\theoremstyle{plain}
\newtheorem{theorem}{Theorem}[section]
\newtheorem{proposition}[theorem]{Proposition}
\newtheorem{lemma}[theorem]{Lemma}
\newtheorem{corollary}[theorem]{Corollary}
\newtheorem{conjecture}[theorem]{Conjecture}
\theoremstyle{definition}
\newtheorem{definition}[theorem]{Definition}
\newtheorem{remark}[theorem]{Remark}
\newtheorem{hypothesis}[theorem]{Hypothesis}
\newcommand{\nc}{\newcommand}
\nc{\on}{\operatorname}
\nc{\Q}{\mathbb{Q}}
\nc{\Z}{\mathbb{Z}}
\nc{\cl}{\mathrm{cl}}
\nc{\fraka}{{\mathfrak a}} \nc{\bba}{{\mathbf a}}
\nc{\frakb}{{\mathfrak b}}
\nc{\frakc}{{\mathfrak c}}
\nc{\frakd}{{\mathfrak d}}
\nc{\frake}{{\mathfrak e}}
\nc{\frakf}{{\mathfrak f}}
\nc{\frakg}{{\mathfrak g}}
\nc{\frakh}{{\mathfrak h}}
\nc{\fraki}{{\mathfrak i}}
\nc{\frakj}{{\mathfrak j}}
\nc{\frakk}{{\mathfrak k}}
\nc{\frakl}{{\mathfrak l}}
\nc{\frakm}{{\mathfrak m}}
\nc{\frakn}{{\mathfrak n}}
\nc{\frako}{{\mathfrak o}}
\nc{\frakp}{{\mathfrak p}}
\nc{\frakq}{{\mathfrak q}}
\nc{\frakr}{{\mathfrak r}}
\nc{\fraks}{{\mathfrak s}}
\nc{\frakt}{{\mathfrak t}}
\nc{\fraku}{{\mathfrak u}}
\nc{\frakv}{{\mathfrak v}}
\nc{\frakw}{{\mathfrak w}}
\nc{\frakx}{{\mathfrak x}}
\nc{\fraky}{{\mathfrak y}}
\nc{\frakz}{{\mathfrak z}}
\nc{\frakA}{{\mathfrak A}}
\nc{\frakB}{{\mathfrak B}}
\nc{\frakC}{{\mathfrak C}}
\nc{\frakD}{{\mathfrak D}}
\nc{\frakE}{{\mathfrak E}}
\nc{\frakF}{{\mathfrak F}}
\nc{\frakG}{{\mathfrak G}}
\nc{\frakH}{{\mathfrak H}}
\nc{\frakI}{{\mathfrak I}}
\nc{\frakJ}{{\mathfrak J}}
\nc{\frakK}{{\mathfrak K}}
\nc{\frakL}{{\mathfrak L}}
\nc{\frakM}{{\mathfrak M}}
\nc{\frakN}{{\mathfrak N}}
\nc{\frakO}{{\mathfrak O}}
\nc{\frakP}{{\mathfrak P}}
\nc{\frakQ}{{\mathfrak Q}}
\nc{\frakR}{{\mathfrak R}}
\nc{\frakS}{{\mathfrak S}}
\nc{\frakT}{{\mathfrak T}}
\nc{\frakU}{{\mathfrak U}}
\nc{\frakV}{{\mathfrak V}}
\nc{\frakW}{{\mathfrak W}}
\nc{\frakX}{{\mathfrak X}}
\nc{\frakY}{{\mathfrak Y}}
\nc{\frakZ}{{\mathfrak Z}}
\nc{\bbA}{{\mathbb A}}
\nc{\bbB}{{\mathbb B}}
\nc{\bbC}{{\mathbb C}}
\nc{\bbD}{{\mathbb D}}
\nc{\bbE}{{\mathbb E}}
\nc{\bbF}{{\mathbb F}} \nc{\bbf}{{\mathbf f}}
\nc{\bbG}{{\mathbb G}}
\nc{\bbH}{{\mathbb H}}
\nc{\bbI}{{\mathbb I}}
\nc{\bbJ}{{\mathbb J}}
\nc{\bbK}{{\mathbb K}}
\nc{\bbL}{{\mathbb L}}
\nc{\bbM}{{\mathbb M}}
\nc{\bbN}{{\mathbb N}}
\nc{\bbO}{{\mathbb O}}
\nc{\bbP}{{\mathbb P}}
\nc{\bbQ}{{\mathbb Q}}
\nc{\bbR}{{\mathbb R}}
\nc{\bbS}{{\mathbb S}}
\nc{\bbT}{{\mathbb T}}
\nc{\bbU}{{\mathbb U}}
\nc{\bbV}{{\mathbb V}}
\nc{\bbW}{{\mathbb W}}
\nc{\bbX}{{\mathbb X}}
\nc{\bbY}{{\mathbb Y}}
\nc{\bbZ}{{\mathbb Z}}
\nc{\calA}{{\mathcal A}}
\nc{\calB}{{\mathcal B}}
\nc{\calC}{{\mathcal C}}
\nc{\calD}{{\mathcal D}}
\nc{\calE}{{\mathcal E}}
\nc{\calF}{{\mathcal F}}
\nc{\calG}{{\mathcal G}}
\nc{\calH}{{\mathcal H}}
\nc{\calI}{{\mathcal I}}
\nc{\calJ}{{\mathcal J}}
\nc{\calK}{{\mathcal K}}
\nc{\calL}{{\mathcal L}}
\nc{\calM}{{\mathcal M}}
\nc{\calN}{{\mathcal N}}
\nc{\calO}{{\mathcal O}}
\nc{\calP}{{\mathcal P}}
\nc{\calQ}{{\mathcal Q}}
\nc{\calR}{{\mathcal R}}
\nc{\calS}{{\mathcal S}}
\nc{\calT}{{\mathcal T}}
\nc{\calU}{{\mathcal U}}
\nc{\calV}{{\mathcal V}}
\nc{\calW}{{\mathcal W}}
\nc{\calX}{{\mathcal X}}
\nc{\calY}{{\mathcal Y}}
\nc{\calZ}{{\mathcal Z}}
\nc{\scrA}{{\mathscr A}}
\nc{\scrB}{{\mathscr B}}
\nc{\scrC}{{\mathscr C}}
\nc{\scrD}{{\mathscr D}}
\nc{\scrE}{{\mathscr E}}
\nc{\scrF}{{\mathscr F}}
\nc{\scrG}{{\mathscr G}}
\nc{\scrH}{{\mathscr H}}
\nc{\scrI}{{\mathscr J}}
\nc{\scrJ}{{\mathscr I}}
\nc{\scrK}{{\mathscr K}}
\nc{\scrL}{{\mathscr L}}
\nc{\scrM}{{\mathscr M}}
\nc{\scrN}{{\mathscr N}}
\nc{\scrO}{{\mathscr O}}
\nc{\scrP}{{\mathscr P}}
\nc{\scrQ}{{\mathscr Q}}
\nc{\scrR}{{\mathscr R}}
\nc{\scrS}{{\mathscr S}}
\nc{\scrT}{{\mathscr T}}
\nc{\scrU}{{\mathscr U}}
\nc{\scrV}{{\mathscr V}}
\nc{\scrW}{{\mathscr W}}
\nc{\scrX}{{\mathscr X}}
\nc{\scrY}{{\mathscr Y}}
\nc{\scrZ}{{\mathscr Z}}
\nc{\bnu}{{\bar{ \nu}}}
\nc{\olO}{\bar{\calO}}
\nc{\pf}{{\on{pf}}}
\nc{\al}{{\alpha}} 
\nc{\be}{{\beta}}
\nc{\ga}{{\gamma}} \nc{\Ga}{{\Gamma}}
\nc{\hGa}{\hat{\Gamma}}
\nc{\ve}{{\varepsilon}} 
\nc{\la}{{\lambda}} \nc{\La}{{\Lambda}}
\nc{\om}{\omega} \nc{\Om}{\Omega} 
\nc{\sig}{{\sigma}} \nc{\Sig}{{\Sigma}}
\nc{\dR}{{\mathrm{dR}}}
\nc{\Perf}{{\mathrm{Perf}}}
\nc{\Gm}{{\mathbb{G}_m}}
\nc{\Spa}{\on{{Spa}}}
\nc{\Spd}{\on{{Spd}}}
\nc{\tnb}{\psi_{\rm tame}}
\nc{\oM}{\overline{{M}}}
\nc{\op}{{\on{op}}}
\nc{\ad}{{\on{ad}}}
\nc{\alg}{{\on{alg}}}
\nc{\Ad}{{\on{Ad}}}
\nc{\Adm}{{\on{Adm}}} \nc{\aff}{{\on{af}}}
\nc{\Aut}{{\on{Aut}}}
\nc{\Bun}{{\on{Bun}}}
\nc{\cha}{{\on{char}}}
\nc{\der}{{\on{der}}}
\nc{\Der}{{\on{Der}}}
\nc{\diag}{{\on{diag}}}
\nc{\End}{{\on{End}}}
\nc{\Fl}{{\calF\!\ell}}
\nc{\Tr}{{\on{Transp}}}
\nc{\TR}{{\calT\!\calR}}
\nc{\Gal}{{\on{Gal}}}
\nc{\Gr}{{\on{Gr}}}
\nc{\Hk}{{\on{Hk}}}
\nc{\rH}{{\on{H}}}
\nc{\Hom}{{\on{Hom}}}
\nc{\IC}{{\on{IC}}}
\nc{\id}{{\on{id}}}
\nc{\Id}{{\on{Id}}}
\nc{\ind}{{\on{ind}}}
\nc{\Ind}{{\on{Ind}}}
\nc{\Lie}{{\on{Lie}}}
\nc{\Pic}{{\on{Pic}}}
\nc{\pr}{{\on{pr}}}
\nc{\Res}{{\on{Res}}}
\nc{\res}{{\on{res}}} \nc{\Sat}{{\on{Sat}}}
\nc{\spc}{{\on{sc}}}
\nc{\drv}{{\on{der}}}
\nc{\sgn}{{\on{sgn}}}
\nc{\Spec}{{\on{Spec}}}\nc{\Spf}{\on{Spf}} 
\nc{\Sph}{\on{Sph}}
\nc{\St}{{\on{St}}}
\nc{\tr}{{\on{tr}}}
\nc{\Mod}{{\mathrm{-Mod}}}
\nc{\Hilb}{{\on{Hilb}}} 
\nc{\Ext}{{\on{Ext}}} 
\nc{\vs}{{\on{Vec}}}
\nc{\ev}{{\on{ev}}}
\nc{\nO}{{\breve{\calO}}}
\nc{\tS}{{\tilde{S}}}
\nc{\spe}{{\on{sp}}}
\nc{\loc}{{\on{loc}}}
\nc{\nscrR}{{\mathscr{R}^{\on{nr}}}}
\nc{\GL}{{\on{GL}}}
\nc{\Gl}{\on{Gl}} 
\nc{\GSp}{{\on{GSp}}}
\nc{\gl}{{\frakg\frakl}}
\nc{\SL}{{\on{SL}}} 
\nc{\SU}{{\on{SU}}} 
\nc{\SO}{{\on{SO}}}
\nc{\PGL}{{\on{PGL}}}
\nc{\Conv}{{\on{Conv}}}
\nc{\Rep}{{\on{Rep}}}
\nc{\Dom}{{\on{Dom}}}
\nc{\red}{{\on{red}}}
\nc{\act}{{\on{act}}}
\nc{\nr}{{\on{nr}}}
\nc{\ctf}{{\on{ctf}}}
\nc{\str}{{\on{-}}} 
\nc{\os}{{\bar{s}}}
\nc{\oeta}{{\bar{\eta}}}
\nc{\hookto}{\hookrightarrow}
\nc{\longto}{\longrightarrow}
\nc{\leftto}{\leftarrow}
\nc{\onto}{\twoheadrightarrow}
\nc{\lonto}{\twoheadleftarrow}
\nc{\pot}[1]{ [\hspace{-0,5mm}[ {#1} ]\hspace{-0,5mm}] }
\nc{\rpot}[1]{ (\hspace{-0,7mm}( {#1} )\hspace{-0,7mm}) }
\numberwithin{equation}{section}
\begin{document}
	
	\title{Singularities of local models}
	
	\author[N. Fakhruddin, T. Haines, J. Louren\c{c}o, T. Richarz]{Najmuddin Fakhruddin, Thomas Haines, Jo\~ao Louren\c{c}o, Timo Richarz}
	
	\address{School of Mathematics, Tata Institute of Fundamental	Research, Homi Bhabha Road, Mumbai 400005, India}
	\email{naf@math.tifr.res.in}
	
	\address{Department of Mathematics, University of Maryland, College Park, MD 20742-4015, USA}
	\email{tjh@umd.edu}

	\address{Mathematisches Institut, Universität Münster, Einsteinstrasse 62, Münster, Germany}
	\email{j.lourenco@uni-muenster.de}
	
	\address{Technische Universit\"at Darmstadt, Department of Mathematics, 64289 Darmstadt, Germany}
	\email{richarz@mathematik.tu-darmstadt.de}
	
	\begin{abstract}
          We construct local models of Shimura varieties and
          investigate their singularities, with special emphasis on
          wildly ramified cases. More precisely, with the exception of
          odd unitary groups in residue characteristic $2$ we
          construct local models, show reducedness of their special
          fiber, Cohen--Macaulayness and in equicharacteristic also
          (pseudo-)rationality. In mixed characteristic we conjecture
          their pseudo-rationality.
                
          This is based on the construction of parahoric group schemes
          over two dimensional bases for wildly ramified groups and an
          analysis of singularities of the attached Schubert varieties
          in positive characteristic using perfect geometry.
	\end{abstract}
	\date{\today}

	\maketitle
	\tableofcontents
	
	\section{Introduction}
	
	\subsection{Background}
	Let $O$ be a complete discretely valued ring with fraction field $K$ and with residue field $k$ of characteristic $p>0$, which for simplicity we assume is algebraically closed.
	Let $G$ be a (connected) reductive group over $K$. 
	
	The local models we consider in this paper are certain flat projective $O$-schemes which model the singularities of integral $O$-models of Shimura varieties (in the case of mixed
	characteristic) and of $G$-shtukas (in the equicharacteristic case) with parahoric level structure. 
	
	Local models attached to PEL type Shimura varieties with parahoric level structure at a given prime number were developed in the book of Rapoport and Zink \cite{RZ96}, and were defined  there in a linear algebra style using moduli spaces of self-dual lattice chains in certain skew-Hermitian vector spaces.  The local models were proved to be \'{e}tale locally isomorphic to the corresponding integral models of the Shimura varieties defined using analogous chains of polarized abelian schemes with additional structure. This has two important consequences:
\begin{enumerate}
\item [(1)] The singularities in the special fiber of the Shimura variety coincide with those of its local model, which can be studied more directly;
\item [(2)] The sheaf of nearby cycles on its special fiber can be determined from the corresponding object on the local model.
\end{enumerate}

The approach in (1) goes back to de Jong \cite{dJ93}, who used it to determine the singularities appearing in Siegel modular 3-folds with Iwahori level structure at $p$ (Shimura varieties attached to ${\rm GSp}(4)$); it was also exploited by many other authors, see \cite{CN90, DP94, Pappas:HilbertBlumenthal, Fal01, Gor01, PR05, PRS13, Pappas_Zachos:OrthogonalShimuraVarieties} for example.  
The method in (2) is a key ingredient in the study of local Hasse--Weil zeta functions of Shimura varieties with parahoric level structure; see the survey articles of Rapoport \cite{Rap90} and the second named author \cite{Hai05, Hai14}.

For more general Shimura varieties (as well as for moduli spaces of
shtukas) a more purely group-theoretic construction of local models
-- also satisfying (1) and (2) above --  is desirable, in parallel
to Deligne's group-theoretic axiomatic construction of Shimura
varieties \cite{Del71b}.
Such constructions also have the benefit of tying the theory of Shimura varieties more closely to Schubert varieties, loop groups, and other objects appearing in the geometric Langlands program. 
This also gives hints about how to make the construction itself, with the help of Beilinson--Drinfeld affine Grassmannians. 

	The sought-after local models, which we denote by $\widetilde{M}_{\calG, \mu}$, arise as the seminormalizations of certain orbit closures $M_{\calG,\mu}$ inside a Beilinson--Drinfeld Grassmannian, and are associated to a parahoric group scheme $\calG$ over $O$ extending $G$, a geometric conjugacy class $\mu$ of cocharacters of $G$ and certain auxiliary additional data in the mixed characteristic case, see \Cref{sec:mixed characteristic local models}. 
	The schemes are constructed by Zhu in \cite{Zhu14} and by
        Pappas--Zhu in \cite{PZ13} for all $G$ splitting over a tamely ramified extension of $K$. 
	Their construction in the mixed characteristic setting is extended by Levin in \cite{Lev16} to all groups $G$ which are restrictions of scalars of tamely ramified groups, so covering all $G$ (up to central isogeny) in the cases where $p \geq 5$. 
	In the equicharacteristic setting, the construction for arbitrary groups is given in \cite{Ric16}. (In all these cases, flatness of the local models so defined is automatic, in contrast with the lattice-theoretic proposals in \cite{RZ96}, which in certain cases failed to be flat, as first pointed out by Pappas.  On the other hand, unlike \cite{RZ96}, these group-theoretic local models are not given by explicit moduli problems.)  
	
	One of the main results of \cite{Zhu14} and \cite{PZ13} is that when $p \nmid | \pi_1(G_{\der})|$ the orbit closures $M_{\calG, \mu}$ are normal (hence coincide with $\widetilde{M}_{\calG, \mu}$) with reduced special fiber, all of whose components are normal, Cohen--Macaulay and compatibly Frobenius split. 
	They also conjecture that under the same conditions the local models are always Cohen--Macaulay \cite[Remark 9.5 (b)]{PZ13}. 
	This is proved by He in \cite{He13} in the case that $G$ is unramified and $\mu$ is minuscule and by the second and fourth named author \cite[Theorem 2.3]{HR22} for $p > 2$ in all cases where local models had been constructed.
	In the case when $p \mid | \pi_1(G_{\der})|$, it is known by
        \cite{HLR24}, that the orbit closures $M_{\calG, \mu}$ are not
        normal in general, so instead one passes to their
        seminormalizations $\widetilde{M}_{\calG, \mu}$ which
        then have the aforementioned properties.  
	
	The paper at hand extends the above results to all $G$ and all
        $p$ with the exception of one family of examples: ramified odd
        unitary groups $G$ in the case $p=2$, see also
        \Cref{wild.ramified.unitary}.  More precisely, excluding this
        family we construct local models $\widetilde{M}_{\calG, \mu}$
        also for wildly ramified groups $G$ which are not necessarily
        restrictions of scalars of tamely ramified groups, and we
        prove that these models are normal, Cohen--Macaulay and have
        reduced special fibers all of whose components are also
        normal, Cohen--Macaulay and compatibly Frobenius split.  The
        reader is referred to \Cref{z.extensions.LM} for the relation
        with the construction of local models via $z$-extensions from
        \cite[Section 2.6]{HPR20}.  Let us now explain our main
        results in more detail.
	
	\subsection{Main results}
	Fix $O\subset K$ with residue field $k$ and $G$ as above.
	Denote by $\Phi_G$ the relative root system of $G$.
	If $G$ ranges through all absolutely simple groups, then $\Phi_G$ is non-reduced if and only if $G$ is an odd unitary group, see \Cref{section_lifts}.
	Our first main result is \Cref{theorem_coherence_allp} in the main text and concerns local models in equicharacteristic:
	
	\begin{theorem}\label{theorem_A} %
		Assume that $K \simeq k\rpot{t}$ has characteristic $p>0$.
		Also assume that $p> 2$ or $\Phi_G$ is
		reduced. Then the local model
		$\widetilde{M}_{\calG, \mu}$ is Cohen--Macaulay, has
		rational singularities, and reduced special fiber
		equal to the admissible locus
		$\widetilde{A}_{\calG,\mu}$.
	\end{theorem}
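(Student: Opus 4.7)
The plan is to exploit the Beilinson--Drinfeld realization: $\widetilde{M}_{\calG,\mu}$ is the seminormalization of an orbit closure $M_{\calG,\mu}$ inside a Beilinson--Drinfeld Grassmannian $\Gr_{\calG}$ flat over $O=k\pot{t}$, whose generic fiber sits in the twisted affine Grassmannian $\Gr_G$ over $K$ and whose special fiber sits in the affine flag variety $\Fl_\calG$ over $k$. Flatness identifies the generic fiber with the Schubert variety $\Gr_{G,\mu}$, and the first task is to show the special fiber, after seminormalization, equals the admissible locus $\widetilde{A}_{\calG,\mu}=\bigcup_{w\in\Adm(\mu)}\Fl_{\calG,w}$ (seminormalized). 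The standard tool here is the coherence conjecture of Pappas--Rapoport, in the form proved in equicharacteristic by Zhu and extended in this paper to wildly ramified $\calG$: matching of Hilbert polynomials of the global line bundle on generic and special fiber, which is known for the Schubert piece on the special fiber, forces the set-theoretic equality to promote to the scheme-theoretic one at the level of seminormalizations.

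The core step is then the geometry of the special fiber, i.e.\ establishing that $\widetilde{A}_{\calG,\mu}$ is reduced, Cohen--Macaulay and has rational singularities. The input that must be provided from the body of the paper is a global, compatible Frobenius splitting of $\Fl_\calG$ that is compatible with all Schubert subvarieties, even under wild ramification. Once this is available, reducedness is immediate and Cohen--Macaulayness of the individual Schubert varieties together with vanishing of higher cohomology of ample line bundles, combined with the usual union-of-Cohen--Macaulay argument along intersection posets, extends Cohen--Macaulayness and $F$-rationality (hence rationality of singularities in the $\cha=p$ sense) to the entire admissible union $\widetilde{A}_{\calG,\mu}$. The upgrade from Frobenius splitting to rationality uses the criterion that a Frobenius split, Cohen--Macaulay scheme with suitable resolution data (e.g.\ compatibly split Demazure-like resolutions constructed via the perfect geometry of convolution Grassmannians) has rational singularities.

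Finally, I would deduce the statement on the total space from the one on the special fiber. Since $\widetilde{M}_{\calG,\mu}\to\Spec O$ is flat with Cohen--Macaulay special fiber and Cohen--Macaulay generic fiber (the latter being $\widetilde{\Gr}_{G,\mu}$, handled by exactly the same Frobenius-splitting techniques on $\Gr_G$), Cohen--Macaulayness propagates to the total space via the flatness criterion over the regular one-dimensional base $O$. Rationality of singularities of the total space then follows from rationality in both fibers by a version of Elkik's descent along a flat family over a DVR, together with the fact that the seminormalization is already normal here, as forced by compatible Frobenius splitting. The main obstacle is in the second paragraph: producing a compatibly Frobenius split structure on $\Fl_\calG$ for wildly ramified $\calG$ when $p=2$ and $\Phi_G$ is reduced, since the classical constructions going back to Faltings and Mathieu rest on tameness; this is precisely the point where the paper's new perfect-geometric analysis of Schubert varieties is needed, and it is also the reason the odd unitary case in residue characteristic $2$ must be excluded.
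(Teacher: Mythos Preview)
Your overall outline---using the coherence conjecture to identify the special fiber, and compatible Frobenius splitting of Schubert varieties as the key new input for wild ramification---matches the paper's strategy. However, your mechanism for deducing Cohen--Macaulayness and rationality of the total space $\widetilde{M}_{\calG,\mu}$ has a genuine gap.

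You assert that $F$-rationality extends to the union $\widetilde{A}_{\calG,\mu}$. This is false whenever the admissible locus has more than one irreducible component, which is the generic situation: $F$-rational local rings are normal, hence domains, so a connected reducible scheme is never $F$-rational at a point lying on two components. Your ``union-of-Cohen--Macaulay along intersection posets'' step is likewise not justified---while each $\widetilde{S}_w$ is Cohen--Macaulay, passing to the union requires control on the intersections that you have not supplied, and the paper does not proceed this way. Nor is there an ``Elkik descent'' result of the form you invoke; Elkik's theorem concerns approximation and lifting of solutions, not propagation of rational singularities across a flat family.

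The paper runs the logic in the opposite direction. It does \emph{not} first prove that $\widetilde{A}_{\calG,\mu}$ is Cohen--Macaulay. Once the special fiber is identified with $\widetilde{A}_{\calG,\mu}$ via coherence, the paper uses only that $\widetilde{A}_{\calG,\mu}$ is $F$-split, hence $F$-injective---a notion that makes perfect sense for reducible schemes---together with $F$-rationality of the irreducible generic fiber $\widetilde{S}_{G,\mu}$. The decisive commutative-algebra input is a result of Schwede--Singh: for an $F$-finite local ring $R$ with nonzerodivisor $t$, if $R/(t)$ is $F$-injective and $R[t^{-1}]$ is $F$-rational, then $R$ itself is $F$-rational. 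Applied to the local rings of $\widetilde{M}_{\calG,\mu}$ with $t$ a uniformizer of $O_E$, this yields $F$-rationality of the total space directly; pseudo-rationality, Cohen--Macaulayness, and rationality of singularities then follow from Smith's theorem and Kov\'acs' equivalences. Cohen--Macaulayness of the special fiber $\widetilde{A}_{\calG,\mu}$ is thus a \emph{consequence} of the argument, not an input.
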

	
	For the definition of the admissible locus $\widetilde{A}_{\calG,\mu}$, the reader is referred to \Cref{def_admissible}.
	We also note that
	$\widetilde{M}_{\calG, \mu} = {M}_{\calG, \mu}$ when
	$p \nmid | \pi_1(G_{\der})|$, see \Cref{rem:good primes equality} and \Cref{rem: good primes equality mixed}.
	In \Cref{cor_pic_local_model_equal}, we also calculate the Picard group of $\widetilde{M}_{\calG, \mu}$.
	
	Our second main result is \Cref{theorem_coherence_mixed_char} in the main text and concerns mixed characteristic local models:
	
	\begin{theorem}[] 
		\label{theorem_B} %
		Assume that $K$ has characteristic $0$. Also assume
		that $p> 2$ or $\Phi_G$ is reduced. Then the local
		model $\widetilde{M}_{\underline{\calG}, \mu}$ is Cohen--Macaulay and has a
		reduced special fiber equal to the
		$\mu'$-admissible locus $\widetilde{A}_{\calG',\mu'}$. If
		$\widetilde{A}_{\calG',\mu'}$ is irreducible \textup{(}for example, $\calG$ special parahoric\textup{)}, then $\widetilde{M}_{\underline{\calG}, \mu}$ has
		pseudo-rational singularities.
	\end{theorem}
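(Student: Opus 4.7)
The plan is to reduce the mixed characteristic statement to its equal characteristic analog (\Cref{theorem_A}) via the Beilinson--Drinfeld degeneration underlying the construction of the local model. By construction, $\widetilde{M}_{\underline{\calG}, \mu}$ arises as the seminormalization of a Schubert variety inside a global affine Grassmannian over a two-dimensional mixed characteristic base, and its special fiber is canonically identified with the admissible locus $\widetilde{A}_{\calG', \mu'}$ for an equal characteristic parahoric/cocharacter pair $(\calG', \mu')$ over $k\rpot{t}$; I treat this identification, set up in the earlier body of the paper, as a black box.

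From this identification, reducedness of the special fiber of $\widetilde{M}_{\underline{\calG},\mu}$ is immediate, since $\widetilde{A}_{\calG', \mu'}$ is reduced by \Cref{theorem_A}. Cohen--Macaulayness of $\widetilde{M}_{\underline{\calG},\mu}$ follows from the fiberwise criterion: the scheme is flat over the regular one-dimensional base $O$, and both geometric fibers are Cohen--Macaulay---the special fiber by \Cref{theorem_A}, and the generic fiber since it is a normal Cohen--Macaulay Schubert variety in a characteristic zero affine Grassmannian, by the classical results of Kumar--Mathieu and Faltings.

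For the rational singularities statement, suppose $\widetilde{A}_{\calG', \mu'}$ is irreducible. Then the special fiber of $\widetilde{M}_{\underline{\calG},\mu}$ is irreducible and normal, so the total scheme is integral, normal and Cohen--Macaulay. To conclude I would apply a deformation principle: for a flat proper morphism to a regular one-dimensional base whose total space is Cohen--Macaulay, if both geometric fibers have pseudo-rational singularities, then the total space has rational singularities. The generic fiber has rational singularities by the classical theory of Schubert varieties in characteristic zero (combined with normality established in \cite{Zhu14, PZ13}), and the special fiber has pseudo-rational singularities by \Cref{theorem_A}.

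The main obstacle will be the deformation step, since lifting (pseudo-)rationality from the special fiber to the total scheme in mixed characteristic is delicate: one cannot invoke Grauert--Riemenschneider or resolution of singularities as in characteristic zero. One route is to adapt Kov\'acs' deformation invariance of rational singularities to the pseudo-rational setting over a mixed characteristic DVR; another is to check the required vanishing of local cohomology $\rH^i_{\frakm}(\calO_{\widetilde{M}_{\underline{\calG},\mu},x})$ directly using the compatible Frobenius splitting of the special fiber (guaranteed via \Cref{theorem_A}) together with a lifting argument. When $\widetilde{A}_{\calG', \mu'}$ fails to be irreducible the scheme can have components whose union is not pseudo-rational even when each is, which accounts for the irreducibility hypothesis here and for the fact that (pseudo-)rationality in full mixed characteristic generality is only conjectured in the abstract.
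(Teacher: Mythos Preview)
There is a genuine gap: you treat the identification of the special fiber of $\widetilde{M}_{\underline{\calG},\mu}$ with $\widetilde{A}_{\calG',\mu'}$ as a black box ``set up in the earlier body of the paper,'' but this is precisely the main content of the proof, not a preliminary. What the paper establishes beforehand is only an \emph{inclusion} $A_{\calG',\mu'}\subset (M_{\underline{\calG},\mu,k})_{\mathrm{red}}$ (\Cref{l.inclusion}). The equality, and the reducedness of the special fiber, are proved inside \Cref{theorem_coherence_mixed_char} itself by a dimension comparison: one passes to the simply connected cover, restricts the ample line bundle $\calL_{\mathrm{ad}}$, and uses the equality \eqref{equation.determinant.line} (the coherence conjecture input, proved in equicharacteristic via the Witt lift and Zhu's theorem in characteristic $0$) to conclude that $\dim_k H^0$ on the admissible locus already accounts for $\dim_E H^0$ on the generic Schubert variety, forcing the special fiber to be exactly $\widetilde{A}_{\calG',\mu'}$. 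Without this step nothing in your outline goes through.

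Your rational singularities argument also relies on a ``deformation principle'' that is not available. The principle you state---pseudo-rational fibers over a DVR imply pseudo-rational total space---is essentially \Cref{conjecture_pseudo_rational}, which the paper leaves open (and which is exactly why the general case is only conjectured). What the paper actually uses in the irreducible case is stronger input on the special fiber: when $\widetilde{A}_{\calG',\mu'}=\widetilde{S}_{\calG',\mu'}$ is a single seminormalized Schubert variety, it is $F$-\emph{rational} by \Cref{theorem_frobenius_split_allp}, and then \Cref{lemma.pseudorational} (i.e., \cite[Theorem 3.8]{MS20}) gives pseudo-rationality of the total space. Neither Kov\'acs-style deformation invariance nor a direct local cohomology computation from $F$-splitting alone is known to suffice here; $F$-rationality of the special fiber is the crucial hypothesis.
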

	Here $\calG'$ and $\mu'$ are equicharacteristic analogues of
	$\calG$ and $\mu$ associated to them via a choice of $O\pot{t}$-group lift $\underline{\calG}$, see \Cref{section_lifts}. As
	above, $\widetilde{M}_{\underline{\calG}, \mu} = {M}_{\underline{\calG}, \mu}$ when
	$p \nmid | \pi_1(G_{\der})|$, and see \Cref{cor_pic_local_model_mixed} for its Picard group. \Cref{theorem_B} is
	slightly weaker than \Cref{theorem_A} in that we do not
	prove that the singularities of $\widetilde{M}_{\underline{\calG}, \mu}$ are
	always pseudo-rational. However, we conjecture that this is
	always the case, see \Cref{conjecture_mixed}.

	\subsection{Methods}
	We now explain our methods and the structure of this
	paper. The main input needed to construct the local model (in
	mixed characteristic) is, as in \cite{PZ13}, the construction
	of a lifting of the parahoric group scheme $\calG$ over $O$ to a group scheme $\underline \calG$ over $O\pot{t}$. The
	special fiber of the local model is then a closed subscheme of
	a partial affine flag variety over $k$ and to analyze this we also
	need to construct lifts of parahoric group schemes over
	$k\pot{t}$ to $W(k)\pot{t}$. These steps were carried out for
	tame groups in \cite{PR08, PZ13}.

          So we need to extend these constructions to
        wild groups.  The group lifts are constructed in
        \Cref{section_lifts} using ideas from \cite{Lou23}: we define
        suitable integral models of maximal tori and root groups
        separately which induce birational models and then apply the
        result that such a model extends to a unique group scheme.
        The reason that we have to exclude the case of odd unitary
        groups stems from this very first step since we are unable to
        construct the lifts of root groups in the case of multipliable
        roots when $G$ is ramified and $p=2$, see
        \Cref{wild.ramified.unitary}.

	In \Cref{Fsing_review} we start with a review of $F$-singularities 
	and (pseudo-)rational singularities. These techniques are central to the study  
	of singularities of local models in later sections. 
	\Cref{conjecture_pseudo_rational} states a conjectural mixed characteristic 
	analogue of a result of Schwede and Singh \cite[Appendix A]{HMS14}, which
	would imply pseudo-rationality of mixed characteristic local models, see also the discussion below.

	The first step in analyzing the singularities of local models
        is the study of the singularities of Schubert varieties $S_w$
        in affine flag varieties.  We carry this out in
        \Cref{section_schubert}, first proving in
        \Cref{theorem_frobenius_split_allp} that the
        seminormalizations $\widetilde{S}_w$ are always normal,
        Cohen--Macaulay, compatibly Frobenius split and have rational
        (in fact, even $F$-rational) singularities. We use the by now
        standard method of applying the Mehta--Ramanathan criterion
        for Frobenius splitting, but we need some extra arguments for $p = 2,3$. In
        \Cref{thm_normality_classical_sch_vars}, we then show that if
        $p>2$ or $\Phi_G$ is reduced, then all Schubert varieties
        $S_w$ are normal if and only if $p$ does not divide the order
        of $\pi_1(G_\der)$.
	
	In \Cref{section_lm}, we construct our local models and prove
        our main results. In the equicharacteristic case, the local
        model is canonical. In mixed characteristic, it depends on the
        choice of the group lift $\underline \calG$ constructed in
        \Cref{prop_lifts_witt}. For minuscule $\mu$, which is the case
        relevant to Shimura varieties, it is expected that these are
        independent of all choices, see \cite[Conjecture
        21.4.1]{SW20}, \cite[Conjectures 2.12, 2.15]{HPR20} and also
        \cite{AGLR22}. To identify the special fiber and prove that it
        is reduced, we follow the method of \cite{Zhu14} and
        \cite{PZ13} based on the coherence conjecture. This is fairly
        straightforward, given the results of
        \Cref{section_schubert}. It then remains to prove that the
        special fiber is Cohen--Macaulay. 
        To do
        this, we use a variant of the argument used in \cite[Section
        6]{HR22}, which has the advantage of also being applicable in
        characteristic $2$ since it does not depend on Zhu's global
        Frobenius splitting \cite[Theorem 6.5]{Zhu14}. The proof uses
        some results in commutative algebra by Schwede and Singh
        \cite[Appendix A]{HMS14} to deduce that in equicharacteristic
        $p$ the local models are Cohen--Macaulay and have $F$-rational
        (hence pseudo-rational) singularities.  In the case of mixed
        characteristic, we get the reducedness and Cohen--Macaulayness
        of the special fiber of the local model by comparing with the
        equicharacteristic case. However, it does not seem possible to
        immediately transfer pseudo-rationality from equal
        characteristic to mixed characteristic.  Motivated by this we
        discuss the above mentioned conjectural mixed characteristic
        analogue (\Cref{conjecture_pseudo_rational}) of one of the
        results of Schwede and Singh which, given our other results,
        would suffice to deduce the pseudo-rationality of local models
        in mixed characteristic.

	\subsection{Relationship with the perfectoid theory}
	
	Let us comment on the relationship between this work and the
        other recent works \cite{AGLR22, GL24} by some of the authors.
        The first paper \cite{AGLR22} studied at length a perfectoid
        analogue of the local model constructed in Scholze--Weinstein's book \cite{SW20}.  An important
        conjecture in \cite{SW20} postulated that these perfectoid
        local models, despite only being v-sheaves, should be
        representable by a flat, normal, and projective scheme over
        $O_E$ with reduced special fiber.  This was proved in
        \cite[Section 7]{AGLR22} under \Cref{hyp_odd_unitary} and
        \Cref{not.the.triality}, using the constructions of this paper
        as an input and comparing them to the v-sheaves of perfectoids
        via a specialization principle.  However, we stress that the
        results in \cite{AGLR22} concerning the singularities of local
        models like reducedness of their special fiber and
        Cohen--Macaulayness rely on the present paper.  As for
        \cite{GL24}, it gives a new proof that local models are normal
        with reduced special fiber, including the missing cases of \Cref{hyp_odd_unitary} and
        \Cref{not.the.triality}.  The statements
        in \cite{GL24} related to Frobenius splittings of the special
        fiber or Cohen--Macaulayness rely again on the present paper.

		\subsection{Acknowledgements} 
		
		We thank Johannes Ansch\"utz, Ian Gleason, Stefano Morra, Michael Rapoport, Peter Scholze, and Karl Schwede for helpful conversations and email exchanges.
		Also, we thank the anonymous referee for suggesting several improvements to the exposition of the manuscript. 
		
	N.F.~acknowledges support from the DAE, Government
of India, under Project Identification No.~RTI4001. The research of T.H.~was partially funded by NSF grant DMS 2200873.
This project has received funding (J.L. via Ana Caraiani) from the European Research Council under the European Union’s Horizon 2020 research and innovation program (grant agreement nº 804176), and (J.L.) from the Max-Planck-Institut f\"ur Mathematik. 
This project has received funding (T.R.) from the European Research Council (ERC) under Horizon Europe (grant agreement nº 101040935), funding (T.R.) from the Deutsche Forschungsgemeinschaft (DFG, German Research Foundation) TRR 326 \textit{Geometry and Arithmetic of Uniformized Structures}, project number 444845124 and funding from the LOEWE starting professorship in algebra, project number LOEWE/4b//519/05/01.002(0004)/87.

	\section{Group lifts to two-dimensional bases} \label{section_lifts}
	In our presentation we follow \cite[Sections 2--3]{Lou23} to construct group lifts via gluing from birational group laws.
	The method works for Witt lifts (equicharacteristic) and Breuil--Kisin lifts (mixed characteristic) in the same way which we, however, treat in the separate Sections \ref{section.Witt.lifts} and \ref{subsection_appendix_bk_lift} for readability.
	We start by fixing some notation.
	
	Let $O$ denote a complete discretely valued ring with fraction field $K$ and perfect residue field $k$ of characteristic $p>0$.
	Let $\breve O/O$ be the completion of the maximal unramified extension with fraction field $\breve K/K$.
	Let $G$ be a reductive $K$-group that is quasi-split (automatic if $K=\breve K$ by Steinberg's theorem) and either simply connected or adjoint. Denote by $\breve G:=G\otimes_K \breve K$ the base change.

	Assume $G$ is also almost $K$-simple.  Then
        $G=\Res_{L/K}(G_0)$, for some finite separable field extension
        $L/K$, of an absolutely almost simple $L$-group $G_0$
        \cite[Section 6.21 (ii)]{borel_tits_groupes_reductifs},
        which is necessarily quasi-split
        and simply connected or adjoint, respectively.  Choose a
        separable field extension $M/L$ of minimal degree such that
        $G_0$ splits over its Galois hull.  As the only non-trivial
        automorphism groups of connected
        Dynkin diagrams are $\bbZ/2$ and $S_3$, the extension $M/L$ is
        of degree $\leq 3$.
	
	In this section, we also work under the following:
	
	\begin{hypothesis}\label{hyp_odd_unitary}
		If $p=2$, then the relative root system $\Phi_{\breve G}$ is reduced.
	\end{hypothesis}
	
	An examination of the tables in
	\cite{Tit79} shows that $\Phi_{\breve G}$ is non-reduced if and only if the associated absolutely almost simple group $\breve G_0= G_0\otimes_K \breve K$ is isomorphic to an odd unitary group.
	So Hypothesis \ref{hyp_odd_unitary} excludes this case if $p=2$.

	Fix a maximal $K$-split torus $S \subset G$ with centralizer equal to a maximal torus $T$ and a Borel subgroup $B$ containing it.
	Let $H/\bbZ$ be the split form of $G$ equipped with a pinning. 
	Choose a Chevalley--Steinberg system for $H$, see \cite[Section 2.1]{Lou23}. 
	Let $K^s/K$ be a Galois extension splitting $G$, and fix an isomorphism 
	\begin{equation}\label{equation.splitting.map}
	G\otimes_K K^s\overset{\simeq}{\longto} H\otimes_\bbZ K^s 
	\end{equation}
	preserving the chosen maximal tori and Borel subgroups such that the $\Gal(K^s/K)$-action transported to the target acts by pinned automorphisms, so $G=\Res_{K^s/K}(H\otimes_\bbZ K^s)^{\Gal(K^s/K)}$ by Galois descent.

	The Chevalley--Steinberg system for $H$ induces a Chevalley quasi-system for the quasi-split group $G$ in the sense of \cite[D\'efinition 2.2.6, Proposition 2.2.7]{Lou23}. 
	Essentially, this is the choice of the pair $S\subset B$ in $G$ along with
	a family of isomorphisms
	\begin{equation}\label{equation.quasi.pinning.G}
	x_a\colon U_a \overset{\simeq}{\longto} \begin{cases} \Res_{L_a/K} \bbG_a \\
	\Res_{L_{2a}/K}\bbH_{L_a/L_{2a}} 
	\end{cases}
	\end{equation}
	for all $a \in \Phi_G^{\on{nd}}$ with $\Phi_G^{\on{nd}}\subset \Phi_G$ the subset of non-divisible roots and $U_a$ the corresponding root subgroup.
	Here, if $\Phi_G$ is reduced, then $L_a=M$ if $a \in \Phi_G^<$ is short and $L_a=L$ if $a \in \Phi_G^>$ is long. 
	If $\Phi_G$ is non-reduced, then $L_a=M \supset L=L_{2a}$ if $2a \in \Phi_G$ and $\bbH_{L_a/L_{2a}}$ is the $L_{2a}$-group described in \cite[4.1.9]{BT84}. Here, the quadratic extension $L_a/L_{2a}$ is allowed to be ramified if $p>2$ but must be unramified if $p=2$ by Hypothesis \ref{hyp_odd_unitary}. 
	This induces a Chevalley valuation of $\scrA(G,S,K)$, see
        \cite[4.2.2]{BT84}, which we then regard as the origin of that
        affine space, which then becomes identified with $\scrV(S)=X_*(S)\otimes \bbR$.
	
\begin{remark}\label{wild.ramified.unitary}
  Let us comment on the various hypotheses on
  $G$.
  \begin{enumerate}
  \item If we wished to include the case where $p=2$ and $\Phi_G$ is
    non-reduced, the structure of $U_a$ would be arithmetically more
    involved, particularly as the subset $M^0 \subset M$ of trace zero
    elements does not behave so well, see \cite[Sections 4.1.10, 4.2.20]{BT84}.
 For
    instance, the valuation of $M^0$ divides the quadratic separable
    extensions into those given by roots of primes and the rest of
    them, see \cite[Lemmes 4.3.3, 4.3.4]{BT84}.  Root-of-prime
    extensions are treated in \cite{Lou23} relying on the theory of
    pseudo-reductive groups.  For the other quadratic extensions, we
    do not know, for example, how to construct the groups
    $\underline{U_a}$ that appear below.
  \item The case of quasi-split and simply connected (respectively,
    adjoint) groups $G$ appears to be most important when studying the
    geometry of Schubert varieties and local models.  Note that for
    such $G$ the maximal torus $T$ is induced \cite[Proposition
    4.4.16]{BT84}, which is a technical convenience, see the proof of
    \Cref{lem_equiv_id_apart_witt}.  If we wished to include more
    general central extensions of $G$ with induced maximal torus, we
    could follow the construction in \cite[Section 2.4]{Lou23}, see
    also \Cref{z.extension.sec} for a particular interesting case.
    Further, it should be possible, though difficult, to extend the
    construction of group lifts below to not necessarily quasi-split
    groups using \'etale descent \cite[Section 5]{BT84}.
			
  \end{enumerate}
\end{remark}
	
	\subsection{Witt lifts}\label{section.Witt.lifts}
	In this subsection, we assume that $K$ is a Laurent series field of characteristic $p>0$. 
	Choosing uniformizers $u$ of $L$ and $t$ of $K$, we identify their rings of integers $O_L=k_L\pot{u}$ and $O=k\pot{t}$ as $k$-algebras. 
	The uniformizers satisfy an Eisenstein equation:
	\begin{equation}\label{equation.Eisenstein.Witt}
	u^e+a_{e-1}(t)u^{e-1}+\dots +a_1(t)u+a_0(t)	=0
	\end{equation} 
	where each of the 
	\begin{equation}
	a_i(t)=\sum b_{ij}t^j
	\end{equation} is a power series with $b_{ij}\in k_L$, $b_{i0}=0$ and $b_{01} \neq 0$. 
	Consider now the defining equation
	\begin{equation}\label{equation.Witt.uniformizer}
	u^e+[a_{e-1}(t)]u^{e-1}+\dots +[a_1(t)]u+[a_0(t)] =0
	\end{equation}
	where each of the \begin{equation}[a_i(t)]=\sum [b_{ij}]t^j \end{equation} is a power series in  $W(k_L)\pot{t}$ obtained by taking Teichm\"uller representatives of the coefficients. 
	Then \eqref{equation.Witt.uniformizer} defines the finite free $W(k)\pot{t}$-algebra $W(k_L)\pot{u}$, 
	which reduces modulo $p$ to the $k\pot{t}$-algebra $k_L\pot{u}$. 
	Similarly, we lift $O_M/O_L$ to $W(k_M)\pot{v}/W(k_L)\pot{u}$ via a choice of uniformizers.

	\begin{lemma}\label{lemma.quasi.pinning.generic.extension}
          The quasi-pinned $K$-group
          $(G,B, S,(x_a)_{a\in \Phi_G^{\on{nd}}})$ lifts to
          $(\underline G,\underline B, \underline
          S,(\underline{x_a})_{a\in \Phi_G^{\on{nd}}})$ defined over
          the maximal open subset
          $U\subset \Spec\,W(k)\pot{t}$ over which
          the extension $W(k_M)\pot{v}/W(k)\pot{t}$ is \'etale.
	\end{lemma}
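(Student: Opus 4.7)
The plan is to construct $(\underline G, \underline B, \underline S, (\underline{x_a}))$ by Galois descent from the split Chevalley group $H_\bbZ$ followed by Weil restriction along the integral lift of $L/K$, thereby transferring the quasi-split presentation $G = \Res_{L/K}(G_0)$ with $G_0$ splitting over $M$ from the field-theoretic setting to the relevant integral models. The canonical pinning of $H_\bbZ$ over $\bbZ$ will give the integral counterpart of the fixed Chevalley--Steinberg quasi-pinning of $G$ once descent is available.

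\textbf{Steps.} First, identify $U \subset \Spec W(k)\pot{t}$ as the complement of the vanishing locus of the different of the finite flat extension $W(k_M)\pot{v}/W(k)\pot{t}$, which is open by construction. Write $U_L$ and $U_M$ for the pullbacks of $U$ to $\Spec W(k_L)\pot{u}$ and $\Spec W(k_M)\pot{v}$, respectively; by the tower property for étale morphisms applied to the finite flat tower, both $U_L \to U$ and $U_M \to U_L$ are finite étale once restricted to $U$. Second, base-change $H_\bbZ$ to obtain the split group $\underline H := H_\bbZ \otimes_\bbZ \calO_{U_M}$, equipped with its canonical pinning. The group $\Gal(M/L)$ acts on $H_\bbZ$ by pinned automorphisms coming from the Dynkin diagram symmetries (defined over $\bbZ$) and on $\calO_{U_M}$ through the Teichmüller-induced action on $W(k_M)$ and the prescribed action on the uniformizer $v$; combining these yields a semilinear action on $\underline H$. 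Since $U_M \to U_L$ is finite étale, étale descent provides a smooth affine group scheme $\underline{G_0}$ over $U_L$, inheriting a Borel $\underline{B_0}$, a torus $\underline{T_0}$ and the pinned root maps of $H_\bbZ$ by the same descent. Third, set $\underline G := \Res_{U_L/U}(\underline{G_0})$, a smooth affine $U$-group scheme by finite étaleness of $U_L \to U$, and define $\underline T$ as the corresponding Weil restriction of $\underline{T_0}$, $\underline S$ as its maximal split sub-torus, $\underline B$ as the Weil restriction of $\underline{B_0}$, and $\underline{x_a}$ from the integral pinned root homomorphisms of $H_\bbZ$ by the same descent-and-restriction procedure. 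In the non-reduced case $2a \in \Phi_G$, which by \Cref{hyp_odd_unitary} only occurs when $p > 2$, the quadratic extension $L_a/L_{2a}$ is tame, its integral lift stays étale over $U$, and the auxiliary group $\bbH_{L_a/L_{2a}}$ lifts inside $\underline{G_0}$ by the same descent formalism. Finally, compatibility of the generic fibre of $(\underline G, \underline B, \underline S, (\underline{x_a}))$ with the chosen quasi-pinning $(G, B, S, (x_a))$ follows because the Teichmüller lifts of the uniformizers render the integral Galois action compatible over $K$ with the splitting isomorphism \eqref{equation.splitting.map}.

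\textbf{Main obstacle.} The substantive restriction is that étale descent is available only after pulling back to $U$; away from the ramification locus of $W(k_M)\pot{v}/W(k)\pot{t}$ the covers $U_L/U$ and $U_M/U_L$ cease to be étale and the descent step breaks down. This is why $U$ is the natural domain of definition for the lift in this lemma, and why extending $\underline G$ to all of $\Spec W(k)\pot{t}$, rather than merely to $U$, is deferred to the subsequent gluing of integral tori and root subgroups by means of birational group laws, following the strategy of \cite[Sections 2--3]{Lou19}.
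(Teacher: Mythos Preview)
Your approach is essentially the same as the paper's---descend from the split Chevalley group along the \'etale locus and then Weil-restrict---but there is a genuine gap. You assume that $M/L$ is Galois when you invoke ``$\Gal(M/L)$ acts on $H_\bbZ$ by pinned automorphisms'' and descend from $U_M$ to $U_L$. However, the paper only chooses $M/L$ to be a separable extension of minimal degree such that $G_0$ splits over its \emph{Galois hull}; when $G_0$ is a triality form of $D_4$, the extension $M/L$ may have degree~$3$ with non-cyclic Galois closure of degree~$6$ and group $S_3$. In that case $M/L$ is not Galois, $G_0\otimes_L M$ is not split, and your step ``$\underline H := H_\bbZ \otimes_\bbZ \calO_{U_M}$'' followed by $\Gal(M/L)$-descent breaks down: there is no group $\Gal(M/L)$ of the right size, and the split form does not live over $U_M$.

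The paper handles this by first reducing to the absolutely almost simple case $G=G_0$ (so $L=K$), then passing to the Galois hull $\widetilde V$ of the finite \'etale cover $V=f^{-1}(U)\to U$, and defining $\underline G = \Res_{\widetilde V/U}(H\otimes_\bbZ \widetilde V)^{\Gal(\widetilde V/U)}$. The point is that $\Gal(\widetilde V/U)$ is identified with the Galois group of a splitting extension of $K$ (via reduction modulo $p$), hence acts through pinned automorphisms of $H$ by \eqref{equation.splitting.map}. Your argument is easily repaired by making the same move: replace $U_M$ by the Galois hull of $U_M\to U_L$ before descending. Once you do this, your proof and the paper's coincide.
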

	\begin{proof}
		Firstly, the split form $H/\bbZ$ of $G$ with its Chevalley--Steinberg system induces a split form $H_0/\bbZ$ of $G_0$ with such a system.
		As quasi-pinnings are compatible with restriction of scalars along finite \'etale maps, we reduce to the case $G=G_0$ is absolutely almost simple and without loss of generality also non-split.
		Let $\widetilde V$ be the Galois hull of the finite \'etale map $V:=f^{-1}(U)\to U$ where $f\colon \Spec\, W(k_M)\pot{v}\to \Spec\, W(k)\pot{t}$.
		As $f$ is ramified at $\{t=0\}$, we have $U\subset \Spec\, W(k)\rpot{t}$ and the reduction of $\widetilde V \to U$ modulo $p$ defines a Galois ring extension $\widetilde K/K$ splitting $G$.
		Hence, $\Gal(\widetilde V/U)\to \Gal(\widetilde K/K)$ acts through \eqref{equation.splitting.map} by pinning preserving automorphisms on $H$, replacing $K^s$ by $\widetilde K$ if necessary. 
		We define 
		\begin{equation}\label{equation.generic.extension}
		\underline G = \Res_{\widetilde V/U}(H\otimes_\bbZ \widetilde V )^{\Gal(\widetilde V/U)},
		\end{equation}
		equipped with the quasi-pinning induced from the chosen Chevalley--Steinberg system for $H$, which satisfies the requirements of the lemma.
	\end{proof}

	Note that $W(k)\rpot{t}$ is a Euclidean domain which is not
        local.  Even though the extension
        $W(k_M)\rpot{v}/W(k)\rpot{t}$ is ramified in general, we can
        extend $\underline G$ from $U$ over $\Spec\, W(k)\rpot{t}$ via
        a birational extension process as follows.  Note that we have
        the maximal torus $\underline T$ in $\underline G$ defined
        over $U$.  We consider the family of group schemes consisting
        of the connected N\'eron $W(k)\rpot{t}$-model of
        $\underline T$ denoted by the same symbol, and the unipotent
        group schemes
	\begin{equation}\label{models_laurent_series_witt}
	\underline{U_a} = \begin{cases}\Res_{W(k_a)\rpot{t_a}/W(k)\rpot{t}} \bbG_a\\
	\Res_{W(k_{2a})\rpot{t_{2a}}/W(k)\rpot{t}} \bbH_{W(k_a)\rpot{t_a}/W(k_{2a})\rpot{t_{2a}}}
	\end{cases}
	\end{equation}
	for every non-divisible root $a \in \Phi_G$, extending the quasi-pinning defined in Lemma \ref{lemma.quasi.pinning.generic.extension}. 
	Here, the symbols $k_a$ denote the residue field of the root fields $L_a$, and the variables $t_a$ are either one of the prescribed lifts $u$ or $v$ of the uniformizer of $L_a$, depending on whether it equals $L$ or $M$.
	
	\begin{lemma}\label{lem_laurent_lift_witt}
		The models $(\underline{T}, \underline{U_a})$ glue birationally to a smooth, affine $W(k)\rpot{t}$-group $\underline{G}$ with connected fibers extending \eqref{equation.generic.extension}.
	\end{lemma}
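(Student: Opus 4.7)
The plan is to follow the Bruhat--Tits/[Lou19] strategy of producing the desired extension by gluing a big open cell along a birational group law. Concretely, we set up the candidate Bruhat cell built from the prescribed integral models $\underline T$ and $\underline{U_a}$, check that it agrees over $U$ with the corresponding open cell of the group of \Cref{lemma.quasi.pinning.generic.extension}, verify that the multiplication induces a birational group law over $\Spec\,W(k)\rpot{t}$, and then invoke the extension principle of \cite[Section 3]{Lou19} to produce a smooth affine group scheme with connected fibers.

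In more detail, first I would fix a Borel $\underline B \supset \underline T$ over $U$ and a Chevalley ordering of $\Phi_G^{\on{nd}}$. Using this ordering, I define
\[
\underline{U^{\pm}} \;:=\; \prod_{a\in \Phi_G^{\on{nd},\pm}} \underline{U_a},
\]
where the product is taken as $W(k)\rpot{t}$-schemes in the chosen order, and then set $\underline{\Omega} := \underline{U^-}\times \underline T \times \underline{U^+}$. Each $\underline{U_a}$ from \eqref{models_laurent_series_witt} is smooth and affine with connected fibers (as Weil restriction along a finite étale extension of a smooth affine connected group, possibly of Hermitian type), and so is the connected Néron model $\underline T$; hence $\underline\Omega$ is smooth, affine and has connected fibers over $\Spec\,W(k)\rpot{t}$. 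Over $U$, the Chevalley quasi-pinning of \Cref{lemma.quasi.pinning.generic.extension} identifies $\underline\Omega|_U$ with the standard big cell of $\underline G$, which is an open dense subscheme.

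Next, I would verify the birational group law. The multiplication $\underline G \times \underline G \to \underline G$ restricted to the open cell over $U$ is governed on the one hand by the group law of $\underline T$ acting on root groups via the chosen characters, and on the other hand by the Chevalley--Steinberg commutator formulas between the $\underline{U_a}$. Since the Chevalley--Steinberg system has coefficients in $\bbZ$ and the restriction of scalars $\Res_{W(k_a)\rpot{t_a}/W(k)\rpot{t}}$ and the Hermitian constructions $\bbH$ from \cite[4.1.9]{BT84} commute with the formation of these commutator formulas, the rational map $\underline\Omega \times \underline\Omega \dashrightarrow \underline \Omega$ is defined on an open subscheme whose complement has codimension $\geq 2$ (as the base is $2$-dimensional and the only possible indeterminacies occur at the divisorial locus $\{t=0\}$ or $\{p=0\}$, which are handled by the very definition of the models \eqref{models_laurent_series_witt} and the Chevalley property of $\underline T$). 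This gives a birational group law on $\underline\Omega$ in the sense of \cite[Section 3]{Lou19}.

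Finally, I would apply the gluing/extension theorem of \cite[Th\'eor\`eme 3.1.5]{Lou19}, which upgrades a smooth affine scheme equipped with a birational group law over a Dedekind-like $2$-dimensional base to a smooth affine group scheme whose open cell is the given one; the restriction to $U$ recovers $\underline G$ because both are smooth, affine group schemes with the same generic Bruhat cell. Connectedness of the fibers is immediate from connectedness of the fibers of $\underline T$ and each $\underline{U_a}$. The main obstacle is the verification of the birational group law at the closed points of $\Spec\,W(k)\rpot{t}$ where the extension $W(k_M)\rpot{v}/W(k)\rpot{t}$ ramifies; this is precisely the place where \Cref{hyp_odd_unitary} enters, since for $p=2$ and non-reduced $\Phi_G$ the integral structure of $\bbH$ and its commutator formulas would not extend, as discussed in \Cref{wild.ramified.unitary}.
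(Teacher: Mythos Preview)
Your overall strategy---assemble a big cell from $\underline T$ and the $\underline{U_a}$, verify a birational group law, and invoke the extension theorem from \cite{Lou19}---is the same as the paper's. However, several steps in your execution are either incorrect or too vague to constitute a proof.

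First, a factual slip: you write that the $\underline{U_a}$ are Weil restrictions ``along a finite \'etale extension''. This is false at the heart of the matter---the extension $W(k_a)\rpot{t_a}/W(k)\rpot{t}$ is finite flat but \emph{ramified} at the primes lying over the ramification locus of $M/K$. That ramification is precisely the obstacle the lemma is meant to overcome. (Smoothness of the restriction of scalars still holds because restriction along a finite locally free map preserves smoothness, but you should say this, not ``\'etale''.) Relatedly, you assert the base is $2$-dimensional; in fact $W(k)\rpot{t}$ is a Dedekind domain (the paper explicitly notes it is Euclidean), hence $1$-dimensional. Your codimension-$\geq 2$ argument for the domain of definition of the rational multiplication map therefore does not make sense and cannot be what establishes the birational group law.

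Second, and more substantively, the verification of the birational group law is not a codimension argument at all. What one actually checks are the axioms of \cite[D\'efinition 3.1.1]{BT84}: (i) the $\underline T$-conjugation on each $\underline{U_a}$ extends; (ii) for linearly independent $a,b$ the commutator map $\underline{U_a}\times\underline{U_b}\to \prod_{c}\underline{U_c}$ extends; and (iii) a rank-$1$ ``exchange'' morphism swapping the $\pm a$ factors extends. Your appeal to ``Chevalley--Steinberg coefficients in $\bbZ$'' is not enough for the quasi-split non-split case: the commutator formulas of \cite[A.6]{BT84} involve arithmetic operations (trace, norm, conjugation) in the root-field extensions, and one must check that these make sense for the specific integral models \eqref{models_laurent_series_witt} over the whole Dedekind base---this is the actual content. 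The rank-$1$ case (isogenous to a restriction of scalars of $\SL_2$ or $\SU_3$) is handled by an explicit construction, which also gives the first and third axioms via the N\'eron property of $\underline T$; you omit this entirely. Once the axioms are verified, one applies \cite[Th\'eor\`eme 3.2.5]{Lou19} (not 3.1.5) to obtain the smooth affine group scheme with connected fibers; affineness uses that $W(k)\rpot{t}$ is Dedekind.
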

	
	\begin{proof}
		This follows from the method of \cite[Proposition 3.3.4]{Lou23}. 
		Here we give an overview of the argument.
		
		First, we must show that the axioms of \cite[D\'efinition 3.1.1]{BT84} are satisfied:
		These involve showing that the conjugation action of $\underline{T}$ on the $\underline{U_a}$, the commutator morphisms between $\underline{U_a}$ and $\underline{U_b}$ for linearly independent roots, and a rationally defined morphism exchanging the order of $\pm a$ in a rank $1$ big cell extend from $U$ (defined in Lemma \ref{lemma.quasi.pinning.generic.extension}) to all of $\Spec\, W(k)\rpot{t}$. 
		In the rank $1$ case, we can construct $\underline{G}$ explicitly by extending the definition of $\underline G$ over $U$, isogenous to a restriction of scalars of either $\on{SL}_2$ or $\on{SU}_3$, to the more general ring extensions that we consider; this provides us with the first and third morphisms using the N\'eron property of $\underline T$. 
		Hence, the main concern are commutator morphisms. 
		Over the generic fiber, these morphisms are given explicitly in \cite[Section
		A.6]{BT84}, up to sign and conjugation, and only involve natural operations such as sum, multiplication, trace
		and norm, so they are still well-defined over $W(k)\rpot{t}$.
		For example, if $\Phi_G$ is reduced, and $a,b$ are short roots with long sum $c=a+b \in \Phi_G$, then the commutator $\gamma_{a,b}$ is given on points under the fixed pinnings by 
		\begin{equation}
		(x,y) \to \on{tr}_{R[t_a]/R}(xy),
		\end{equation}	
		where $R$ is any $W(k_c)\rpot{t_c}$-algebra, and $x,y \in R[t_a]=R\otimes_{W(k_c)\rpot{t_{c}}} W(k_a)\rpot{t_a}$, up to ignoring sign and conjugation.
		It is now a consequence of \cite[Th\'eor\`eme
                3.2.5]{Lou23} that there is a smooth affine
                $W(k)\rpot{t}$-group $\underline{G}$ with connected
                fibers glued from these closed subgroups. Here, for
                affineness we use the fact that $W(k)\rpot{t}$ is a Dedekind ring.
	\end{proof} 
	
	We already know that $\underline{G}$ is reductive over $k\rpot{t}$ and $K_0\rpot{t}$, where $K_0=W(k)[p^{-1}]$. We can compare a portion of their Bruhat--Tits theory.
	
	\begin{lemma}\label{lem_equiv_id_apart_witt}
		There are identifications
		\begin{equation}\label{equation.apartment.identification}
		\scrA(\underline{G},\underline{S},k\rpot{t}) \simeq \scrA(\underline{G},\underline{S}, K_0\rpot{t}),
		\end{equation}
		of apartments, equivariant along a natural identification of the Iwahori--Weyl groups.
	\end{lemma}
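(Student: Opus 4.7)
The plan is to match both apartments term by term, since both are constructed by the Bruhat--Tits recipe applied to the \emph{same} group scheme $\underline G$, which is defined over $W(k)\rpot{t}$ and carries a maximal split subtorus $\underline S$ together with the lifted quasi-pinning of the $\underline U_a$ coming from \eqref{models_laurent_series_witt}. First I would observe that both apartments are affine spaces under $X_*(\underline S_F)\otimes \bbR$ for $F\in\{k\rpot{t},K_0\rpot{t}\}$, and that this vector space is canonically identified with $\scrV(S)=X_*(S)\otimes\bbR$, because $\underline S$ is already defined and split over $W(k)\rpot{t}$ with cocharacter lattice $X_*(S)$. Next, the origin on each side is declared to be the Chevalley valuation attached to the base change of the quasi-pinning of $\underline G$ in the sense of \cite[4.2.2]{BT84}; by construction both quasi-pinnings come from the same one over $W(k)\rpot{t}$, so the two origins are identified.

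The main content is to check that the affine root systems (the collections of walls) agree. Under the pinning \eqref{equation.quasi.pinning.G} (and its non-reduced analogue involving $\bbH_{L_a/L_{2a}}$), the valuation $\varphi_a$ on $U_a(F)$ is computed in terms of the $t_a$-adic valuation of elements in the root field $L_a$, which under our integral model \eqref{models_laurent_series_witt} base-changes to $k_a\rpot{t_a}$ or $W(k_a)\rpot{t_a}[p^{-1}]$ respectively. Crucially, these two fields carry the same uniformizer $t_a$ and the same $t_a$-adic valuation. Hence the set of attainable values of $\varphi_a$, i.e.\ the jumps in the filtration $U_{a,r}$, is identical on both sides, producing the same affine roots $a+r$. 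Running this for each non-divisible $a\in\Phi_G$ (and its multiple, when allowed by \Cref{hyp_odd_unitary}) yields an identification of the affine root systems.

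Finally, the Iwahori--Weyl group on each side is $\widetilde W_F = N_{\underline G}(\underline T)(F)/\underline T(O_F)^0$, where $\underline T(O_F)^0$ denotes the connected N\'eron points. Because $\underline T$ is induced, being built out of restrictions of scalars of split tori (cf.\ \Cref{wild.ramified.unitary}(2)), both $N_{\underline G}(\underline T)$ and the connected N\'eron model of $\underline T$ are already defined over $W(k)\rpot{t}$, and the quotient reduces on each side to the combinatorial datum $W_0\ltimes X_*(\underline T)_I$ with $I=\Gal(\widetilde K/K)$, which is the same on both sides. Equivariance of the apartment identification is then immediate, since the $\widetilde W_F$-action is read off from the conjugation on root groups and the translation action on $X_*(\underline T)$, both defined integrally. \emph{The hardest step} is the explicit valuation comparison in the non-reduced case, where $\bbH_{L_a/L_{2a}}$ intervenes; \Cref{hyp_odd_unitary} is used precisely here to keep the quadratic extension $L_a/L_{2a}$ unramified when $p=2$, which is what makes the lifted pinning behave well enough for the valuations to match.
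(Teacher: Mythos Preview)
Your argument for the apartment identification is essentially the paper's: both sides are given the Chevalley--Steinberg valuation as origin and thereby identified with $\scrV(\underline S)$, after which one checks that the hyperplane arrangements agree. The paper records this in one sentence (``compatibly with the hyperplanes'') without spelling out the valuation comparison you give, but your expansion is correct and illuminating.

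For the Iwahori--Weyl groups, your route diverges from the paper's. You invoke the standard presentation $W_0 \ltimes X_*(\underline T)_I$ on each side and assert that these coincide because the ingredients are defined integrally. The paper instead exhibits a single integral object $\underline N(W(k)\rpot{t})/\underline{\calT}(W(k)\pot{t})$ and proves directly that the natural maps from it to the Iwahori--Weyl groups over $k\rpot{t}$ and over $K_0\rpot{t}$ are isomorphisms, treating separately the translation part (by decomposing $\underline{\calT}$ as a product of restrictions of scalars of $\bbG_m$) and the finite Weyl part (constancy of the Weyl group of a split torus together with $H^1$-vanishing for $\underline T$). The paper's approach produces an explicit mediating object, from which equivariance of the apartment identification is immediate. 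Your approach is more conceptual but leaves one verification implicit: that the two combinatorial identifications with $W_0 \ltimes X_*(\underline T)_I$ are compatible, in particular that the induced actions on the apartment match through your bijection. Both approaches work; yours becomes fully rigorous once you check that the Kottwitz-type isomorphisms $\underline T(F)/\underline{\calT}(O_F) \cong X_*(\underline T)_I$ over $F=k\rpot{t}$ and $F=K_0\rpot{t}$ are induced by a common integral map, which is precisely what the paper's decomposition of $\underline{\calT}$ provides.
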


	\begin{proof}
		Our method of proof is similar to \cite[Proposition 3.4.1]{Lou23}. 
		We fix as origin of the apartments the Chevalley--Steinberg valuations determined by the quasi-pinning inherited from \eqref{models_laurent_series_witt}. 
		Then, both identify with the real vector space $\scrV(\underline{S})$ generated by the coweights of the split torus $\underline{S}$ compatibly with the hyperplanes. 
		
		As $G$ is assumed to be either simply connected or adjoint, the maximal torus $T$ is induced and so is $\underline T$ over $U$.
		We denote by $\underline{\calT}$ its connected N\'eron $W(k)\pot{t}$-model, see \cite[D\'efinition 3.3.3]{Lou23} and \cite[Part IV, Proposition 3.8]{Lou20}.
		Let $\underline{N}$ be the normalizer of $\underline{S}$ in $\underline{G}$.
		In order to identify the Iwahori--Weyl groups, we prove that they are isomorphic to
		\begin{equation}\label{equation.global.IW}
		\underline{N}(W(k)\rpot{t})/\underline{\calT}(W(k)\pot{t})
		\end{equation}	
		via the natural maps as follows.  It suffices to show
                that the natural maps
		\begin{equation}\label{equation2.lem_equiv_id_apart_witt}
		\underline{T}(W(k)\rpot{t})/\underline{\calT}(W(k)\pot{t}) \to \underline{T}(L\rpot{t})/\underline{\calT}(L\pot{t})
		\end{equation}
		and 
		\begin{equation}\label{equation3.lem_equiv_id_apart_witt}
		\underline{N}(W(k)\rpot{t})/\underline{T}(W(k)\rpot{t}) \to \underline{N}(L\rpot{t})/\underline{T}(L\rpot{t}),
		\end{equation}
		are isomorphisms, where $L$ equals either $k$ or $K_0$. 
		The first case \eqref{equation2.lem_equiv_id_apart_witt} is verified by decomposing $\underline{\calT}$ as a product of restriction of scalars of multiplicative group schemes. 
		The second case \eqref{equation3.lem_equiv_id_apart_witt} is a consequence of constancy of the Weyl group of a split torus and vanishing of $H^1$ for $\underline{T}$. 
		One sees readily that these comparison isomorphisms are compatible with those of the apartments and the corresponding group actions.
              \end{proof}

	For any point $x $ in the apartments, we have a certain optimal quasi-concave function $f_x\colon \Phi_G \to \bbR$ in the sense of \cite[Section 4.5]{BT84}, defined with respect to the chosen origin, the Chevalley--Steinberg valuation. We use this to define the $W(k)\pot{t}$-models $\underline{\calU_{a,x}}$ via
	\begin{equation}\label{equation.root.group.extension.Witt}
	\underline{\calU_{a,x}}=\begin{cases} \Res_{W(k_a)\pot{t_a}/W(k)\pot{t}} \big(t_a^{e_af_x(a)}\bbG_{a}\big)\\
	\Res_{W(k_{2a})\pot{t_{2a}}/W(k)\pot{t}} \big( t_a^{(e_af_x(a),e_af_x(2a))}\bbH_{W(k_a)\pot{t_{a}}/W(k_{2a})\pot{t_{2a}}}\big)
	\end{cases},
	\end{equation}
	where the $e_a$ are the ramification degrees of the root field extension $L_a/K$, and by construction the $e_af_x(a)$ are integers.
	
	\begin{proposition}\label{prop_lifts_witt}
		The models $\underline{\calT}$ and $\underline{\calU_{a,x}}$ for all $a \in \Phi_G^{\on{nd}}$ birationally glue to a smooth, affine $W(k)\pot{t}$-group scheme $\underline{\calG_{x}}$ with connected fibers. 
		Its reductions to $k\pot{t}$ and $K_0\pot{t}$ are parahoric group schemes coming from facets which correspond under \eqref{equation.apartment.identification}.
	\end{proposition}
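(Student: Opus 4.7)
The plan is to mimic the proof of \Cref{lem_laurent_lift_witt}, adapted so that over $W(k)\pot{t}$ we glue the prescribed truncated root groups $\underline{\calU_{a,x}}$ rather than their full $W(k)\rpot{t}$-versions. That is, I will check that the system $(\underline{\calT},\underline{\calU_{a,x}})_{a\in\Phi_G^{\on{nd}}}$ satisfies the axioms of \cite[D\'efinition~3.1.1]{BT84} over $\Spec\, W(k)\pot{t}$, and then invoke \cite[Th\'eor\`eme~3.2.5]{Lou19} to assemble a smooth affine $W(k)\pot{t}$-group scheme $\underline{\calG_x}$ with connected fibers extending the generic-fiber group $\underline{G}$ from \Cref{lem_laurent_lift_witt}.

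The axioms to be verified are: (i) the adjoint action of $\underline{\calT}$ stabilises each $\underline{\calU_{a,x}}$; (ii) for linearly independent non-divisible $a,b\in\Phi_G$ the commutator morphism between $\underline{U_a}$ and $\underline{U_b}$ restricts to one between $\underline{\calU_{a,x}}$ and $\underline{\calU_{b,x}}$ landing in the product of the $\underline{\calU_{pa+qb,x}}$ for $p,q>0$; and (iii) in each rank-one subsystem, the reflection/exchange morphism in the big cell extends over $\Spec\, W(k)\pot{t}$. For (i), this reduces to the integrality of the values $e_af_x(a)$ composed with the pairings $\langle \chi, a^{\vee}\rangle$, which hold by construction. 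For (iii), one uses the explicit $W(k)\pot{t}$-models of the rank-one groups $\on{SL}_2$ and $\on{SU}_3$ (or their restrictions of scalars) constructed in \Cref{lem_laurent_lift_witt}, together with the N\'eron property of $\underline{\calT}$. The heart of the matter is (ii): here I will use the explicit commutator formulas from \cite[Section~A.6]{BT84}, which are built from sums, products, traces and norms, and appeal to the quasi-concavity of the optimal function $f_x$ with respect to the Chevalley--Steinberg valuation to guarantee that the output lies in $\prod\underline{\calU_{pa+qb,x}}$. Since the formulas are defined by the same universal polynomials in all of our two-dimensional base rings (the Teichm\"uller-lifted Eisenstein data having been chosen precisely for this compatibility), the generic-fiber verifications over $K$ transport to the required integral statement over $W(k)\pot{t}$.

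Having produced $\underline{\calG_x}$, I would identify its reductions. Modulo $p$ the construction returns exactly the Pappas--Rapoport/Richarz parahoric for the $k\rpot{t}$-group $\underline{G}\otimes k\rpot{t}$ at the facet of $x$, since the truncated root groups in \eqref{equation.root.group.extension.Witt} specialise to the Bruhat--Tits root group schemes associated to the values $f_x(a)$, and the connected N\'eron model $\underline{\calT}\otimes k\pot{t}$ is the Bruhat--Tits torus. The same holds after inverting $p$ and reducing modulo $t$, giving the parahoric for the mixed-characteristic group $\underline{G}\otimes K_0\rpot{t}$ at the corresponding facet. That the two facets coincide is exactly the content of \Cref{lem_equiv_id_apart_witt}: we used the \emph{same} function $f_x$ on $\scrV(\underline{S})$ to cut out both models, and the equivariant identification \eqref{equation.apartment.identification} preserves hyperplanes and Chevalley--Steinberg origins, so the facets of $x$ on both sides match up.

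The main obstacle will be step (ii) of the axiom verification, i.e.\ checking that the universal Bruhat--Tits commutator formulas, originally written in the rank-one root fields $L$, $M$ of the generic fibre, continue to make sense and yield morphisms of the truncated models over the two-dimensional base $W(k)\pot{t}$ where the ramified extension $W(k_M)\pot{v}/W(k)\pot{t}$ is only finite flat (not \'etale at $t=0$). The essential point, going back to \cite[Section~3.3]{Lou19}, is that the trace and norm maps for $W(k_a)\pot{t_a}/W(k)\pot{t}$ are well-defined on the Weil restrictions even at the non-\'etale locus, and that the quasi-concavity inequalities control the output valuations; once this is in place, Hypothesis~\ref{hyp_odd_unitary} ensures that the unramified quadratic structure needed in the non-reduced factors is still available, and the remainder of the argument is a direct transcription of the method already used in \Cref{lem_laurent_lift_witt}.
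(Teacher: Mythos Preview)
Your approach of directly verifying the Bruhat--Tits axioms over $W(k)\pot{t}$ is workable in principle, but the paper takes a shorter route following \cite[Proposition~3.4.5]{Lou19}: rather than re-deriving the commutator and exchange conditions by hand over the two-dimensional base, it invokes the equality $W(k)\rpot{t}\cap K_0\pot{t}=W(k)\pot{t}$ together with \cite[Th\'eor\`eme~3.8.1]{BT84}. Over $K_0\pot{t}$ (a discrete valuation ring) the axioms hold by classical Bruhat--Tits theory, while over $W(k)\rpot{t}$ the work was already done in \Cref{lem_laurent_lift_witt}; intersecting gives the birational group law over $W(k)\pot{t}$ without redoing your quasi-concavity bookkeeping. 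Your step (ii) would eventually reach the same conclusion but with considerably more effort, and the explicit handling of trace and norm at the non-\'etale locus that you flag as the ``main obstacle'' is precisely what the intersection trick lets one avoid.

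There is, however, a genuine gap: you assert that \cite[Th\'eor\`eme~3.2.5]{Lou19} yields a smooth \emph{affine} group scheme over $W(k)\pot{t}$, but that result gives only smooth, separated, with connected fibers. Affineness in \Cref{lem_laurent_lift_witt} was available because $W(k)\rpot{t}$ is Dedekind, which $W(k)\pot{t}$ is not. The paper fills this in with two further steps: \cite[Proposition~3.2.7]{Lou19} gives quasi-affineness together with a smooth affine hull whose fibers are connected away from the closed point, and then affineness is deduced via the argument of \cite[Th\'eor\`eme~3.4.10]{Lou19}, which relies essentially on the identification of Iwahori--Weyl groups from \Cref{lem_equiv_id_apart_witt}. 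You invoke that lemma only at the end to match facets, but its deeper role here is exactly to secure affineness of $\underline{\calG_x}$; this is why the paper proves it before the proposition rather than after.
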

	
	\begin{proof}
          To see that the models $\underline{\calT}$ and $\underline{\calU_{a,x}}$ for all $a \in \Phi_G^{\on{nd}}$
          satisfy the axioms of \cite[D\'efinition 3.1.1]{BT84}, we can proceed as in \cite[Proposition 3.4.5]{Lou23}: due to the equality $W(k)\rpot{t}\cap K_0\pot{t}=W(k)\pot{t}$, it suffices to apply \cite[Th\'eor\`eme 3.8.1]{BT84} to prove the existence of a birational group law. So it glues to a smooth and separated group scheme $\underline{\calG_{x}}$ with connected fibers due to \cite[Th\'eor\`eme 3.2.5]{Lou23}. 
		
		This group scheme is quasi-affine and admits a smooth affine hull, whose geometric fibers are connected outside the unique closed point of $\Spec(W(k)\pot{t})$, by \cite[Proposition 3.2.7]{Lou23}. 
		In order to check affineness, we apply verbatim the proof in \cite[Th\'eor\`eme 3.4.10]{Lou23}: indeed, this relies on the identification of the Iwahori--Weyl groups given in \Cref{lem_equiv_id_apart_witt}.
	\end{proof}	
	
	\subsection{Breuil--Kisin lifts}\label{subsection_appendix_bk_lift}
	In this subsection, we assume that $K$ has characteristic
        zero.  So $L/K$ is a finite extension of complete discretely
        valued fields of characteristic zero with perfect residue
        fields $k_L/k$ of characteristic $p>0$. Define also
        $L^{\mathrm{nr}}/K$ as the maximal unramified subextension of
        $L/K$, so $L/L^{\mathrm{nr}}$ is totally ramified.  Choosing
        uniformizers $\pi_L$ of $L$ and $\pi_K$ of $K$, they satisfy
        an Eisenstein equation:
	\begin{equation}
	\pi_L^e+a_{e-1}(\pi_K)\pi_L^{e-1}+\dots +a_0(\pi_K) =0,
	\end{equation} 
	where each of the $a_i(\pi_K) \in \pi_KO_{L^{\mathrm{nr}}}$ is a power
        series in $\pi_K$ with coefficients being Teichm\"uller
        representatives of elements in $k_L$
        and satisfying the usual constraints, compare with
        \eqref{equation.Eisenstein.Witt}.  Assume without loss of
        generality that there exists $i$ with $(i,p)=1$ and
	\begin{equation}\label{equation.separability.trick}
	a_i(\pi_K) \neq 0. 
	\end{equation}
	This can be achieved by replacing $\pi_L$ by $\pi_L+\pi_K$, if needed.
	Consider, in analogy to \eqref{equation.Witt.uniformizer}, the equation
	\begin{equation}\label{equation.Teichmueller.mixed}
	u^e+[a_{e-1}(t)]u^{e-1}\dots +[a_0(t)]=0,
	\end{equation}
	where $u$ and $t$ are indeterminates, each of the $[a_i(t)] \in W(k_L)\pot{t}$ is obtained from $a_i(\pi_K)$ by taking the coefficients and by replacing $\pi_K$ by $t$. 
	Equation \eqref{equation.Teichmueller.mixed} defines the finite free $W(k)\pot{t}$-algebra $W(k_L)\pot{u}$.	 
	We repeat this procedure for $M/L$: choose a uniformizer $\pi_M$ of $M$ satisfying the analogue of \eqref{equation.separability.trick} with respect to $\pi_L$, an indeterminate $v$ and define the finite free $W(k_L)\pot{u}$-algebra $W(k_M)\pot{v}$.
	Tensoring with $O$ over $W(k)$, we arrive at the finite free ring extensions
	\begin{equation}\label{equation.mixed.lifts}
	O\pot{t} \subset O_{L^{\on{nr}}}\pot{u} \subset O_{M^{\on{nr}}}\pot{v},
	\end{equation}
	where $L^{\on{nr}}$ and $M^{\on{nr}}$ are the maximal unramified subextensions of $L/K$ and $M/K$, respectively. 
	The tower \eqref{equation.mixed.lifts} reduces modulo $t-\pi_K$ to $O\subset O_L\subset O_M$; its reduction modulo $\pi_K$ is $k\pot{t}\subset k_L\pot{u} \subset k_M\pot{v}$ with separable fraction field extensions by \eqref{equation.separability.trick}.

	As for the group $G$ endowed with its quasi-pinning $(B,S,(x_a)_{a\in \Phi_G^{\on{nd}}})$, these data also lift to an open neighborhood $U\subset \Spec\,O\pot{t}$ of the points $(\pi_K)$ and $(t-\pi_K)$ in analogy to \Cref{lemma.quasi.pinning.generic.extension}, and we denote the resulting $U$-groups by $\underline G$, $\underline B$, $\underline T$ and $\underline S$ as before.
	To extend $\underline G$ from $U$ over $\Spec\, O\rpot{t}$, we proceed again via a gluing procedure using extensions of birational group laws. 
	Consider the family of group schemes consisting of the connected N\'eron $O\rpot{t}$-model $\underline{T}$ (note that $O\rpot{t}$ is a Dedekind domain), and the unipotent group schemes
	\begin{equation}\label{models_laurent_series_bk}
	\underline{U_a} = \begin{cases}\Res_{O_{L_a^{\rm{nr}}}\rpot{t_a}/O\rpot{t}} \bbG_a\\
	\Res_{O_{L_{2a}^{\rm{nr}}}\rpot{t_{2a}}/O\rpot{t}} \bbH_{O_{L_a^{\rm{nr}}}\rpot{t_a}/O_{L_{2a}^{\rm{nr}}}\rpot{t_{2a}}}
	\end{cases}
	\end{equation}
	for every root $a \in \Phi_G^{\on{nd}}$, extending the generic quasi-pinning. 
	Here, the variables $t_a$ are either $t$, $u$ or $v$ depending on the cases for the root fields $L_a$ for $a\in \Phi_G$ explicated in \eqref{equation.quasi.pinning.G}.
	We arrive at the following result:

	\begin{lemma}\label{lem_equiv_id_apart_bk}
		The models $(\underline{T}, (\underline{U_a})_{a\in \Phi_G^{\on{nd}}})$ birationally glue to a smooth, affine $O\rpot{t}$-group $\underline{G}$ with connected fibers.
		Furthermore, its fibers over $K$ and $\kappa\rpot{t}$ for $\kappa=k,K$ are reductive, and there are identifications of apartments
		\begin{equation}\label{equation.identification.apartments.bk}
		\scrA(G,S,K) \simeq \scrA(\underline{G},\underline{S}, \kappa\rpot{t}),
		\end{equation}
		equivariantly for the respective Iwahori--Weyl groups. 
	\end{lemma}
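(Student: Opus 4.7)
The plan is to mirror the equicharacteristic arguments of \Cref{lem_laurent_lift_witt} and \Cref{lem_equiv_id_apart_witt} almost verbatim, since the mixed-characteristic base $O\rpot{t}$ shares the properties that made those proofs work: it is a one-dimensional Noetherian domain, and the extensions in \eqref{equation.mixed.lifts} are finite flat with separable reductions thanks to \eqref{equation.separability.trick}.

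First, I would construct $\underline{G}$ via the birational gluing machinery of \cite[Sections 3.1--3.3]{Lou19}. Concretely, one checks the axioms of \cite[D\'efinition 3.1.1]{BT84} for the family $(\underline{T}, (\underline{U_a}))$: the conjugation action of $\underline{T}$ on each $\underline{U_a}$, the commutator morphisms between $\underline{U_a}$ and $\underline{U_b}$ for linearly independent pairs, and a rationally defined exchange morphism in each rank-$1$ big cell all have to extend from the open locus $U$ to all of $\Spec\,O\rpot{t}$. In rank $1$, one produces the exchange morphism directly from explicit integral models of $\SL_2$ or $\SU_3$ under restriction of scalars along \eqref{equation.mixed.lifts}, using the N\'eron property of $\underline{T}$. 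The commutator formulas recalled in \cite[Section A.6]{BT84} involve only sums, products, traces, and norms, all of which remain well-defined over the extensions $O_{L_a^{\rm nr}}\rpot{t_a}/O\rpot{t}$. Applying \cite[Th\'eor\`eme 3.2.5]{Lou19} then yields a smooth, separated group scheme with connected fibers, and affineness follows because $O\rpot{t}$ has Krull dimension $\leq 1$, exactly as in the Witt case. Reductivity of the fibers over $K$ and over $\kappa\rpot{t}$ is then read off from the construction: the Weil restrictions along the separable residue extensions deliver the expected quasi-split root subgroups.

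For the apartment identification, I would fix the Chevalley--Steinberg valuation arising from the quasi-pinning as the common origin; both apartments then canonically identify with $\scrV(\underline{S}) = X_*(\underline{S})\otimes \bbR$ with matching hyperplane arrangement. Equivariance reduces to comparing the Iwahori--Weyl groups, which I would realize as
\begin{equation*}
  \underline{N}(O\rpot{t}) / \underline{\calT}(O\pot{t}),
\end{equation*}
with $\underline{\calT}$ the connected N\'eron $O\pot{t}$-model of $\underline{T}$. Showing that the natural maps into $\underline{T}(\kappa\rpot{t})/\underline{\calT}(\kappa\pot{t})$ and $\underline{N}(\kappa\rpot{t})/\underline{T}(\kappa\rpot{t})$ are bijections rests on two ingredients, both borrowed from the proof of \Cref{lem_equiv_id_apart_witt}: decomposing the induced torus $\underline{T}$ into Weil restrictions of $\Gm$ along the finite extensions in \eqref{equation.mixed.lifts}, and invoking constancy of the Weyl group of a split torus together with $H^1$-vanishing for $\underline{T}$.

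The main obstacle I anticipate is the extension of commutator and exchange morphisms across the two ``bad'' closed points $(\pi_K)$ and $(t)$ of $\Spec\,O\pot{t}$, where \eqref{equation.mixed.lifts} is ramified. Integrality of the formulas from \cite[Section A.6]{BT84} with respect to the Chevalley--Steinberg quasi-pinning is precisely what would fail for odd unitary groups at $p=2$, which is why \Cref{hyp_odd_unitary} is in force; in our setting it succeeds thanks to the careful choice of quasi-pinning \eqref{equation.quasi.pinning.G} inherited from \cite[Section 2.1]{Lou19}.
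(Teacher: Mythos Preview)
Your proposal is correct and follows exactly the paper's own approach: the paper's proof of this lemma consists of the single sentence ``The proofs of \Cref{lem_laurent_lift_witt} and \Cref{lem_equiv_id_apart_witt} translate literally,'' and you have spelled out precisely that translation. One minor inaccuracy in your final paragraph: the point $(t)$ does not lie in $\Spec\,O\rpot{t}$, so the ramification locus you need to extend across is contained in the complement of $U$ inside $\Spec\,O\rpot{t}$, which is governed by the discriminant of the finite extensions in \eqref{equation.mixed.lifts}; but this does not affect the argument.
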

	\begin{proof}
		The proofs of \Cref{lem_laurent_lift_witt} and \Cref{lem_equiv_id_apart_witt} translate literally.
	\end{proof}

	For any point $x$ in the apartments \eqref{equation.identification.apartments.bk}, we have the quasi-concave function $f_x\colon \Phi_G \to \bbR$, compare with \eqref{equation.root.group.extension.Witt}. 
	We define the $O\pot{t}$-models $\underline{\calU_{a,x}}$ by 
	\begin{equation}\label{equation.root.models}
	\underline{\calU_{a,x}}=\begin{cases} \Res_{O_{L_a^{\rm{nr}}}\pot{t_a}/O\pot{t}}\big( t_a^{e_af_x(a)}\bbG_{a}\big)\\
	\Res_{O_{L_{2a}^{\rm{nr}}}\pot{t_{2a}}/O\pot{t}} \big(t_a^{(e_af_x(a),e_af_x(2a))}\bbH_{O_{L_a^{\rm{nr}}}\pot{t_{a}}/O_{L_{2a}^{\rm{nr}}}\pot{t_{2a}}}\big)
	\end{cases},
	\end{equation}
	where the $e_a$ are the ramification degrees of the extensions $L_a/K$ and by construction the $e_af_x(a)$ are integers.
	Let $\underline \calT$ be the connected N\'eron $O\pot{t}$-model of the induced torus $\underline T_U$, compare with the proof of \Cref{lem_equiv_id_apart_witt}.
	
	\begin{proposition}\label{prop_lifts_bk}
		The models $\underline{\calT}$ and $\underline{\calU_{a,x}}$ for all $a \in \Phi_G^{\on{nd}}$ birationally glue to a smooth, affine $O\pot{t}$-group scheme $\underline{\calG_{x}}$ with connected fibers. 
		Its reductions to $O$ and $\kappa\pot{t}$, with $\kappa=k,K$ are parahoric group schemes coming from facets which correspond under \eqref{equation.identification.apartments.bk}.
	\end{proposition}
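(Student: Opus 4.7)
\textbf{Proof plan for Proposition \ref{prop_lifts_bk}.} The argument is essentially parallel to the Witt case (\Cref{prop_lifts_witt}), with the two-dimensional base $O\pot{t}$ replacing $W(k)\pot{t}$; the key mechanical input is that $O\rpot{t} \cap K\pot{t} = O\pot{t}$ inside the fraction field of $O\pot{t}$, which takes the place of the identity $W(k)\rpot{t} \cap K_0\pot{t}=W(k)\pot{t}$ used there.

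First, I would verify that the family $(\underline{\calT}, \underline{\calU_{a,x}})$ satisfies the axioms of \cite[D\'efinition 3.1.1]{BT84}. Over the open subscheme $U \subset \Spec\, O\pot{t}$, everything is already given by \Cref{lem_equiv_id_apart_bk} via its quasi-pinning, and the issue is to extend (i) the conjugation action of $\underline{\calT}$ on the $\underline{\calU_{a,x}}$, (ii) the commutator morphisms between $\underline{\calU_{a,x}}$ and $\underline{\calU_{b,x}}$ for linearly independent roots $a,b$, and (iii) the rank-one "exchange" morphism switching $\pm a$ in the big cell of $\langle \underline{\calU_{a,x}}, \underline{\calU_{-a,x}} \rangle$. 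For (i) and (iii), one constructs the relevant rank-one group explicitly by an isogeny from a restriction of scalars of $\SL_2$ (or $\SU_3$ in the non-reduced case) along the extensions \eqref{equation.mixed.lifts}, and invokes the N\'eron property of $\underline{\calT}$ to extend across the closed point. For (ii), the commutator formulas of \cite[Section A.6]{BT84} involve only sums, products, traces, and norms of elements in the root fields, all of which make integral sense over $O\pot{t}$; combined with the quasi-concavity of $f_x$ this yields that the commutators land in the prescribed subgroups.

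Next, I would invoke \cite[Th\'eor\`eme 3.8.1]{BT84} to assemble a birational group law on a big cell built from these pieces, and then apply \cite[Th\'eor\`eme 3.2.5]{Lou19} to glue to a smooth, separated $O\pot{t}$-group scheme $\underline{\calG_x}$ with connected fibers, restricting to the $U$-group $\underline{G}|_U$ of \Cref{lem_equiv_id_apart_bk} generically. Affineness, which is the step where the Dedekind hypothesis from \Cref{lem_laurent_lift_witt} is no longer available, requires the verbatim argument of \cite[Th\'eor\`eme 3.4.10]{Lou19}: by \cite[Proposition 3.2.7]{Lou19}, $\underline{\calG_x}$ is quasi-affine and differs from its smooth affine hull $\underline{\calG_x}^{\on{aff}}$ at most over the closed point $(\pi_K, t)$; one then uses the identification of Iwahori--Weyl groups in \Cref{lem_equiv_id_apart_bk} to check that the two specializations at the height-one primes $(\pi_K)$ and $(t-\pi_K)$ already compute the correct parahoric and in particular force the affine hull to be trivial over the closed point.

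Finally, for the identification of the reductions, specializing $t \mapsto \pi_K$ yields a smooth affine $O$-group scheme with connected fibers and generic fiber $G$; by the definition of $\underline{\calU_{a,x}}$ in \eqref{equation.root.models}, its Bruhat--Tits datum at the closed point of $\Spec\, O$ coincides with the quasi-concave function $f_x$ evaluated in $\scrA(G,S,K)$, hence it is the parahoric attached to $x$. Specializing modulo $\pi_K$ (respectively inverting $\pi_K$) and using \eqref{equation.identification.apartments.bk} identifies the resulting $\kappa\pot{t}$-group with the parahoric for the corresponding facet, for $\kappa = k$ (respectively $\kappa = K$). I expect the main obstacle to be the affineness step, since unlike the Laurent case we cannot appeal to the Dedekind hypothesis directly, but this is exactly the place where the two-dimensional Iwahori--Weyl comparison of \Cref{lem_equiv_id_apart_bk} was tailored to substitute.
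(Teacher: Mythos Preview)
Your proposal is correct and follows essentially the same approach as the paper, which simply records that the proof of \Cref{prop_lifts_witt} applies verbatim. You invoke the same chain of references (\cite[Th\'eor\`eme 3.8.1]{BT84}, \cite[Th\'eor\`eme 3.2.5, Proposition 3.2.7, Th\'eor\`eme 3.4.10]{Lou19}) together with the intersection identity $O\rpot{t}\cap K\pot{t}=O\pot{t}$ and the Iwahori--Weyl comparison of \Cref{lem_equiv_id_apart_bk}, exactly as in the Witt case; your write-up is just more explicit about the individual verification of the axioms of \cite[D\'efinition 3.1.1]{BT84}, whereas the paper (following \cite[Proposition 3.4.5]{Lou19}) short-circuits this by using the intersection identity to reduce to the already-established $O\rpot{t}$- and $K\pot{t}$-cases.
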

	
	\begin{proof}
		The proof of \Cref{prop_lifts_witt} applies verbatim.
	\end{proof}

	\section{A conjecture on pseudo-rationality} \label{Fsing_review}
	
	We recall some definitions and facts from the theory of singularities, especially in positive characteristic. 
	\Cref{conjecture_pseudo_rational} below is a mixed characteristic analogue of a result of Schwede--Singh recalled in \Cref{lemma.Schwede.Singh}. 
	Its proof would imply that mixed characteristic local
        models also have pseudo-rational singularities, see \Cref{conjecture_mixed}. 
	
	\subsection{Review of $F$-singularities}
	A Noetherian scheme $X$ over $\bbF_p$ is
	said to be $F$-\emph{finite} if the absolute Frobenius
	morphism $F\colon X \to X$ is a finite morphism (for example, finite type
	schemes over $F$-finite fields). It is said to be $F$-\emph{split} if the canonical morphism $\mathcal O_X \rightarrow F_*\mathcal O_X$ has an $\mathcal O_X$-linear splitting. We say $X$ is \emph{stably $F$-split} if for some $e>0$ the map $\mathcal O_X \to F^e_*\mathcal O_X$ splits, and the two notions are equivalent by \cite[Lemma 5.0.3]{BS13}. Moreover, a closed subscheme $Y\subset X$ is \emph{compatibly (stably) $F$-split} if the corresponding splittings respect the closed immersion, and again the stable notion is an equivalent one by \cite[Lemma 6.0.4]{BS13}.	 
	A local $\bbF_p$-algebra
	$(R,\frakm)$ is said to be $F$-\emph{injective} if the map on
	local cohomology
	$F_*\colon H^{\bullet}_{\frakm}(R) \to H^{\bullet}_{\frakm}(R)$ is
	injective (for example, local rings of $F$-split schemes).

	A Noetherian reduced $F$-finite $\bbF_p$-algebra $R$ is said to be $F$-\emph{regular} if every prime ideal localization $R_{\mathfrak p}$ has all its ideals tightly closed, see \cite[Section 1]{HH89}. 
	If every parameter ideal of such an $R_{\mathfrak p}$ is tightly closed, we say $R_{\mathfrak p}$ is $F$-\emph{rational}; see \cite[Definition 4.1]{HH94a}, and also \cite{FW89} or \cite{Smi97}. We say a Noetherian reduced $F$-finite $\bbF_p$-scheme has $F$-\emph{rational singularities} if all of its local rings are $F$-rational. 
	
	A projective scheme $X$ over an $F$-finite field is said to be \emph{globally} $F$-\emph{regular} provided that for every ample invertible sheaf $\mathcal L$, the section ring $\bigoplus_{n \in \mathbb Z_{\geq 0}} H^0(X, \mathcal L^{\otimes n})$ is a \emph{strongly} $F$-\emph{regular} ring, in the sense of \cite[Section 3]{HH89} (see also \cite[Definition 5.2]{Cas22}). 
	By \cite[Theorem 3.1(d)]{HH89}, any strongly $F$-regular ring is $F$-regular (the converse is expected but appears to be an open question in general). 
	A key property of strong $F$-regularity is that it passes to all prime localizations of the ring.

	We shall use the following results, extracted from \cite{HH89, Smi00, HMS14}.

	\begin{lemma} \label{Smith00}
		A globally $F$-regular projective variety ${\rm Proj}(S)$ over a perfect field is $F$-rational.
	\end{lemma}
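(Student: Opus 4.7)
The plan is to descend strong $F$-regularity from the section ring of $X$ to each affine chart, and then invoke the standard implications strongly $F$-regular $\Rightarrow$ $F$-regular $\Rightarrow$ $F$-rational. Fix any ample invertible sheaf $\calL$ on $X$ so that $X = \on{Proj}(S)$ with $S = \bigoplus_{n \geq 0} H^0(X, \calL^{\otimes n})$ strongly $F$-regular by hypothesis.

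First, I would cover $X$ by the standard affine opens $D_+(f) = \Spec(S_{(f)})$ for $f$ homogeneous of positive degree, and show that each $S_{(f)}$ is strongly $F$-regular. This combines two observations. On one hand, strong $F$-regularity passes to arbitrary localizations of the underlying ring (as highlighted in the excerpt), so the ordinary localization $S_f$ is strongly $F$-regular. On the other hand, $S_f$ is $\bbZ$-graded and the decomposition $S_f = \bigoplus_{n \in \bbZ} (S_f)_n$ gives an $S_{(f)}$-linear splitting of the inclusion $S_{(f)} = (S_f)_0 \hookrightarrow S_f$ via projection onto degree zero. Hence $S_{(f)}$ is a direct summand, in particular a pure subring, of the strongly $F$-regular $F$-finite ring $S_f$, and standard descent of strong $F$-regularity to pure subrings forces $S_{(f)}$ itself to be strongly $F$-regular.

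Given that each $S_{(f)}$ is strongly $F$-regular, the Hochster--Huneke theorem cited in the excerpt gives that $S_{(f)}$ is $F$-regular, so every ideal in every prime localization of $S_{(f)}$ is tightly closed. Each stalk $\calO_{X,x}$ is itself a localization of some $S_{(f)}$ appearing in the cover, so it inherits $F$-regularity. Since $F$-regularity is strictly stronger than $F$-rationality (parameter ideals form a subfamily of all ideals), every stalk of $X$ is $F$-rational, i.e.\ $X$ has $F$-rational singularities, which is the claim.

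I expect the main obstacle to be the descent of strong $F$-regularity from $S_f$ to its split $S_{(f)}$-subring $S_{(f)}$. In the $F$-finite setting, strong $F$-regularity of $R$ is characterized by the existence, for every $0 \neq c \in R$, of some $e$ and a splitting of the $R$-linear map $R \to F^e_* R$, $1 \mapsto F^e_* c$. A splitting for $R = S_f$ should yield one for $S_{(f)}$ by composing with the graded projection $S_f \twoheadrightarrow S_{(f)}$, but writing this out requires a careful check of compatibility with the Frobenius twist and with the $\bbZ$-grading, and this is the only step whose verification is not immediate from the observations already listed above.
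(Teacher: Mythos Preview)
Your proof is correct and follows essentially the same approach as the paper's: strong $F$-regularity of $S$ passes to localizations $S_f$, then to the direct summand $S_{(f)}$, hence to all stalks of $X$, and the chain strongly $F$-regular $\Rightarrow$ $F$-regular $\Rightarrow$ $F$-rational finishes. Your concern at the end is unfounded: descent of strong $F$-regularity to direct summands (or more generally pure subrings) of $F$-finite rings is a standard result of Hochster--Huneke, and the paper simply cites it rather than reproving it.
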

	
	\begin{proof} 
		By \cite[Theorem 3.10]{Smi00}, $S$ is strongly $F$-regular.  
		By \cite[Theorem 3.1]{HH89}, all localizations of $S$ and all direct summands of such are strongly $F$-regular.  
		This means the local rings of ${\rm Proj}(S)$ are strongly $F$-regular.  
		Now by \cite[Theorem 3.1(d)]{HH89}, they are also $F$-regular, which means that all ideals are tightly closed.  
		In particular these local rings are $F$-rational.
	\end{proof}

	\begin{lemma}\label{lemma.Schwede.Singh}
          Let $R$ be an $F$-finite Noetherian local ring and $t$
          a non-zero divisor.  If $R/(t)$ is $F$-injective and
          $R[t^{-1}]$ is $F$-rational, then $R$ is $F$-rational.
	\end{lemma}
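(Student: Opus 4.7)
Write $d=\dim R$. Since $t$ is a nonzerodivisor, $\dim R/(t)=d-1$ while $R[t^{-1}]$ still has dimension $d$. My plan is to establish $F$-rationality of $R$ by proving, in two stages, that (i) $R$ is Cohen--Macaulay, and (ii) the tight closure of $0$ in $H^d_{\mathfrak m}(R)$ vanishes.

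For (i), I would exploit that $R[t^{-1}]$, being $F$-rational, is Cohen--Macaulay of dimension $d$; localizing at $t$ shows that $H^i_{\mathfrak m}(R)$ is $t$-power torsion for every $i<d$. I then argue by induction on $i<d$, the case $i=0$ being immediate from the fact that $t\in R$ is a nonzerodivisor. Assuming $H^j_{\mathfrak m}(R)=0$ for $j<i$, the long exact sequence attached to $0\to R\xrightarrow{t^n} R\to R/(t^n)\to 0$ yields injections $H^{i-1}_{\mathfrak m}(R/(t^n))\hookrightarrow H^i_{\mathfrak m}(R)$ identifying the source with the $t^n$-torsion. A hypothetical nonzero element $\xi\in H^i_{\mathfrak m}(R)$ killed by $t$ lifts to a nonzero class $\bar\xi\in H^{i-1}_{\mathfrak m}(R/(t))$. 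Using naturality of Frobenius with respect to the map of short exact sequences $0\to R\xrightarrow{t}R\to R/(t)\to 0 \rightsquigarrow 0\to R\xrightarrow{t^p}R\to R/(t^p)\to 0$ and the quotient $R/(t^p)\twoheadrightarrow R/(t)$, together with the injectivity of $F$ on $H^{i-1}_{\mathfrak m}(R/(t))$, I would force $\bar\xi=0$, a contradiction.

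For (ii), granted now that $R$ is Cohen--Macaulay, let $\eta\in 0^{\,*}_{H^d_{\mathfrak m}(R)}$. Tight closure commutes with localization in this setting, so the image of $\eta$ in $H^d_{\mathfrak m R[t^{-1}]}(R[t^{-1}])=H^d_{\mathfrak m}(R)[t^{-1}]$ lies in the tight closure of $0$, which vanishes by $F$-rationality of $R[t^{-1}]$. Thus $t^n\eta=0$ for some $n\ge 0$, and the connecting map furnishes $\bar\eta\in H^{d-1}_{\mathfrak m}(R/(t^n))$ with $\eta$ as its image. The tight-closure relation satisfied by $\eta$ on $R$ restricts to a Frobenius-compatible annihilator condition on $\bar\eta$; pushing $\bar\eta$ to $H^{d-1}_{\mathfrak m}(R/(t))$ and applying $F$-injectivity there reduces the exponent $n$ by one. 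Iterating this step gives $\eta=0$, so $R$ is $F$-rational.

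The main obstacle is the interaction between the $R$-semilinearity of Frobenius, which carries the $t$-torsion in $H^\bullet_{\mathfrak m}(R)$ into the $t^p$-torsion rather than into itself, and the inductive arguments above, which want to live on $R/(t)$. This forces one to work compatibly with the whole tower $R/(t^{p^e})$ and to carefully produce Frobenius-compatible preimages under the connecting homomorphisms; once this bookkeeping is executed, the rigidity supplied by $F$-injectivity of $R/(t)$ closes off both induction steps.
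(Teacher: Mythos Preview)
The paper itself does not prove this lemma; it simply cites \cite[Corollary A.4]{HMS14}. Your attempt to supply a direct argument has a reasonable skeleton (Cohen--Macaulayness, then vanishing of $0^*$ in $H^d_{\mathfrak m}(R)$), and the mechanism you sketch---comparing the connecting maps for $R/(t)$ and $R/(t^p)$ under Frobenius, so that $F$-injectivity of $R/(t)$ forces $t^{p-1}F$ to act injectively on the $t$-torsion of $H^\bullet_{\mathfrak m}(R)$---is indeed the right lever. But there is a genuine gap upstream of the bookkeeping you flag.

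Your use of the hypothesis ``$R[t^{-1}]$ is $F$-rational'' is vacuous. First, $\dim R[t^{-1}]=d$ is false: since $t\in\mathfrak m$ (otherwise the lemma is trivial), the maximal ideal does not survive and $\dim R[t^{-1}]\le d-1$. More to the point, every element of $H^i_{\mathfrak m}(R)$ is $\mathfrak m$-power torsion, hence $t$-power torsion, so your ``localize at $t$'' step in (i) extracts nothing, and in (ii) the target $H^d_{\mathfrak m}(R)[t^{-1}]$ is identically zero. As written, neither half uses $F$-rationality of $R[t^{-1}]$ at all, and the conclusion fails without it. What is missing is the observation that, $R$ being $F$-finite (hence excellent), the non-$F$-rational locus is closed and contained in $V(t)$, so some power $t^N$ serves as a parameter test element. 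With $c=t^N$ in hand your mechanism closes: if $t^{n-1}\eta\ne 0$ then iterating $t^{p-1}F$ gives $t^{np^e-1}F^e(\eta)\ne 0$ for all $e$, contradicting $t^N F^e(\eta)=0$ for $e\gg 0$. Without producing such a $c$, the injectivity of $t^{p-1}F$ on the $t$-torsion gives no contradiction, and in (i) you are left trying to deduce $H^i_{\mathfrak m}(R)=0$ from the injectivity of an operator on it, which is hopeless on its own.
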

	\begin{proof}
	This is Schwede--Singh \cite[Corollary A.4]{HMS14}.
	\end{proof}

	\subsection{(Pseudo-)rational singularities}
	We follow \cite[Section 2]{LT81} (see also \cite[Definition 1.8]{Smi97}). An excellent (thus Noetherian) local ring $(R,\mathfrak m)$ is said to be \emph{pseudo-rational} if it is normal, Cohen--Macaulay, admits a dualizing complex, and if for each proper birational morphism $\pi\colon Y \rightarrow {\rm Spec}(R)$ with $Y$ normal, the canonical map 
	\begin{equation}\label{equation.definition.pseudorational}
	f_*\omega_Y\to \omega_R
	\end{equation}
	is an isomorphism (or equivalently, is surjective on global sections \cite[Section 4]{LT81}, or equivalently by duality theory $H^{d}_\frakm(R)\to \bbH^d_{\frakm}(Rf_*\calO_Y)=H^{d}_{f^{-1}(\frakm)}(\calO_Y)$ is injective for $d={\rm dim}(R)$).
	An excellent scheme has \emph{pseudo-rational singularities} (or is \emph{pseudo-rational}) if each of its local rings is pseudo-rational.
	
	\begin{remark}
	In order to establish pseudo-rationality, one may restrict to the class of \textit{projective} birational morphisms $\pi\colon Y \rightarrow {\rm Spec}(R)$ with $Y$ normal, by an application of Chow's lemma.
	Further, we note that the definition of pseudo-rationality in \cite[Definition 2.6]{MS20} is weaker in that $R$ is not required to be excellent or normal.
	\end{remark}
	
	\begin{lemma}  \label{Smith97}
		Any excellent local $\bbF_p$-algebra $R$ which is $F$-rational is also pseudo-rational.
	\end{lemma}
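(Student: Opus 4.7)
The plan is to follow the strategy of Smith in \cite{Smi97}, reducing the statement to the tight closure of zero in the top local cohomology. First, I would dispense with the "structural" requirements of pseudo-rationality: an excellent $F$-rational local ring is automatically normal and Cohen--Macaulay, both being standard consequences of Hochster--Huneke's tight closure theory for excellent local rings, and admits a dualizing complex because it is excellent. Thus the entire content of the lemma is the cohomological condition on proper birational morphisms.

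Next, given a proper birational $\pi\colon Y \to \Spec R$ with $Y$ normal, I would use the Cohen--Macaulay property, the existence of a dualizing complex and Grothendieck duality to translate the surjectivity of $\pi_*\omega_Y \to \omega_R$ in \eqref{equation.definition.pseudorational} into the injectivity of the natural map
\[
\delta \colon H^d_{\mathfrak{m}}(R) \longrightarrow H^d_{\pi^{-1}(\mathfrak{m})}(\calO_Y), \qquad d = \dim R.
\]
The aim is then to show $\ker \delta \subset 0^*_{H^d_{\mathfrak{m}}(R)}$, the tight closure of zero in the top local cohomology. Since $R$ is Cohen--Macaulay and $F$-rational, the hypothesis that every parameter ideal is tightly closed is equivalent to $0^*_{H^d_{\mathfrak{m}}(R)} = 0$, which would immediately force $\delta$ to be injective.

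For the containment $\ker \delta \subset 0^*_{H^d_{\mathfrak{m}}(R)}$, I would exploit the Frobenius compatibility of $\delta$: the absolute Frobenius acts functorially on both source and target, so any $\xi \in \ker \delta$ satisfies $F^e(\xi) \in \ker \delta$ for all $e \geq 0$. The tight closure membership then amounts to producing a single test element $c \in R^{\circ}$ such that $c \cdot F^e(\xi) = 0$ in $H^d_{\mathfrak{m}}(R)$ for every $e \gg 0$. Such a $c$ exists because $R\pi_*\calO_Y$ is a bounded complex with coherent cohomology, giving a uniform annihilator for the obstruction along the Frobenius tower via coherent duality.

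The main obstacle is precisely the construction and uniformity of the test element $c$: this is where one needs the full strength of finiteness of $R\pi_*$ and the excellent hypothesis, and it is the technical heart of Smith's original argument. Since this step is carried out in detail in \cite{Smi97}, I would simply invoke that reference to complete the proof rather than reproduce the calculation here.
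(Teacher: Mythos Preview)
Your proposal is correct and takes essentially the same approach as the paper, which simply cites \cite[Theorem 3.1]{Smi97} without further elaboration. One small correction to your sketch: excellence alone does not imply the existence of a dualizing complex; Smith handles this by passing to the completion (where a dualizing complex always exists and $F$-rationality is preserved), and in the paper's applications the rings are moreover $F$-finite, whence a dualizing complex exists by Gabber's theorem.
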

	\begin{proof}
		This is \cite[Theorem 3.1]{Smi97}.
	\end{proof}
	
	The next lemma is used in \Cref{theorem_coherence_mixed_char} to establish pseudo-rationality of special local models:

	\begin{lemma}\label{lemma.pseudorational}
          Let $R$ be a local ring of mixed characteristic $(0,p)$
          which is excellent, normal and admits a dualizing complex.
          Let $\pi \in \frakm$ be a non-zero divisor such that
          $R/(\pi)$ is an $\bbF_p$-algebra.  If $R/(\pi)$ is
          $F$-rational, then $R$ is pseudo-rational.
	\end{lemma}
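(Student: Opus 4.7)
The plan is to combine Smith's result (\Cref{Smith97}), which yields that $R/(\pi)$ is pseudo-rational, with a Nakayama-style deformation argument in the spirit of the classical characteristic-zero deformation theorems for rational singularities. First, since $F$-rationality implies Cohen--Macaulayness and $\pi \in \frakm$ is a non-zero divisor, $R$ is Cohen--Macaulay of dimension $d := \dim R$. Together with the standing hypotheses that $R$ is normal, excellent, and admits a dualizing complex, verifying pseudo-rationality of $R$ reduces to showing that for every proper birational morphism $f\colon Y \to \Spec R$ with $Y$ normal, the trace map $f_*\omega_Y \to \omega_R$ is surjective.

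Let $C$ denote the cokernel of $f_*\omega_Y \to \omega_R$. As a quotient of the finitely generated $R$-module $\omega_R$, the module $C$ is finitely generated, so by Nakayama's lemma it suffices to verify $C/\pi C = 0$. Since $R$ is Cohen--Macaulay with $\pi$ a non-zero divisor, one has $\omega_R/\pi\omega_R \cong \omega_{R/(\pi)}$. Moreover, $Y$ is integral (being normal and birational to the domain $\Spec R$), so $\pi$ is a non-zero divisor on $\omega_Y$ and $Y_0 := V(\pi) \subset Y$ is an effective Cartier divisor of dimension $d-1$ along which $Y$ is Cohen--Macaulay; this yields an identification $\omega_Y/\pi\omega_Y \cong \omega_{Y_0}$ and hence a comparison $f_*\omega_Y/\pi f_*\omega_Y \to (f_0)_*\omega_{Y_0}$, where $f_0 \colon Y_0 \to \Spec R/(\pi)$ is the base change. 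The vanishing $C/\pi C = 0$ is then implied by surjectivity of the trace $(f_0)_*\omega_{Y_0} \to \omega_{R/(\pi)}$.

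The main obstacle is that $f_0$, though proper and surjective, need not be birational: $Y_0$ may acquire exceptional components over closed points of $V(\pi)$. To handle this, I would exploit that $R$ is a normal local domain and $R/(\pi)$ is normal (being $F$-rational), so $R_{(\pi)}$ is a DVR and Zariski's main theorem forces $f$ to be an isomorphism over the generic point of $V(\pi)$. This singles out a unique irreducible component $Y_0^{\mathrm{main}} \subset Y_0$ which maps birationally to $\Spec R/(\pi)$; letting $g\colon \widetilde{Y} \to \Spec R/(\pi)$ denote the composition with its normalization, pseudo-rationality of $R/(\pi)$ applied to $g$ yields surjectivity of $g_*\omega_{\widetilde{Y}} \to \omega_{R/(\pi)}$. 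Standard functoriality of dualizing sheaves under closed immersions and finite birational morphisms lets this map factor through $(f_0)_*\omega_{Y_0}$, forcing the desired surjectivity. The principal technical points are the verification of $\omega_Y/\pi\omega_Y \cong \omega_{Y_0}$ (which may require first replacing $Y$ by a further modification to achieve the needed Cohen--Macaulayness along $Y_0$) and the compatibility of the various trace maps under the chosen factorization.
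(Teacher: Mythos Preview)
Your Nakayama strategy has a genuine gap at the step ``$C/\pi C = 0$ is then implied by surjectivity of the trace $(f_0)_*\omega_{Y_0} \to \omega_{R/(\pi)}$.'' From the short exact sequence $0 \to \omega_Y \xrightarrow{\pi} \omega_Y \to \omega_{Y_0} \to 0$ (granting your identification $\omega_Y/\pi\omega_Y \cong \omega_{Y_0}$), pushing forward gives only an \emph{injection}
\[
f_*\omega_Y/\pi f_*\omega_Y \hookrightarrow (f_0)_*\omega_{Y_0},
\]
with cokernel the $\pi$-torsion of $R^1 f_*\omega_Y$. Knowing that the trace is surjective on the larger module $(f_0)_*\omega_{Y_0}$ says nothing about its restriction to the submodule $f_*\omega_Y/\pi f_*\omega_Y$, which is what you actually need to kill $C/\pi C$. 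To close this gap you would need something like $R^1 f_*\omega_Y = 0$, i.e., Grauert--Riemenschneider vanishing. That is exactly what makes Elkik's classical deformation argument work in characteristic zero, and it is exactly what is \emph{not} available for an arbitrary normal proper birational $Y$ over a mixed-characteristic base. You cannot sidestep this by modifying $Y$ further, since pseudo-rationality demands the trace be surjective for \emph{every} such $Y$, and resolution of singularities is not known here.

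The paper does not attempt a direct argument: it simply invokes \cite[Theorem 3.8]{MS20}. That result is proved with genuinely mixed-characteristic tools (big Cohen--Macaulay algebras coming from perfectoid methods, following Andr\'e), which supply a substitute for the missing vanishing theorem. The moral is that the $F$-rationality hypothesis on $R/(\pi)$ is being used for more than just its consequence ``$R/(\pi)$ is pseudo-rational'' via \Cref{Smith97}; the Frobenius structure feeds into the construction of the big Cohen--Macaulay algebra over $R$ itself. Your sketch only extracts the pseudo-rationality of $R/(\pi)$ and then tries to deform, which is not enough.
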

	\begin{proof}
	Since $R$ is assumed to be normal, this is \cite[Theorem 3.8]{MS20}. 
	\end{proof}
  
	The following conjecture is a mixed characteristic analogue of \Cref{lemma.Schwede.Singh}. 
	Since it does not appear in the
	literature (but see the discussion at \url{https://mathoverflow.net/q/396462}), 
	we write it down here:
	
	\begin{conjecture} \label{conjecture_pseudo_rational}  
	In the situation of \Cref{lemma.pseudorational}, if $R/(\pi)$ is $F$-finite and
		$F$-injective, and $R[\pi^{-1}]$ is pseudo-rational, then $R$ is pseudo-rational.
	\end{conjecture}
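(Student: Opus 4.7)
My plan is to adapt the equal characteristic argument of Schwede--Singh (\Cref{lemma.Schwede.Singh}) to mixed characteristic, substituting perfectoid big Cohen--Macaulay algebras for the absolute Frobenius on $R$, in the spirit of the proof of \Cref{lemma.pseudorational}. By the criterion recorded in \Cref{equation.definition.pseudorational}, the goal reduces to two points: (a) $R$ is Cohen--Macaulay, and (b) for every projective birational $f \colon Y \to \Spec(R)$ with $Y$ normal, the canonical map $H^d_\mathfrak{m}(R) \to H^d_{f^{-1}(\mathfrak{m})}(\mathcal{O}_Y)$ is injective, where $d = \dim R$; normality and existence of a dualizing complex are given.

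For (a), since $\pi \in \mathfrak{m}$ is a non-zero-divisor, $R$ is Cohen--Macaulay if and only if $R/(\pi)$ is, so one must deduce Cohen--Macaulayness of $R/(\pi)$ from its $F$-injectivity combined with pseudo-rationality of $R[\pi^{-1}]$. Since $F$-injectivity alone does not imply Cohen--Macaulay, the idea is to exhibit a projective birational resolution $f \colon Y \to \Spec(R)$ whose generic fiber inherits Cohen--Macaulayness from $R[\pi^{-1}]$ and whose special fiber $Y_0 = Y \times_{\Spec R} \Spec(R/(\pi))$ acquires the required depth via $F$-injectivity, then compare local cohomology of $R$ and $R/(\pi)$ through the short exact sequence $0 \to R \xrightarrow{\cdot \pi} R \to R/(\pi) \to 0$.

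For (b), I would exploit the commutative diagram of long exact sequences in local cohomology obtained by applying both $H^\bullet_\mathfrak{m}(-)$ and $H^\bullet_{f^{-1}(\mathfrak{m})}(-)$ to the short exact sequence above. Pseudo-rationality of $R[\pi^{-1}]$ yields injectivity of the degree-$d$ comparison map after inverting $\pi$, while $F$-injectivity of $R/(\pi)$ should control the mod-$\pi$ comparison $H^{d-1}_\mathfrak{m}(R/(\pi)) \to H^{d-1}_{f^{-1}(\mathfrak{m})}(\mathcal{O}_{Y_0})$; a snake-lemma argument would then glue these two pieces. Since $R$ itself carries no Frobenius, one substitutes, as in \cite{MS20}, a $(\pi)$-completely faithfully flat perfectoid big Cohen--Macaulay $R$-algebra through which all relevant comparison maps factor and on which the mod-$\pi$ Frobenius becomes usable.

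The main obstacle is the genuine gap between $F$-injectivity and $F$-rationality: the proof of \Cref{lemma.pseudorational} depends on tight-closure-type test submodules in the $F$-rational reduction, and no such test submodule compatible with $F$-injectivity alone is known in mixed characteristic. As indicated by the MathOverflow note cited in the text, resolving this step seems to demand genuinely new input, plausibly from the $v$-topological perfectoidization theory or from refined mixed-characteristic analogues of test ideals; absent such input, one is stuck with the weaker deformation of $F$-rationality supplied by \Cref{lemma.pseudorational}.
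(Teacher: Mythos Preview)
The statement you are attempting to prove is labeled a \emph{Conjecture} in the paper, and the paper does not supply a proof. It records the statement precisely because no proof is known, referring the reader to a MathOverflow discussion and explaining that a proof would yield \Cref{conjecture_mixed}. So there is no ``paper's own proof'' to compare your proposal against.

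As for the proposal itself, you have not given a proof either, and you correctly identify why. Your step (a) already contains a gap: $F$-injectivity of $R/(\pi)$ together with pseudo-rationality of $R[\pi^{-1}]$ is not known to force Cohen--Macaulayness of $R/(\pi)$, and your sketch of producing a resolution whose special fiber ``acquires the required depth via $F$-injectivity'' is not a mechanism that exists in the literature. In step (b), the snake-lemma gluing you describe would need injectivity of $H^{d-1}_{\mathfrak m}(R/(\pi)) \to H^{d-1}_{f^{-1}(\mathfrak m)}(\calO_{Y_0})$, but $F$-injectivity only controls the Frobenius action on $H^{\bullet}_{\mathfrak m}(R/(\pi))$, not the comparison map to a resolution; this is exactly the gap between $F$-injective and $F$-rational that you flag at the end. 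Your final paragraph is an honest assessment: the known perfectoid and big Cohen--Macaulay techniques (as in \cite{MS20}) handle the $F$-rational case of \Cref{lemma.pseudorational} but do not, at present, bridge to $F$-injective reductions. In short, your write-up is a reasonable discussion of why the conjecture is open, but it is not a proof, and the paper does not claim one.
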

	
        We conclude this section by recalling a stronger notion of
        rationality over a perfect field $k$.  Let $X$ be a finite
        type $k$-scheme. We say $X$ has {\em rational singularities}
        if it is Cohen-Macaulay and there exists a proper birational
        morphism $f\colon Y \rightarrow X$ of $k$-schemes with $Y$
        smooth over $k$ (in which case we say $X$ has a {\em
          resolution of singularities}) such that the natural map
        $\mathcal{O}_X \to Rf_* \mathcal{O}_Y$ is an isomorphism.  It
        follows using Grothendieck--Serre duality that if $X$ has
        rational singularities then $R^if_*\omega_Y=0$ for all $i>0$. Moreover, it also follows that $X$ has pseudo-rational
        singularities by \cite[Lemma 9.3]{Kov17}, but we do not use this fact anywhere in the present paper.
        We note that this notion of rational singularities is independent of the
        choice of resolution by \cite[Theorem 1]{CR11}.

	\section{Schubert varieties} \label{section_schubert}
	Let $k$ be an algebraically closed field of characteristic $p>0$, $K=k\rpot{t}$ be the corresponding Laurent series field and $O=k\pot{t}$ the power series ring. 
	
	Let $G$ be a reductive $K$-group. 
	For each facet $\bbf\subset \scrB(G,K)$ of the Bruhat--Tits building, we denote by $\calG=\calG_\bbf$ the associated parahoric $O$-group scheme extending $G$, see \cite[Définition 5.2.6 ff.]{BT84}. 
	
	The loop group $LG$, respectively positive loop group $L^+\calG$, is the functor on the category of $k$-algebras $R$ given by $LG(R)=G(R\rpot{t})$, respectively $L^+\calG(R)=\calG(R\pot{t})$.
	Then $L^+\calG\subset LG$ is a subgroup functor, and the {\it \textup{(}twisted partial\textup{)} affine flag variety} is the \'etale quotient
	\begin{equation}
	\Fl_{\calG}=LG/L^+\calG,
	\end{equation}
	which is represented by an ind-projective $k$-ind-scheme by \cite[Theorem 1.4]{PR08}. 
	
	In the following, we fix two facets $\bbf,\bbf'\subset \scrB(G,K)$ and denote by $\calG=\calG_\bbf$, $\calG'=\calG_{\bbf'}$ the associated parahorics.
	Given an element $w\in L^+\calG'(k)\backslash LG(k)/L^+\calG(k)$, 
	the {\it Schubert variety} $S_w$ is the reduced $L^+\calG'$-orbit closure of $\tilde{w}\cdot e$ in $\Fl_{\calG}$,
	where $\tilde w\in LG(k)$ is any representative of $w$
 	and $e$ the base point of $\Fl_{\calG}$, see \cite[Definition 8.3]{PR08} and compare with \cite[Section 3]{HR22}.
	Then $S_w$ is a projective $k$-variety admitting the $L^+\calG'$-orbit $C_w$ of $\dot w\cdot e$ as a dense open subset.
	This induces a presentation on reduced ind-schemes
	\begin{equation}\label{ind.presentation.aff.flag}
	(\Fl_{\calG})_{\red}=\on{colim} S_w,
	\end{equation}
	where $w$ runs through the double cosets as above, and all transition maps $S_v\to S_w$ are closed immersions.

	\subsection{$F$-singularities of seminormalized Schubert varieties}
	Let $\widetilde{S}_w\to S_w$ be the seminormalization \cite[0EUK]{StaProj}, that is, the initial scheme mapping universally homeomorphically to $S_w$ with the same residue fields.
	In this subsection we show the following result for general reductive $K$-groups:
	
	\begin{theorem}\label{theorem_frobenius_split_allp}
		The seminormalized Schubert varieties $\widetilde{S}_w$ are
		normal, Cohen--Macaulay, compatibly $F$-split and have rational
		singularities. Furthermore, the $\widetilde{S}_w$ are globally
		$F$-regular, hence have $F$-rational
		singularities.
	\end{theorem}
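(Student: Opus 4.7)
The plan is to construct a Bott--Samelson type resolution $\pi\colon \Sigma \to S_w$ and apply the Mehta--Ramanathan approach to Frobenius splittings. First I would fix a reduced expression $\underline w = (s_{i_1}, \dots, s_{i_\ell})$ of $w$ in the Iwahori--Weyl group attached to the facet $\bbf$, and build $\Sigma = \Sigma_{\underline w}$ as an iterated $\bbP^1$-fibration obtained from the rank-one affine parahorics associated to the simple affine reflections $s_{i_j}$; this $\Sigma$ is a smooth projective $k$-variety, and the natural multiplication map $\pi\colon \Sigma \to S_w$ is proper and birational onto its image. The sub-Bott--Samelsons obtained by deleting one simple reflection give a boundary divisor $\partial\Sigma$, and a direct computation of $\omega_\Sigma^{-1}$ in terms of the $\bbP^1$-fibre structure produces a section vanishing to order exactly one along $\partial\Sigma$.

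Second, the Mehta--Ramanathan splitting criterion applied to the $(p-1)$-th power of this section gives a Frobenius splitting of $\Sigma$ compatible with $\partial\Sigma$, and hence, iteratively, with every smaller Bott--Samelson inside $\Sigma$. Pushing this splitting forward along $\pi$ equips $S_w$ with a compatible Frobenius splitting; since Frobenius splittings descend through universal homeomorphisms with trivial residue extensions, $\pi$ factors through the seminormalization with $\pi_*\calO_\Sigma = \calO_{\widetilde S_w}$, so that in particular $\widetilde S_w$ is normal. To upgrade to global $F$-regularity I would refine this splitting in the spirit of Ramanathan's treatment for classical Schubert varieties: choose an $L^+\calG'$-equivariant ample line bundle $\calL$ on $\Fl_\calG$ and multiply the above section by a suitable section of $\calL^{p-1}$ whose vanishing locus avoids any prescribed closed point of $\widetilde S_w$. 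This produces the strong $F$-regularity of the section ring $\bigoplus_{n\ge 0} H^0(\widetilde S_w, \calL^{\otimes n})$, which is exactly global $F$-regularity of $\widetilde S_w$.

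Once global $F$-regularity is in place, \Cref{Smith00} gives $F$-rationality of $\widetilde S_w$, whence its local rings are Cohen--Macaulay and normal. For resolution-rational singularities, the same compatible splitting together with the standard Mehta--van der Kallen vanishing argument yields $R^i\pi_*\calO_\Sigma = 0$ for $i > 0$, while $R^i\pi_*\omega_\Sigma = 0$ for $i > 0$ and the canonical identification $\pi_*\omega_\Sigma \simeq \omega_{\widetilde S_w}$ follow from Cohen--Macaulayness of $\Sigma$ and $F$-rationality of $\widetilde S_w$ via Grothendieck duality. Together this realises $\pi$ as a rational resolution of $\widetilde S_w$.

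The step I expect to be most delicate is the construction of the compatible splitting in small characteristic. For $p = 2$ or $3$ the naive product section of $\omega_\Sigma^{-1}$ coming from the rank-one $\bbP^1$-factors may acquire unwanted zeroes along boundary components corresponding to rank-one residual parahorics of exotic type, where the usual formula for the anticanonical degree of a rank-one flag variety degenerates. Under \Cref{hyp_odd_unitary} the residual groups that arise are of $\SL_2$ type or unramified $\SU_3$ type, and I would bypass the difficulty by performing a direct rank-one computation on the finite flag varieties of these two groups, in the spirit of the treatment of tame groups in \cite{PR08,HR22}; this suffices to conclude that an appropriately twisted product section has the required vanishing behaviour along $\partial \Sigma$.
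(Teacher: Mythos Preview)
Your overall framework matches the paper: Demazure resolution $f\colon D_{\dot w}\to \widetilde S_w$, canonical bundle formula, Mehta--Ramanathan, push forward the splitting, and deduce $F$-rationality, Cohen--Macaulayness, and resolution-rationality via \Cref{Smith00}, \Cref{Smith97}, and \Cref{Kovacs_equivalences}. The normality of $\widetilde S_w$ and the equality $f_*\calO_{D_{\dot w}}=\calO_{\widetilde S_w}$ are handled essentially as you indicate.

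However, your diagnosis of the small-characteristic difficulty and your proposed fix are both off. First, \Cref{theorem_frobenius_split_allp} is stated and proved for \emph{all} reductive $G$, without \Cref{hyp_odd_unitary}; you cannot invoke that hypothesis here. Second, the obstruction is not that a ``naive product section of $\omega_\Sigma^{-1}$ acquires unwanted zeros along exotic rank-one boundary components,'' nor is it resolvable by a local rank-one computation on $\SL_2$ or $\SU_3$ flag varieties. The canonical bundle formula $\omega_{D_{\dot w}} = \calO(-\partial D_{\dot w})\otimes \calL_{\on{crit}}^{-1}$ always holds, where $\calL_{\on{crit}}$ is a square root of the pullback of the adjoint line bundle $\calL_{\on{ad}}$. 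The actual issue is producing a section of $\calL_{\on{crit}}^{p^e-1}$ for some $e>0$: one must show that the formal square root of $\calL_{\on{ad}}^{p-1}$, which a priori only lives on the perfection $\Fl_\calG^{\pf}$, descends to an honest line bundle on $\widetilde\Fl_\calG$. The paper reformulates this as the inclusion $c(\Pic(\widetilde\Fl_\calG)) \supset (p-1)h^\vee\bbZ$ for the central charge homomorphism $c$, and verifies it via a divisibility argument: the relation $d(\Res_{L/K})=[L:K]$ from \Cref{cor.charge.deg.res.sc} handles all cases except $p=2$ with $G=\on{SU}_{2n+1}$, and that case is treated by embedding the unitary group into an orthogonal group and using the Pfaffian square root of the determinant bundle (\Cref{lem_unitary_inside_orthogonal}). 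This machinery---perfected Picard groups, the central charge, and the Pfaffian trick---is the new content of the paper's proof and is not captured by your rank-one computation.

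A smaller point: your route to global $F$-regularity via sections avoiding prescribed points is not what the paper does. It follows Cass's argument that compatible splitting with the boundary $\partial D_{\dot w}$ already yields global $F$-regularity of $D_{\dot w}$, which then descends to $\widetilde S_w$ along $f$.
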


	Here {\it compatibly} $F$-split carries the following meaning.
	By functoriality of seminormalizations \cite[0EUS]{StaProj}, there are maps $\widetilde{S}_v \to \widetilde{S}_w$ lifting the closed immersions $S_v \to S_w$ from \eqref{ind.presentation.aff.flag}, yielding the (a priori non-strict) ind-scheme
	\begin{equation}\label{normalized.ind.scheme}
	\widetilde{\Fl}_{\calG}=\on{colim}\widetilde{S}_w.
	\end{equation}
	In the course of the proof of \Cref{theorem_frobenius_split_allp}, we show that $\widetilde{S}_v \to \widetilde{S}_w$ are closed immersions (see \Cref{lemma.reduction.main.theorem}) and that $\widetilde{S}_w$ is $F$-split compatibly with all closed subvarieties $\widetilde S_v$.

	\begin{remark}
		The methods from \cite[Theorem 8]{Fal03}, \cite[Theorem 8.4]{PR08} and \cite[Theorem 1.4]{Cas22} essentially imply \Cref{theorem_frobenius_split_allp} for all groups whose adjoint simple factors are Weil restrictions of scalars of tamely ramified groups. 
		\Cref{theorem_frobenius_split_allp} is new whenever one of the absolutely simple factors is wildly ramified, therefore covering general reductive $K$-groups.
	\end{remark}
	
\begin{remark} 
  There exist surfaces which have rational, but not $F$-rational,
  singularities \cite[Example 2.11]{HW96}.  Further, we note that by
  the proof of Lemma \ref{Smith00}, we know something slightly
  stronger than $F$-rationality, namely, the local rings of
  $\widetilde{S}_w$ are $F$-regular.
\end{remark}

	\subsubsection{Preliminary reductions for the proof of \Cref{theorem_frobenius_split_allp}}
	Recall the notation $\calG=\calG_\bbf$, $\calG'=\calG_{\bbf'}$.
	Let $S$ be a maximal $K$-split torus with $\bbf,\bbf'\subset \scrA(G,S,K)$, see \cite[Theorem 7.4.18 (i)]{BT72}.
	Fix an alcove $\bba$ in the apartment containing $\bbf$ in its closure, and denote by $\calI=\calG_{\bba}$ the associated Iwahori $O$-group scheme. 
	The affine Weyl group $W_{\on{af}}$ (respectively, its subgroup $W_\calG$) is the Coxeter group generated by the simple reflections along the hyperplanes meeting the closure of $\bba$ (respectively, passing through $\bbf$).
	There is a natural bijection $W_{\on{af}}/W_\calG\cong L^+\calI(k)\backslash LG^0(k)/L^+\calG(k)$ where $LG^0$ denotes the neutral component.
	In order to prove \Cref{theorem_frobenius_split_allp}, we may and do assume without loss of generality that $\calG'=\calI$ and $w\in W_{\on{af}}/W_\calG$, as every Schubert variety is isomorphic to one of this particular form by \cite[Section 3.1, Corollary 3.2]{HR22}.

	In the following we identify the Bruhat order on the coset space $W_{\on{af}}/W_\calG$ compatibly with the Bruhat order on the subset of right $W_\calG$-minimal representatives in $W_{\on{af}}$, see \cite[Lemma 1.6]{Ri13}. 		Suppose $w \in W_{\on{af}}$ is right $W_\calG$-minimal.
	Fix a reduced decomposition as a product of simple reflections $\dot w = s_{1}\cdot \ldots \cdot s_{d}$ in $W_{\on{af}}$.
	Denote by $D_{\dot w}$ the Demazure variety for $\dot w$, denoted $D(\tilde w)$ in \cite[Proposition 8.8]{PR08}. 
	By \cite[Section 3.3]{HR22}, there is a projective morphism 
	\begin{equation}\label{equation.reduced.decomposition.Schubert}
	D_{\dot{w}} \to S_w,
	\end{equation}
	which is an isomorphism over the open Schubert cell $C_w$, hence birational and surjective. 
	For any $v\leq w$ in the Bruhat order, the reduced decomposition $\dot w$ induces a (not necessarily unique) reduced decomposition $\dot v$ of $v$, so there exists a closed immersion $D_{\dot v}\to D_{\dot w}$ covering $S_v\to S_w$.
	The following lemma makes the connection to the normalized Schubert varieties appearing in \cite{HLR24, HR22}: 
	
	\begin{lemma}\label{seminormal.Schubert.varieties.lemma}
	The seminormalized Schubert varieties $\widetilde S_w$ are normal.
	\end{lemma}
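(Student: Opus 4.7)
The plan is to identify $\widetilde S_w$ with the normalization $S_w^\nu$ of $S_w$, which is normal by construction. By the universal property of seminormalization, this reduces to showing that the normalization morphism $\nu\colon S_w^\nu \to S_w$ is a universal homeomorphism inducing isomorphisms on residue fields: in that case $\nu$ factors through $\widetilde S_w$, and the induced map $S_w^\nu \to \widetilde S_w$ is a universal homeomorphism with trivial residue-field extensions between seminormal schemes, hence an isomorphism.

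To exhibit $S_w^\nu$ geometrically, I would apply Stein factorization to the Demazure morphism $\pi_{\dot w}\colon D_{\dot w} \to S_w$ of \eqref{equation.reduced.decomposition.Schubert}. Since $\pi_{\dot w}$ is proper, birational, and surjective with $D_{\dot w}$ smooth, the intermediate scheme $\Spec_{S_w}\big((\pi_{\dot w})_*\calO_{D_{\dot w}}\big)$ is normal and finite birational over $S_w$, so it canonically equals $S_w^\nu$. Set-theoretic fibers of $\nu$ then correspond to the sets of connected components of fibers of $\pi_{\dot w}$. Since $k$ is algebraically closed, residue fields at closed points match automatically, and by birationality the condition transports to non-closed points, so it remains to show that $\pi_{\dot w}$ has geometrically connected fibers.

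Geometric connectedness is proved by induction on the length $\ell(\dot w)=d$, using the iterated $\mathbb P^1$-bundle structure of the Demazure variety $D_{\dot w}$ to reduce to the case $\dot w = s$ of a single simple reflection. In that base case, the Demazure morphism $D_s \to S_s$ is an isomorphism of projective lines, and its fibers are trivially singletons.

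The main obstacle is the rank-one analysis when $G$ is wildly ramified, as the previous arguments in \cite{HLR18, HR22} invoked tameness in several places. This is circumvented by the parahoric group schemes over two-dimensional bases constructed in \Cref{prop_lifts_witt}, which supply all the rank-one Bruhat--Tits data needed in the wild setting; once these lifts are in hand, the inductive reduction and the base case proceed by the same formal arguments as in the tame case.
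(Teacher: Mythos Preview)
Your overall strategy matches the paper's: identify $\widetilde S_w$ with the normalization $S_w^{\on{nor}}$ by showing that $\nu\colon S_w^{\on{nor}} \to S_w$ is a universal homeomorphism inducing isomorphisms on all residue fields. The universal-homeomorphism step via Stein factorization and geometric connectedness of Demazure fibers is fine, and is essentially what the paper invokes by citing \cite[Proposition 3.1 i)]{HR22}.

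However, your treatment of residue fields has a genuine gap. You assert that ``by birationality the condition transports to non-closed points,'' but this does not follow: a finite birational universal homeomorphism of $k$-varieties can induce non-trivial purely inseparable residue-field extensions at non-closed, non-generic points even when $k$ is algebraically closed. The paper handles this via the Schubert-cell stratification. For any point $x \in S_w$ lying in a cell $C_v$, there is a map $S_v^{\on{nor}} \to S_w^{\on{nor}}$ lifting $S_v \hookrightarrow S_w$ (obtained from functoriality of the Demazure resolution, as in the proof of \cite[Proposition 9.7 (b)]{PR08}), yielding a tower $\kappa(S_v^{\on{nor}}, x) \supset \kappa(S_w^{\on{nor}}, x) \supset \kappa(S_w, x)=\kappa(C_v,x)$. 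Since $C_v$ is regular, $S_v^{\on{nor}} \to S_v$ is an isomorphism over $C_v$, forcing all inclusions to be equalities. This is the missing idea in your argument.

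Separately, your final paragraph is misplaced: the two-dimensional group lifts of \Cref{prop_lifts_witt} play no role in this lemma. The statement and its proof are entirely over $k$ and use only the Demazure resolution; no tameness hypothesis enters here. Those lifts are needed later, for instance in the proof of \Cref{thm_normality_classical_sch_vars} and in the coherence argument, not for \Cref{seminormal.Schubert.varieties.lemma}.
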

	\begin{proof}
	The normalization morphism $S_w^{\on{nor}}\to S_w$ is a universal homeomorphism \cite[Proposition 3.1 i)]{HR22}, which induces an isomorphism over $C_w$ (because it is regular).
	By the universal property of seminormalizations, it remains to show that $S_w^{\on{nor}}\to S_w$ induces an isomorphism on {\it all} residue fields.  
	We observe that there are transition maps $S_v^{\on{nor}} \to S_w^{\on{nor}}$ lifting the closed immersions $S_v \to S_w$, see the proof of \cite[Proposition 9.7 (b)]{PR08} using the functoriality of the Demazure resolution \eqref{equation.reduced.decomposition.Schubert}.
	Now, given a point $x\in S_w$ lying in some cell $C_v$, it induces a tower of residue field extensions $\kappa(S_v^{\on{nor}},x) \supset \kappa(S_w^{\on{nor}},x) \supset \kappa(S_w,x)=\kappa(C_v,x)$. 
	As $S_v^{\on{nor}}\to S_v$ is an isomorphism over $C_v$, all inclusions are equalities which implies the lemma.
	\end{proof}
	
	\Cref{seminormal.Schubert.varieties.lemma} implies that \eqref{equation.reduced.decomposition.Schubert} factors through $\widetilde S_w\to S_w$ inducing the birational projective morphism
	\begin{equation}\label{equation.reduced.decomposition.Demazure}
	f\colon D_{\dot w}\to \widetilde S_w,
	\end{equation}
	with the property $f_{*}\calO_{D_{\dot w}}=\calO_{\widetilde S_w}$, compare \cite[Section 3.3]{HR22}. 
	The proof of the next lemma follows the arguments from \cite{LRPT06, Cas22} and reduces \Cref{theorem_frobenius_split_allp} to the corresponding result for Demazure varieties. 
	The latter case is proved in \Cref{subsection_demazure}, see \eqref{eq.finish.proof} for details.

	\begin{lemma}\label{lemma.reduction.main.theorem}
	Assume that the Demazure variety $D_{\dot w}$ is compatibly stably $F$-split 
	with the divisors $D_{\dot v}$ for all subwords $\dot v$ of $\dot w$ of colength $1$.
	Then \Cref{theorem_frobenius_split_allp} holds true.
	\end{lemma}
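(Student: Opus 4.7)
The plan is to descend all required properties along the Demazure resolution $f\colon D_{\dot w}\to\widetilde S_w$ of \eqref{equation.reduced.decomposition.Demazure}, which is birational projective and satisfies $f_*\calO_{D_{\dot w}} = \calO_{\widetilde S_w}$.

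First I would promote the hypothesis, stated only for colength-one subwords, to compatibility with every Demazure subvariety $D_{\dot v}$ for $v\leq w$: this is automatic because compatible $F$-splittings are stable under scheme-theoretic intersections of compatibly split subschemes, and every $D_{\dot v}$ arises inductively as such an intersection inside $D_{\dot w}$. Next, I would apply $f_*$ to each stable splitting $\calO_{D_{\dot w}}\to F^e_*\calO_{D_{\dot w}}$: using $f_*\calO_{D_{\dot w}} = \calO_{\widetilde S_w}$ and the commutation $F\circ f = f\circ F$, this yields a stable splitting $\calO_{\widetilde S_w}\to F^e_*\calO_{\widetilde S_w}$ compatible with the scheme-theoretic image $Z_v$ of each $D_{\dot v}$ in $\widetilde S_w$. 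Since a compatibly $F$-split closed subscheme is reduced and seminormal (the latter by $F$-purity), while $Z_v$ and $\widetilde S_v$ share the underlying topological space $S_v$ and $\widetilde S_v$ is normal by \Cref{seminormal.Schubert.varieties.lemma}, the universal property of the seminormalization identifies $Z_v = \widetilde S_v$. In particular, the transition maps $\widetilde S_v\to\widetilde S_w$ of \eqref{normalized.ind.scheme} become closed immersions and the splittings of $\widetilde S_w$ are compatible with every $\widetilde S_v$.

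To establish resolution-rational singularities---and hence Cohen--Macaulayness by \cite[Corollary 8.3]{Kov17}---I would then invoke the Kempf-type vanishings $R^if_*\calO_{D_{\dot w}} = 0$ and $R^if_*\omega_{D_{\dot w}} = 0$ for $i>0$, together with the isomorphism $f_*\omega_{D_{\dot w}}\cong\omega_{\widetilde S_w}$. These are standard consequences of the compatible $F$-splitting of the smooth projective iterated $\bbP^1$-bundle $D_{\dot w}$ along its ample boundary divisor (the union of the colength-one $D_{\dot v}$'s), in the spirit of Mehta--Ramanathan and Brion--Kumar, and can be adapted verbatim from the tame setting of \cite{Cas19}. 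Global $F$-regularity of $\widetilde S_w$ then follows from the pushed-forward stable splittings combined with the standard ``stable splitting along an ample divisor'' criterion of \cite[Theorem 3.10]{Smi00}, the ample divisor being supplied by a suitable union of the compatibly split $\widetilde S_v$. Finally, \Cref{Smith00} upgrades global $F$-regularity to $F$-rationality.

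The step I expect to be the main obstacle is the first: correctly identifying the scheme-theoretic image $Z_v$ of $D_{\dot v}$ in $\widetilde S_w$ as precisely $\widetilde S_v$, rather than a larger or strictly smaller subscheme. This requires the delicate interplay between the seminormality of compatibly $F$-split closed subschemes, the normality of $\widetilde S_v$ from \Cref{seminormal.Schubert.varieties.lemma}, and the universal property of the seminormalization; everything downstream is then a relatively routine adaptation of the Frobenius splitting arguments of \cite{PR08,Cas19} to the wildly ramified setting permitted by our hypothesis.
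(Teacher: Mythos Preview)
Your overall strategy is close in spirit to the paper's, but the logical order is reversed and your central step---deducing resolution-rationality directly from compatible $F$-splitting---contains a genuine gap.

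The paper first proves that $D_{\dot w}$ itself is globally $F$-regular: since the complement of $\partial D_{\dot w}$ is the affine big cell, a stable splitting along $\partial D_{\dot w}$ forces global $F$-regularity by the second half of the argument in \cite[Proposition 5.8]{Cas19}. Global $F$-regularity then descends to $\widetilde S_w$ via \cite[Lemma 1.2]{LRPT06}, which pushes forward global $F$-regularity along any proper map with $f_*\calO_Y=\calO_X$---this bypasses any need to analyze pushed-forward stable splittings or to identify an ample divisor on $\widetilde S_w$. From there the chain is: globally $F$-regular $\Rightarrow$ $F$-rational (\Cref{Smith00}) $\Rightarrow$ pseudo-rational (\Cref{Smith97}) $\Rightarrow$ Cohen--Macaulay; only \emph{then} does \Cref{Kovacs_equivalences} upgrade $f$ to a rational resolution, giving resolution-rational singularities as a corollary. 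The promotion from colength-one subwords to all subwords is handled by \cite[Proposition 1.2.1]{BK07} rather than by an intersection argument.

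Your claim that the isomorphism $f_*\omega_{D_{\dot w}}\cong\omega_{\widetilde S_w}$ and the Grauert--Riemenschneider-type vanishing $R^if_*\omega_{D_{\dot w}}=0$ are ``standard consequences of the compatible $F$-splitting'' is where the argument breaks. The first of these is precisely the pseudo-rationality condition for $\widetilde S_w$, and neither statement follows from $F$-splitting alone in positive characteristic. They \emph{can} be obtained by the classical length induction of Brion--Kumar exploiting the iterated $\bbP^1$-bundle structure (as in \cite[Proposition 9.7]{PR08}), but that is a separate and longer argument, not a formal consequence of the splitting. The paper's route is more efficient precisely because it makes resolution-rationality a downstream consequence of $F$-rationality rather than an independent computation; your step 4 as written would need to be replaced either by that $F$-singularity chain or by the full inductive argument.
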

	\begin{proof}
	Recall that compatibly stably $F$-split varieties are compatibly $F$-split by \cite[Lemmas 5.0.3, 6.0.4]{BS13}.
	Now, if $D_{\dot w}$ is compatibly $F$-split with the divisors $D_{\dot v}$ for $\dot v$ of colength $1$, then $D_{\dot w}$ is compatibly $F$-split both with their union $\partial D_{\dot w}$ and $D_{\dot v}$ for all subwords $\dot v$ of $\dot w$ by \cite[Proposition 1.2.1]{BK07}.
	
	Compatibility with $\partial D_{\dot w}$ implies that $D_{\dot w}$ is globally $F$-regular (following the second part of the argument in \cite[Proposition 5.8]{Cas22} which applies verbatim). Recall that we have an identity $f_*\calO_{D_{\dot w}}=\calO_{\widetilde{S}_w}$ where $f$ denotes the map \eqref{equation.reduced.decomposition.Demazure}, compatibly with the Frobenius, which allows us to descend any $F$-splitting along the proper cover $f$. More generally, we can apply \cite[Lemma 1.2]{LRPT06} to $f$ and deduce $\widetilde S_w$ is globally $F$-regular. \Cref{Smith00} implies that $\widetilde{S}_w$ has $F$-rational singularities.
	 Then by \Cref{Smith97}, $\widetilde{S}_w$ is pseudo-rational, and in particular, is Cohen--Macaulay. 
	 
	 Next, consider the scheme-theoretic image $T_{v,w}$ of the map $\widetilde S_v\to \widetilde S_w$ and follow the argument in \cite[Proposition 9.7 (b)]{PR08}. This is a $k$-variety with seminormalization equal to $\widetilde{S}_v$. Since $D_{\dot v}$ is compatibly $F$-split inside $D_{\dot{w}}$, we deduce that $T_{v,w}$ is also compatibly $F$-split with $\widetilde{S}_w$ by pushforward along the map $f$. But $F$-split schemes are weakly normal by \cite[Proposition 1.2.5]{BK07}, and in particular seminormal, so we have that $T_{v,w}\simeq \widetilde{S}_v$. In other words, the maps $\widetilde{S}_v\to \widetilde{S}_w$ are closed immersions for all $v\leq w$ and compatibly $F$-split. 
	 
	 Finally, we handle rationality of $\widetilde{S}_w$. We factor $f$ as partial Demazure resolutions having fibers of dimension at most $1$
	 \begin{equation}f_i \colon D_{\dot{u}_i}\tilde{\times}\widetilde{S}_{v_i}\to D_{\dot{u}_{i+1}}\tilde{\times}\widetilde{S}_{v_{i+1}},\end{equation} where we write $\dot w=\dot u_i \cdot \dot v_i$ with $\dot u_i=s_1\cdot \dots \cdot s_{d-i}$ and $\dot v_i=s_{d-i+1}\cdot \dots \cdot s_d$.
	 By induction, it suffices to show vanishing of the higher direct images of the structure sheaf along $f_i$. Moreover, we may even ignore the factor $D_{\dot{u}_{i+1}}\subset D_{\dot{u}_i}$ and reduce to the study of $g\colon S_s \tilde{\times}\widetilde{S}_v \to \widetilde{S}_w$ with $w=sv$ being a reduced expression. 
	 We claim that for any (not necessarily closed) point $x\in \widetilde{S}_w$ the fiber $g^{-1}(x)$ is either isomorphic to $\mathrm{Spec}(\kappa(x))$ or to $\mathbb{P}^1_{\kappa(x)}$: 
	 Indeed, if $g^{-1}(x)$ is $0$-dimensional, then the birational map $g$ becomes a universal homeomorphism of normal varieties around $x$, thus a local isomorphism by Zariski's main theorem; if $g^{-1}(x)$ is $1$-dimensional, then $x$ belongs to $\widetilde{S}_u$ 
	 with $u< v$ and $su<u$, and we can directly see that the fibers of $h\colon S_s \tilde{\times} \widetilde{S}_u\to \widetilde{S}_u$ are projective lines. 
	 Therefore, we have $H^i(g^{-1}(x),\calO_{S_s \tilde{\times}\widetilde{S}_v})=0$ for all $i>0$, which upgrades in the presence of an $F$-splitting to $R^ig_*\calO_{S_s \tilde{\times}\widetilde{S}_v}=0$ for all $i>0$ by \cite[Lemma 1.2.11]{BK07}. Since $\widetilde{S}_w$ is Cohen--Macaulay, Grothendieck--Serre duality yields also $R^if_*\omega_{D_{\dot w}}=\omega_{\widetilde{S}_w}$. This means $\widetilde{S}_w$ has rational singularities, as desired.
	\end{proof}
	
	\begin{remark}\label{remark.simply.connected.reduction}
	The map $G_{\on{sc}}\to G$ from the simply connected group extends to the Iwahori $O$-models, and the induced map on Demazure varieties $D_{\on{sc},\dot w}\to D_{\dot w}$ is an isomorphism, see \cite[Proof of Lemma 3.8]{HR22}. 
	Further, $D_{\on{sc},\dot w}$ factors as a product of Demazure varieties according to the almost simple factors of $G_{\on{sc}}$, and products of (stably) compatibly $F$-split varieties are (stably) compatibly $F$-split \cite[Section 1.3.E (8)]{BK07}.  
	Therefore, in order to verify the assumption of \Cref{lemma.reduction.main.theorem}, we may assume whenever convenient that $G=G_{\on{sc}}$ is simply connected and (by the Weil restriction of scalars case in \cite[Lemma 3.9]{HR22}) {\it absolutely} almost simple and that $\calG=\calI$ is the Iwahori group scheme.
	\end{remark}

	\subsubsection{Picard groups of perfected Schubert varieties}

In this subsection, we calculate the Picard groups of perfected Schubert varieties and the induced map on the Demazure resolution. 
This plays a role later in proving the existence of $F$-splittings for Demazure varieties, which requires the construction of a certain divisor that is more easily done on the Schubert varieties. 

For any $v \in W_{\on{af}}$ we consider the corresponding $(\calI, \calG)$-Schubert variety $S_v$ and its seminormalization $\widetilde{S}_v$.  For the right $W_\calG$-minimal element $w$ above, we fix a choice of reduced expression $\dot w = s_{1} \cdot \ldots \cdot s_{d}$ and consider the Demazure resolution $f\colon D_{\dot w}\to \widetilde S_w$ as in \eqref{equation.reduced.decomposition.Demazure}. 
	For each simple reflection $s\in W_{\on{af}}$ and any choice of isomorphism of $D_{\dot s}\cong \bbP^1_k$, the degree of line bundles induces a well-defined isomorphism $\deg\colon \Pic(D_{\dot s}) \cong\bbZ$.
	
	\begin{lemma}\label{picard.group.demazure.lemma} 
	There is an isomorphism
	\begin{equation}
		\on{Pic}(D_{\dot w}) \overset{\cong}{\longto} \bbZ^d, \;\;\; \calL\mapsto (\deg(\calL|_{D_{\dot s_i}}))_{i=1,\ldots,d}.
		\end{equation}
	\end{lemma}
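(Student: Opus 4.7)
The plan is to argue by induction on the length $d$ of the reduced expression $\dot w = s_1\cdot\ldots\cdot s_d$, using the iterated $\bbP^1$-bundle structure of the Demazure variety. For $d=0$ the variety is a point and $\bbZ^0 = 0$, so the assertion is vacuous; for $d=1$ we have $D_{\dot w}\cong\bbP^1_k$ and the degree map is an isomorphism by definition.

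For the inductive step, set $\dot w' = s_1\cdot\ldots\cdot s_{d-1}$ and consider the projection
\[
\pi\colon D_{\dot w} = L^+\calP_{s_1}\times^{L^+\calI}\cdots\times^{L^+\calI} L^+\calP_{s_d}/L^+\calI \longto D_{\dot w'}
\]
forgetting the last factor. This is a Zariski-locally trivial $\bbP^1$-bundle because the fiber $L^+\calP_{s_d}/L^+\calI\cong \bbP^1_k$ is homogeneous and the fibration is obtained from a Borel-type quotient; local trivializations come from unipotent sections as in \cite[Section 8]{PR08}. The projective bundle formula then yields a split short exact sequence
\[
0\longto \pi^*\on{Pic}(D_{\dot w'})\longto \on{Pic}(D_{\dot w}) \overset{\deg_{\text{fib}}}{\longto} \bbZ \longto 0,
\]
where the last map takes the degree along any fiber of $\pi$.

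Next I would match this splitting with the one in the statement. For $i<d$, the standard closed immersion $D_{\dot s_i}\hookrightarrow D_{\dot w}$ (all other coordinates equal to the base point) projects under $\pi$ onto the analogous $D_{\dot s_i}\hookrightarrow D_{\dot w'}$ and is an isomorphism onto its image; hence for $i<d$ the composite of $\pi^*$ with restriction to $D_{\dot s_i}\subset D_{\dot w}$ agrees with restriction to $D_{\dot s_i}\subset D_{\dot w'}$. By the inductive hypothesis, these recover the first $d-1$ coordinates of the claimed isomorphism. On the other hand, the embedding $D_{\dot s_d}\hookrightarrow D_{\dot w}$ identifies $D_{\dot s_d}$ with the fiber of $\pi$ over the base point of $D_{\dot w'}$, so the $d$-th coordinate is precisely $\deg_{\text{fib}}$, which splits the sequence. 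Combining these identifications gives the desired isomorphism $\on{Pic}(D_{\dot w})\cong \bbZ^d$.

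The only nontrivial point is the Zariski-local triviality of $\pi$, which is standard in the tame setting but should be verified for wild parahorics $\calI$ as in \Cref{prop_lifts_witt}; this reduces, after the simply connected and absolutely almost simple reductions of \Cref{remark.simply.connected.reduction}, to producing an open cover of $D_{\dot w'}$ over which a section of $\pi$ exists, which follows from the Bruhat decomposition of $L^+\calP_{s_d}/L^+\calI$ together with the existence of the big cell in $L^+\calP_{s_d}$.
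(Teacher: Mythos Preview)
Your argument is correct, but it runs the induction in the opposite direction from the paper. The paper factors $\dot w = \dot s \cdot \dot v$ with $\dot s = s_1$ and projects $D_{\dot w} \to D_{\dot s} \cong \bbP^1_k$ onto the \emph{first} factor, obtaining a Zariski locally trivial fibration with general fiber $D_{\dot v}$; it then invokes Magid's theorem \cite[Theorem 5]{Mag75} for the exact sequence $0 \to \Pic(D_{\dot s}) \to \Pic(D_{\dot w}) \to \Pic(D_{\dot v}) \to 0$, splits it via the section $D_{\dot s} \to D_{\dot w}$, and inducts on the fiber. You instead drop the \emph{last} factor to get a $\bbP^1$-bundle $D_{\dot w} \to D_{\dot w'}$ and apply the projective bundle formula. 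Your route is arguably more elementary (no need for Magid's general fibration result), and the Zariski local triviality you flag as the delicate point is in fact immediate here: the bundle has a global section given by the base point in the last coordinate, so it is the projectivization of a rank~$2$ vector bundle, with no tameness hypothesis needed. The paper's direction, following \cite[Proposition 3.4]{HZ20}, has the minor advantage that Zariski local triviality is already recorded in \cite[Proposition 8.7 (b)]{PR08} for exactly that projection.
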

	\begin{proof}
		The method of \cite[Proposition 3.4]{HZ20} applies as follows.
		Writing $\dot w=\dot s\cdot \dot v$ with $\dot s= s_1$, $\dot v=s_2\cdot\ldots\cdot s_d$ induces an \'etale locally trivial fibration $D_{\dot w}\to D_{\dot s}$ with general fiber $D_{\dot{v}}$.
		The fibration is Zariski locally trivial by \cite[Proposition 8.7 (b)]{PR08}.
		Hence, \cite[Theorem 5]{Mag75} gives an exact sequence $0\to \Pic(D_{\dot s})\to \Pic(D_{\dot w})\to \Pic(D_{\dot v}) \to 0$, which splits by using the section $D_{\dot s}\to D_{\dot w}$.
		The lemma follows by induction.
	\end{proof}
	
	The universal homeomorphism $\widetilde S_{v}\to S_{v}$ induces an isomorphism on perfections \cite[Lemma 3.8]{BS17}, and we denote by $S_{v}^{{\rm{pf}}}$ its common value.
	For each simple reflection $s\in W_{\on{af}} \backslash
        W_\calG$, we have an isomorphism $S_s=\widetilde{S}_s = D_{\dot s} \cong \bbP^1_k$,
        and the degree map uniquely extends to an isomorphism $\deg\colon \Pic(S_s^{{\rm{pf}}})\cong\bbZ[p^{-1}]$ (see \cite[Lemma 3.5]{BS17}); further $\Pic(\widetilde{S}_s) \cong \Pic(S_s^{{\rm{pf}}}) = 0$ if $s \in W_\calG$, since $\widetilde{S}_s \cong S_s \cong \Spec(k)$ in that case.

	\begin{lemma}\label{picard.group.schubert.lemma}
		There is an isomorphism
		\begin{equation}\label{Picard.group.computation}
		\on{Pic}(S_w^{{\rm{pf}}}) \overset{\cong}{\longto} \bigoplus_{s} \bbZ[p^{-1}], \;\;\; \calL\mapsto (\deg(\calL|_{S_s^{{\rm{pf}}}}))_s
		\end{equation}
                where the sum runs over all $s\in\{s_1,\ldots,s_d\}$
                with $s\not\in W_\calG$.  Further, the pullback map
                $\Pic(\widetilde S_w)\to \Pic(S_w^{{\rm{pf}}})$ is
                injective, and its image is a $\bbZ$-lattice.
	\end{lemma}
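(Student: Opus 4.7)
The plan is to compute $\Pic(S_w^{\pf})$ by comparison with $\Pic(D_{\dot w}^{\pf})$ along the Demazure resolution $f\colon D_{\dot w}\to\widetilde S_w$ of \eqref{equation.reduced.decomposition.Demazure}, building on \Cref{picard.group.demazure.lemma}.

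First I would pass to perfections. Since absolute Frobenius on $\bbP^1_k$ acts as multiplication by $p$ on the Picard group, and perfection inverts Frobenius (see \cite[Lemma 3.5]{BS17}), the Zariski-locally-trivial $\bbP^1$-fibration argument of \Cref{picard.group.demazure.lemma} adapts directly to yield $\Pic(D_{\dot w}^{\pf})\cong\bbZ[p^{-1}]^d$ via restriction to the curves $D_{\dot s_i}^{\pf}$ for $i=1,\ldots,d$. Next, because $f$ is proper birational with $f_*\calO_{D_{\dot w}}=\calO_{\widetilde S_w}$ and $\widetilde S_w$ is normal (\Cref{seminormal.Schubert.varieties.lemma}), pullback $f^*$ is injective on Picard groups (any $\calL$ with $f^*\calL\cong\calO_{D_{\dot w}}$ satisfies $\calL\cong f_*f^*\calL\cong\calO_{\widetilde S_w}$); injectivity is preserved by the filtered colimit defining perfections, giving an injection $(f^{\pf})^*\colon\Pic(S_w^{\pf})\hookrightarrow\bbZ[p^{-1}]^d$.

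The heart of the argument is identifying the image. For each position $i$ with $s_i\in W_\calG$, the curve $D_{\dot s_i}\subset D_{\dot w}$ is contracted to a point by the composite $D_{\dot w}\to\Fl_\calG$, since the rank-one parahoric at the wall of $s_i$ is already contained in $\calG$ and so fixes the basepoint of $\Fl_\calG$; hence the $i$-th coordinate of any pullback bundle vanishes. For two positions $i\neq j$ with $s_i=s_j\notin W_\calG$, the curves $D_{\dot s_i}$ and $D_{\dot s_j}$ share the same image $\widetilde S_{s_i}\subset\widetilde S_w$, so the corresponding coordinates coincide. Thus the image lies in the diagonal sublattice $\bigoplus_{s\in\{s_1,\ldots,s_d\}\setminus W_\calG}\bbZ[p^{-1}]\hookrightarrow\bbZ[p^{-1}]^d$. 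Surjectivity onto this sublattice is obtained by restricting suitable standard (determinant) line bundles on $\Fl_\calG$ to $\widetilde S_w$, as in \cite[Section 10]{PR08}, which can be arranged to hit any prescribed tuple of degrees on the curves $\widetilde S_s\hookrightarrow\widetilde S_w$. The second assertion then follows from the same diagram of injective maps: $\Pic(\widetilde S_w)\hookrightarrow\Pic(D_{\dot w})=\bbZ^d$ lands in the integer diagonal sublattice $\bigoplus_s\bbZ$, which maps injectively into $\Pic(S_w^{\pf})$ as a $\bbZ$-lattice.

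The main obstacle will be the image characterization above: both the contraction of $D_{\dot s_i}$ when $s_i\in W_\calG$ and the coincidence of images for equal simple reflections in distinct positions require careful bookkeeping on the fiber geometry of the Demazure tower. A potentially cleaner alternative is to directly invoke the \'etale-locally-trivial projection $\Fl_\calI\to\Fl_\calG$ with classical partial flag variety fibers to relate the two Picard groups, bypassing some of the Demazure analysis.
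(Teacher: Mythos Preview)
Your injectivity argument and the identification of the image constraints (the $i$-th coordinate vanishes when $s_i\in W_\calG$, and coordinates for positions with the same simple reflection coincide because $D_{\dot s_i}\to\widetilde S_w$ factors through $\widetilde S_{s_i}$) match the paper's approach. The genuine gap is surjectivity.

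Your proposed surjectivity step---restricting ``standard (determinant) line bundles on $\Fl_\calG$ as in \cite[Section 10]{PR08}''---does not work here. The constructions in \cite{PR08} rest on identifying $\Fl_\calG$ with a Kac--Moody flag variety, which is available only for tamely ramified $G$; the present paper is precisely concerned with the wildly ramified case, where no such identification exists (see the footnote in \Cref{subsection_demazure} and \cite[Annexe A]{Lou19}). The adjoint line bundle alone gives only the diagonal tuple, and supplementing it with equivariant line bundles $\calL(\mu)$ requires knowing their degrees on the $S_s$, which is \Cref{lemma.kac.moody.coeff.inj}---but that lemma already presupposes the isomorphism you are trying to prove. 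Likewise, the statement $\Pic(\widetilde\Fl_\calG^0)\cong\bigoplus_s\bbZ$ is only established later in \Cref{picard.group.flag.variety.lemma.normal}, whose proof passes through \Cref{theorem_frobenius_split_allp} and hence through the present lemma. So either route is circular.

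The paper's surjectivity argument avoids this entirely and is specific to the perfect setting. Given $(\la_s)\in\bigoplus_s\bbZ[p^{-1}]$, embed it diagonally into $\bbZ[p^{-1}]^d$ to get a line bundle $\calD$ on $D_{\dot w}^{\pf}$. Factor $f^{\pf}$ as a composite of partial Demazure resolutions with one-dimensional fibers; by construction $\calD$ has degree zero on each such fiber, hence is fibrewise trivial. The key input is then $v$-descent for vector bundles on perfect schemes \cite[Theorem 6.13]{BS17}, which forces $\calD$ to descend to $S_w^{\pf}$. This is the step your outline is missing, and it is what makes the argument go through for arbitrary (possibly wildly ramified) $G$ without any prior knowledge of $\Pic(\widetilde\Fl_\calG)$.
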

	\begin{proof}
	The argument in \cite[Proposition 3.9]{HZ20} applied to
        $f\colon D_{\dot w}\to \widetilde S_w$ translates verbatim,
        and we sketch it for the reader's convenience.
	The pullback map $\Pic(\widetilde S_w)\to \Pic(D_{\dot w})$ is injective using the projection formula and the relation $f_*\calO_{D_{\dot w}}=\calO_{\widetilde S_w}$ from \eqref{equation.reduced.decomposition.Demazure}.
	Under the isomorphism $\Pic(D_{\dot w})\cong \bbZ^d$ from
        \Cref{picard.group.demazure.lemma}, in the $i$-th component,
        for all $i=1,\ldots,d$, the map is given by $\calL\mapsto \deg(\calL|_{S_{s_i}})$ if $s_i\not\in W_\calG$ and $\calL\mapsto 0$ else. 
	(Note that if $s \leq w$ and $i_{\dot s}\colon D_{\dot s} \rightarrow D_{\dot w}$ covers $i_s\colon \widetilde{S}_s \hookrightarrow \widetilde{S}_w$, then the map $\Pic(\widetilde{S}_w) \to \Pic(D_{\dot w}) \overset{i^*_{\dot s}}{\rightarrow} \Pic(D_{\dot s})$ factors as $\Pic(\widetilde{S}_w) \overset{i^*_s}{\rightarrow} \Pic(\widetilde{S}_s) \rightarrow \Pic(D_{\dot s})$, hence is zero if $s \in W_\calG$.)
	For any qcqs $\bbF_p$-scheme $X$ one has $\Pic(X^{{\rm{pf}}})=\Pic(X)[p^{-1}]$ by \cite[Lemma 3.5]{BS17}.
	So, passing to perfections implies injectivity of
        \eqref{Picard.group.computation} and that $\Pic(\widetilde
        S_w)$ defines a $\bbZ$-lattice in $\Pic(S_w^{{\rm{pf}}})$.
        
	To prove surjectivity of \eqref{Picard.group.computation}, let $(\la_s)\in \oplus_s\bbZ[p^{-1}]\subset \bbZ[p^{-1}]^d$.
	This induces a line bundle $\calD=\calD(\la_s)$ on $D_{\dot w}^{{\rm{pf}}}$.
It suffices to show that $\calD$ is trivial along the fibers of
$f^{\mathrm{pf}}$, for 
	once we know this $\calD$ descends to $S_{w}^{{\rm{pf}}}$ by $v$-descent for vector bundles on perfect varieties \cite[Theorem 6.13]{BS17}.
	 First, we factor $f^{{\rm{pf}}}$ as partial Demazure resolutions having fibers of dimension at most $1$ as in the proof of \Cref{lemma.reduction.main.theorem}. 
	By induction we may and do replace $f^{{\rm pf}}$ by a corresponding map $g^{{\rm pf}}\colon S^{{\rm pf}}_s \tilde{\times}S^{{\rm pf}}_v \to S^{{\rm pf}}_w$ with $w=sv$ being a reduced expression. 
	Also, by the proof of \Cref{lemma.reduction.main.theorem} the map $g^{{\rm pf}}$ has non-trivial fibers exactly over the union of all $S_u^{\mathrm{pf}}$, with $u\leq v$ such that $su <u$. 
	So, we restrict our attention to the fibers of the map $h^{\mathrm{pf}}\colon S^{{\rm pf}}_s\tilde{\times}S^{{\rm pf}}_u \to S^{{\rm pf}}_u$. 
	By induction on $w$, we know that $\mathrm{Pic}(S_u^{\mathrm{pf}})$ is a free $\mathbb{Z}[1/p]$-module generated on its $L^+\mathcal{I}$-stable projective lines. 
	Following the argument in \Cref{picard.group.demazure.lemma}, the same assertion holds for $\mathrm{Pic}(S^{\mathrm{pf}}_s\tilde{\times}S_u^{\mathrm{pf}})$. 
	In particular, we see that the restriction of $\calD$ to $S_s^{\mathrm{pf}}\tilde{\times}S_u^{\mathrm{pf}}$ is the pullback of some line bundle $\calL$ on $S_u^{\mathrm{pf}}$ along $h^{{\rm pf}}$.
	Hence, it is trivial along the fibers.
	\end{proof}
	
	
	As perfections preserve closed immersions \cite[Lemma 3.4 (viii)]{BS13}, there is a strict $k$-ind-scheme
	\begin{equation}\label{presentation.perfection.flagvariety}
	\Fl_\calG^{{\rm{pf}}}=\on{colim}S_w^{{\rm{pf}}}
	\end{equation}
	lying over $\widetilde\Fl_{\calG}=\on{colim}\widetilde S_w$ from \eqref{normalized.ind.scheme}.
	Their Picard groups are defined as the limit of the Picard groups of the respective Schubert varieties.
	
	\begin{corollary}\label{picard.group.flag.variety.lemma}
	There is an isomorphism
	\begin{equation}\label{Fl_Picard.group.computation}
	\on{Pic}(\Fl_\calG^{0, {\rm{pf}}}) \overset{\cong}{\longto} \bigoplus_s \bbZ[p^{-1}], \;\;\; \calL\mapsto (\deg(\calL|_{S_s^{{\rm{pf}}}}))_s
	\end{equation}
	where $\Fl_\calG^0$ denotes the neutral component and the sum runs over all simple reflections $s\in W_{\on{af}}\backslash W_\calG$.
	The pullback map $\Pic(\widetilde\Fl_{\calG}) \to \Pic(\Fl_\calG^{{\rm{pf}}})$ is injective.
	\end{corollary}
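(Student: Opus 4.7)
The plan is to reduce the statement to \Cref{picard.group.schubert.lemma} by expressing the relevant Picard groups as inverse limits over perfected Schubert varieties. The neutral component of the ind-scheme \eqref{presentation.perfection.flagvariety} admits the strict presentation $\Fl_\calG^{0,{\rm{pf}}} = \on{colim}_w S_w^{{\rm{pf}}}$ where $w$ ranges over right $W_\calG$-minimal elements of the affine Weyl group $W_{\on{af}}$. Since $\Pic$ of a strict ind-scheme is by definition the inverse limit of Picard groups of its closed subschemes, this gives $\Pic(\Fl_\calG^{0,{\rm{pf}}}) = \lim_w \Pic(S_w^{{\rm{pf}}})$, with transition maps induced by restriction along the closed immersions $S_v^{{\rm{pf}}} \hookrightarrow S_w^{{\rm{pf}}}$ for $v \leq w$.

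Applying \Cref{picard.group.schubert.lemma} termwise identifies $\Pic(S_w^{{\rm{pf}}})$ with $\bigoplus_s \bbZ[p^{-1}]$ indexed by simple reflections from a fixed reduced expression of $w$ that lie outside $W_\calG$. By the subword characterization of the Bruhat order, this index set agrees intrinsically with $\{s \leq w : s \notin W_\calG\}$, so the identifications are independent of the choice of reduced expression. The key point I would verify is that the transition map $\Pic(S_w^{{\rm{pf}}}) \to \Pic(S_v^{{\rm{pf}}})$ corresponds, under these identifications, to the projection onto the components indexed by $\{s \leq v : s \notin W_\calG\}$; this is immediate because the degree $\deg(\calL|_{S_s^{{\rm{pf}}}})$ depends only on the intrinsic pullback to $S_s^{{\rm{pf}}}$ and not on the ambient Schubert variety. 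Taking the inverse limit of these projections between finite direct sums yields the product $\prod_s \bbZ[p^{-1}]$ over all simple reflections $s \in W_{\on{af}} \setminus W_\calG$, and since the set of simple reflections of $W_{\on{af}}$ is finite the product coincides with the direct sum, proving the isomorphism \eqref{Fl_Picard.group.computation}.

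For the injectivity of $\Pic(\widetilde\Fl_\calG) \to \Pic(\Fl_\calG^{{\rm{pf}}})$, the presentations \eqref{normalized.ind.scheme} and \eqref{presentation.perfection.flagvariety} exhibit both groups as inverse limits indexed by all $w$ (across all connected components), and the map in question is the inverse limit of the injections $\Pic(\widetilde S_w) \hookrightarrow \Pic(S_w^{{\rm{pf}}})$ provided by \Cref{picard.group.schubert.lemma}. Since the inverse limit functor on abelian groups preserves injections, the conclusion follows. I do not expect any substantive obstacle in carrying out this plan: the only non-trivial verification is the compatibility of the identifications in \Cref{picard.group.schubert.lemma} with restriction along closed immersions of Schubert varieties, which is tautological from the intrinsic description of the isomorphism via degrees of restrictions to the one-dimensional pieces $S_s^{{\rm{pf}}}$.
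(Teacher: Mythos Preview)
Your proposal is correct and follows essentially the same approach as the paper: both reduce to \Cref{picard.group.schubert.lemma} by passing to the inverse limit over the perfected Schubert varieties and observing that the transition maps are compatible with the degree description. The paper phrases the computation slightly differently, noting that for $v \leq w$ of sufficiently large length the transition map $\Pic(S_w^{\pf}) \to \Pic(S_v^{\pf})$ is already an isomorphism (equal to the identity under \eqref{Picard.group.computation}), so the inverse system stabilizes; this is equivalent to your observation that the inverse limit of the projections is the finite direct sum.
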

	\begin{proof}
	This is immediate from \Cref{picard.group.schubert.lemma}: For $v\leq w$ in $W_{\on{af}}$ with large enough length, the pullback map $\on{Pic}(S_w^{ {\rm{pf}}})\to \on{Pic}(S_v^{ {\rm{pf}}})$ is an isomorphism, which is the identity map under \eqref{Picard.group.computation}.
	\end{proof}

	\subsubsection{The central charge}
	We assume in this subsection (for simplicity) that $G$ is almost simple and simply connected, compare with \Cref{remark.simply.connected.reduction}.
	In particular, the affine flag variety $\Fl_\calG$ is connected.

	The quotient $L^+\calG\to\calG_k$ induces maps $\Fl_\calG=LG/L^+\calG\to [\Spec\, k/L^+\calG]\to [\Spec\, k/\calG_k]$ to the respective quotient stacks.
	Passing to Picard groups we obtain
	\begin{equation}\label{equivariant.embedding.line.bundles.eq}
	X^*(\calG_k)\cong X^*(L^+\calG) \to \Pic(\Fl_\calG)
	\end{equation}
	Here, the first isomorphism holds because the kernel of $L^+\calG \to \calG_k$ is pro-unipotent.
	The Picard groups of the quotient stacks are the respective character groups because giving a line bundle on such a stack is the same as giving a $1$-dimensional representation of the group, that is, a character.
	
	  \begin{lemma}\label{lemma.equivariant.lb}
    The group $\mathrm{Pic}([LG\backslash \Fl_{\calG}])$ of isomorphism classes of line
      bundles on $\Fl_{\calG}$ equipped with an $LG$-equivariant structure naturally identifies with $X^*(\calG_k)$ via
      the induction map
	\begin{equation}\label{eqn.equiv.constr.ker.c}\mu \mapsto \calL(\mu):=LG \times^{L^+\calG}\calO_\mu,
	\end{equation}
	where $\calO_{\mu}$ is the equivariant line bundle on $L^+\calG$ attached to $\mu$.
	\end{lemma}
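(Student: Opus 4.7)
The plan is to reduce the computation to the Picard group of a classifying stack. Since $\Fl_\calG = LG/L^+\calG$ as fpqc sheaves, the quotient map $LG \to \Fl_\calG$ is a right $L^+\calG$-torsor, so passing to quotients by the left $LG$-translation action yields a canonical equivalence of quotient stacks
\begin{equation*}
[LG \backslash \Fl_\calG] \;\simeq\; [\Spec\, k / L^+\calG] \;=\; BL^+\calG.
\end{equation*}
Under this equivalence an $LG$-equivariant line bundle on $\Fl_\calG$ corresponds to a $1$-dimensional $L^+\calG$-representation, so $\Pic([LG\backslash \Fl_\calG]) \cong X^*(L^+\calG)$.

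To get to $X^*(\calG_k)$, I would invoke the identification $X^*(L^+\calG) \cong X^*(\calG_k)$ already recorded as the first isomorphism in \eqref{equivariant.embedding.line.bundles.eq}: since the kernel of $L^+\calG \to \calG_k$ is pro-unipotent, it has no non-trivial multiplicative characters, so every character of $L^+\calG$ factors uniquely through $\calG_k$. Composing with the previous isomorphism produces a canonical bijection $X^*(\calG_k) \xrightarrow{\sim} \Pic([LG\backslash \Fl_\calG])$.

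Finally, I would verify that this composite bijection coincides with the induction map $\mu \mapsto \calL(\mu) = LG \times^{L^+\calG} \calO_\mu$. This is a direct unwinding of definitions: under the equivalence $[LG\backslash \Fl_\calG] \simeq BL^+\calG$, the torsor $LG \to \Fl_\calG$ is the pullback of the universal $L^+\calG$-torsor on $BL^+\calG$, and the associated-bundle construction $LG \times^{L^+\calG} (-)$ is by design the pullback of line bundles from $BL^+\calG$ to $\Fl_\calG$; the $LG$-equivariant structure comes for free from left translation on the $LG$-factor. The main point to check --- rather than a serious obstacle --- is that the fpqc stack quotient formalism is available in the ind-schematic setting, i.e.\ that $LG \to \Fl_\calG$ really is an $L^+\calG$-torsor and that the equivalence $[LG\backslash \Fl_\calG] \simeq BL^+\calG$ is valid. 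This is built into the definition of $\Fl_\calG$ recalled at the start of this section (using \cite[Theorem 1.4]{PR08}), so the argument is formal once this input is in place.
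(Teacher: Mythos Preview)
Your proposal is correct and follows essentially the same approach as the paper: both identify $[LG\backslash \Fl_\calG]\simeq [\Spec\,k/L^+\calG]$ as stacks, pass to Picard groups to get characters of $L^+\calG$, and use pro-unipotence of the kernel of $L^+\calG\to\calG_k$ to descend to $X^*(\calG_k)$. The paper's proof is terser because these identifications are already recorded just before the lemma in \eqref{equivariant.embedding.line.bundles.eq}, and it phrases the inverse map as pullback to the origin rather than unwinding the induction construction, but the content is the same.
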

	\begin{proof}
          This is immediate from the isomorphisms in
          \eqref{equivariant.embedding.line.bundles.eq}: the inverse
          to the induction map is given by pullback of
          $LG$-equivariant line bundles to the origin of
          $\Fl_{\calG}$, noticing that they carry an action of
          $L^+\calG$, that is,
          $[LG\backslash \Fl_\calG]=[\Spec\, k/L^+\calG]$ in terms of
          (\'etale) stacks.
	\end{proof}
	
	We now pass to perfections in order to make the map
        \eqref{equivariant.embedding.line.bundles.eq} explicit,
        compare \Cref{picard.group.flag.variety.lemma}.  So choosing
        any presentation of $LG$ by affine schemes, we denote by
        $LG^\pf$ the colimit of the perfections of the constituents.
        As $k$ is perfect, we can equivalently use the relative
        Frobenius over $k$ to form $LG^\pf$, so it is naturally an
        ind-affine $k$-group ind-scheme.
	
	After perfection, we deduce from \Cref{lemma.equivariant.lb} and \Cref{equivariant.embedding.line.bundles.eq} the homomorphism
	\begin{equation}\label{equation.characters.induced}
	X^*(\calG_k)[p^{-1}] \to \mathrm{Pic}(\Fl_{\calG}^\pf),
	\end{equation} 
	whose image identifies with the line bundles admitting an $LG^\pf$-equivariant structure. 
	In order to explicitly describe \eqref{equation.characters.induced}, we fix the standard basis $\epsilon_i=(0,\dots, 1,\dots, 0)$ of $\mathrm{Pic}(
        \Fl_{\calI}^{\rm{pf}})\cong \oplus_s \bbZ[p^{-1}]$ (see \Cref{picard.group.flag.variety.lemma} for $\calG=\calI$ being the Iwahori).
         It will be convenient for us to fix a certain enumeration of the simple reflections.
   \begin{lemma}\label{lemma.affine.reflection}
   	There exists a simple reflection $s_0$ such that the unique standard maximal parahoric $\calG_0$ with $s_0 \notin W_{\calG_0}$ satisfies the following: the reductive quotient of the special fiber $\calG_{0,k}$ is simply connected and its root system equals the non-multipliable roots of $\Phi_G$.
   \end{lemma}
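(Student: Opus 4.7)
The plan is to proceed by a case-by-case analysis based on the Tits classification \cite{Tit79} of absolutely almost simple simply connected quasi-split groups over $K$ and their local Dynkin diagrams. In each case one exhibits the desired $s_0$ by identifying an appropriate special vertex of the local Dynkin: it must be ``special'' so that the attached echelonnage root system equals the non-multipliable roots of $\Phi_G$, and the corresponding parahoric must moreover have simply connected reductive quotient.

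First, if $G$ is unramified then $\Phi_G$ is reduced, hence equal to its non-multipliable subset, and we may take $s_0$ to correspond to any hyperspecial vertex: the reductive quotient of $\calG_{0,k}$ is then the split simply connected Chevalley group of type $\Phi_G$ over $k$, which is simply connected because $G$ is.

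If $G$ is ramified but $\Phi_G$ is still reduced, the possibilities are (in Tits' notation) ramified ${}^2A_{2n-1}$, ${}^2D_n$, ${}^2E_6$, and (for appropriate residue characteristic) ${}^3D_4$ or ${}^6D_4$, with respective relative root systems $C_n$, $B_{n-1}$, $F_4$, and $G_2$. In each case, inspection of Tits' tables shows that the local Dynkin admits a special vertex whose reductive quotient is the simply connected Chevalley group of type $\Phi_G$ over $k$ (for instance, $\on{Sp}(2n)$ in the case of ramified ${}^2A_{2n-1}$), and we take $s_0$ to be the corresponding simple affine reflection.

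Finally, if $\Phi_G$ is non-reduced then $\Phi_G = BC_n$ and $G$ is necessarily ramified ${}^2A_{2n}$ (that is, ramified $\SU(2n+1)$); the non-multipliable roots then form a root system of type $C_n$. The local Dynkin here has two special vertices, with reductive quotients $\SO(2n+1)$ (root system $B_n$, not simply connected) and $\on{Sp}(2n)$ (root system $C_n$, simply connected), and we take $s_0$ to correspond to the latter. The main obstacle in this approach is the ramified reduced case, which requires a careful case-by-case inspection of Tits' tables: the local Dynkin of such groups may have several special vertices with differing reductive quotients, and one must verify in each case that at least one of them is simply connected with the prescribed root system.
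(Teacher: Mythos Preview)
Your approach is correct in principle, but the paper avoids the case-by-case inspection you flag as the main obstacle. Both proofs start the same way: choose $s_0$ so that the facet fixed by the remaining $s_1,\dots,s_n$ is a special vertex, and in the non-reduced case insist it is not extra special (the paper cites \cite[Table 1.5.51]{KP21} for existence, where you look at Tits' tables). The difference is in verifying that the reductive quotient is simply connected. You do this type by type, reading off the reductive quotients from Tits' tables; the paper instead argues uniformly: the gradients $a_i$ of the simple affine roots for $i>0$ form a basis of the non-multipliable roots $\Phi_G^{\mathrm{nm}}$, so their coroots $a_i^\vee$ (which are non-divisible) form a basis of $\Phi_G^\vee$ and hence of $X_*(S)$, since $G$ is simply connected. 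Because $\calS_k$ is a maximal torus of the reductive quotient of $\calG_{0,k}$ and the $a_i^\vee$ are exactly its simple coroots, the coroot lattice equals the cocharacter lattice, giving simple connectedness without any classification. This buys a cleaner argument and makes transparent why the hypothesis that $G$ is simply connected is what forces the conclusion; your approach has the virtue of being completely explicit about which vertex to take in each case.
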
     
\begin{proof}
 For any positive simple
 affine root $\alpha_s$ in the affine root system $\Psi_G$ in the
 sense of \cite[Definition 4.3.4]{KP23} associated with a simple reflection $s$, let $a_s \in \Phi_G$ be the
 gradient of $\alpha_s$. For any enumeration $s_0, \dots, s_n$ of the simple reflections, we write $a_i:=a_{s_i}$ for $i=0,\dots, n$. We claim that there exists a choice of enumeration such that the $a_i$ for $i>0$ form a basis of $\Phi_G^{\mathrm{nm}}$, the
 sub-root system of non-multipliable roots. In order to see that this is possible, we consider the folowing cases: either $\Phi_G$ is reduced,
 and this amounts to the choice of a special vertex in the fundamental alcove not fixed by $s_0$; or $\Phi_G$ is not
 reduced, and we need to ensure the existence of special vertices which
 are not extra special in the sense of \cite[Proposition
 1.5.39]{KP23}, which can be verified in \cite[Table
 1.5.51]{KP23}. From now on, we fix such an enumeration and claim that the standard maximal parahoric $\calG_0$ attached to $s_0$ satisfies the conditions in the lemma.
 
 In what follows, we canonically identify
 the character and cocharacter groups of the $K$-split torus $S$ with
 those of the special fiber $\calS_k$ of its connected N\'eron
 $O$-model $\calS$.  Note that $\calS_k$ defines a maximal split torus
 of the reductive quotient of $\calG_{0,k}$, because $k$ is
 algebraically closed.  By construction, the $a_i$ for $i>0$ define
 roots of the reductive quotient of $\calG_{0,k}$. In particular, the
 coroots $a_i^\vee$ for $i > 0$
  (which are
  non-divisible) form a basis of the dual root system $\Phi_G^\vee$
  and hence of $X_*(S)$ by our assumption on $G$.
\end{proof}   
    From now on, we fix an enumeration $s_0, s_1, \dots,
        s_n$ of the simple reflections with $s_0$ being as in \Cref{lemma.affine.reflection}. 
  With our numbering system above in terms of our choice of
  special vertex, this has the following explicit description: for
  $i>0$, $a_i$ is the non-multipliable relative root whose reflection
  is $s_i$, and $a_0$ is the negative of the highest multipliable
  relative root.

        	\begin{lemma}\label{lemma.kac.moody.coeff.inj}
		Let $a_i^\vee\in X_*(S)$ be the coroot associated to the root $a_i$ as defined above. Under the isomorphism \eqref{Fl_Picard.group.computation}, the map
		\eqref{equation.characters.induced} is given by
		\begin{equation}\label{lemma.kac.moody.coeff.eq}
		\mu \mapsto \sum \langle a_i^\vee, \mu\rangle \epsilon_i, 
		\end{equation}
		where the sum runs over all $i=0,\ldots,n$ with $s_i\not\in W_\calG$.
		Thus, \eqref{equation.characters.induced} is injective
                and has cokernel free over $\bbZ[p^{-1}]$ of rank $1$.
	\end{lemma}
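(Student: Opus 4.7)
The plan is, for each $i$ with $s_i \notin W_\calG$, to compute the component $\deg(\calL(\mu)|_{S_{s_i}^{\pf}})$ of the image of $\mu$ under \eqref{equation.characters.induced} and identify it with $\langle a_i^\vee, \mu\rangle$; the linear algebra of coroots provided by \Cref{lemma.affine.reflection} then yields injectivity and the cokernel description. First I would fix a simple reflection $s = s_i$ with $s \notin W_\calG$ and let $\calP_s \supset \calI$ be the unique minimal parahoric $O$-group scheme whose Iwahori--Weyl group is $\{1, s\}$. The induced map $\calP_{s,k}/\calI_k \to S_s \subset \Fl_\calG$ is a $k$-isomorphism, and the reductive quotient of $\calP_{s,k}$ modulo its pro-unipotent radical is a rank-one reductive group containing the split torus $\calS_k$. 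By construction of $a_i$ as the gradient of the simple affine root $\alpha_{s_i}$, the positive coroot of this rank-one quotient, transported to $X_*(S)$, equals $a_i^\vee$: for $i > 0$ this is the statement of \Cref{lemma.affine.reflection} regarding the root system of $\calG_{0,k}$; for $i = 0$ it is implicit in the choice of enumeration at the affine end of the Dynkin diagram.

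Next I would restrict $\calL(\mu) = LG \times^{L^+\calG} \calO_\mu$ to $S_s$. Since $\mu$ is pulled back from $\calG_k$ to $\calI_k$ and the inclusion $\calP_{s,k} \hookrightarrow L^+\calG$ factors the map $\calP_{s,k}/\calI_k \to \Fl_\calG$, this restriction is the equivariant line bundle $\calP_{s,k} \times^{\calI_k} \calO_\mu$ on the rank-one flag variety $\calP_{s,k}/\calI_k \cong \bbP^1_k$. The standard Borel--Weil calculation for $\bbP^1$ yields
\begin{equation*}
  \calL(\mu)|_{S_s} \;\cong\; \calO_{\bbP^1}\bigl(\langle a_i^\vee, \mu\rangle\bigr),
\end{equation*}
of degree $\langle a_i^\vee, \mu\rangle$, and this degree is preserved under perfection by \Cref{picard.group.schubert.lemma}. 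Summing over all relevant $i$ gives the formula \eqref{lemma.kac.moody.coeff.eq}.

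For injectivity and the cokernel, I would use the basis $a_1^\vee, \dots, a_n^\vee$ of $X_*(S)$ from \Cref{lemma.affine.reflection}. The map $X^*(S) \to \bbZ^n$, $\mu \mapsto (\langle a_i^\vee, \mu\rangle)_{i=1}^n$, is thus an isomorphism, and the remaining coordinate $\langle a_0^\vee, \mu\rangle$ is $\bbZ$-linearly determined via the unique integral relation $a_0^\vee + \sum_{i=1}^n c_i a_i^\vee = 0$ expressing $a_0^\vee$ in the basis $(a_i^\vee)_{i \geq 1}$. The image in $\bbZ^{n+1}$ is the primitive hyperplane $v_0 + \sum c_i v_i = 0$, which is saturated since the coefficient of $v_0$ equals $1$; consequently the map is injective with cokernel $\bbZ$, free of rank $1$. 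Tensoring with $\bbZ[p^{-1}]$ preserves both properties. For general $\calG$, the source $X^*(\calG_k)$ is the subgroup of $X^*(S)$ vanishing on $a_i^\vee$ for $s_i \in W_\calG$ and the target is the corresponding partial sum, so the same saturation argument applies.

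The principal obstacle is the computation in the first step for $i = 0$, as the rank-one parahoric $\calP_{s_0}$ involves the loop direction and, in the non-reduced case, interacts delicately with multipliable roots. The enumeration specified in \Cref{lemma.affine.reflection} is calibrated precisely so that the classical coroot appearing in the Borel--Weil formula is the intended $a_0^\vee$; once this identification is secured, the Borel--Weil calculation on $\bbP^1$ is entirely routine.
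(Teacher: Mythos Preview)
Your proposal is essentially correct and follows the same approach as the paper: compute the restriction of $\calL(\mu)$ to each $S_{s_i}$ via the rank-one minimal parahoric $\calP_{s_i}$ and Borel--Weil on $\bbP^1$, then use that $a_1^\vee,\dots,a_n^\vee$ is a $\bbZ$-basis of $X_*(S)$ (from \Cref{lemma.affine.reflection}) to handle the Iwahori case. Two minor points are worth noting. First, your final paragraph overstates the difficulty at $i=0$: the paper treats all $i$ uniformly, simply observing that the reductive quotient of $\calP_{s_i,k}$ has simply connected cover $\on{SL}_2$ with positive coroot $a_i^\vee$. Second, your passage to general $\calG$ via ``the same saturation argument'' is too brief. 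It is not clear that some $n-m$ of the remaining coroots project to a basis of the relevant quotient lattice, so the Iwahori argument does not literally repeat. The paper instead invokes that $X^*(\calG_k)$ is a direct summand of $X^*(S)$ to reduce injectivity and freeness of the cokernel to the Iwahori case, and then performs a separate rank count (using that any $n$ of the $a_i^\vee$ are $\bbQ$-linearly independent) to conclude the cokernel has rank $1$. Your description of $X^*(\calG_k)$ as the orthogonal of $\{a_j^\vee: s_j\in W_\calG\}$ is correct and, combined with a short snake-lemma argument against the Iwahori map, would also complete the step.
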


\begin{proof}
  Let $ \calP_i$ be the minimal standard parahoric such that
  $L^+\calP_i/L^+\calI = S_{s_i}$. The reductive quotient of the
  special fiber of $\calP_i$ has simply connected cover isomorphic to
  $\on{SL}_2$ with positive coroot $a_i^\vee$.  Therefore, the pullback to
  $S_{s_i}^{\rm{pf}}$ of the equivariant line bundle $\calL(\mu)$ attached to a weight $\mu \in X^*(S)[p^{-1}]\cong X^*(\calI_k)[p^{-1}]$ is
  isomorphic to $\calO(\langle a_i^\vee, \mu\rangle)$, hence
  \eqref{lemma.kac.moody.coeff.eq} holds. It is well-known from the theory of algebraic groups that $X^*(\calG_k)$ is a
  direct summand of $X^*(S)$, compare with \cite[Corollary A.2.7]{CGP15}. Hence, to deduce injectivity of \eqref{equation.characters.induced} and
  freeness of its cokernel, we may and do assume that $\calG=\calI$ is the Iwahori. Due to the fact that $\calS_k$ identifies with a maximal torus in the reductive quotient of $\calG_{0,k}$, which is simply connected with roots $\Phi^{\rm nm}_G$, the coroots $a_i^\vee$ for $i>0$ form a basis of $X_*(S)$. So its dual basis $\omega_i$ form a basis of $X^*(S)$, and thus \eqref{lemma.kac.moody.coeff.eq} admits a section.  
  Finally, to see that the cokernel has rank $1$ for arbitrary $\calG$, we proceed as follows. First, we notice that for any $i=0, \dots, n$, the set $a_j$ for $j\neq i$ forms a basis of $X^*(S)_\bbQ$, because otherwise the affine reflections $s_j$ for $j \neq i$ would have a positive-dimensional intersection in $\scrA(G,S)$. Suppose $W_\calG$ contains exactly $m<n+1$ many simple reflections and notice that the associated relative roots are still linearly independent in $X_*(S)_\bbQ$ by our previous observation. Let $\calS_k^{\der}$ denote the maximal torus of the derived subgroup of $\calG_k^{\mathrm{red}}$ and notice that $X^*(\calS_k^\der)$ has rank $m$. We deduce that the cokernel $X^*(\calG_k)_\bbQ$ of $X^*(\calS_k^\der)_\bbQ \to X^*(S)_\bbQ$ has rank $n-m$, whereas $\mathrm{Pic}(\Fl_{\calG}^\pf)$ has rank $n+1-m$ by \Cref{Fl_Picard.group.computation}.
      \end{proof}

	Using that \eqref{equation.characters.induced} is injective
        and has free cokernel of rank $1$ (see
        \Cref{lemma.kac.moody.coeff.inj}), we construct a  homomorphism	
	\begin{equation}\label{equation.central.charge}
	 \on{Pic}(\Fl_{\calG}^{\pf})\rightarrow \bbZ[p^{-1}], \; \calL\mapsto c_{\calL},
	\end{equation}
	called the \textit{central charge homomorphism}, uniquely
        characterized by the following properties: its kernel is
        $X^*(\calG_k)[p^{-1}]$; it factors through
        $\on{Pic}(\Fl_{\calG}^{\pf})\to \on{Pic}(\Fl_{\calI}^{\pf})$;
        for $\calG=\calI$ the standard $\bbZ$-lattice
        $\oplus_s\bbZ\subset \oplus_s\bbZ[p^{-1}]\cong
        \Pic(\Fl_\calI^\pf)$ (see
        \Cref{picard.group.flag.variety.lemma}) maps onto
        $\bbZ\subset \bbZ[p^{-1}]$, preserving positive degrees. Please note that this map is just the $\mathbb{Z}[p^{-1}]$-linearization of the usual central charge as defined in \cite[Equation (2.2.3)]{Zhu14}. The only reason we defined it in the perfect setting is because we have not yet proved \Cref{theorem_frobenius_split_allp}, so we do not control the Picard group of the $\Fl_{\calG}$, but only of its perfection. We remark that the homomorphism \eqref{equation.central.charge} is surjective when
        $\calG=\calI$ is the Iwahori, but usually not for general
        parahorics, see \cite[Section 2.2, page 12]{Zhu14}.

	\begin{lemma}\label{lemma.kac.moody.coeff}
		Let $\omega_i \in X^*(S)$ for $i=1 ,\ldots,n$ be the dual basis to $a_i^\vee$.
		Under the isomorphism \eqref{Fl_Picard.group.computation}, the map
		\eqref{equation.central.charge} is given by
		\begin{equation}
		(\la_i) \mapsto \la_0-\sum_{i>0} \langle a_0^\vee, \omega_{i} \rangle \la_i,
		\end{equation}
                 where we use the convention that $\lambda_i=0$ whenever $s_i \in W_{\calG}$.
		In particular, the coefficients $1$ and $-\langle a_0^\vee,\omega_{i}\rangle$ are the numbers attached in \cite[Section 6.1]{Kac90} to the vertices of the dual affine Dynkin diagram of $G$.
	\end{lemma}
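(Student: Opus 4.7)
The strategy is to verify that the explicit linear functional
\[
\varphi\colon \bigoplus_{s\not\in W_\calG}\bbZ[p^{-1}] \longto \bbZ[p^{-1}], \qquad (\la_i) \longmapsto \la_0 - \sum_{i>0}\langle a_0^\vee, \omega_i\rangle \la_i,
\]
with the convention $\la_0 = 0$ if $s_0 \in W_\calG$, satisfies the three characterizing properties of the central charge \eqref{equation.central.charge}, and then to identify the resulting coefficients with those of \cite{Kac90}. By the uniqueness built into \eqref{equation.central.charge}, this is enough.

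The factoring of $\varphi$ through $\Pic(\Fl_\calI^\pf)$ is already built into its definition via the $\la_i=0$ convention: under \Cref{picard.group.flag.variety.lemma} the pullback $\Pic(\Fl_\calG^\pf)\to\Pic(\Fl_\calI^\pf)$ is simply the inclusion of the summands indexed by $s\notin W_\calG$ into the sum over all simple reflections. Hence it suffices to verify the remaining two properties in the Iwahori case, treating $\varphi$ as a functional on all of $\bbZ[p^{-1}]^{n+1}$.

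The main computation is to identify $\ker \varphi$ with the image of \eqref{equation.characters.induced}. By \Cref{lemma.kac.moody.coeff.inj} that image is spanned by tuples $(\langle a_i^\vee, \mu\rangle)_{i=0}^n$ for $\mu\in X^*(S)[p^{-1}]$. Since $a_1^\vee,\dots,a_n^\vee$ is a basis of $X_*(S)$ (as observed in the proof of that lemma), any $\mu\in X^*(S)$ admits the expansion $\mu=\sum_{j>0}\langle a_j^\vee,\mu\rangle\,\omega_j$ in the dual basis. Pairing with $a_0^\vee$ yields
\[
\langle a_0^\vee, \mu\rangle \;=\; \sum_{j>0}\langle a_j^\vee,\mu\rangle\,\langle a_0^\vee,\omega_j\rangle,
\]
which is precisely the relation $\varphi((\langle a_i^\vee,\mu\rangle)_i)=0$. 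Since the image of \eqref{equation.characters.induced} has corank one by \Cref{lemma.kac.moody.coeff.inj} and $\varphi$ itself has rank one, equality of kernels follows.

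It remains to check the normalization and the identification with Kac's numbers. The functional $\varphi$ evidently sends the standard $\bbZ$-lattice to $\bbZ$ and is surjective onto $\bbZ$. The positivity of degrees follows from $a_0=-\theta$ with $\theta$ the highest multipliable relative root: writing $\theta^\vee=\sum_{i>0}m_i a_i^\vee$ with positive integers $m_i$ one obtains $-\langle a_0^\vee,\omega_i\rangle=\langle \theta^\vee, \omega_i\rangle = m_i>0$. Together with the coefficient $1$ of $\la_0$, these integers $m_i$ are by construction the marks of the dual affine Dynkin diagram of $G$, as listed in \cite[Section 6.1]{Kac90}. I expect the only real obstacle to be the bookkeeping required to match our enumeration of simple reflections with that of Kac, which is ensured by the particular choice of special vertex $s_0$ made in \Cref{lemma.affine.reflection}; with that choice the identification is tautological.
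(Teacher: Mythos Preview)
Your proof is correct and takes essentially the same approach as the paper's. The paper argues directly: since $\calL(\omega_i)$ lies in $\ker c$ and maps to $\langle a_0^\vee,\omega_i\rangle\epsilon_0+\epsilon_i$ under \eqref{Fl_Picard.group.computation}, one solves $c(\epsilon_i)=-\langle a_0^\vee,\omega_i\rangle c(\epsilon_0)$ and then normalizes; you instead define $\varphi$ explicitly and verify it satisfies the characterizing properties of $c$, which amounts to the same linear-algebra computation read in the other direction.
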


\begin{proof}
	The proof of \Cref{lemma.kac.moody.coeff.inj} shows that $\calL(\omega_i)$ is the image of
        $\langle a_0^\vee, \omega_i\rangle \epsilon_0+\epsilon_i$
        under the bijection \eqref{Fl_Picard.group.computation}.
        Hence, we get $c(\epsilon_i)=-\langle a_0^\vee, \omega_i\rangle c(\epsilon_0)$. 
        So $c(\oplus_s \bbZ) \subset \bbZ c(\epsilon_0)$ and by our choice of normalization $c(\epsilon_0)=1$, thus $c(\epsilon_i)=-\langle a_0^\vee, \omega_i\rangle $ for $i>0$.
	
	Finally, for the comparison with Kac--Moody theory, this can be seen by inspecting \cite[Theorem 4.8, Tables Aff 1-3]{Kac90} or the construction of the central charge for untwisted and twisted Kac--Moody algebras, see \cite[Theorems 7.4 and 8.3]{Kac90}. Alternatively, we may observe that these coefficients are combinatorial data that do not really depend on the arithmetic properties of $G$, so we may assume the latter to be tamely ramified, in which case $\Fl_{\calG}$ identifies with a Kac--Moody flag variety, see \cite[9.h and Proposition 10.1]{PR08} and also \cite[Annexe A]{Lou23}.
\end{proof}

Recall that for $\calG={\rm{GL}}_n$ we have an ample line bundle $\calL_{\on{det}}=\calO(1)$ on $\Fl_{{\rm{GL}}_n}$ such that $c(\calL_{\on{det}})=1$. Pulling it back along the adjoint representation $\mathrm{ad}\colon \Fl_{\calG} \to \Fl_{\mathrm{GL}(\mathrm{Lie}\calG)}$, we get an ample line bundle $\calL_{\mathrm{ad}}$ on $\Fl_{\calG}$ whose central charge can still be determined:

\begin{lemma}\label{lem.central.charge.adjoint}
	The central charge $c(\calL_{\mathrm{ad}})$ of the adjoint line bundle is equal to $2h^\vee$, where $h^\vee$ is the dual Coxeter number of the split form of $G$.
\end{lemma}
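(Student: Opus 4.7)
The strategy is to reduce the statement to a classical identity in Kac--Moody theory through a sequence of simplifications, and to apply the well-known computation of the Dynkin index of the adjoint representation.

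First, since the central charge homomorphism \eqref{equation.central.charge} factors through $\on{Pic}(\Fl_{\calI}^{\pf})$ by construction, and the adjoint morphism on $\Fl_{\calI}$ is compatible with that on $\Fl_{\calG}$ via the natural projection $\Fl_{\calI}\to \Fl_{\calG}$ (induced by the inclusion of the lattice $\Lie \calI \subset \Lie \calG$), one may replace $\calG$ by the Iwahori $\calI$. Using the reduction recorded in \Cref{remark.simply.connected.reduction} together with additivity of central charges and of dual Coxeter numbers under products of almost simple factors, we further reduce to the case where $G$ is absolutely almost simple and simply connected.

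Second, arguing as in the last paragraph of the proof of \Cref{lemma.kac.moody.coeff}, the coefficients controlling both the central charge and the Dynkin index of the adjoint representation (as well as $h^\vee$) depend only on the affine root datum of $G$ and not on the arithmetic of $K$. Thus we may further assume that $G$ is tamely ramified, in which case $\Fl_{\calI}$ identifies with a twisted Kac--Moody affine flag variety by \cite[9.h, Proposition 10.1]{PR08} together with \cite[Annexe A]{Lou19}.

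In this Kac--Moody setting the proof proceeds by showing that for any algebraic representation $\rho\colon G \to \GL(V)$, the central charge of $\rho^*\calL_{\det}$ equals the Dynkin index of $\rho$. Concretely, one computes this by restricting to the rank-one Schubert curves $S_{s_i}^{\pf}\cong\bbP^1$ attached to each simple affine reflection $s_i$, determining the degree there via the associated $\SL_2$-subrepresentation on $V$, and then applying the combinatorial formula of \Cref{lemma.kac.moody.coeff} to convert the degrees on simple $\bbP^1$'s into the central charge. Applied to the adjoint representation $\rho=\mathrm{Ad}$ acting on $\Lie G$, this calculation produces precisely the Dynkin index of the adjoint representation of a simple Lie algebra, which is $2h^\vee$ by the standard identification of the Killing form with $2h^\vee$ times the basic invariant bilinear form.

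The main obstacle is reconciling the normalization of the central charge adopted in \eqref{equation.central.charge} with the conventional Kac--Moody level. This comes down to an explicit $\SL_2$-check: on the Schubert $\bbP^1$ corresponding to $s_0$ the pullback of $\calL_{\det}$ under the adjoint map has degree exactly equal to $2h^\vee$, reflecting the weights of $\mathrm{Ad}$ along the long coroot $a_0^\vee$, and the normalization $c(\epsilon_0)=1$ fixed in the construction of \eqref{equation.central.charge} then yields $c(\calL_{\mathrm{ad}}) = 2h^\vee$.
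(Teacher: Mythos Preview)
Your overall plan—reduce to the Iwahori, compute the degrees of $\calL_{\mathrm{ad}}$ on the Schubert curves $S_{s_i}$, then feed these into the combinatorial formula of \Cref{lemma.kac.moody.coeff}—is exactly the paper's approach. But the execution has a genuine error and an unnecessary detour.

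The detour is the reduction to the tame case. The paper instead invokes \cite[Lemma 4.2]{Zhu14} directly, which shows (without any tameness hypothesis) that $\deg(\calL_{\mathrm{ad}}|_{S_{s_i}})=2$ for every $i$; plugging $\lambda_i=2$ into \Cref{lemma.kac.moody.coeff} gives $c(\calL_{\mathrm{ad}})=2\bigl(1-\sum_{i>0}\langle a_0^\vee,\omega_i\rangle\bigr)=2h^\vee$. Your claim that the degrees $\lambda_i$ ``depend only on the affine root datum'' is precisely what needs to be proved; it is not obvious a priori, since $\calL_{\mathrm{ad}}$ is built from the action on $\Lie\calI$, which is sensitive to the arithmetic of $K$. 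Zhu's lemma is the substance here, not a formality to be bypassed.

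The error is in your final paragraph: the degree of $\calL_{\mathrm{ad}}$ on $S_{s_0}$ is $2$, not $2h^\vee$. You seem to have conflated the Dynkin index of the full adjoint representation (which is $2h^\vee$, and does equal the central charge) with the degree on a single Schubert $\bbP^1$. Already for $G=\SL_2$ (where $h^\vee=2$) one checks $\lambda_0=\lambda_1=2$, and the formula gives $c=2-\langle a_0^\vee,\omega_1\rangle\cdot 2=2-(-1)\cdot 2=4=2h^\vee$. If instead $\lambda_0=2h^\vee$ as you assert, the formula $c=\lambda_0-\sum_{i>0}\langle a_0^\vee,\omega_i\rangle\lambda_i$ would only return $2h^\vee$ when the remaining sum vanishes, which it does not. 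So the ``explicit $\SL_2$-check'' you describe does not close the argument; what is actually needed is the uniform computation $\lambda_i=2$, and that is the missing input.
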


\begin{proof}
	We invoke \cite[Lemma 4.2]{Zhu14} at Iwahori level, which shows that $\calL_{\mathrm{ad}}$ has degree $2$ when restricted to every $S_{s_i}$, and which does not use any tameness assumptions. But it is well-known that the sum $1-\sum \langle \omega_i,a_0^\vee \rangle $ equals the dual Coxeter number. For general parahoric level, there is a reduction step in the remaining paragraphs of the proof of \cite[Proposition 4.1]{Zhu14} that follow the Iwahori lemma cited above.
\end{proof}

	A key property of \eqref{equation.central.charge} is its constancy along the fibers of Beilinson--Drinfeld Grassmannians, and we extend the results \cite[Lemma 18, Remark 19]{He10} and \cite[Proposition 4.1, Corollary 4.3]{Zhu14} from tamely ramified groups to general reductive groups as follows. 
	Let $\Gr_\calG\to \Spec(O)$ be the Beilinson--Drinfeld Grassmannian, see \cite[Definition 2.3]{Ric16} and \cite[Section 0.3]{Ric19b} for a definition independent of auxiliary choices. 
	Then $\Gr_\calG\to \Spec(O)$ is an ind-projective ind-scheme, its generic fiber $\Gr_{\calG,K}$ is equivariantly isomorphic to the affine Grassmannian $\Gr_G$ formed using an additional formal parameter \cite[Section 0.2]{Ric19b}, whereas its special fiber $\Gr_{\calG,k}$ is equal to $\Fl_\calG$.
	Looking ahead to the proof of \Cref{line.bundle.extension} below, we note that the line bundle $\calL_{\mathrm{ad}}$ above extends to a line bundle on $\Gr_{\calG }$, by the same construction (use \cite[$\S2.5$]{Ric16}); we denote the extension also by $\calL_{\mathrm{ad}}$.
	By our assumptions on the group, we can write $G=\Res_{L/K}G_0$ for some finite, separable field extension $L/K$ and some absolutely almost simple, simply connected reductive $L$-group $G_0$.
	 Given a scheme $X$, let $\mathrm{Pic}(X)_\bbQ$ denote the rationalized Picard group of $X$. For an ind-scheme $X$, we define $\mathrm{Pic}(X)_\bbQ$ as the limit of the $\mathrm{Pic}(X_i)_\bbQ$ along a presentation (in all cases considered in this paper, this will match the $\bbQ$-localization of $\mathrm{Pic}(X)$).

	\begin{lemma}\label{line.bundle.extension}
		 The following properties hold:
		\begin{enumerate}
			\item 
			\label{line.bundle.extension.1}
			The map $\on{Pic}(\Gr_{\calG})_\bbQ \to \on{Pic}(\Fl_{\calG})_\bbQ$ is surjective.
			\item 
			\label{line.bundle.extension.2}
			Every $\calL \in \mathrm{Pic}(\Gr_{\calG})_\bbQ$ has geometric generic fiber isomorphic to $\calO(c_{ \calL_k})$, the $c_{ \calL_k}$-th tensor power of $\boxtimes_{[L:K]}\calO(1)$ on $\Gr_{G,\bar K}\cong \prod_{[L:K]}\Gr_{G_0,\bar K}$. 
		\end{enumerate}
	\end{lemma}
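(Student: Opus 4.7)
The plan is to prove both parts together by exhibiting sufficiently many line bundles on $\Gr_\calG$ whose classes generate $\Pic(\Fl_\calG)_\bbQ$ upon restriction and whose geometric generic fibers admit an explicit description.

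For part (1), by \Cref{lemma.kac.moody.coeff.inj} the group $\Pic(\Fl_\calG)_\bbQ$ is generated over $\bbQ$ by the image of $X^*(\calG_k)$ together with any single class of nonzero central charge. The adjoint line bundle $\calL_{\mathrm{ad}}$ already extends to $\Gr_\calG$ as noted in the text, and its central charge is $2h^\vee \neq 0$ by \Cref{lem.central.charge.adjoint}. It thus remains to extend rationally the equivariant line bundles $\calL(\chi)$ from \Cref{lemma.equivariant.lb} for $\chi \in X^*(\calG_k)$. This is done by using the two-dimensional group lift $\underline{\calG}$ from \Cref{section_lifts}: the associated global arc group acts on $\Gr_\calG$ compatibly with $L^+\calG$ on $\Fl_\calG$, and a rational lift of $\chi$ to a character of this arc group induces by induction an equivariant line bundle on $\Gr_\calG$ restricting to $\calL(\chi)$ on the special fiber. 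Together with $\calL_{\mathrm{ad}}$, these line bundles surject onto $\Pic(\Fl_\calG)_\bbQ$.

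For part (2), fix $\calL \in \Pic(\Gr_\calG)_\bbQ$. Since $G = \Res_{L/K}G_0$ with $G_0$ absolutely almost simple and simply connected, there is a decomposition $G_{\bar K} \cong \prod_\sigma G_{0,\bar K}$ indexed by the embeddings $\sigma\colon L \hookrightarrow \bar K$, inducing $\Gr_{G,\bar K} \cong \prod_\sigma \Gr_{G_0,\bar K}$. Each factor has $\Pic(\Gr_{G_0,\bar K}) \cong \bbZ$ generated by the unique ample class $\calO(1)$, so $\Pic(\Gr_{G,\bar K})_\bbQ$ is $\bbQ^{[L:K]}$ via external tensor product. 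The class of $\calL_{\bar K}$ must be $\Gal(\bar K/K)$-invariant in order to descend to $\Gr_G$, and since the Galois action permutes the embeddings $\sigma$ transitively, the invariants form the diagonal line. Consequently $\calL_{\bar K}$ is isomorphic to $\calO(c)$ in the sense stated, for some $c \in \bbQ$.

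To identify $c$ with $c_{\calL_k}$, both invariants are additive in $\calL$, so it suffices to check the identity on the generators produced in part (1). For the equivariant extensions of $\calL(\chi)$, the central charge vanishes because $\chi$ lies in the kernel of the central charge map of \Cref{lemma.kac.moody.coeff.inj}, and the geometric generic fiber is trivial since every $L^+G$-equivariant line bundle on $\Gr_{G,\bar K}$ corresponds to a character of $L^+G$ factoring through $X^*(G_{\bar K})_\bbQ = 0$ (as $G$ is simply connected). For $\calL_{\mathrm{ad}}$, the central charge is $2h^\vee$ by \Cref{lem.central.charge.adjoint}, while the geometric generic fiber, computed factor by factor via the block-diagonal decomposition of the adjoint representation, is $\boxtimes_\sigma \calO(2h^\vee_{G_0})$, namely the $2h^\vee$-th tensor power of $\boxtimes_\sigma \calO(1)$ since $h^\vee = h^\vee_{G_0}$ by the definition of the split form of $G$ as the $[L:K]$-fold product of the split form of $G_0$. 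The main technical obstacle lies in step (1): because $\calG_k$ is in general not reductive and $X^*(\calG_k)$ can be strictly larger than $X^*(\calG)$, one cannot directly induce from characters of $\calG$ over $O$, and the use of the global arc group attached to $\underline{\calG}$ over $O\pot{t}$ together with rational denominators is what allows one to circumvent this.
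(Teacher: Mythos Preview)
Your approach shares the paper's overall strategy of producing explicit line bundles on $\Gr_\calG$, but it has two genuine gaps.

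\textbf{Part (1).} Your proposed extension of the equivariant bundles $\calL(\chi)$ via the Witt lift $\underline\calG$ over $W(k)\pot{t}$ is not well defined: the Beilinson--Drinfeld Grassmannian $\Gr_\calG$ lives over $O=k\pot{t}$, not over $W(k)\pot{t}$, and there is no ``global arc group attached to $\underline\calG$'' acting on it. Moreover, your final paragraph correctly identifies the obstruction (the jump from $X^*(\calG)$ to $X^*(\calG_k)$) but your proposed workaround does not resolve it. The paper instead uses a direct map
\[
\Gr_\calG \longrightarrow [\Spec\,k/\calG_k],
\]
obtained by forgetting the modification and restricting the $\calG$-torsor to the divisor $\{t=0\}$. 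On Picard groups this factors $X^*(\calG_k)\to\Pic(\Gr_\calG)\to\Pic(\Fl_\calG)$ through the equivariant embedding \eqref{equivariant.embedding.line.bundles.eq}, and no two-dimensional lift or rational denominators are needed.

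\textbf{Part (2).} Your Galois invariance step is correct and elegant: it forces $\calL_{\bar K}\cong\calO(c)$ for a single integer $c$. But the identification $c=c_{\calL_k}$ does not follow from checking on ``the generators produced in part (1)'', because those are only shown to generate $\Pic(\Fl_\calG)_\bbQ$, not $\Pic(\Gr_\calG)_\bbQ$. In other words, you have two homomorphisms $\Pic(\Gr_\calG)_\bbQ\to\bbQ$ that agree on a subset whose span is unknown; nothing rules out an $\calL$ with $\calL_k\cong\calO$ but $\calL_{\bar K}$ nontrivial. The paper closes exactly this gap: after subtracting the explicit classes it reduces to the case $\calL_k\cong\calO$, and then argues by contradiction using openness of the ample locus on the local model $M_{\calG,\mu}$ (if $\calL_{\bar K}$ were nontrivial, one could twist to make $\calL_k$ ample and $\calL_{\bar K}$ trivial, contradicting \cite[Corollaire 9.6.4]{EGAIV3}).
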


\begin{proof}
	There is a natural map $\Gr_{\calG} \to [\Spec \,k /\calG_{k}]$ to the classifying stack of $\calG_k$-bundles over $k$, given by forgetting the modification and then restricting the torsor to the subscheme defined by the principal ideal $t$.
	This map factors the map $\Fl_\calG \to [\Spec \,k /\calG_{k}]$ (compare \Cref{equivariant.embedding.line.bundles.eq}) under the identification $\Gr_{\calG,k}=\Fl_\calG$.
	Passing to Picard groups, we get maps $X^*(\calG_{k}) \to \on{Pic}(\Gr_{\calG})\to \on{Pic}(\Fl_{\calG})$ whose composition is \eqref{equivariant.embedding.line.bundles.eq}.
	After rationalizations, the maps are injective. 
	Further, $\calL_\ad$ and $\mathrm{ker}(c)_\bbQ=X^*(\calG_{k})_\bbQ$ generate the $\bbQ$-vector space $\on{Pic}(\Fl_{\calG})_\bbQ$ by \Cref{picard.group.flag.variety.lemma}.
	This shows \eqref{line.bundle.extension.1}.

	For \eqref{line.bundle.extension.2}, we start by noticing that its conclusion is satisfied by the image of $X^*(\calG_{k})_\bbQ \to \on{Pic}(\Gr_{\calG})_\bbQ$. Indeed, the map $\Gr_{G,\bar K} \to [\Spec \,k /\calG_{k}]$ factors through the trivial torsor by Beauville--Laszlo gluing, so it must induce the zero map on the rationalized Picard group. Moreover, the conclusion holds as well for $\calL_{\mathrm{ad}}$ defined over $\Gr_{\calG}$ again by pulling back $\calL_{\mathrm{det}}$ along the adjoint map to the Lie algebra. Indeed, on the geometric generic fiber $\Gr_{G,\bar K}\cong \prod_{[L:K]} \Gr_{G_0,\bar K}$, the line bundle $\calL_{\mathrm{ad}}$ becomes isomorphic to $\calO(2h^\vee)$, where $h^\vee$ is the dual Coxeter number of $G_0$, by \Cref{lem.central.charge.adjoint} applied to each of the factors $G_0$. On the special fiber $\Fl_{\calG}$, we also know by \Cref{lem.central.charge.adjoint} that $c_{\calL_{\mathrm{ad}}}=2h^\vee$. 

        Since the previous explicitly given rationalized line bundles
        generate $ \on{Pic}(\Fl_{\calG})_\bbQ$ as seen already, we may
        and do assume that our abstract rationalized line bundle
          $\calL$ on $\Gr_{\calG}$ has trivial special fiber
        $\calL_{k}=\calO$.  Let $\mu$ be a conjugacy class of
        cocharacters in $G_{\bar K}$ with reflex field $E\supset K$.
        Let $M_{\calG,\mu}$ be the orbit closure of $S_{G,\mu}$ over
        $ O_E$, see \Cref{loc_mod_def}, and suppose that $\mu$ is
        supported on exactly one almost simple factor of $G_{\bar
          K}$. Then, $\mathrm{Pic}(S_{G,\mu, \bar K})_\bbQ$ is
        $1$-dimensional by \Cref{picard.group.schubert.lemma}. Assume
        for the sake of contradiction that $\calL_{\bar K}$ is
        anti-ample on $S_{G,\mu,\bar K}$ (if not, take its
        inverse). It is therefore equal to the restriction of
        $\calL_{\mathrm{ad},\bar K}^{-q}$ for some $q \in
        \bbQ_{>0}$. Replacing $\calL$ by its product with
        $\calL_\ad^q$, we may now ensure that $\calL_k$ is ample and
        $\calL_{\bar K}$ is trivial on $\Gr_{G,\mu,\bar K}$. This
        contradicts openness of the ample locus of $\calL$ on
        $ M_{\calG,\mu}$, see \cite[Corollaire 9.6.4]{EGAIV3}. In
        particular, we conclude that $\calL_{\bar K}$ must be trivial
        on $S_{G,\mu, \bar K}$. Letting $\mu$ run over all coweights
        with irreducible support, we deduce from
        \Cref{picard.group.flag.variety.lemma} that $\calL_{\bar K}$
        is trivial.
\end{proof}

	Suppose we are given a map $f\colon \calG_1 \to \calG_2$ of
        parahoric $O$-models of simply connected, almost simple $
        K$-groups $G_1$ and $G_2$. 
	We have an induced pull-back map \begin{equation}f^*\colon \mathrm{Pic}(\Fl_{\calG_2}^{\mathrm{pf}})\to \mathrm{Pic}(\Fl_{\calG_1}^{\mathrm{pf}})
	\end{equation}
	that sends equivariant line bundles with respect to $LG_2^{\mathrm{pf}}$ to those with respect to $LG_1^{\mathrm{pf}}$. In particular, we get a homomorphism of cokernels defined by their central charges and it follows that $c_1(f^*\calL)=d(f)c_2(\calL)$ where $d(f) \in \bbZ_{\geq 0}$ is independent of $\calL$ and $c_i$ denote the central charges of the respective Picard groups. 
	Here, the non-negativity of $d(f)$ holds because pullback preserves semi-ampleness, and $d(f)$ is an integer because the map of Picard groups also exists on the non-perfected affine flag varieties.
	
	From the constancy of the central charge we draw the following consequence:
	
	\begin{corollary}\label{cor.charge.deg.res.sc}
		Let $L/K$ be a finite separable extension and consider the natural map \begin{equation}f\colon \calG \to \Res_{O_L/O_K}(\tilde{\calG}),\end{equation} extending the unit of adjunction for $\Res_{L/K}$, where $\tilde{\calG}$ is the associated parahoric $O_L$-model of $G_L$ induced by the map $\scrB(G,K)\to \scrB(G,L)$. Then $d(f)=[L:K]$.
	\end{corollary}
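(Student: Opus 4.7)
The plan is to compare central charges by extending a line bundle from the affine flag variety to the Beilinson–Drinfeld Grassmannian, and then reading off both $d(f)$ and $[L:K]$ as the degree of the geometric generic fiber of the same line bundle via Lemma~\ref{line.bundle.extension}. I would first reduce to the case where $G$ is absolutely almost simple over $K$: writing $G = \Res_{L'/K}G_0$ with $G_0$ absolutely almost simple, one has $\Res_{L/K}(G_L) = \Res_{L \otimes_K L'/K}(G_0)$, which decomposes as a product of almost simple groups indexed by the field components of $L \otimes_K L'$; applying the argument below to each factor yields $d(f) = [L:K]$ on that factor. Assuming $G$ is absolutely almost simple, $G_L$ is absolutely almost simple over $L$ and $\Res_{L/K}(G_L)$ is almost $K$-simple with Tits decomposition given by $L/K$ itself.

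Next, I would choose a rationalized line bundle $\calL$ on $\Fl_{\Res_{O_L/O_K}(\tilde\calG)}^{\pf}$ with central charge $c_2(\calL) = 1$, and, by Lemma~\ref{line.bundle.extension}(\ref{line.bundle.extension.1}), extend it to $\widetilde\calL \in \mathrm{Pic}(\Gr_{\Res_{O_L/O_K}(\tilde\calG)})_\bbQ$. By Lemma~\ref{line.bundle.extension}(\ref{line.bundle.extension.2}), its geometric generic fiber is $\boxtimes_{[L:K]}\calO(1)$ on $\Gr_{\Res_{L/K}(G_L),\bar K} \cong \prod_{L \hookrightarrow \bar K}\Gr_{G,\bar K}$. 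Pulling $\widetilde\calL$ back along $\Gr_{\calG} \to \Gr_{\Res_{O_L/O_K}(\tilde\calG)}$ gives a line bundle whose special fiber is $f^*\calL$, which has central charge $c_1(f^*\calL) = d(f)$ by definition of $d(f)$; applying Lemma~\ref{line.bundle.extension}(\ref{line.bundle.extension.2}) now to $\calG$ identifies the geometric generic fiber of $f^*\widetilde\calL$ with $\calO(d(f))$ on $\Gr_{G,\bar K}$.

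Finally, I would compute directly the pullback of $\boxtimes_{[L:K]}\calO(1)$ along the induced map $\Gr_{G,\bar K} \to \prod_{[L:K]}\Gr_{G,\bar K}$. Since $f$ extends the unit of adjunction $G \to \Res_{L/K}(G_L)$, which on $\bar K$-points is the diagonal $G(\bar K) \to \prod_{L \hookrightarrow \bar K}G(\bar K)$ coming from the decomposition $L \otimes_K \bar K \cong \prod_{L \hookrightarrow \bar K}\bar K$, the induced map on geometric generic fibers of the Grassmannians must likewise be the diagonal via Beauville–Laszlo gluing. Pulling $\boxtimes_{[L:K]}\calO(1)$ back along the diagonal yields $\calO(1)^{\otimes [L:K]} = \calO([L:K])$, and comparing with $\calO(d(f))$ gives $d(f) = [L:K]$.

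The step that requires the most care is identifying the induced map $\Gr_{G,\bar K} \to \prod_{[L:K]}\Gr_{G,\bar K}$ with the diagonal embedding: while this is morally immediate from the functorial definition of the Beilinson–Drinfeld Grassmannians and the adjunction between $\Res_{L/K}$ and base change, it requires tracking the Beauville–Laszlo decomposition along $\Spec(L \otimes_K \bar K)$ and checking compatibility with the identifications used in Lemma~\ref{line.bundle.extension}(\ref{line.bundle.extension.2}). Once this identification is in hand, the rest of the argument is a direct comparison of rationalized line bundles.
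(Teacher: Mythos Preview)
Your proposal is correct and follows essentially the same route as the paper's proof, which is very terse but amounts to precisely this: invoke Lemma~\ref{line.bundle.extension} to transfer the computation of $d(f)$ from the special fiber to the geometric generic fiber, where the map of Grassmannians becomes the diagonal $\Gr_{G,\bar K} \to \prod_{[L:K]}\Gr_{G,\bar K}$ and the pullback of $\boxtimes_{[L:K]}\calO(1)$ is visibly $\calO([L:K])$. Your more explicit unpacking---choosing a line bundle of central charge $1$, extending via part~(\ref{line.bundle.extension.1}), applying part~(\ref{line.bundle.extension.2}) twice, and the preliminary reduction to the absolutely almost simple case---spells out exactly what the paper leaves implicit.
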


\begin{proof}
	Thanks to \Cref{line.bundle.extension}, we can read off the integer $d(f)$ from the map of affine Grassmannians $\Gr_G \to \Gr_{\Res_{L/K}G_L}$ after base changing to $\bar K$. But then $\Res_{L/K}G_L$ splits over $\bar K$ as a product of $[L:K]$-many copies of $G_{\bar K}$, so $\calO(1)= \boxtimes_{[L:K]} \calO(1)$ pulls back to $\calO([L:K])$ as desired.
\end{proof}
	\subsubsection{The Demazure variety is stably compatibly $F$-split}\label{subsection_demazure}
	 In order to finish the proof of \Cref{theorem_frobenius_split_allp}, it remains to show that the assumption of \Cref{lemma.reduction.main.theorem} holds, that is, the Demazure variety $D_{\dot w}$ is stably compatibly $F$-split with $D_{\dot v}$ for all $\dot v$ of colength $1$ in $\dot w$. 
	 By \Cref{remark.simply.connected.reduction}, we may and do assume that $G$ is simply connected, absolutely almost simple and that $\calG=\calI$ is the Iwahori group scheme.
		As in \cite[Section 8]{PR08} (for proving $F$-splitness) and \cite[Section
		5]{Cas22} (for proving stable $F$-splitness), we aim to apply the
		Mehta--Ramanathan splitting criterion, see
		\cite[Theorem 5.3.1]{BS13} and \cite[Proposition 1.3.11]{BK07}, to
		$D_{\dot{w}}$ together with its divisors
		$D_{\dot{v}}$. 
		We need the following result for this.
		
		\begin{lemma}\label{lem.crit.line.bundle}
			There exists a unique line bundle $\calL_{\mathrm{crit}}\in\mathrm{Pic}(\widetilde{\Fl}_{\calG})$ such that $\calL_{\mathrm{crit}}^{\otimes 2}\simeq \calL_{\mathrm{ad}}$. 
		\end{lemma}
		
		\begin{proof}
			We first prove uniqueness.
			Note that $f^*\colon \mathrm{Pic}(\widetilde{S}_w)\to \mathrm{Pic}(D_{\dot w})$ is injective as $f_*f^*\calL=\calL$ by the projection formula and by $f_*\calO_{D_{\dot w}}=\calO_{\widetilde S_w}$, see \eqref{equation.reduced.decomposition.Demazure} for the latter. 
			By \Cref{picard.group.demazure.lemma}, we see that $\mathrm{Pic}(\widetilde{S}_w)$ is torsion-free for all $w\in W_{\aff}$ and so is $\mathrm{Pic}(\widetilde{\Fl}_{\calG})$. 
			Hence, the map $\pi^*\colon \mathrm{Pic}(\widetilde{\Fl}_{\calG})\to \mathrm{Pic}(\Fl_{\calG}^{\mathrm{pf}})$ is injective. 
			In particular, $\calL_{\mathrm{ad}}$ admits at most one square root.
			
			Next, we prove existence. 
		Recall that $\calL_{\mathrm{ad}}$ restricts to $\calO(2)$ on
			every $S_{s}$, so it admits a square
			root inside $ 
			\mathrm{Pic}(\Fl_{\calG}^{\mathrm{pf}})$ of central
			charge equal to $h^\vee$, see
			\Cref{picard.group.flag.variety.lemma} and
			\Cref{lem.central.charge.adjoint}. 
			There are inclusions of $\mathbb{Z}$-lattices
			\begin{equation}\mathrm{Pic}(\widetilde{\Fl}_{\calG})\subset\oplus_s\mathbb{Z}\subset \oplus_s \mathbb{Z}[p^{-1}]=\mathrm{Pic}(\Fl_{\calG}^{\mathrm{pf}}).\end{equation} 
			(Note that the lattices are in fact equal, which only follows after finishing the proof of \Cref{theorem_frobenius_split_allp}.)
			The cokernel of the inclusion is $p$-power torsion. 
			If there were no square root $\calL_{\mathrm{crit}}$ on $\widetilde{\Fl}_{\calG}$, then the element $(1,\dots,1) \in \oplus_s \bbZ$ would be a non-trivial $2$-torsion point of the cokernel $\mathrm{Pic}(\Fl_{\calG}^{\mathrm{pf}})/\mathrm{Pic}(\widetilde{\Fl}_{\calG})$, which yields a contradiction unless $p=2$. 
			So, $\calL_{\mathrm{crit}}$ exists for whenever $p>2$.
			
			Now, let $p=2$.
			Informally speaking, we aim to show that $\mathrm{Pic}(\widetilde{\Fl}_{\calG})$ is large enough as follows. 
			\Cref{lemma.equivariant.lb} and \Cref{lemma.kac.moody.coeff.inj} implies that $\ker(c) \cap \oplus_s \bbZ$ already lies in $\on{Pic}(\widetilde{\Fl}_{\calG})$. 
			Since $c(1,\dots,1)=h^\vee$, it is enough to prove the inclusion
			\begin{equation} \label{c-contain}
				c(\on{Pic}(\widetilde{\Fl}_{\calG})) \supset h^\vee\bbZ,
			\end{equation}
			where we recall the normalization of $c$ from \eqref{equation.central.charge}.
			In order to verify \eqref{c-contain},
			let $e \leq 3$ denote the degree of the smallest extension $L/K$ whose Galois hull $\widetilde{L}/K$ splits $G$. The flag variety of the corresponding Iwahori model $\widetilde{\calG}$ over $O_{\widetilde{L}}$ admits a line bundle with central charge $1$ by \cite[Theorem 7]{Fal03}. By \Cref{cor.charge.deg.res.sc}, we obtain the inclusion $e!\bbZ \subset c(\on{Pic}(\widetilde{\Fl}_{\calG}))$. 
			Looking at the classification of \cite[Planches]{Bou68}, we see that $e!$ always divides $h^\vee$, unless $G=\on{SU}_{2n+1}$ is an odd-dimensional unitary group. 
			
			Finally, if $G=\on{SU}(V,q)$ is a unitary group, where $V$ is a $L$-vector space and $q\colon V \rightarrow L$ is a semi-regular $L$-hermitian form, we follow the implicit argument that had already been covered in \cite[Lemma 8.3]{Zhu14} for $p>2$, but now for all primes. Namely, in \Cref{lem_unitary_inside_orthogonal} below, we will consider the natural map of $K$-groups
			\begin{equation}
				\on{SU}(V,q) \rightarrow \on{SL}(_KV)
			\end{equation}
			where $_KV$ is
			$V$ regarded as a $K$-vector space, and construct a certain non-degenerate quadratic
			form $r\colon {}_KV
			\rightarrow K$ such that the above map factors through $\on{SO}(_KV,r)$. Notice this
			solves our problem of constructing a
			line bundle $\calL$ satisfying $c(\calL)=1$, since the determinant has a square root given by the Pfaffian, see \cite[Section 4.2]{BD91} and especially \cite[Section 4.2.16]{BD91} when $p=2$.
			\end{proof}
			
			The following lemma is used towards the end in the proof of \Cref{lem.crit.line.bundle}.
			
			\begin{lemma}\label{lem_unitary_inside_orthogonal}
				Let $L/K$ be a quadratic extension, $V$ a $L$-vector space and $q\colon V \rightarrow L$ a semi-regular $L$-hermitian form.
				There is a non-degenerate quadratic form $r\colon {}_KV \rightarrow K$ such that $\on{SU}(V,q)$ lies inside $\on{SO}({}_KV,r)$.
			\end{lemma}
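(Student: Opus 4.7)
The plan is to take $r$ to be the restriction-of-scalars of $q$ itself. Writing $h\colon V\times V \to L$ for the sesquilinear form underlying the Hermitian form $q$, and $\sigma$ for the nontrivial element of $\Gal(L/K)$, the values $h(v,v) = \sigma(h(v,v))$ lie in the fixed field $K$, so setting $r(v) := h(v,v)$ yields a $K$-valued function on ${}_KV$. For $k\in K$ one has $r(kv) = k\sigma(k)r(v) = k^2 r(v)$, so $r$ is a $K$-quadratic form whose polarization is
\begin{equation}
b_r(v,w) = r(v+w) - r(v) - r(w) = h(v,w) + h(w,v) = \tr_{L/K}(h(v,w)).
\end{equation}

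Non-degeneracy of $r$ reduces, since $\dim_K({}_KV) = 2\dim_LV$ is even, to non-degeneracy of the bilinear form $b_r$. Given a nonzero $v\in V$, semi-regularity of $q$ supplies some $w_0\in V$ with $h(v,w_0)\neq 0$, and separability of the quadratic extension $L/K$ makes $\tr_{L/K}\colon L\to K$ surjective; hence $b_r(v,\lambda w_0) = \tr_{L/K}(h(v,\lambda w_0))$ is nonzero for a suitable $\lambda\in L$.

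It remains to verify the inclusion of groups. Any $g\in \on{SU}(V,q)$ is $K$-linear and preserves $h$, so $r(gv) = h(gv,gv) = h(v,v) = r(v)$, placing $g$ in $\on{O}({}_KV,r)$. The main obstacle is to rule out that $g$ lands in the nontrivial coset of $\on{SO}({}_KV,r)\subset \on{O}({}_KV,r)$, which in characteristic $2$ is detected by the Dickson invariant rather than by the determinant (and in odd characteristic follows directly from $\det_K g = N_{L/K}(\det_L g) = 1$). I would handle this uniformly via a connectedness argument: $\on{SU}(V,q)$ is a connected reductive $K$-group while $\on{SO}({}_KV,r)$ is the identity component of $\on{O}({}_KV,r)$, so the morphism $\on{SU}(V,q)\to \on{O}({}_KV,r)$ necessarily factors through $\on{SO}({}_KV,r)$.
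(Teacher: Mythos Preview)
Your proof is correct and, at its core, uses the same quadratic form as the paper: since $q(v)=h(v,v)\in K$ and the paper's choice satisfies $\operatorname{tr}(\lambda q(v))=q(v)\operatorname{tr}(\lambda)=q(v)$, your $r$ and the paper's $r=\operatorname{tr}(\lambda q)$ coincide. The difference is purely in the verification. The paper separates the case $p>2$ (deferred to the literature) from $p=2$, and in the latter decomposes $(V,q)$ into one-dimensional anisotropic pieces and hyperbolic planes, writing out $r$ explicitly in coordinates on each summand to check non-degeneracy by inspection. Your argument instead treats all characteristics uniformly and avoids any decomposition: you identify the polar form as $b_r(v,w)=\operatorname{tr}_{L/K}(h(v,w))$ and deduce non-degeneracy directly from non-degeneracy of $h$ together with surjectivity of the trace (which is legitimate since the existence of the involution $\sigma$ forces $L/K$ to be separable). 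You also make explicit the passage from $\operatorname{O}$ to $\operatorname{SO}$ via connectedness of $\operatorname{SU}(V,q)$, a point the paper leaves implicit. Your route is cleaner and more conceptual; the paper's explicit formulas have the modest advantage of exhibiting the resulting $r$ concretely as a sum of hyperbolic planes.
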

			
			\begin{proof}
				If $p>2$, this is a well-known result in the theory of $L$-sesquilinear and $K$-bilinear forms, see \cite[Section 1.2.2]{PR09}, so from now on we assume $p=2$.
				
				Decomposing into orthogonal summands, we may assume either \begin{equation}(V,q)=(L,x \mapsto N(x))\end{equation} is one-dimensional semi-regular,  or \begin{equation}(V,q)=(L^2,(x,y)\mapsto x\sigma(y)+\sigma(x)y)\end{equation} is a two-dimensional regular hermitian hyperbolic plane. 
				
				In the first case, taking \begin{equation}r=\on{tr}(\lambda q)\colon (x_1,x_2) \mapsto x_1^2+x_1x_2+N(\lambda)x_2^2, \end{equation} where $\on{tr}(\lambda)=1$, gives us a regular symmetric $K$-hyperbolic plane, as $1-4N(\lambda)=1 \neq 0$ as $p=2$. As for the second case, the quadratic form \begin{equation}r=\on{tr}(\lambda q)\colon(x_1,x_2,y_1,y_2) \mapsto x_2y_1 +x_1y_2\end{equation} clearly decomposes into the orthogonal sum of two regular symmetric $K$-hyperbolic planes.
 \end{proof}
		
\begin{remark}
  The construction of $\calL_{\textup{crit}}$ on the
  \textit{seminormalized} affine flag variety is used in order to
  apply the Mehta--Ramanathan criterion.  It would be interesting to
  find a uniform proof for all $G$ and $p$.  Recall that \cite[Theorem
  7]{Fal03} provides a construction for split $G$, which is extended
  in \cite[Corollary 4.3.10]{Lou23} for tame $G$, using negative loops
  groups that seem, however, not to exist for wildly ramified $G$.
  Also, the work \cite{PR08} refers to a construction in
  \cite[Proposition 3.19]{Gor01} for $G=\mathrm{GL}_n$, which we were
  not able to generalize to other groups.
\end{remark}
			
	Now, we are ready to finish the proof of \Cref{theorem_frobenius_split_allp}. Let $f\colon D_{\dot w}\to \widetilde S_w$ be the Demazure resolution, compare \eqref{equation.reduced.decomposition.Demazure}.
		The anti-canonical line bundle admits the formula
		\begin{equation}\label{eq.finish.proof}
		\omega^{-1}_{D_{\dot{w}}}=\calO(\partial D_{\dot{w}})\otimes f^*\calL_{\on{crit}}
		\end{equation}
		by the argument of \cite[Proposition 2.2.2]{BK07}, and the fact that $\calL_{\on{crit}}$ has degree $1$ on every projective line $S_s$.
		To apply the Mehta--Ramanathan
		criterion, see \cite[Theorem 5.3.1]{BS13} and
		\cite[Proof of Theorem 5.8]{Cas22}, we must produce a
		section of the $(q-1)$-th power of
		$\calL_{\mathrm{crit}}$ (for some power $q$ of $p$) avoiding the origin (i.e., the intersection of all the divisors $D_{\dot v}$). Note that $\calL_{\mathrm{crit}}$ is an ample line bundle, because so is its square $\calL_{\mathrm{ad}}$. We deduce that any sufficiently large power of $\calL_{\mathrm{crit}}$ is very ample on $\widetilde{S}_w$, and therefore
		$f^*\calL^{q-1}_{\on{crit}}$ will be basepoint free for some sufficiently large power $q\gg 0 $ of $p$.  
	
	\subsubsection{Picard groups of seminormalized Schubert varieties}
	
	Using the already proven \Cref{theorem_frobenius_split_allp}, we can actually upgrade the previous results on Picard groups to seminormalized Schubert varieties.
	
	\begin{lemma}\label{picard.group.schubert.lemma.normal}
		There is an isomorphism
		\begin{equation}\label{Picard.group.computation.normal}
		\on{Pic}(\widetilde{S}_w) \overset{\cong}{\longto} \bigoplus_{s} \bbZ, \;\;\; \calL\mapsto (\deg(\calL|_{S_s}))_s
		\end{equation}
		where the sum runs over all $s\in\{s_1,\ldots,s_d\}$ with $s\not\in W_\calG$.
	\end{lemma}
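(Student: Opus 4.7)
The plan is to upgrade the $\bbZ$-lattice statement from \Cref{picard.group.schubert.lemma} to an explicit identification by repeating its proof integrally, making essential use of the fact that the Demazure morphism $f\colon D_{\dot w} \to \widetilde S_w$ is now known to be a rational resolution (\Cref{theorem_frobenius_split_allp}) and in particular satisfies $f_*\calO_{D_{\dot w}} = \calO_{\widetilde S_w}$.

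For injectivity of the displayed map, the proof of \Cref{picard.group.schubert.lemma} already shows that $f^*\colon \Pic(\widetilde S_w) \hookrightarrow \Pic(D_{\dot w}) \cong \bbZ^d$ (the latter isomorphism being \Cref{picard.group.demazure.lemma}), with the $i$-th component given by $\calL \mapsto \deg(\calL|_{S_{s_i}})$. This component vanishes whenever $s_i \in W_\calG$ (since $S_{s_i}$ is a point) and depends only on the simple reflection $s = s_i$, not on the index $i$. Hence the image already factors through the standard $\bbZ$-lattice $\bigoplus_s \bbZ \subset \bbZ^d$ indexed over $\{s_1,\ldots,s_d\} \setminus W_\calG$, yielding the required injection.

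For surjectivity, I would translate the perfection-level construction at the end of the proof of \Cref{picard.group.schubert.lemma} into the integral setting. Given $(\lambda_s)_s \in \bigoplus_s \bbZ$, let $\calD = \calD(\lambda_s) \in \Pic(D_{\dot w})$ be the line bundle with $i$-th component $\lambda_{s_i}$, using \Cref{picard.group.demazure.lemma}. To descend $\calD$ to $\widetilde S_w$, I would factor $f$ into the sequence of partial Demazure resolutions with one-dimensional fibers from \cite[Proposition 3.9]{HZ20} and verify that $\calD$ is trivial on each such fiber. The triviality condition on a $\bbP^1$-fiber amounts to a degree computation, and it is precisely guaranteed by the defining constraint that $\lambda_s$ depends only on the simple reflection $s$. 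Normality of $\widetilde S_w$ (\Cref{theorem_frobenius_split_allp}) together with $f_*\calO_{D_{\dot w}} = \calO_{\widetilde S_w}$ then ensures that $\calD$ descends uniquely to a line bundle $\calL \in \Pic(\widetilde S_w)$ with $\deg(\calL|_{S_s}) = \lambda_s$, as required.

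The chief technical obstacle is to rigorously justify the integral descent step: in the perfection setting of \Cref{picard.group.schubert.lemma} one could invoke $v$-descent for vector bundles on perfect schemes \cite[Theorem 6.13]{BS17}, but this tool is no longer available here. Instead one works with the rational resolution property directly, reducing inductively to the case of a single partial Demazure resolution $g\colon Y' \to Y$ with $\bbP^1$-fibers and with $g_*\calO_{Y'} = \calO_Y$, where a line bundle on $Y'$ of fiberwise degree zero is the pullback of a line bundle on $Y$ by a standard Stein-factorization argument.
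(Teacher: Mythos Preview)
Your approach is essentially the same as the paper's: both deduce injectivity from \Cref{picard.group.schubert.lemma}, and for surjectivity both factor $f$ into partial Demazure resolutions with one-dimensional fibers, check the fibrewise degree-zero condition, and then descend. The only substantive difference is in how the descent step is justified. The paper makes this precise by invoking \cite[Theorem 12.1 (i)]{Lip69}, which uses rational singularities of the intermediate targets to conclude that $\calD$ is Zariski-locally trivial on the base; it then notes (in a footnote) that the partial Demazure varieties, not just $\widetilde S_w$, have rational singularities, since the proof of \Cref{theorem_frobenius_split_allp} proceeds by descent from $D_{\dot w}$. Your ``standard Stein-factorization argument'' using only $g_*\calO_{Y'}=\calO_Y$ is a bit underspecified: since the partial resolution is birational and not flat, cohomology-and-base-change does not apply directly, and one genuinely needs the vanishing of $R^1g_*\calO_{Y'}$ (i.e., rational singularities of the intermediate target) to lift the fibrewise trivialization formally and then algebraize. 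You have this input available from your first paragraph, so the strategy is sound, but you should state explicitly that the intermediate varieties in the factorization also have rational singularities and indicate how this yields the local triviality.
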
 
	\begin{proof}
		Recall the notation $f\colon D_{\dot w}\to \widetilde S_w$ for the Demazure resolution from \eqref{equation.reduced.decomposition.Demazure} and the computation of $\Pic(D_{\dot w})$ from \Cref{picard.group.demazure.lemma}.
		As explained in \Cref{picard.group.schubert.lemma}, the pullback map $\Pic(\widetilde S_w)\to \bigoplus_{s} \bbZ$ is injective.
		For surjectivity, let $(\lambda_s)\in \oplus_s\bbZ$ and denote by $\calD=\calD(\lambda_s)$ the corresponding line bundle on $D_{\dot w}$.
		
		We show that $\calL:=f_*\calD$ is a line bundle, and that the canonical map $f^*\calL\to \calD$ is an isomorphism.
			As in the proof of \Cref{lemma.reduction.main.theorem} we factor $f$ into successive partial Demazure resolutions, each having fibers of dimension at most $1$.
	 By induction we replace $f$ by one of those maps $g\colon S_s \tilde{\times}\widetilde{S}_v \to \widetilde{S}_w$.
		 By the proof of \Cref{picard.group.schubert.lemma},
		we already know that the restriction of $\calD$ to the fibers of $g$ is trivial after passing to perfections. By the proof of \Cref{lemma.reduction.main.theorem}, we know that the fibers of $g$ are either $\mathrm{Spec}(\kappa(x))$ or $\mathbb{P}^1_{\kappa(x)}$, so their Picard groups are torsion-free and $\calD$ has trivial restriction to all fibers of $g$. 
		By \Cref{theorem_frobenius_split_allp}, our varieties have rational singularities\footnote{Strictly speaking, \Cref{theorem_frobenius_split_allp} only refers to the $\widetilde{S}_w$ and not their partial Demazure resolutions, but the proof given in the previous section proceeds by descent from $D_{\dot{w}}$, so those also have rational singularities.}, so \cite[Theorem 12.1 (i)]{Lip69} applies to show that $\calD$ is Zariski locally trivial on the base.
		Using rational singularities again shows $g_*\calD$ is a line bundle, and that $g^*\calL\to \calD$ is an isomorphism.
              \end{proof}

	\begin{corollary}\label{picard.group.flag.variety.lemma.normal}
		There is an isomorphism
		\begin{equation}\label{Fl_Picard.group.computation.normal}
		\on{Pic}(\widetilde{\Fl}_\calG^{0}) \overset{\cong}{\longto} \bigoplus_s \bbZ, \;\;\; \calL\mapsto (\deg(\calL|_{S_s}))_s
		\end{equation}
		where $\widetilde{\Fl}_\calG^0$ denotes the neutral component and the sum runs over all simple reflections $s\in W_{\on{af}}\backslash W_\calG$.
	\end{corollary}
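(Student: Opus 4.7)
The plan is to mimic the proof of \Cref{picard.group.flag.variety.lemma}, replacing the perfected Schubert results by their seminormalized analogues from \Cref{picard.group.schubert.lemma.normal}. By \Cref{theorem_frobenius_split_allp}, the transition maps $\widetilde S_v \hookrightarrow \widetilde S_w$ are closed immersions, so the neutral component admits a strict ind-scheme presentation
\begin{equation*}
\widetilde \Fl_\calG^0 = \on{colim}_w \widetilde S_w,
\end{equation*}
where $w$ runs through the right $W_\calG$-minimal representatives of elements of $W_{\on{af}}/W_\calG$. Hence $\on{Pic}(\widetilde \Fl_\calG^0) = \lim_w \on{Pic}(\widetilde S_w)$, the transition maps being pullback along closed immersions.

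Next, for each $w$ I would fix a reduced decomposition $\dot w$ and invoke \Cref{picard.group.schubert.lemma.normal} to identify $\on{Pic}(\widetilde S_w) \cong \bigoplus_{s \in \on{supp}(\dot w) \smallsetminus W_\calG} \bbZ$ via $\calL \mapsto (\deg \calL|_{S_s})_s$. Since $\calL|_{S_s}$ depends only on the intrinsic Schubert curve $S_s \cong \bbP^1_k$ and not on how it sits inside $\widetilde S_w$, the transition map from $w$ to $v \leq w$ corresponds under these identifications to the coordinate projection $\bigoplus_{s \in \on{supp}(\dot w) \smallsetminus W_\calG} \bbZ \twoheadrightarrow \bigoplus_{s \in \on{supp}(\dot v) \smallsetminus W_\calG} \bbZ$. (Compatibility between different choices of reduced decompositions follows from well-definedness of the support in a Coxeter group.)

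Finally, I would observe that for any simple reflection $s \in W_{\on{af}} \smallsetminus W_\calG$, the one-letter element $s$ is already right $W_\calG$-minimal (since $\ell(ss') \geq 1$ for all $s' \in W_\calG$, as $s \neq s'$). Thus $\widetilde S_s$ appears in the system, so as $w$ grows, the supports $\on{supp}(\dot w) \smallsetminus W_\calG$ exhaust the full finite set of simple reflections of $W_{\on{af}}$ outside $W_\calG$. Since the index set is finite, the inverse limit of the surjective projection system yields the desired direct sum, and the resulting isomorphism is given by $\calL \mapsto (\deg \calL|_{S_s})_s$ as claimed. The only mildly delicate point is the first step --- asserting that the ind-presentation of $\widetilde \Fl_\calG^0$ is strict --- but this is already recorded in the proof of \Cref{theorem_frobenius_split_allp} via \Cref{lemma.reduction.main.theorem}, so no extra work is needed.
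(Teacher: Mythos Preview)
Your proof is correct and follows the same approach as the paper. The paper's proof is a one-line appeal to \Cref{picard.group.schubert.lemma.normal}, noting that $\on{Pic}(\widetilde S_w)$ becomes independent of $w$ once the length is large enough; you have simply unpacked this stabilization argument in detail (strictness of the ind-presentation, identification of transition maps with coordinate projections, exhaustion of the finite set of simple reflections).
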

	\begin{proof}
		This is immediate from
		\Cref{picard.group.schubert.lemma.normal}, as
		$\on{Pic}(\widetilde{S}_w)$ is again independent of
		$w$ for sufficiently large lengths by \eqref{Picard.group.computation.normal}.
	\end{proof}

	\Cref{picard.group.schubert.lemma.normal} admits the following slight generalization (see \Cref{picard.group.schubert.lemma.normal.union}) which is used in \Cref{section_lm}.
	We first need an elementary lemma:

	\begin{lemma}\label{intersections.seminormalized.Schubert.varieties}
	Finite unions of seminormalized Schubert varieties in $\widetilde \Fl_\calG$ are seminormal and stable under finite intersections. 
	\end{lemma}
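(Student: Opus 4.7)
The plan is to work inside a single seminormalized Schubert variety $\widetilde{S}_W$ with $W$ chosen to majorize the Bruhat indices of the finite collection $\widetilde{S}_{v_1}, \ldots, \widetilde{S}_{v_r}$ under consideration, and exploit the compatible $F$-splittings provided by \Cref{theorem_frobenius_split_allp}. By the standard formalism of compatibly split subschemes \cite[Proposition 1.2.1]{BK07}, any finite scheme-theoretic union or intersection of such $\widetilde{S}_v$'s inside $\widetilde{S}_W$ remains compatibly $F$-split, hence reduced. This will be the workhorse of both assertions.

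For stability under intersections, I would use distributivity to reduce to showing that $\widetilde{S}_u \cap \widetilde{S}_v$ is itself a finite union of seminormalized Schubert varieties. Set-theoretically, the Bruhat decomposition gives $S_u \cap S_v = \bigcup_{w \in I(u,v)} S_w$, where $I(u,v)$ is the finite set of maximal elements of $\{w : w \leq u, w \leq v\}$ in the Bruhat order. Both $\widetilde{S}_u \cap \widetilde{S}_v$ and $\bigcup_{w \in I(u,v)} \widetilde{S}_w$ are reduced closed subschemes of $\widetilde{S}_W$ with the same underlying set, so they must coincide scheme-theoretically.

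For seminormality of $X = \bigcup_i \widetilde{S}_{v_i}$, I would invoke the Mayer--Vietoris exact sequence
\begin{equation*}
0 \longto \calO_X \longto \bigoplus_i \calO_{\widetilde{S}_{v_i}} \longto \bigoplus_{i<j} \calO_{\widetilde{S}_{v_i} \cap \widetilde{S}_{v_j}},
\end{equation*}
whose exactness follows from the compatibility of the $F$-splitting on $\widetilde{S}_W$. Each $\widetilde{S}_{v_i}$ is normal by \Cref{seminormal.Schubert.varieties.lemma}, so the seminormalization $\pi\colon X^{\on{sn}} \to X$ factors every inclusion $\widetilde{S}_{v_i} \hookrightarrow X$, and $\pi_*\calO_{X^{\on{sn}}}$ identifies with the subsheaf of $\bigoplus_i \calO_{\widetilde{S}_{v_i}}$ consisting of tuples whose components agree in every residue field of $X$. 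Given such a tuple $(f_i)$ and a pair $i<j$, the difference $f_i - f_j \in \calO_{\widetilde{S}_{v_i} \cap \widetilde{S}_{v_j}}$ lies in every prime ideal and is therefore nilpotent; reducedness of the intersection (from the first part) forces $f_i = f_j$, placing the tuple in the Mayer--Vietoris kernel, i.e., in $\calO_X$. This will prove $X = X^{\on{sn}}$.

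The main technical obstacle is the exactness of the Mayer--Vietoris sequence for compatibly split subschemes, which must be established for arbitrarily many components (by an induction on the number of components reducing to the classical two-term case), and which crucially uses the simultaneous compatibility of the splittings on $\widetilde{S}_W$ asserted in \Cref{theorem_frobenius_split_allp}. Once this and the reducedness of all pairwise intersections are in hand, the upgrade from pointwise agreement to equality as sections is essentially automatic.
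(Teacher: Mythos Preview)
Your proof is correct but takes a genuinely different route from the paper's. The paper's argument is a one-liner: compatibly $F$-split subschemes have $F$-split (hence $F$-injective) unions and intersections, and Schwede's theorem \cite[Theorem 4.7]{Sch09} says $F$-injective schemes are seminormal. That handles seminormality of both unions and intersections simultaneously; the identification of $\cap_i \widetilde{S}_{w_i}$ with the seminormalization of $\cap_i S_{w_i}$ (hence with a union of $\widetilde{S}_v$'s) then falls out.

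Your approach avoids Schwede's theorem entirely. For intersections you use only that compatibly split subschemes have reduced intersections, then match underlying sets via Bruhat combinatorics—this is the same as the paper's implicit argument. For seminormality of unions you substitute a hands-on argument: Traverso's description of $\pi_*\calO_{X^{\mathrm{sn}}}$ as tuples agreeing in residue fields, plus Mayer--Vietoris exactness, plus reducedness of pairwise intersections. The induction you sketch for Mayer--Vietoris does go through, but note that the key step (distributivity $(\bigcup_{i<r} Y_i)\cap Y_r = \bigcup_{i<r}(Y_i\cap Y_r)$ scheme-theoretically) is not automatic and requires precisely the reducedness of the left-hand side coming from compatible splitting—you should make this explicit, as you also need the same distributivity when you ``use distributivity to reduce'' at the start of the intersection argument. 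One further detail worth recording: the closed immersions $\widetilde{S}_{v_i}\hookrightarrow X^{\mathrm{sn}}$ (needed to embed $\pi_*\calO_{X^{\mathrm{sn}}}$ in $\bigoplus_i\calO_{\widetilde{S}_{v_i}}$) and the equality of the intersections $\widetilde{S}_{v_i}\cap\widetilde{S}_{v_j}$ computed in $X$ versus in $X^{\mathrm{sn}}$ both follow from seminormality of the $\widetilde{S}_{v_i}$ together with reducedness, but deserve a sentence. What your approach buys is self-containment (no appeal to \cite{Sch09}); what the paper's approach buys is brevity and a uniform treatment of unions and intersections.
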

	\begin{proof}
	 Due to the compatible $F$-splitting of seminormalized Schubert varieties from \Cref{theorem_frobenius_split_allp}, their finite union (and, finite intersection) is again $F$-split, hence $F$-injective (and reduced) and therefore seminormal by \cite[Theorem 4.7]{ Sch09}. In particular, if $S_{w_1},\ldots, S_{w_n}\subset \Fl_\calG$ are Schubert varieties, then the maps $\cup_{i=1}^n \widetilde S_{w_i}\to \cup_{i=1}^n S_{w_i}$ and $\cap_{i=1}^n \widetilde S_{w_i}\to \cap_{i=1}^n S_{w_i}$ are universal homeomorphisms and induce isomorphisms on all residue fields, and so identify the respective sources as the seminormalizations of their targets. The lemma follows.
	\end{proof}

	\begin{proposition}\label{picard.group.schubert.lemma.normal.union}
	Let $w_1,\ldots, w_n\in \widetilde W$ be right $W_\calG$-minimal.
	There is an isomorphism
         \begin{equation}\label{Picard.group.computation.unions}
        \on{Pic}\left(\bigcup_{i=1}^n \widetilde{S}_{w_i}\right) \overset{\cong}{\longto} \bigoplus_{s} \bbZ, \;\;\; \calL\mapsto (\deg(\calL|_{S_s}))_s
        \end{equation}
        where the sum runs over all $s \in \widetilde W\backslash W_\calG$ of length $1$ such that $s\leq w_i$ for some $i=1,\ldots,n$.
        \end{proposition}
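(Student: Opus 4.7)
The plan is to induct on $n$, with the base case $n=1$ given by \Cref{picard.group.schubert.lemma.normal}. For the inductive step, I would set $Y=\bigcup_{i<n}\widetilde S_{w_i}$ and $Z=\widetilde S_{w_n}$, so that $X := \bigcup_{i=1}^n \widetilde S_{w_i}=Y\cup Z$. By \Cref{intersections.seminormalized.Schubert.varieties}, the scheme-theoretic intersection $Y\cap Z$ formed inside $\widetilde \Fl_\calG$ is itself a reduced union of seminormalized Schubert varieties, so the induction hypothesis applies both to $Y$ and to $Y\cap Z$.

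The key tool is the Mayer--Vietoris short exact sequence of Zariski sheaves on $X$:
\begin{equation*}
0\to \calO^\times_X\to \iota_{Y,*}\calO^\times_Y\oplus \iota_{Z,*}\calO^\times_Z\to \iota_{Y\cap Z,*}\calO^\times_{Y\cap Z}\to 0.
\end{equation*}
Its exactness boils down to the identification $\calO_X=\calO_Y\times_{\calO_{Y\cap Z}}\calO_Z$, which in turn reduces to checking that the ideals of $Y$ and $Z$ in $X$ have trivial intersection (because $X=Y\cup Z$ as reduced subschemes) and that their sum cuts out $Y\cap Z$ (because that schematic intersection is already reduced by \Cref{intersections.seminormalized.Schubert.varieties}); surjectivity of the last map is a routine lifting of units along the stalkwise surjection $\calO_{Y,x}\twoheadrightarrow \calO_{Y\cap Z,x}$. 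Taking global sections and using that $X$, $Y$, $Z$ and $Y\cap Z$ are each connected projective $k$-schemes---they all contain the base point of $\Fl_\calG$---the initial segment $k^\times\to k^\times\oplus k^\times \to k^\times$ is surjective, so the connecting map vanishes and we obtain
\begin{equation*}
0\to \Pic(X)\to \Pic(Y)\oplus \Pic(Z)\to \Pic(Y\cap Z).
\end{equation*}

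Next, writing $S_Y$, $S_Z$, $S_{YZ}$, $S_X$ for the indexing sets in \eqref{Picard.group.computation.unions} for $Y$, $Z$, $Y\cap Z$ and $X$, I would observe the combinatorial identities $S_{YZ}=S_Y\cap S_Z$ and $S_X=S_Y\cup S_Z$: indeed, a simple reflection $s$ lies below some maximal element of $Y\cap Z$ if and only if $s\leq w_i$ for some $i<n$ \emph{and} $s\leq w_n$. Invoking the inductive hypothesis and \eqref{Picard.group.computation.normal}, the second map in the Mayer--Vietoris sequence becomes the difference map
\begin{equation*}
\bigoplus_{s\in S_Y}\bbZ\oplus \bigoplus_{s\in S_Z}\bbZ\longrightarrow \bigoplus_{s\in S_Y\cap S_Z}\bbZ,\qquad ((a_s),(b_s))\mapsto (a_s-b_s),
\end{equation*}
whose kernel is canonically $\bigoplus_{s\in S_Y\cup S_Z}\bbZ=\bigoplus_{s\in S_X}\bbZ$. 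This identifies $\Pic(X)$ with the desired direct sum, and the resulting isomorphism agrees with the degree map by functoriality of restriction.

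The step I expect to be most delicate is setting up the Mayer--Vietoris sequence itself, in particular verifying the fiber-product identity $\calO_X=\calO_Y\times_{\calO_{Y\cap Z}}\calO_Z$. This depends on the scheme-theoretic intersection $Y\cap Z$ in $\widetilde \Fl_\calG$ being reduced, for which it is crucial that we work with seminormalized Schubert varieties and that we may invoke \Cref{intersections.seminormalized.Schubert.varieties}; the unseminormalized analogue (where $S_v\cap S_w$ may acquire embedded structure) would fail this reducedness, and in that case the argument would have to be reformulated, e.g.\ via a v-descent argument after passing to perfections as in \Cref{picard.group.schubert.lemma}.
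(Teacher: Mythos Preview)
Your approach is essentially the same Mayer--Vietoris induction as the paper's, and the exact sequence you write down is precisely the one the paper uses. However, there is a genuine gap in how you set up the induction: you claim the induction hypothesis applies to $Y\cap Z$, but your induction is on $n$, and $Y\cap Z$ may have \emph{more} than $n$ irreducible components (an intersection $\widetilde S_{w_i}\cap\widetilde S_{w_n}$ can have many maximal Schubert subvarieties). So strictly speaking the hypothesis is not available for $Y\cap Z$. There are two easy fixes. One is to replace the induction on $n$ by a lexicographic induction on $(\max_i l(w_i),\,n)$: after discarding any $w_n$ already dominated by some $w_i$, every component of $Y\cap Z$ has strictly smaller length than $w_n$. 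The other, closer to what the paper's ``one easily deduces'' seems to have in mind, is to avoid computing $\Pic(Y\cap Z)$ altogether: injectivity of the degree map for $X$ follows immediately from the fiber-product description and the known cases for $Y$ and $Z$, while surjectivity is obtained by restricting a line bundle from the whole $\widetilde\Fl_\calG^0$ using \Cref{picard.group.flag.variety.lemma.normal}.

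A second minor point: your assertion that $X$, $Y$, $Z$, $Y\cap Z$ all contain the base point (hence are connected) presupposes that every $w_i$ lies in $W_{\mathrm{af}}/W_\calG$. The statement is for $w_i\in\widetilde W$, so you should first reduce, as the paper does, to the case where the union is connected and lies in the neutral component (translate by the length-zero element of the relevant component). After this reduction your connectedness claims are correct, since every $(\calI,\calG)$-Schubert variety in the neutral component contains $e$.
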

        \begin{proof}
        Without loss of generality, we may and do assume that $\bigcup_{i=1}^n \widetilde{S}_{w_i}$ is connected and contained in the neutral component $\widetilde \Fl_\calG^0$.
        Next, we proceed by induction on $n\geq 1$. 
        For $n=1$, this is \Cref{picard.group.schubert.lemma.normal}.
        For the induction step, let $X=\cup_{i=1}^{n-1}\widetilde S_{w_i}$ and $Y=\widetilde S_{w_n}$ viewed as closed subschemes of $\widetilde \Fl_\calG$. 
  	The sequence of sheaves of abelian groups on $\widetilde \Fl_\calG$
\begin{equation}\label{short.exact.units.sequence.eq}
1 \longto \iota_{X\cup Y, *}\calO_{X\cup Y}^\times \longto \iota_{X, *}\calO_{X}^\times \times \iota_{Y, *}\calO_Y^\times\overset{(a,b)\mapsto ab^{-1}}{\longto} \iota_{X\cap Y, *}\calO_{X\cap Y}^\times \longto 1 
\end{equation} is exact as is easily checked on stalks, where $\iota_{(\str)}$ denotes the respective closed immersion into $\widetilde \Fl_\calG$.
	Since $X\cap Y$ is reduced (because seminormal) by \Cref{intersections.seminormalized.Schubert.varieties}, we see $\on{H}^0(X\cap Y, \calO^\times_{X\cap Y})=k^\times$ by connectedness and projectivity of $X\cap Y$.
	Hence, the long exact (Zariski) cohomology sequence associated with \eqref{short.exact.units.sequence.eq} identifies $\Pic(X\cup Y)=\on{H}^1(X\cup Y, \calO^\times_{X\cup Y})$ with $\mathrm{Pic}(X)\times_{\mathrm{Pic}(X\cap Y)}\mathrm{Pic}(Y)$.
	One easily deduces \eqref{Picard.group.computation.unions} which finishes the induction step. 
	\end{proof}
	
	\subsubsection{Vanishing of higher coherent cohomology of seminormalized Schubert varieties} 
	Another consequence of \Cref{theorem_frobenius_split_allp} is the following result, to be used in \Cref{section_lm} below:
	
	\begin{lemma}\label{cohomology.vanishing.lemma}
	Let $w_1,\ldots, w_n\in W_{\on{af}}$ be right $W_\calG$-minimal, and consider $X=\cup_{i=1}^n\widetilde S_{w_i}$. 
	Then $\on{H}^j(X,\calO_X)=0$ for all $j\geq 1$.
        \end{lemma}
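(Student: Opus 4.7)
The plan is to first prove the vanishing for a single seminormalized Schubert variety and then bootstrap to finite unions via a Mayer--Vietoris induction.

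For a single $W_\calG$-minimal $w$, I would argue that $\on{H}^j(\widetilde S_w, \calO_{\widetilde S_w}) = 0$ for every $j \geq 1$ as follows. By \Cref{theorem_frobenius_split_allp} combined with \Cref{Kovacs_equivalences}, the Demazure map $f \colon D_{\dot w} \to \widetilde S_w$ from \eqref{equation.reduced.decomposition.Demazure} is a rational resolution, so $Rf_*\calO_{D_{\dot w}} \simeq \calO_{\widetilde S_w}$. The Leray spectral sequence then reduces the claim to $\on{H}^j(D_{\dot w}, \calO_{D_{\dot w}}) = 0$ for $j \geq 1$, which follows by induction on the length of $\dot w$ using the Zariski-locally trivial $\bbP^1$-bundle structure $D_{\dot w} \to D_{\dot w'}$ already exploited in the proof of \Cref{picard.group.demazure.lemma}.

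For a general finite union $X = \bigcup_{i=1}^n \widetilde S_{w_i}$, I would induct on the pair $(\dim X, m)$ ordered lexicographically, where $m$ counts the indices $i$ with $\dim \widetilde S_{w_i} = \dim X$. After discarding any $\widetilde S_{w_i}$ that is contained in the union of the others, pick $w_n$ with $\ell(w_n) = \dim X$ and set $Y = \widetilde S_{w_n}$, $X' = \bigcup_{i < n} \widetilde S_{w_i}$. The Mayer--Vietoris sequence
\begin{equation*}
0 \longto \calO_X \longto \calO_{X'} \oplus \calO_Y \longto \calO_{X' \cap Y} \longto 0
\end{equation*}
is exact stalkwise thanks to the reducedness supplied by \Cref{intersections.seminormalized.Schubert.varieties}; the same lemma identifies $X' \cap Y$ as another finite union of seminormalized Schubert varieties, and the redundancy reduction together with the irreducibility of $Y$ forces $\dim(X' \cap Y) < \dim X$. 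The long exact cohomology sequence, combined with the inductive vanishing on $X'$, $Y$ and $X' \cap Y$ in positive degrees, gives the result as soon as one verifies surjectivity of the map $\on{H}^0(X') \oplus \on{H}^0(Y) \to \on{H}^0(X' \cap Y)$; but since $w_i \in W_{\on{af}}$ every $\widetilde S_{w_i}$ sits in the neutral component and contains the base point $e$, so all four schemes $X, X', Y, X' \cap Y$ are connected, projective and reduced with $\on{H}^0 = k$, making surjectivity immediate.

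The principal obstacle is choosing the right induction. A naive induction on the number $n$ of Schubert pieces fails because $X' \cap Y$ may decompose into strictly more Schubert strata than $X$ itself, so the complexity need not drop. The lexicographic $(\dim X, m)$-induction, combined with the reduction to a representation without redundant pieces, is tailored to force a strict decrease in $\dim(X' \cap Y)$ and thereby close the induction.
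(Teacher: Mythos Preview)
Your proof is correct and follows the same strategy as the paper: reduce the single-variety case to the Demazure resolution and the iterated $\bbP^1$-bundle structure, then handle unions by Mayer--Vietoris using \Cref{intersections.seminormalized.Schubert.varieties}. The paper only sketches the induction (referring to the analogous argument in \Cref{picard.group.schubert.lemma.normal.union}), whereas your lexicographic $(\dim X, m)$-scheme makes precise how to deal with the fact that $X'\cap Y$ may split into more Schubert pieces than $X$; this is a genuine detail the paper leaves implicit, and your formulation handles it cleanly.
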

	\begin{proof}
	By \Cref{intersections.seminormalized.Schubert.varieties} finite unions of seminormalized Schubert varieties are stable under intersections.
	Hence, a Mayer--Vietoris argument similar to that in \Cref{picard.group.schubert.lemma.normal.union} reduces the claim to the case $n=1$.
	Consider the Demazure resolution $f\colon D_{\dot w}\to \widetilde S_w$ from \eqref{equation.reduced.decomposition.Demazure}. 
	Now, $\widetilde{S}_w$ has rational singularities by \Cref{theorem_frobenius_split_allp}, so $\on{H}^j(\widetilde S_w,\calO_{\widetilde S_w})=\on{H}^j(D_{\dot w},\calO_{D_{\dot w}})$ using $Rf_*\calO_{D_{\dot w}}=\calO_{\widetilde S_w}$.
	Since $D_{\dot w}$ is an iterated $\bbP^1_k$-bundle, the vanishing of higher cohomology follows by a straightforward induction argument.
	\end{proof}

	\subsection{Normality of Schubert varieties}
	In this subsection, we extend the normality theorem for Schubert varieties to some wildly ramified groups. 	
	Previously, this was proved by Faltings for split groups, see \cite[Theorem 8]{Fal03}, and by Pappas--Rapoport for Weil-restricted tame groups, see \cite[Theorem 8.4]{PR08}. 
	These results were inspired by similar ones in Kac--Moody theory found in \cite{Mat89}, but we stress that wildly ramified groups are in principle unrelated to that theory, compare with \cite[Annexe A]{Lou23}. 
	The prime-to-$p$ hypothesis on the order of $\pi_1(G_\der)$ is essential, due to \cite[Theorem 2.5]{HLR24}. 
	
	\begin{theorem}\label{thm_normality_classical_sch_vars}
		Under \Cref{hyp_odd_unitary}, all Schubert varieties $S_w$ are normal if and only if $p$ does not divide the order of $\pi_1(G_\der)$.
	\end{theorem}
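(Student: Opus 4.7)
The plan is to argue the two directions separately. The ``only if'' direction follows from \cite[Theorem 2.5]{HLR18}: the explicit non-normal Schubert varieties constructed there when $p$ divides $|\pi_1(G_{\der})|$ are produced by a purely group-theoretic argument, and this construction carries over to the wildly ramified setting via the parahoric lifts of \Cref{section_lifts} without essential change. So the real content lies in the ``if'' direction, and for the rest we assume $p \nmid |\pi_1(G_{\der})|$.

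By \Cref{theorem_frobenius_split_allp} the seminormalization $\widetilde{S}_w$ is already normal, so it suffices to prove that the canonical finite universal homeomorphism $\widetilde{S}_w \to S_w$ is an isomorphism. First I would reduce to the case that $G$ is absolutely almost simple and simply connected: by \Cref{remark.simply.connected.reduction} and \cite[Lemma 3.8]{HR22} the Demazure variety $D_{\dot{w}}$ and its seminormalized image $\widetilde{S}_w$ are unchanged when $G$ is replaced by $G_{\on{sc}}$, and since the kernel of the central isogeny $G_{\on{sc}} \to G$ has order prime to $p$ by hypothesis, the induced finite birational surjection $S_w^{\on{sc}} \to S_w$ should be an isomorphism via an explicit description of its fibers in terms of the loop-theoretic kernel.

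For $G$ simply connected, I would follow the approach of \cite[Theorem 8]{Fal03} and \cite[Theorem 8.4]{PR08}: the existence of a $B$-canonical Frobenius splitting of $\Fl_{\calG}$ itself (not merely of $\widetilde{\Fl}_{\calG}$), compatible with all Schubert subvarieties, would force those subvarieties to be reduced and hence, by normality of $\widetilde{S}_w$, already equal to $\widetilde{S}_w$. The splitting produced in \Cref{subsection_demazure} lives on $\widetilde{\Fl}_{\calG}$, so the task becomes to descend it to $\Fl_{\calG}$. Equivalently, one must exhibit on $\Fl_{\calG}$ itself an ample line bundle whose central charge is divisible by $(p-1) h^\vee$, then run the same Mehta--Ramanathan argument as in \Cref{subsection_demazure}.

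The hard part will be this last descent. In the tame case it is provided by the theory of negative loop groups \cite[Section 4.3]{Lou19}, which is not available for wildly ramified groups in general. Instead, I would combine \Cref{cor.charge.deg.res.sc} (controlling central charges under Weil restriction) with a case-by-case inspection of absolutely almost simple simply connected groups, in the spirit of the analysis at the end of \Cref{subsection_demazure}, using \Cref{lem_unitary_inside_orthogonal} to handle the residual characteristic $2$ cases covered by \Cref{hyp_odd_unitary}. The prime-to-$p$ hypothesis on $|\pi_1(G_{\der})|$ is precisely what ensures that the resulting central-charge congruences can be met by a line bundle living on $\Fl_{\calG}$ rather than only on its seminormalization.
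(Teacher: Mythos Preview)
Your ``only if'' direction is essentially fine (though the parahoric lifts of \Cref{section_lifts} play no role here; the paper uses only \Cref{lem_reduced_flag_var} and the argument of \cite[Section 2]{HLR18}).

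The ``if'' direction, however, has a genuine gap. Your plan is to descend the $F$-splitting from $\widetilde{\Fl}_\calG$ to $\Fl_\calG$ by producing the relevant ample line bundle on $\Fl_\calG$ rather than on $\widetilde{\Fl}_\calG$ and then ``running the same Mehta--Ramanathan argument''. But the Mehta--Ramanathan criterion is applied on the Demazure variety $D_{\dot w}$, and the line bundle enters only via its pullback to $D_{\dot w}$; since $D_{\dot w}\to S_w$ factors through $\widetilde S_w$, that pullback is the same whether the bundle lives on $\Fl_\calG$ or on $\widetilde{\Fl}_\calG$. The obstruction is not in the line bundle but in the pushforward: the splitting on $D_{\dot w}$ descends along $f\colon D_{\dot w}\to X$ precisely when $f_*\calO_{D_{\dot w}}=\calO_X$. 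This holds for $X=\widetilde S_w$ by construction, and holds for $X=S_w$ if and only if $\widetilde S_w\to S_w$ is an isomorphism, which is exactly what you want to prove. So the argument is circular.

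The paper's approach is entirely different: after reducing to $G$ simply connected, absolutely almost simple and Iwahori level, it uses the Witt lift $\underline\calG$ from \Cref{prop_lifts_witt} to spread the map $\widetilde{\Fl}_\calG\to\Fl_\calG$ over $W(k)$, observes that it is an isomorphism over $K_0=W(k)[p^{-1}]$ by Kac--Moody theory (characteristic $0$ normality), and then proves an integral formal smoothness statement at the origin in the style of \cite[Lemma 10]{Fal03} and \cite[Proposition 9.3]{PR08}. This last step, together with $L^+\calG$-translation, forces the map to be an isomorphism on the special fiber. The deformation to characteristic zero is the key idea you are missing.
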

	
	 We need the following auxiliary lemma:

\begin{lemma}\label{lem_reduced_flag_var}
	If $G$ is simply connected and satisfies \Cref{hyp_odd_unitary}, then $\Fl_{\calG}$ is reduced.
\end{lemma}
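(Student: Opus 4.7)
The plan is to show that the ind-scheme $\Fl_\calG = LG/L^+\calG$ is reduced by proving that the closed ind-subscheme $(\Fl_\calG)_\red = \colim_w S_w$ from \eqref{ind.presentation.aff.flag} coincides with $\Fl_\calG$ itself. Since $LG$ acts transitively on points of $\Fl_\calG$ when $G$ is simply connected (as $\pi_0(LG) = \pi_1(G) = 0$), it is enough to exhibit a single reduced open neighborhood of the basepoint $e$, and then translate.

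The natural choice is an ``opposite big cell'' at $e$. Fix an alcove $\bba$ in $\scrA(G,S,K)$ having $\bbf$ in its closure, with associated Iwahori scheme $\calI = \calG_\bba$; this gives a decomposition of the affine roots into positive and negative parts. One then forms the ind-subgroup $U^-$ of $LG$ generated (with some fixed convex order) by the root subgroups corresponding to the negative affine roots, each of which is isomorphic to $\Res_{L_a/K}\bbG_a$ or $\Res_{L_{2a}/K}\bbH_{L_a/L_{2a}}$ by \eqref{equation.quasi.pinning.G}. Under \Cref{hyp_odd_unitary}, the groups $\bbH_{L_a/L_{2a}}$ are sufficiently well-behaved (the quadratic extension $L_a/L_{2a}$ is unramified when $p=2$) that each root subgroup is a restriction of scalars of an affine space, so that $U^-$ is an ind-affine scheme, hence reduced. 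A standard Iwasawa-type decomposition then identifies the composition $U^- \hookrightarrow LG \to \Fl_\calG$ as an open immersion onto a neighborhood of $e$, giving the desired reduced open.

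The main obstacle is establishing the Iwasawa-type decomposition $LG \supset U^- \cdot L^+\calI$ (at least locally around $e$) for general wildly ramified $G$, since the Kac--Moody techniques available in the tame case (as used in \cite{Mat89, Fal03, PR08}) are not directly applicable here. To handle this I would follow the philosophy of Section~\ref{section_lifts} and \cite{Lou19}, constructing the big cell via birational group laws glued from the explicit root subgroups, and then invoking the axiomatic Bruhat--Tits decomposition results \cite[Chapitre~6, Sections~7.1--7.4]{BT72} to identify the open immersion. The simply-connectedness hypothesis is essential both for the connectedness of $LG$ (to cover $\Fl_\calG$ by translates) and to exclude the pathological components that obstruct normality of Schubert varieties in \cite[Theorem 2.5]{HLR18}.
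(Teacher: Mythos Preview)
Your approach differs from the paper's and has a genuine gap precisely at the step you flag as the ``main obstacle'': establishing that the opposite big cell $U^- \to \Fl_\calG$ is an open immersion for wildly ramified $G$. The paper itself remarks (see the footnote in \Cref{subsection_demazure}) that the theory of negative loop groups---which is what underlies such open-cell decompositions in the tame/Kac--Moody setting---``is not available for wildly ramified groups in general.'' Your suggested workaround, birational group laws from \Cref{section_lifts} plus axiomatic Bruhat--Tits theory, does not bridge this: the birational gluing there produces smooth affine group schemes over $O\pot{t}$, not open cells in ind-schemes over $k$, and the decompositions in \cite{BT72} are statements about sets of rational points, not scheme-theoretic open immersions into $\Fl_\calG$.

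The paper's proof avoids this entirely by arguing pointwise. Invoking \cite[Lemma 8.6]{HLR18}, it suffices to show that every $R$-point of $\Fl_\calG$ with $R$ Artinian strictly Henselian factors through $(\Fl_\calG)_{\red}$. After translating to the origin via the Bruhat decomposition and lifting to $\tilde{x} \in LG(R)$ supported at the identity, one views $\tilde{x}$ as an $R\rpot{t}$-point of $G$ lying in the \emph{finite-dimensional} big cell $U^- \times T \times U^+$ of $G$ (no affine open cell is needed). The ind-schemes $LU^\pm$ are reduced since $U^\pm$ are affine spaces as $K$-varieties, so one may assume $\tilde{x} \in LT(R)$. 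Because $T$ is induced (here $G$ is simply connected) and factors according to the relative coroots, this reduces to the absolute rank-one case, i.e., $G = \SL_2$ or $\SU_3$ with $p \neq 2$ (the latter by \Cref{hyp_odd_unitary}), where the required generation by $LU^\pm(R)$ is the explicit calculation in \cite[Proposition 9.3]{PR08}.
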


\begin{proof}
	This is proven in \cite[Proposition 9.9]{PR08} for tamely ramified groups and extends to wildly ramified groups under \Cref{hyp_odd_unitary}. 
	We recall the proof for convenience, following closely \cite[Proposition 9.9]{PR08}. 
	
	By \cite[Lemma 8.6]{HLR24}, it is enough to show that every $R$-valued point $x$ of $\Fl_{\calG}$, with $R$ being Artinian and strictly Henselian, factors through the reduced locus. 
	By the Bruhat decomposition and formal smoothness of $L^+\calG$, we can translate $x$ such that it is supported at the origin $e \in \Fl_{\calG}(k)$. 
	After extending scalars, we may assume that the residue field of $R$ equals $k$.
	Moreover, we can use formal smoothness of $L^+\calG$ and the fact that $R$ is strictly Henselian to lift $x$ to an $R$-valued point $\tilde{x}$ of $LG$ supported at the identity.
	This corresponds to an $R\rpot{t}$-valued point of $G$ supported at the identity, so it factors through the big cell $C=U^- \times T \times U^+$. We claim that $\tilde{x}$ is in the subgroup generated by $LU^\pm(R)$. Since the ind-schemes $LU^\pm $ are reduced, they map to $(\Fl_{\calG})_{\mathrm{red}}$. Hence, we may and do assume that $\tilde{x} \in LT$. But $T$ factors as a product of induced tori indexed by its relative coroots, and thus we can further reduce to the case when $G$ has rank $1$. Supressing the wildly ramified restrictions of scalars, then either $G=\mathrm{SL}_2$ or $\mathrm{SU}_3$ and $p \neq 2$ and the needed generation property is explicitly calculated in the proof of \cite[Proposition 9.3]{PR08}.
		So $x$ lies in the reduced locus, and the lemma follows.
\end{proof}

	\begin{proof}[Proof of \Cref{thm_normality_classical_sch_vars}]
		The seminormalization $\widetilde S_w\to S_w$ is proper and surjective, hence an isomorphism if and only if it is a monomorphism (as $S_w$ is reduced). 
		So all Schubert varieties $S_w$ are seminormal (hence normal by \Cref{seminormal.Schubert.varieties.lemma}) if and only if the morphism of ind-schemes
		\begin{equation}\label{eqn_normalization_flag_var}
		\widetilde{\Fl}_{\calG}=\on{colim} \widetilde S_w \rightarrow \on{colim} S_w =(\Fl_{\calG})_{\on{red}}\subset \Fl_\calG
		\end{equation}
		is a monomorphism, or equivalently, its restriction to the neutral components is so.  
		Using this we prove the theorem as follows.
			
			For the if clause, by \cite[Section 6.a]{PR08} we may and do assume that $G$ is simply connected, absolutely almost simple and $\calG$ is an Iwahori model. 
		In this case, we claim that \eqref{eqn_normalization_flag_var} is an isomorphism.
		Now observe that by \Cref{prop_lifts_witt}, we can find a smooth affine $W(k)\pot{t}$-group $\underline{\calG}$ with connected fibers lifting $\calG$, such that it becomes parahoric as well over $K_0\pot{t}$ with $K_0=W(k)[p^{-1}]$. Hence, \eqref{eqn_normalization_flag_var} lifts to a morphism of $W(k)$-ind-schemes
		\begin{equation}
		\widetilde{\Fl}_{\underline{\calG}}\to \Fl_{\underline{\calG}},
		\end{equation}
		where the left
		side is the ind-normalization of the right
		side. Indeed, that this commutes with base change to
		$k$ is a consequence of \Cref{theorem_frobenius_split_allp} thanks to the
		vanishing of higher coherent cohomology of the
		Demazure resolution, by an application of cohomology and base change, compare with \cite[page 52]{Fal03} and \cite[Proposition 3.13]{Gor03}. Over $K_0$, we get an isomorphism
		by Kac--Moody theory, see \cite[Section 9.f]{PR08}. Integrally, we show that the map is formally smooth around the origin, by virtue of an analogue of \cite[Lemma 10]{Fal03} or \cite[Proposition 9.3]{PR08}. This implies the claim by \cite[page 53]{Fal03} or \cite[Section 9.g]{PR08}.		
		
		The only if part follows from the argument in \cite[Section 2]{HLR24}, because if $p$ divides the order of $\pi_1(G_\der$) then the kernel of $G_{\on{sc}}\to G$ is not \'etale. 
		Hence, the induced morphism $\Fl_{\calG_{\on{sc}}}\to \Fl_{\calG}^0\subset \Fl_\calG$ is not a monomorphism, where $\calG_{\on{sc}}$ denotes the parahoric $O$-model of $G_{\on{sc}}$ induced by $\calG$.
		By \Cref{lem_reduced_flag_var}, $\Fl_{\calG_{\on{sc}}}$ is reduced, so \eqref{eqn_normalization_flag_var} factors on neutral components as $\widetilde{\Fl}_{\calG}^0 \overset{\sim}{\to}\Fl_{\calG_{\on{sc}}}\to (\Fl_{\calG})_{\on{red}}^0$.
		Now, if \eqref{eqn_normalization_flag_var} were a monomorphism, then $\Fl_{\calG_{\on{sc}}}\to \Fl_{\calG}^0$ would be a monomorphism, which is a contradiction.
	\end{proof}	
	
	\subsection{Central extensions of line bundles}
	
	In the theory of loop groups and their flag varieties, one is usually faced with the obstacle that not every line bundle on $\Fl_{\calG}$ is $LG$-equivariant. However, this can partially remedied by considering a certain universal central extension of $LG$ that acts on every line bundle of $\Fl_{\calG}$. This is a recurrent theme in Kac--Moody theory, see \cite[page 54]{Fal03}, \cite[Remark 10.2]{PR08} and \cite[Corollary 4.3.11]{Lou23}, and also admits an incarnation for the Witt vector Grassmannian by \cite[Proposition 10.3]{BS17}. In order to properly explain it, we need to use the geometric results of the previous subsections.
		
		Given a line bundle $\calL$ on $\Fl_{\calG}$, we form the group functor on the category of $k$-algebras $R$ defined by
	\begin{equation}
	LG\{\calL\}(R) \,=\, \{(g,\al)\,|\,g\in LG(R), \;\al\colon \calL\cong g^*\calL\}.
	\end{equation}
	We can now prove the following lemma:
	
	\begin{lemma}\label{central.extension.lemma}
		Suppose $G$ is an almost simple, simply connected $K$-group satisfying \Cref{hyp_odd_unitary}. Then, the pre-sheaf $LG\{\calL\}$ defines a central extension of $LG$ by $\bbG_{m,k}$ in the category of ind-affine $k$-group ind-schemes.
		The association $\calL\mapsto LG\{\calL\}$ induces a group homomorphism
		\begin{equation}\label{central.extension.map.equation}
		\Pic(\Fl_{\calG}) \to \Ext_{\on{cent}}(LG, \bbG_{m,k}). 
		\end{equation}
		with the same kernel as \eqref{equation.central.charge} restricted to $\Pic(\Fl_{\calG})$.
	\end{lemma}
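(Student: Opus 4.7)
The plan is to verify that $LG\{\calL\}$ is representable by an ind-affine $\bbG_{m,k}$-torsor over $LG$, equip it with a compatible group law exhibiting a central extension, establish Baer-sum functoriality in $\calL$, and identify the kernel.

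First, since $G$ is almost simple and simply connected, $LG$ is connected, hence acts trivially on the discrete group $\on{Pic}(\Fl_\calG)$. In particular $g^*\calL \cong \calL$ for every functorial point $g\in LG(R)$. Moreover $\Fl_\calG$ is reduced by \Cref{lem_reduced_flag_var}, connected, and ind-projective, so $\on{H}^0(\Fl_\calG, \calO^\times_{\Fl_\calG}) = k^\times$. Consequently the fibers of $LG\{\calL\}\to LG$ acquire a natural structure of $\bbG_{m,k}$-torsor via scaling of isomorphisms.

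For representability, pick an ind-affine presentation $LG = \on{colim}_n Y_n$ by affine closed subschemes of finite type. The action map $\on{act}\colon Y_n\times \Fl_\calG\to \Fl_\calG$ factors through a seminormalized Schubert variety $\widetilde{S}_w$ for sufficiently large $w$, and the line bundle $\calM_n := \on{act}^*\calL \otimes \pi_2^*\calL^{-1}$ on $Y_n\times \widetilde{S}_w$ is fiberwise trivial over $Y_n$ by the previous paragraph. Combined with the vanishing $\on{H}^{>0}(\widetilde{S}_w, \calO) = 0$ supplied by \Cref{cohomology.vanishing.lemma}, cohomology and base change produces an invertible sheaf $\pi_{1,*}\calM_n$ on $Y_n$ whose associated $\bbG_{m,k}$-torsor represents $LG\{\calL\}|_{Y_n}$. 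These compatibly assemble into an ind-affine ind-scheme structure on $LG\{\calL\}$.

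The composition law $(g_1,\alpha_1)\cdot(g_2,\alpha_2) := (g_1g_2,\, g_2^*\alpha_1\circ\alpha_2)$ endows $LG\{\calL\}$ with a group ind-scheme structure fitting into a central extension
\[
1 \longto \bbG_{m,k} \longto LG\{\calL\} \longto LG \longto 1,
\]
where $\bbG_{m,k}$ embeds via $\lambda\mapsto (e,\lambda\cdot\on{id})$ and is central because scalars commute with every isomorphism of line bundles. Tensor product of isomorphisms defines a morphism $LG\{\calL_1\}\times_{LG} LG\{\calL_2\} \to LG\{\calL_1\otimes\calL_2\}$ whose kernel is the antidiagonal $\bbG_{m,k}$; this realizes the right hand side as the Baer sum of the two extensions, so \eqref{central.extension.map.equation} is a group homomorphism. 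Finally, $[LG\{\calL\}]$ is trivial in $\Ext_{\on{cent}}(LG, \bbG_{m,k})$ iff the extension admits a group-theoretic section, iff $\calL$ carries an $LG$-equivariant structure. By \Cref{lemma.equivariant.lb} the group of $LG$-equivariant line bundles on $\Fl_\calG$ equals $X^*(\calG_k)$, embedded in $\on{Pic}(\Fl_\calG)$ via \eqref{equivariant.embedding.line.bundles.eq}; and the defining properties of the central charge \eqref{equation.central.charge} identify its kernel on $\on{Pic}(\Fl_\calG^{\pf})$ with $X^*(\calG_k)[p^{-1}]$, whose intersection with $\on{Pic}(\Fl_\calG)$ is precisely $X^*(\calG_k)$. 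Thus the two kernels agree.

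The principal technical obstacle is the representability step: asserting that $\pi_{1,*}\calM_n$ is a genuine line bundle requires cohomology and base change to behave well on the possibly singular Schubert approximations, and this is ensured only by the combination of fiberwise triviality of $\calM_n$ (from connectedness of $LG$) with the higher cohomology vanishing from \Cref{cohomology.vanishing.lemma}, both ultimately resting on the earlier results of this section.
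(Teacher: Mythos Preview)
Your proof is essentially correct, though the claim in the first paragraph that $g^*\calL \cong \calL$ for every $g\in LG(R)$ is slightly overstated: connectedness of $LG$ together with \'etaleness of the Picard functor (which already needs the cohomology vanishing you invoke later) only gives that the two bundles agree fiberwise over $\Spec\,R$, i.e., that $\calM_n$ is fiberwise trivial. Since your representability argument uses precisely this fiberwise statement, the proof goes through unaffected.

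Your route differs from the paper's in the crucial step of showing that $g^*\calL$ and $\calL$ have the same class in $\Pic(\Fl_{\calG,R})/\Pic(R)$. The paper first decomposes $\Pic(\Fl_{\calG,R})=\Pic(R)\oplus\Pic(\Fl_\calG)$ and then checks triviality of the $LG(R)$-action on the second summand by hand on $\bbQ$-generators: for $LG$-equivariant bundles this is tautological, and for $\calL_{\on{ad}}$ it is an explicit lattice-determinant computation. Given this, representability is obtained by trivializing the residual $\Pic(R)$-obstruction on an affine open. You instead bypass the generator check entirely by invoking connectedness of $LG$ (valid since $G$ is simply connected) against the \'etale Picard scheme, and then package representability directly via cohomology and base change for $\pi_{1,*}\calM_n$. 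Your approach is more uniform and avoids the adjoint-bundle calculation; the paper's approach has the virtue of making the $\Pic(R)$-obstruction explicit and would survive even if $LG$ were disconnected. Both routes ultimately rest on the same ingredients: reducedness of $\Fl_\calG$ (\Cref{lem_reduced_flag_var}) and the cohomology vanishing (\Cref{cohomology.vanishing.lemma}).
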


\begin{proof}
  Note that $LG \{\calL\}(R)$ carries a natural group structure via
  $(g_1,\alpha_1)\cdot (g_2,\alpha_2)=(g_1g_2, g^*_2\alpha_1 \circ \alpha_2 )$, thus having
  $\bbG_{m,k}(R)=\{(1,c)\,|\, c \in R^{\times}\}$ as a central
  subgroup. We claim moreover that
  $\bbG_{m,k}(R) \subset LG \{\calL\}(R)$ is the kernel of the natural
  projection to $LG(R)$. In other words, we claim that the
  automorphism group $\mathrm{Aut}(\calL_R)$ as a line bundle on
  $\Fl_{\calG,R}$ equals $R^{\times}$. After tensoring with
  $\calL^{-1}$, we may and do assume that $\calL=\calO$. Thus, it
  suffices to show that $H^0(\Fl_{\calG,R},\calO)=R$ which is implied
  by \Cref{lem_reduced_flag_var}.

        Next, we study the action of $LG(R)$ on the Picard
        groups. Note that $\mathrm{Pic}(\Fl_{\calG,R})$ is the direct
        sum of $\mathrm{Pic}(R)$ and $\mathrm{Pic}(\Fl_{\calG})$,
        since the Picard functor of the flag variety is constant
        \'etale due to \cite[Corollary 5.13]{Kle05} using \Cref{cohomology.vanishing.lemma}. The action
        of $LG(R)$ on $\mathrm{Pic}(R)$ is trivial, and we claim that
        the same holds for the quotient
        $\mathrm{Pic}(\Fl_{\calG,R})/\mathrm{Pic}(R)$. By
        \Cref{thm_normality_classical_sch_vars} and
        \eqref{Fl_Picard.group.computation.normal}, that quotient is
        torsion-free and we may check triviality of the $LG(R)$-action
        on generators of the associated $\bbQ$-vector space. A set of
        generators is given by $LG$-equivariant line bundles, see
        \Cref{lemma.equivariant.lb}, and the adjoint line bundle. For
        an $LG$-equivariant line bundle, the claim is trivial and we
        even see directly that $LG\{\calL\} \to LG$ splits and thus is
        the trivial extension. For the adjoint line bundle, one sees
        that the difference \begin{equation}
          \calL^{-1}_\mathrm{det}\cdot
          g^*\calL_{\mathrm{det}}=\mathrm{det}(t^{-a}R\pot{t}^n/gR\pot{t}^n)
          \in \mathrm{Pic}(\Fl_{\mathrm{SL}_n,R})
      \end{equation}
      for $a\gg 0$ is in
      the image of $\mathrm{Pic}(R)$, compare \cite[page 43]{Fal03}, so the same remains true after pulling back to
      $\Fl_{\calG,R}$.

		We can use the previous paragraph to show that any
                $R$-valued point of $LG$ lifts along the map
                $LG\{\calL\} \to LG$ after we replace $\Spec\, R$ by a finite union of affine opens.
                Indeed, we saw above that $\calL$ and $g^*\calL$ differ by an element of $\mathrm{Pic}(R)$ which can be trivialized over an affine open $\Spec\,  S \subset \Spec\,  R$. Replacing $R$ by $S$, we may assume the existence of an isomorphism $\alpha\colon \calL \cong g^*\calL$, thereby producing a lift in $LG\{\calL\}(S)$. Letting $\Spec\,  R$ run over sufficiently small affine opens of a presentation of $LG$, the existence of lifts shows that $LG\{\calL\}$ is representable by an ind-affine $k$-group ind-scheme and that it is an extension of $LG$ by $\bbG_{m,k}$. Finally, it is clear that the kernel of \eqref{central.extension.map.equation} consists of those $\calL$ that admit an $LG$-equivariant structure, hence coincides with the kernel of \eqref{equation.central.charge} after restricting the latter to $\mathrm{Pic}(\Fl_{\calG})$ thanks to \Cref{lemma.equivariant.lb}.
                
                
\end{proof}

The lemma implies that the image of \eqref{central.extension.map.equation} is a free $\bbZ$-module of rank $1$, see \Cref{lemma.kac.moody.coeff.inj}. 
Identify the image with $\bbZ$ via the unique isomorphism sending ample line bundles to positive integers. 

\begin{corollary}
For any $\calL\in \mathrm{Pic}(\Fl_\calG)$ with $c_\calL=1$, the resulting central extension $\widehat{LG}:=LG\{\calL \}$ has the property that every line bundle on $\Fl_\calG$ admits a $\widehat{LG}$-equivariant structure which is unique up to multiplication by $k^\times$.
\end{corollary}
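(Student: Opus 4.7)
The plan is to twist $\calM$ by an appropriate power of $\calL$ to reduce to the $LG$-equivariant case. Specifically, set $n := c_\calM$ and $\calN := \calM \otimes \calL^{\otimes(-n)}$. Then $c_\calN = 0$ since $c_\calL = 1$. By \Cref{central.extension.lemma}, the kernel of \eqref{central.extension.map.equation} agrees with that of \eqref{equation.central.charge} restricted to $\Pic(\Fl_\calG)$, which by \Cref{lemma.equivariant.lb} consists exactly of the line bundles admitting an $LG$-equivariant structure. Thus $\calN$ admits such a structure, and pulling back along $\widehat{LG} \twoheadrightarrow LG$ yields a $\widehat{LG}$-equivariant structure on $\calN$. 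On the other hand, the definition of $\widehat{LG} = LG\{\calL\}$ as pairs $(g, \alpha)$ with $\alpha\colon \calL \cong g^*\calL$ directly provides a tautological $\widehat{LG}$-equivariant structure on $\calL$. Tensoring, we obtain a $\widehat{LG}$-equivariant structure on $\calM = \calN \otimes \calL^{\otimes n}$.

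For uniqueness, any two $\widehat{LG}$-equivariant structures on the same underlying $\calM$ differ by an automorphism of $\calM$ as an equivariant line bundle, which is a global unit on $\Fl_\calG$ of the expected compatibility type. Since $\Fl_\calG$ is reduced and connected by \Cref{lem_reduced_flag_var} together with the standing hypothesis that $G$ is almost simple and simply connected, and since each Schubert variety $S_w$ is projective, reduced and connected, we have $H^0(\Fl_\calG, \calO^\times) = k^\times$; moreover, both structures impose the same central weight $n = c_\calM$ on $\calM$, so the character ambiguity descends to $LG$ and is killed by perfectness. The remaining ambiguity is therefore a scalar in $k^\times$.

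The substantive content is the existence step, which reduces immediately to the kernel description from \Cref{central.extension.lemma}, combined with the tautological nature of the action of $LG\{\calL\}$ on $\calL$. The main potential obstacle is the uniqueness refinement, specifically ensuring that the compatibility conditions force the ambiguity to lie in $k^\times$ rather than in a larger character group; this follows from the rigidity of global units on $\Fl_\calG$ and triviality of characters of $LG$ for such $G$.
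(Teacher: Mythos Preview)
Your existence argument is correct and is essentially the paper's proof: both amount to observing that the choice of $\calL$ with $c_\calL=1$ splits the short exact sequence $0\to \Pic([LG\backslash\Fl_\calG])\to \Pic(\Fl_\calG)\xrightarrow{c}\bbZ\to 0$, so every $\calM$ factors as $\calN\otimes\calL^{\otimes c_\calM}$ with $\calN$ already $LG$-equivariant and $\calL$ tautologically $\widehat{LG}$-equivariant.

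Your uniqueness argument, however, is muddled in several places. First, two $\widehat{LG}$-equivariant structures on the same $\calM$ do not ``differ by an automorphism of $\calM$ as an equivariant line bundle''; they differ by a character $\widehat{LG}\to\underline{\on{Aut}}(\calM)\cong\bbG_m$. Second, you assert that both structures impose the same central weight $c_\calM$, but this is exactly what needs to be shown (it would follow from knowing that the extension $\widehat{LG}$ has no character restricting nontrivially to the central $\bbG_m$, which you do not establish). Third, you invoke ``perfectness'' of $LG$ to kill characters of $LG$, but nothing in the paper proves or cites this. Finally, your conclusion that ``the remaining ambiguity is a scalar in $k^\times$'' does not follow from the preceding sentences: if the difference character were trivial, the two structures would be equal, not merely equal up to $k^\times$. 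The paper's own argument for uniqueness is much shorter and does not pass through characters of $LG$ at all: it simply invokes $\on{Aut}(\calM)=k^\times$ (established in the proof of \Cref{central.extension.lemma}), which is the precise content of ``unique up to multiplication by $k^\times$''. Your detour through central weights and perfectness is unnecessary and, as written, not justified.
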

\begin{proof}
Using \Cref{lemma.equivariant.lb}, the central charge induces a short exact sequence $0\to \Pic([LG\backslash \Fl_\calG])\to \Pic(\Fl_\calG)\overset{c}{\to} \bbZ\to 0$. 
The choice of $\calL$ provides a splitting. 
So the corollary follows from the equality $\on{Aut}(\calM)=k^\times$ for any line bundle $\calM$ on $\Fl_\calG$, see the proof of \Cref{central.extension.lemma}.
\end{proof}

	\section{Local models} \label{section_lm}

	In this final section, let $O$ be a complete discretely valued ring with fraction field $K$ and perfect residue field $k$ of characteristic $p>0$.
	Let $G$ be a reductive $K$-group, $\mu$ a (not necessarily minuscule) geometric conjugacy class of cocharacters in $G$ and $\calG$ a parahoric $O$-model of $G$.
	The reflex field $E$ of $\mu$ is a finite separable field extension of $K$ with ring of integers $O_E$ and residue field $k_E$.
	
	Let $\breve O$ be the completed strict Henselisation of $O$ with fraction field $\breve K$ and algebraically closed residue field $\bar k$. 
	Let $T$ be the centralizer of some maximal $\breve K$-split torus $S$ which is defined over $K$ and contains a maximal $K$-split torus with apartment containing the facet associated with $\calG$, see \cite[Corollaire 5.1.12]{BT84}.
	The connected N\'eron model $\calT$ of $T$ is a closed subgroup scheme of $\calG$.

	\subsection{Equicharacteristic local models} \label{equichar_LM_sec}
	Assume $K=k\rpot{t}$ is a Laurent series field with ring of integers $O=k\pot{t}$.
	Let us recall the definition of local models in equicharacteristic, which only depend on the pair $(\calG,\mu)$ and not on additional auxiliary choices. 
	Recall that we have defined the Beilinson--Drinfeld Grassmannian $\Gr_{\calG} \to \Spec \,O$ before \Cref{line.bundle.extension}.
	Its generic fiber is equivariantly isomorphic to the affine Grassmannian $\Gr_G\to \Spec\, K$ whereas its special fiber is equal to the affine flag variety $\Fl_\calG\to \Spec\, k$. 
	Let $S_{G,\mu}\subset \Gr_G\times_{\Spec\, K}\Spec\, E$ be the Schubert variety attached to $\mu$.
	
	\begin{definition}\label{loc_mod_def}
		Let $M_{\calG, \mu}$ denote the flat closure of $S_{G,\mu}$ inside the Beilinson--Drinfeld Grassmannian $\Gr_{\calG,O_E}:=\Gr_{\calG}\times_{\Spec\, O}\Spec\, O_E$. 
		We denote by $\widetilde{M}_{\calG, \mu}$ its seminormalization \cite[0EUK]{StaProj}.
              \end{definition}

	\begin{remark}\label{functoriality_remark}
	The formation of orbit closures and their seminormalizations are functorial in the following sense.
	A morphisms of pairs $(\calG,\mu)\to (\widetilde \calG,\widetilde \mu)$ is a map of $O$-group schemes $\calG\to \widetilde \calG$ which maps $\mu$ into $\widetilde \mu$ under the induced map of reductive $K$-groups $G\to \widetilde G$ in the generic fiber.
	Any such map of pairs induces a map $M_{\calG,\mu}\to M_{\widetilde \calG, \widetilde \mu}$ commuting over $\Spec\, O_E\to \Spec\, O_{\widetilde E}$ where $\widetilde E$ denotes the reflex field of $\widetilde \mu$.
	By functoriality of seminormalizations \cite[Tag
        0EUS]{StaProj}, we get a map $\widetilde M_{\calG,\mu}\to \widetilde M_{\widetilde \calG, \widetilde \mu}$ commuting over the map of orbit closures.
	\end{remark}

	In order to describe the special fiber of the schemes from \Cref{loc_mod_def}, we
	recall the admissible locus \cite[Section 4.3]{PRS13}.
	The Kottwitz homomorphism induces an isomorphism $X_*(T)_I\cong T(\breve K)/\calT(\breve O), \bar\la\mapsto \bar\la(t)$ where the source denotes the coinvariants of the cocharacter lattice $X_*(T)$ under the inertia subgroup $I$ of the absolute Galois group of $K$. 
	Note that the isomorphism does not depend on the choice of uniformizer $t$.

	\begin{definition} \label{def_admissible} The admissible locus
          $A_{\calG, \mu}$ is the reduced $k_E$-subscheme of
          $\Fl_{\calG, k_E}$ given by the $k_E$-descent of the union
          of $\bar k$-Schubert varieties $S_{\bar\la(t)}$, where
          $\la\in X_*(T)$ runs through the (finitely many)
          representatives of $\mu$ and where $\bar\la\in X_*(T)_I$
          denotes its image in the coinvariants under $I$.  We denote
          by $\widetilde{A}_{\calG, \mu}$ its seminormalization.
	\end{definition}
	
	Note that $A_{\calG,\mu}$ does not depend on the choice of the
        maximal torus $T$ as above.  Further, $A_{\calG,\mu}$ is
        geometrically connected and, by \cite[Theorem 4.2]{Hai18}, its
        irreducible $\bar k$-components are the Schubert varieties
        $S_{\bar\la(t)}$ where $\la$ runs through the
        $\breve{K}$-rational representatives of $\mu$ in $X_*(T)$.

	Let us now discuss finer geometric properties. It was shown in
        \cite[Theorem 6.12]{HR21} that the reduced special fiber of
        $M_{\calG, \mu}$ coincides with $A_{\calG, \mu}$, but we shall
        only need to use the inclusion of $A_{\calG, \mu}$ in the
        reduced special fiber, already proved in \cite[Lemma
        3.12]{Ric16}. Note that
        $(\widetilde{A}_{\calG, \mu})_{\bar{k}} = \cup_{\lambda}
        \widetilde{S}_{\bar{\lambda}(t)}$ by
        \Cref{intersections.seminormalized.Schubert.varieties}, where
        $\lambda$ ranges over the $\breve{K}$-rational representatives
        of $\mu$ in $X_*(T)$.  Since the $F$-split property for proper
        schemes can be descended from $\bar{k}$ to $k_E$,
        $\widetilde{A}_{\calG, \mu}$ is $F$-split.  It identifies
        moreover with the admissible locus
        $A_{\widetilde{\calG}, \widetilde{\mu}}$ associated with any
        $z$-extension $\widetilde{G}$ of $G$ with simply connected
        derived group, and any lift $\widetilde{\mu}$ of $\mu$, by
        \Cref{thm_normality_classical_sch_vars}, at least when
        \Cref{hyp_odd_unitary} holds.

	Now, we may state our main result on the singularities of local models.
	
	\begin{theorem}\label{theorem_coherence_allp}
		Under \Cref{hyp_odd_unitary}, the local
		model $\widetilde{M}_{\calG, \mu}$ is Cohen--Macaulay, has $F$-rational singularities (and thus is pseudo-rational), and has reduced special fiber
		equal to the seminormalized admissible locus
		$\widetilde{A}_{\calG,\mu}$.
	\end{theorem}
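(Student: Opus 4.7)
The plan is to follow the strategy of \cite{Zhu14, PZ13}, refined in \cite[Section 6]{HR22}, in three steps: identify the scheme-theoretic special fiber of $M_{\calG,\mu}$ with $A_{\calG,\mu}$ via the coherence conjecture; establish the seminormalized analogue $(\widetilde{M}_{\calG,\mu})_k = \widetilde{A}_{\calG,\mu}$; and apply \Cref{lemma.Schwede.Singh} to deduce Cohen--Macaulayness and rational singularities.

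For the first step, one picks an ample line bundle $\calL$ on $\Gr_{\calG,O_E}$ via \Cref{line.bundle.extension} (for instance, $\calL_{\mathrm{ad}}$). By $O_E$-flatness of $M_{\calG,\mu}$ together with vanishing of higher cohomology of $\calL^{\otimes n}$ on the admissible locus (as in \Cref{cohomology.vanishing.lemma} combined with ampleness and the Frobenius splitting of \Cref{theorem_frobenius_split_allp}), the $O_E$-module $H^0(M_{\calG,\mu}, \calL^{\otimes n})$ is locally free, with rank equal to both $\dim_E H^0(S_{G,\mu,E}, \calL^{\otimes n})$ and $\dim_{k_E} H^0(A_{\calG,\mu}, \calL^{\otimes n})$. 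The latter equality is the coherence conjecture; its proof in \cite{Zhu14} carries over to wild groups using \Cref{theorem_frobenius_split_allp} in place of the previously required tameness hypothesis. Together with the inclusion $A_{\calG,\mu} \subseteq (M_{\calG,\mu})_k^{\on{red}}$ from \cite[Lemma 3.12]{Ric16}, this forces $(M_{\calG,\mu})_k = A_{\calG,\mu}$ scheme-theoretically.

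For the second step, I would use integral-level Demazure resolutions extending \eqref{equation.reduced.decomposition.Demazure}. For each $\lambda \in \mu$ with a reduced decomposition $\dot\lambda$, a smooth global Demazure variety $\underline{D}_{\dot\lambda}$ over $O_E$ maps to $\Gr_{\calG, O_E}$ with image the flat closure of $S_\lambda$ (see \cite{PZ13, Ric16}). Since $\underline{D}_{\dot\lambda}$ is normal, this map factors through the normalization $M_{\calG,\mu}^\nu$, hence also through the seminormalization $\widetilde{M}_{\calG,\mu}$. On special fibers, the induced map $D_{\dot\lambda} \to (\widetilde{M}_{\calG,\mu})_k$ has scheme-theoretic image $\widetilde{S}_{\bar\lambda(t)}$ (using $f_*\calO_{D_{\dot\lambda}} = \calO_{\widetilde{S}_{\bar\lambda(t)}}$ from \eqref{equation.reduced.decomposition.Demazure}), yielding a closed immersion $\widetilde{A}_{\calG,\mu} = \bigcup_\lambda \widetilde{S}_{\bar\lambda(t)} \hookrightarrow (\widetilde{M}_{\calG,\mu})_k$ via \Cref{intersections.seminormalized.Schubert.varieties}. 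Both sides share $A_{\calG,\mu}$ as their underlying reduced scheme, so the immersion is an equality precisely when $(\widetilde{M}_{\calG,\mu})_k$ is reduced. One establishes this by comparing with the seminormalized BD-type ind-scheme $\widetilde{\Gr}_{\calG, O_E}$ defined in analogy to \eqref{normalized.ind.scheme}, inside which $\widetilde{M}_{\calG,\mu}$ is a closed subscheme whose special fiber lies in the seminormal $\widetilde{\Fl}_\calG$.

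With the special fiber identified, Cohen--Macaulayness and rational singularities follow from \Cref{lemma.Schwede.Singh}. For any closed point $x \in \widetilde{M}_{\calG,\mu}$ and local ring $R = \calO_{\widetilde{M}_{\calG,\mu}, x}$, the ring $R$ is excellent and $F$-finite; a uniformizer $\pi \in O_E$ is a non-zero divisor by $O_E$-flatness of $\widetilde{M}_{\calG,\mu}$ (torsion-freeness is inherited from the normalization); $R[\pi^{-1}]$ is a localization of the $F$-rational variety $\widetilde{S}_{G,\mu,E}$ (\Cref{theorem_frobenius_split_allp}); and $R/(\pi)$, as a localization of the $F$-split $\widetilde{A}_{\calG,\mu}$ (again \Cref{theorem_frobenius_split_allp}), is $F$-injective. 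Schwede--Singh then gives that $R$ is $F$-rational, hence Cohen--Macaulay. Finally, \Cref{Smith97} yields pseudo-rationality, which by \Cref{Kovacs_equivalences} is equivalent to rational singularities, $\widetilde{M}_{\calG,\mu}$ being excellent and admitting a dualizing complex. The main obstacle is the second step: seminormalization is not compatible with base change in general, and verifying that $\widetilde{A}_{\calG,\mu} \hookrightarrow (\widetilde{M}_{\calG,\mu})_k$ is an equality (equivalently, that $(\widetilde{M}_{\calG,\mu})_k$ is reduced) requires exploiting both the integral Demazure construction and the seminormality of unions of seminormalized Schubert varieties (\Cref{intersections.seminormalized.Schubert.varieties}).
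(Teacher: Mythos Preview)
Your third step (Schwede--Singh) matches the paper and is correct once the special fiber is identified. The real content is in the first two steps, and there your proposal has a genuine gap and takes a route that is harder than necessary.

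\textbf{The missing Witt lift.} You assert that Zhu's proof of the coherence equality ``carries over to wild groups using \Cref{theorem_frobenius_split_allp}.'' This is the crux of the whole theorem, and the paper does \emph{not} proceed this way. Zhu's argument in \cite{Zhu14} uses ingredients (including a global Frobenius splitting of the BD Grassmannian, cf.~\cite[Theorem 6.5]{Zhu14}) that are unavailable for wildly ramified groups, especially when $p=2$. The paper's actual mechanism is to use the $W(k)\pot{t}$-lift $\underline{\calG_{\on{sc}}}$ of \Cref{prop_lifts_witt} to produce a flat family over $W(k)$ whose special fiber is $A_{\calG_{\on{sc}},\mu}$ and whose generic fiber is the admissible locus $A_{\calG'_{\on{sc}},\mu'}$ for a group over $K_0\rpot{t}$ with $K_0=W(k)[p^{-1}]$. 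Compatible $F$-splitting (\Cref{theorem_frobenius_split_allp}) gives constancy of $\dim H^0(\,\cdot\,,\calL_{\on{ad}}^{\otimes n})$ across this family, reducing the coherence equality to characteristic zero, where every group is tame and Zhu's theorem applies directly. Without this step, you have no proof of the dimension equality for wild $G$; \Cref{theorem_frobenius_split_allp} alone does not supply it.

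\textbf{The reduction to the simply connected cover.} Your steps 1 and 2 treat $M_{\calG,\mu}$ and then try to pass to the seminormalization. The paper instead passes first to $G_{\on{ad}}$ and then works with a translate $M_{\calG_{\on{sc}},\mu}\subset \Gr_{\calG_{\on{sc}},O_E}$. Because Schubert varieties for simply connected groups are already normal (\Cref{thm_normality_classical_sch_vars}), the coherence argument applied at this level shows directly that $M_{\calG_{\on{sc}},\mu}$ has reduced special fiber $A_{\calG_{\on{sc}},\mu}=\widetilde A_{\calG,\mu}$ and is normal by Serre's criterion. Then the finite birational universal homeomorphism $M_{\calG_{\on{sc}},\mu}\to M_{\calG,\mu}$, being an isomorphism on residue fields, identifies the source with $\widetilde M_{\calG,\mu}$. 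This completely bypasses your step 2: there is no need for integral Demazure resolutions, seminormalized BD ind-schemes, or a separate argument that $(\widetilde M_{\calG,\mu})_k$ is reduced. Your step 1 as written (for $M_{\calG,\mu}$ rather than $M_{\calG_{\on{sc}},\mu}$) is also delicate: when $p\mid|\pi_1(G_{\der})|$ neither $A_{\calG,\mu}$ nor $S_{G,\mu}$ need be normal, and the coherence equality you invoke is only proved in the paper for the seminormalized versions (\Cref{corollary.coherence.conjecture}), which is deduced \emph{from} the theorem rather than used to prove it.
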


	\begin{proof}
          The key step of the proof is showing that the special fiber
          is reduced and equal to $\widetilde{A}_{\calG,\mu}$ from
          which the other properties follow by using the $F$-splitness
          of $\widetilde{A}_{\calG,\mu}$; in fact, we shall prove that
          $\widetilde{M}_{\calG, \mu}$ has $F$-rational singularities.
          This part of the proof essentially follows from
          \cite[Section 4.2]{Zhu14}, relying on
          \Cref{thm_normality_classical_sch_vars} for wildly ramified
          groups.  Here is an outline.  By using faithfully flat
          descent of $F$-rationality \cite[Proposition A.5]{DM20} we
          may reduce to the case $O=\breve O$, so $G$ is quasi-split.

          First, we show that for any finite field extension
          $\widetilde E/E$ the base change
          $\widetilde{M}_{\calG,
            \mu}\otimes_{\calO_E}\calO_{\widetilde E}$ is normal with
          reduced special fiber equal to $\widetilde{A}_{\calG,\mu}$
          as follows.  Passage to the adjoint group induces a map of
          pairs $(\calG,\mu)\to (\calG_{\on{ad}},\mu_{\on{ad}})$ where
          $\calG_{\on{ad}}$ is the parahoric associated with
          $G_{\on{ad}}$ and $\mu_\ad$ is induced by $\mu$ under
          $G\to G_\ad$.  The corresponding map
          $\widetilde M_{{\calG},\mu}\to\widetilde
          M_{{\calG_{\on{ad}}},\mu_\ad}\otimes_{O_{E_\ad}}O_E$ is a
          universal homeomorphism inducing isomorphisms on residue
          fields by \cite[Corollary 2.3 and its proof]{HR22}, thus an
          isomorphism if the target is (semi-)normal.  Without loss of
          generality, we reduce to the case where $G$ is adjoint.  A
          similar argument shows that the formation of
          $\widetilde{M}_{\calG, \mu}$ commutes with products in $G$,
          so we first assume that $G$ is adjoint and
          simple, so $G=\Res_{L/K}(G_0)$ for a finite separable field
          extension $L/K$ (necessarily totally ramified) and an
          absolutely simple $L$-group $G_0$.

          The simply connected cover
          $G_{\on{sc}}\to G$ induces a universally closed and
          universally injective morphism
          $\iota\colon \Gr_{\calG_{\on{sc}}}\to \Gr_{\calG}$ which
          gives on generic fibers the universal homeomorphism
          $\Gr_{G_{\on{sc}}}\to \Gr_G^0$ onto the neutral component.
          We consider the translate
          $t_{\mu}^{-1}M_{\calG,\mu} \subset
          \iota(\Gr_{\calG_{\on{sc}}, O_E})$, where $t_{\mu}$ is an
          $O_E$-valued point of $L\calT$ lifting the corresponding
          section of $\Gr_\calT$, and consider the unique reduced
          closed subscheme
          $M_{\calG_{\on{sc}}, \mu}\subset \Gr_{\calG_{\on{sc}}, O_E}$
          with the topological space $\iota(M_{\calG_{\on{sc}}, \mu})$
          being the same as the translation.  Likewise, we denote by
          $A_{\calG_{\on{sc}},\mu}$ (respectively,
          $S_{G_{\on{sc}},\mu}$) the $t_\mu$-translated admissible
          locus inside $\Fl_{\calG_{\on{sc}}}$ (respectively,
          $\Gr_{G_{\on{sc}},E}$). These are also unions of translates
          of Schubert varieties for some choice of Iwahori group
          scheme.  The induced finite universal homeomorphism
          $M_{\calG_{\on{sc}}, \mu}\to M_{\calG,\mu}$ factors on
          generic fibers as
          $S_{G_{\on{sc}},\mu}\cong \widetilde S_{G,\mu}\to S_{G,\mu}$
          (hence is birational).

          We will prove that for all $n \geq 1$, we have
          \begin{equation}\label{equation.determinant.line}
            \dim_k H^0(A_{\calG_{\on{sc},\mu}},
            \calL_{\on{ad}}^{\otimes n})
            =\dim_{E} H^0(S_{G_{\on{sc}}, \mu},
            \calL_{\on{ad}}^{\otimes n}),
          \end{equation}
           where $\calL_{\on{ad}}$ denotes
          the pullback of the determinant line bundle along the
          adjoint representation, compare
          \Cref{lem.central.charge.adjoint}.  But before we do so let
          us explain how it implies that $\widetilde{M}_{\calG,\mu}$
          has special fiber equal to $\widetilde{A}_{\calG,\mu}$.  By
          \cite[Lemma 3.12]{Ric16}, we have an inclusion of
          $A_{\calG_{\on{sc}},\mu}$ in the reduced special fiber of
          $M_{\calG_{\on{sc}}, \mu}$. Since $\calL_{\on{ad}}$ is a
          relatively ample line bundle on $M_{\calG_{\on{sc}}, \mu}$,
          \eqref{equation.determinant.line} implies that the special
          fiber of $M_{\calG_{\on{sc}}, \mu}$ is reduced and equal to
          $A_{\calG_{\on{sc}},\mu}$. By Serre's criterion (see
          \cite[Proposition 9.2]{PZ13}) it follows that
          $M_{\calG_{\on{sc}}, \mu}$ is normal.  Consequently, as the
          map $M_{\calG_{\on{sc}}, \mu}\to M_{\calG,\mu}$ induces an
          isomorphism on every residue field, it identifies with the
          seminormalization, so induces an isomorphism
          $M_{\calG_{\on{sc}}, \mu}\cong \widetilde M_{\calG, \mu}$.
          Using the normality of Schubert varieties for simply
          connected groups in \Cref{thm_normality_classical_sch_vars}
          we then see that the special fiber of
          $\widetilde M_{\calG, \mu}$ is
            $\widetilde A_{\calG,\mu}$.

		It remains to prove \eqref{equation.determinant.line}.
For this, consider the $W(k)\pot{t}$-lift $\underline{\calG_{\on{sc}}}$ of $\calG_{\on{sc}}$ provided by \Cref{prop_lifts_witt} under our \Cref{hyp_odd_unitary}, which holds for $\Phi_{G_{\on{sc}}}$.
		Consider the affine flag scheme $\Fl_{\underline{\calG_{\on{sc}}}}$ over $W(k)$.
		It admits the flat, closed subscheme $A_{\underline{\calG_{\on{sc}}},\mu}$ whose generic fiber is $A_{\calG'_{\on{sc}}, \mu'}$ with $\calG'_{\on{sc}}=\underline{\calG_{\on{sc}}}\otimes K_0\pot{t}$ and $\mu'$ corresponding to $\mu$ using \eqref{equation.global.IW}, and whose special fiber contains $A_{\calG_{\on{sc}},\mu}$. 
		As explained in the last paragraph of the proof of \cite[Th\'eor\`eme 5.2.1]{Lou23}, one deduces from the combinatorics of Schubert varieties and their compatible $F$-splitness an equality  
		\begin{equation}\label{equation.characteristic.zero}
		\dim_k H^0(A_{\calG_{\on{sc}},\mu}, \calL_{\on{ad}}^{\otimes n})=\dim_{K_0} H^0(A_{\calG'_{\on{sc}},\mu'}, \calL_{\on{ad}}^{\otimes n}),
		\end{equation}
		for all $n\geq 1$.
		Note that \eqref{equation.characteristic.zero} uses again the normality of Iwahori Schubert varieties for simply connected groups (\Cref{thm_normality_classical_sch_vars}) to deduce their $F$-splitness (\Cref{theorem_frobenius_split_allp}). 
		Likewise, the analogue of \eqref{equation.characteristic.zero} also holds for $S_{G_{\on{sc}},\mu}$ versus $S_{G_{\on{sc}}',\mu'}$ with $G_{\on{sc}}'=\calG'_{\on{sc}}\otimes K_0\rpot{t}$.
		Appealing now to the coherence theorem of \cite{Zhu14}
                for the group $G'_{\on{sc}}$ in characteristic $0$
                (those are always tamely ramified) finishes the proof of \eqref{equation.determinant.line}.
			Thus, $\widetilde{M}_{\calG, \mu}$ is normal and has
                reduced special fiber which is equal to
                $\widetilde{A}_{\calG, \mu}$, and the same holds for
                the base change $\widetilde{M}_{\calG,
                  \mu}\otimes_{O_E}O_{\widetilde E}$ by an application
                of Serre's criterion as the generic fiber is
                geometrically normal.
		
                 Since, as noted above, the formation of
                  $\widetilde{M}_{\calG, \mu}$ commutes with products
                  in $G$, it follows that for general $G$ the special
                  fiber of $\widetilde{M}_{\calG, \mu}$ is reduced and
                  is equal to $\widetilde{A}_{\calG, \mu}$.  We now
                prove the other parts of the theorem by using results
                from the theory of $F$-singularities, see
                \Cref{Fsing_review}. Since
                $\widetilde{S}_{G,\mu, \bar K}\cong S_{G,\mu, \bar
                  K}^{\on{sc}}$ is an Iwahori Schubert variety for the
                simply connected, split reductive group
                $G_{\on{sc},\bar K}$, it is Cohen--Macaulay and even
                $F$-rational by \cite[Theorem
                1.4]{Cas22}. (Alternatively, these properties of
                $\widetilde{S}_{G,\mu,\bar K}$ also follow directly
                from \Cref{theorem_frobenius_split_allp}.)  Hence, so
                is $\widetilde{S}_{G,\mu}$ by faithfully flat descent
                of \cite[Proposition A.5]{DM20}.  We already know that
                $\widetilde{A}_{\calG, \mu}$ is $F$-split by
                \Cref{theorem_frobenius_split_allp}, so it is
                $F$-injective in particular.  We also note that all
                rings and schemes involved in our argument are
                $F$-finite since $k$ is algebraically closed.  Then
                \Cref{lemma.Schwede.Singh} implies that
                $\widetilde{M}_{\calG, \mu}$ is $F$-rational, so
                pseudo-rational by \Cref{Smith97} and in particular
                Cohen--Macaulay.
	\end{proof}
	
	\begin{remark}\label{rem:good primes equality}
		There is an equality $\widetilde{M}_{\calG, \mu}= M_{\calG, \mu}$ if and only if $\widetilde{A}_{\calG, \mu}= A_{\calG, \mu}$ and $\widetilde{S}_{G,\mu}=S_{G,\mu}$. This is ensured, for instance, when $p \nmid \lvert \pi_1(G_\der)\rvert$. 
		If $p \mid  \lvert \pi_1(G_\der)\rvert$, then the equality still holds when $\bar{\mu} \in X_*(T)_I$  is minuscule with respect to the \'{e}chelonnage roots and the closure of ${\bf f}$ contains a special vertex; see the proof of \cite[Proposition 9.1]{HLR24}. 
		Otherwise the equality is false for infinitely many values of $\mu$, see \cite[Corollary 9.2]{HLR24}.
	\end{remark}
	
	\begin{remark}
		Cass has proved somewhat stronger properties of the
		singularities of $\widetilde{M}_{\calG, \mu}$ when the group
		$G$ is a constant split reductive group and $p>2$, see \cite[Theorem
		1.6]{Cas21}.
	\end{remark}

\begin{remark}\label{remark.GL}
	There is an alternative proof for the reducedness of the special fiber of $\widetilde{M}_{\calG,\mu}$ via perfectoid geometry, see \cite[Lemma 1.2, Theorem 1.3]{GL24}, without the need for \Cref{hyp_odd_unitary}. We stress that it does not directly imply that the special fiber is seminormal and $F$-split as in \Cref{theorem_coherence_allp}, upon which the last sentence of \cite[Corollary 1.4]{GL24} actually relies. On the other hand, combining the results of \cite{GL24} with \Cref{thm_normality_classical_sch_vars} immediately yields an identification between $\widetilde{A}_{\calG,\mu}$ and the special fiber of $\widetilde{M}_{\calG,\mu}$, compare with the proof of \cite[Theorem 2.1]{GL24} or the discussion surrounding \cite[Conjecture 7.25]{AGLR22}.
\end{remark}

We can also deduce the following facts on the Picard group of the local models.

\begin{corollary}\label{cor_pic_local_model_equal}
Under \Cref{hyp_odd_unitary}, the following properties hold:
\begin{enumerate}
	\item 
	\label{cor_pic_local_model_equal.1}
	The restriction map
	$\mathrm{Pic}(\widetilde{M}_{\calG,\mu}) \to
	\mathrm{Pic}(\widetilde{A}_{\calG,\mu})$ is an isomorphism.
	\item 
	\label{cor_pic_local_model_equal.2}
	Let $G_i$ for $i=1,\dots, m$ be an enumeration of the simple factors of $G_\mathrm{ad}$ such that the image $\bar \mu_i$ of $\mu$ in the group $X_*(T_i)_I$ attached to $G_i$ is non-zero. 
	Then the restriction map 
	\begin{equation}
          \prod_{i=1}^{m}\mathrm{Pic}(\widetilde{\Fl}^{\tau_{i}}_{\calG_{i}})
          \to \mathrm{Pic}(\widetilde{A}_{\calG,\mu}) \end{equation}
        is an isomorphism, where $\calG_i$ is the associated parahoric
        $O$-model of $\calG_i$ and the superscript $\tau_i$ indicates the connected component
        attached to $\mu_i$.
	\item 
	\label{cor_pic_local_model_equal.3}
	There is a commutative diagram:
	\begin{equation}
	\begin{tikzcd}[column sep=4.5cm, row sep=1cm,ampersand replacement=\&]
	 \mathrm{Pic}(\widetilde{M}_{\calG,\mu}) \ar[r, ""] \ar[d, "\sim"]
	\& \mathrm{Pic}(\widetilde{S}_{G,\mu})
	\ar[d, "\sim"] \\
	\mathrm{Pic}(\widetilde{A}_{\calG,\mu})  \& \prod_{i=1}^{m}\mathrm{Pic}(\widetilde{S}_{G_i,\mu_i})\ar[d, "\prod_{i=1}^m \mathrm{deg}_i"]
	\\
\prod_{i=1}^{m}\mathrm{Pic}(\widetilde{\Fl}^{\tau_{i}}_{\calG_{i}}) \ar[u, "\sim"] \ar[r, "\prod_{i=1}^m c_i"] \& \bbZ^m ,
	\end{tikzcd}
	\end{equation}
	where the maps of Picard groups are induced by functoriality, $\mathrm{deg}_i$ denotes the degree homomorphism, and the $c_i$ are the central charge homomorphisms for $\Fl_{\calG_{i,\on{sc}}}$ translated to the respective connected components.	
\end{enumerate} 	
\end{corollary}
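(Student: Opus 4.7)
The plan for \eqref{cor_pic_local_model_equal.1} is to combine deformation theory with formal GAGA. Set $X := \widetilde{M}_{\calG,\mu}$ and $X_s := \widetilde{A}_{\calG,\mu}$; by \Cref{theorem_coherence_allp}, $X$ is projective and flat over $O_E$ with reduced special fibre equal to $X_s$. \Cref{cohomology.vanishing.lemma} supplies the vanishing $H^i(X_s, \calO_{X_s}) = 0$ for all $i \geq 1$, which annihilates both the obstruction class in $H^2$ and the indeterminacy in $H^1$ that govern the lift of a line bundle across each infinitesimal thickening of $X_s$ inside $X$. Hence every line bundle on $X_s$ extends uniquely to a compatible system on these thickenings, defining a line bundle on the formal completion of $X$ along $X_s$ which algebraizes to a line bundle on $X$ by Grothendieck's existence theorem. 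Injectivity follows because $f_*\calO_X = \calO_{\Spec O_E}$ (cohomology and base change): any line bundle $\calL$ on $X$ trivial on $X_s$ has $f_*\calL$ free of rank one, and its global generator is a nowhere-vanishing section of $\calL$.

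For \eqref{cor_pic_local_model_equal.2}, the plan is first to reduce to $G = G_{\on{ad}}$ by observing that the map $\widetilde{A}_{\calG,\mu} \to \widetilde{A}_{\calG_{\on{ad}},\mu_{\on{ad}}}$ is a universal homeomorphism inducing isomorphisms on residue fields (the same type of argument used at the beginning of the proof of \Cref{theorem_coherence_allp}), hence an isomorphism by the universal property of seminormalizations; analogously for $\widetilde{\Fl}_\calG^\tau \to \widetilde{\Fl}_{\calG_{\on{ad}}}^{\tau_{\on{ad}}}$. The decomposition $G_{\on{ad}} = \prod_i G_i$, together with the additive splitting $t_{\bar\lambda} = \prod_i t_{\bar\lambda_i}$ in the associated product of Iwahori--Weyl groups, then yields $\widetilde{A}_{\calG_{\on{ad}},\mu_{\on{ad}}} \cong \prod_i \widetilde{A}_{\calG_i,\mu_i}$, in which factors with $\bar\mu_i = 0$ contribute only a point. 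One is thereby reduced, factor by factor, to proving that $\mathrm{Pic}(\widetilde{\Fl}_{\calG_i}^{\tau_i}) \to \mathrm{Pic}(\widetilde{A}_{\calG_i,\mu_i})$ is an isomorphism. Both sides are computed by \Cref{picard.group.flag.variety.lemma.normal} and \Cref{picard.group.schubert.lemma.normal.union} respectively as direct sums of copies of $\bbZ$ indexed by subsets of the simple affine reflections $s \in W_{\on{af},i} \backslash W_{\calG_i}$, and the restriction map carries basis element to basis element. The remaining (and main technical) point is the combinatorial assertion that when $\bar\mu_i \neq 0$, every such $s$ satisfies $s \leq \bar\lambda_i(t)$ for some $\lambda_i \in W\mu_i$; this follows from the standard fact that the support of $\Adm(\mu_i)$ exhausts the simple affine reflections outside $W_{\calG_i}$ whenever $\bar\mu_i$ is non-central.

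Part \eqref{cor_pic_local_model_equal.3} then assembles formally. The top horizontal arrow is restriction to the generic fibre; the left vertical isomorphism is \eqref{cor_pic_local_model_equal.1}; the lower vertical isomorphism is \eqref{cor_pic_local_model_equal.2}; and the right vertical isomorphism comes from the product decomposition $\widetilde{S}_{G,\mu} \cong \prod_i \widetilde{S}_{G_i,\mu_i}$ (after the reduction to the adjoint case) combined with \Cref{picard.group.schubert.lemma.normal}, which identifies each $\mathrm{Pic}(\widetilde{S}_{G_i,\mu_i})$ with $\bbZ$ via $\deg_i$. Commutativity of the upper square is immediate from functoriality of restriction, and of the lower square reduces per simple factor to identifying the composite $\mathrm{Pic}(\widetilde{\Fl}_{\calG_i}^{\tau_i}) \to \mathrm{Pic}(\widetilde{S}_{G_i,\mu_i}) \overset{\deg_i}{\to} \bbZ$ with the central charge $c_i$. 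This identification is exactly the content of \Cref{lemma.kac.moody.coeff} combined with the generic-fibre degree computation in \Cref{line.bundle.extension}\eqref{line.bundle.extension.2}.
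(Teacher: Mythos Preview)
Your approach matches the paper's in structure, but there is one genuine gap and one imprecision worth flagging.

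In \eqref{cor_pic_local_model_equal.2} you correctly isolate the crux --- that every simple affine reflection $s \notin W_{\calG_i}$ lies below some element of the admissible set --- but you dismiss it as a ``standard fact.'' The paper actually proves this (at Iwahori level, which suffices): if not every simple reflection appeared in $\tau_\mu^{-1}\Adm(\mu)$, then those that do would generate a \emph{finite} standard parabolic subgroup $W' \subset W_{\on{af}}$ containing $\tau_\mu^{-1}\Adm(\mu)$; such a $W'$ meets each coset of $W_{\on{af}}/X_*(T_{\on{sc}})_I$ at most once, contradicting the fact that $\Adm(\mu)$ contains the two distinct translations $t_{\bar\mu}$ and $t_{w_0(\bar\mu)}$ when $\bar\mu \neq 0$. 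This argument is short but not standard in the literature, so you should include it.

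In \eqref{cor_pic_local_model_equal.3} your language of an ``upper square'' and ``lower square'' is misleading: the middle row has no horizontal arrow, so the diagram is a single hexagon whose commutativity must be checked globally. The paper handles this by observing that all the groups are torsion-free, so one may tensor with $\bbQ$; then \Cref{line.bundle.extension} (applied to each simple factor) supplies enough rationalized line bundles on $\Gr_{\calG_i}$, restricting to $\widetilde{M}_{\calG,\mu}$, whose generic fibre is $\calO(c_{\calL_k})$ by construction. Your citation of \Cref{line.bundle.extension}\eqref{line.bundle.extension.2} is exactly the right input; the additional appeal to \Cref{lemma.kac.moody.coeff} is not needed here.

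For \eqref{cor_pic_local_model_equal.1} your argument is the same as the paper's.
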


\begin{proof}

By \Cref{theorem_coherence_allp}, the special fiber of $\widetilde{M}_{\calG,\mu}$ is equal to $\widetilde{A}_{\calG,\mu}= \cup_{\lambda} \widetilde{S}_{\bar{\lambda}(t)}$, see \Cref{def_admissible}. 
For \eqref{cor_pic_local_model_equal.1}, it is enough to prove that every line bundle on $\widetilde{A}_{\calG,\mu}$ lifts uniquely to $\widetilde{M}_{\calG,\mu}$, or equivalently to the formal scheme $\widetilde{M}_{\calG,\mu}\times_{\Spec(O_E)}\on{Spf}(O_E)$ by Grothendieck's formal GAGA. 
Since $\on{H}^j(\widetilde{A}_{\calG,\mu},\calO_{\widetilde{A}_{\calG,\mu}})=0$ for $j=1,2$ by \Cref{cohomology.vanishing.lemma}, obstruction theory (compare \cite[Proposition 5.19]{Kle05}) shows the existence and uniqueness of such lifts. 

For \eqref{cor_pic_local_model_equal.2}, we may and do assume that $k$ is algebraically closed by \'etale descent. We use \Cref{picard.group.schubert.lemma.normal.union} which calculates $\on{Pic}(\widetilde{A}_{\calG,\mu})$ as $\oplus_s \bbZ$ where the sum runs over all $s \in \widetilde{W} \setminus W_\calG$ with $l(s)=1$ and $s \leq \bar\lambda(t)$ for some rational representative $\lambda$ of $\mu$ in $X_*(T)$. 
        In order to finish the proof of the second part, we may and do assume that $G$ is simple and $\mu$ is non-zero. 
        We have to show that the map
        $\mathrm{Pic}(\widetilde \Fl^{\tau_\mu}_{\calG})\to \mathrm{Pic}(\widetilde{A}_{\calG,\mu})$ 
        is an isomorphism where $\tau_\mu$ denotes the unique length $0$ element in the admissible set $\mathrm{Adm}(\mu)\subset \widetilde{W}$.
        It is enough to show that every simple reflection $s\in W_{\on{af}}$ appears in $\tau_\mu^{-1}\mathrm{Adm}(\mu)$, see \Cref{picard.group.flag.variety.lemma.normal}. 
        Assume the contrary.
        Then the subgroup generated by the simple reflections which do appear is a finite Coxeter group, say, $W'$ containing $\tau_\mu^{-1}\mathrm{Adm}(\mu)$.
        Therefore, $W'$ (hence $\tau_\mu^{-1}\mathrm{Adm}(\mu)$) contains at most one representative for each coset in the finite Weyl group $W_0=W_{\on{af}}/X_*(T_{\on{sc}})_I$: if there were two representatives, their difference would be a non-trivial translation, so $W'$ would not be finite. However, this contradicts the fact that $\mathrm{Adm}(\mu)$ contains always at least two different translations $t_{\bar \mu}$ and $t_{w_0(\bar \mu)}$ because $\bar \mu\neq 0 $.

        Part \eqref{cor_pic_local_model_equal.3} is verified as follows. Since the groups involved are all torsion-free, we only need to check commutativity after tensoring with $\bbQ$. But then \Cref{line.bundle.extension} applied to each of the simple factors provides rationalized line bundles on $\widetilde{M}_{\calG,\mu}$ whose generic fiber is given by $\calO(c_{\calL_k})$, exactly as claimed.        
\end{proof}

	Recall that Pappas--Rapoport's coherence conjecture in \cite{PR08}, as corrected by Zhu in \cite{Zhu14}, gives an equality of dimensions of certain cohomology groups, which we can now formulate and prove in greater generality.

	\begin{corollary}\label{corollary.coherence.conjecture}
		Let $\calL$ be an ample line bundle on $\widetilde{A}_{\calG,\mu}$.
		Under \Cref{hyp_odd_unitary}, there is an equality
		\begin{equation}\label{equation.coherence.conjecture}
		\dim_k \on{H}^0(\widetilde{A}_{\calG,\mu}, \calL)=\dim_{\bar K} \on{H}^0(\widetilde{S}_{G, \mu,\bar K}, \calO(c_\calL)),
		\end{equation}
		 where $\calO(c_\calL):=\boxtimes_i \calO(c_i(\calL))$ and the $c_i$ are the central charge homomorphisms of the simple factors of $G_{\mathrm{ad}}$, compare with \Cref{cor_pic_local_model_equal}.
	\end{corollary}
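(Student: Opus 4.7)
The plan is to lift $\calL$ to a relatively ample line bundle on the local model $\widetilde M_{\calG,\mu}$ and then compare its global sections on the special and geometric generic fibres via flatness and cohomological vanishing. By part (1) of \Cref{cor_pic_local_model_equal}, the ample line bundle $\calL$ on $\widetilde A_{\calG,\mu}$ extends uniquely to a line bundle $\widetilde\calL$ on $\widetilde M_{\calG,\mu}$. Since $\widetilde M_{\calG,\mu}\to \Spec\,O_E$ is proper and ampleness on fibres is an open condition on the base, $\widetilde\calL$ is then relatively ample over $O_E$.

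Next, I would verify $H^i(\widetilde A_{\calG,\mu},\calL)=0$ for every $i\geq 1$. This is the standard consequence of the $F$-splitness of $\widetilde A_{\calG,\mu}$, noted below \Cref{def_admissible}, and the ampleness of $\calL$: the split injection $\calL\hookrightarrow F_*\calL^p$ gives $H^i(\widetilde A_{\calG,\mu},\calL)\hookrightarrow H^i(\widetilde A_{\calG,\mu},\calL^{p^n})$ for all $n$, and Serre vanishing kills the right-hand side for $n\gg 0$. Upper semi-continuity of fibrewise cohomology then forces $H^i(\widetilde S_{G,\mu,\bar K},\widetilde\calL_{\bar K})=0$ as well for $i\geq 1$. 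Combined with flatness of $\widetilde M_{\calG,\mu}$ over $O_E$ (\Cref{theorem_coherence_allp}) and cohomology and base change, the $O_E$-module $H^0(\widetilde M_{\calG,\mu},\widetilde\calL)$ is free of finite rank and its formation commutes with base change; in particular its rank equals both $\dim_k H^0(\widetilde A_{\calG,\mu},\calL)$ (from the closed fibre) and $\dim_{\bar K} H^0(\widetilde S_{G,\mu,\bar K},\widetilde\calL_{\bar K})$ (from the geometric generic fibre).

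The main remaining task, and where I expect the only real subtlety, is to identify $\widetilde\calL_{\bar K}$ with $\calO(c_\calL)=\boxtimes_i \calO(c_i(\calL))$. For this I would chase the commutative diagram of part (3) of \Cref{cor_pic_local_model_equal}: the image of $\widetilde\calL$ in $\prod_i \Pic(\widetilde S_{G_i,\mu_i})$ must have the tuple of central charges $(c_i(\calL))_i$ computed via the special-fibre branch, and since each factor of the product decomposition $\widetilde S_{G,\mu,\bar K}\cong\prod_i \widetilde S_{G_i,\mu_i,\bar K}$ has its Picard group identified with $\bbZ$ via the degree, matching the central-charge normalisation, we conclude $\widetilde\calL_{\bar K}\cong \boxtimes_i \calO(c_i(\calL))=\calO(c_\calL)$, yielding \eqref{equation.coherence.conjecture}. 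Once this Picard-theoretic identification is granted, the coherence identity reduces to the flat base-change comparison set up in the previous step.
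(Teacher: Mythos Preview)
Your proposal is correct and follows essentially the same approach as the paper: lift $\calL$ via \Cref{cor_pic_local_model_equal}(1), identify the geometric generic fibre as $\calO(c_\calL)$ via \Cref{cor_pic_local_model_equal}(3), and compare dimensions of global sections using $F$-splitness of the special fibre to kill higher cohomology. The paper phrases the last step as ``constancy of the Euler characteristic plus vanishing of higher cohomology'', whereas you spell out the semicontinuity and base-change argument explicitly, but the content is the same.
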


\begin{proof}
	Note that given a flat proper scheme $X$ over a discrete valuation ring with $F$-split special fiber, and an ample line bundle $\calL$ on $X$, the dimension of the global sections of $\calL$ on $X_s$ and $X_\eta$ agree by the vanishing of higher cohomology (and constancy of the Euler characteristic). Therefore, the statement follows directly from \Cref{theorem_frobenius_split_allp}, \Cref{theorem_coherence_allp}, and \Cref{cor_pic_local_model_equal}. Indeed, by \Cref{cor_pic_local_model_equal} \eqref{cor_pic_local_model_equal.1}, $\calL$ lifts uniquely to an ample line bundle over $\widetilde{M}_{\calG,\mu}$ with geometric generic fiber equal to $\calO(c_\calL)$ by \Cref{cor_pic_local_model_equal} \eqref{cor_pic_local_model_equal.3} (note that the integers $c_i(\calL)$ are well defined by \Cref{cor_pic_local_model_equal} \eqref{cor_pic_local_model_equal.2}).
	\end{proof}

	\subsection{Mixed characteristic}\label{sec:mixed characteristic local models}
	In this subsection, we assume $K/\bbQ_p$ is of characteristic $0$, and fix a uniformizer $\pi\in K$. 
	Further, $G$ is assumed to be adjoint, quasi-split and to satisfy \Cref{hyp_odd_unitary}. 
	Then $G$ is a product of $K$-simple groups compatibly with the tori $S\subset T$, and we fix the data in \eqref{equation.splitting.map} for each factor.
	The resulting $O\pot{t}$-group lift $\underline{\calG}$ of its parahoric model $\calG$ is defined as the product of the lifts from \Cref{prop_lifts_bk} of each simple factor. 
	We denote $\calG':=\underline\calG\otimes k\pot{t}$.
	
	Let us recall the basic properties of the Beilinson--Drinfeld Grassmannian $\Gr_{\underline{\calG}}\to \Spec\, O$, where the power series variable is $z=t-\pi$, compare with \cite[Section 6]{PZ13}.
	
	\begin{proposition}\label{prop_ident_BD_Grass}
		The $O$-functor $\Gr_{\underline{\calG}}$ is representable by an ind-projective ind-scheme. 
		Its generic fiber is isomorphic to $\Gr_{G}$, whereas the special fiber is identified with $\Fl_{\calG'}$.
	\end{proposition}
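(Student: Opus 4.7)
The plan is to follow the strategy of \cite[Section 6]{PZ13}, adapted to the wildly ramified case via the Breuil--Kisin lift $\underline{\calG}$ from \Cref{prop_lifts_bk}.

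For ind-representability and ind-projectivity, the key technical point is to construct a closed immersion $\rho\colon \underline{\calG} \hookrightarrow \GL(\calV)$ into the general linear group of some finite free $O\pot{t}$-module $\calV$. Granting this, the induced morphism on BD Grassmannians is a closed immersion $\Gr_{\underline{\calG}} \hookrightarrow \Gr_{\GL(\calV)}$, and the target is ind-representable by an ind-projective ind-scheme over $\Spec O$ via its standard lattice-chain description. Thus $\Gr_{\underline{\calG}}$ inherits the same property.

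For the identification of fibers, I would use the functorial description of $\Gr_{\underline{\calG}}$: for an $O$-algebra $R$ its $R$-points parametrize $\underline{\calG}$-torsors on $\Spec R\pot{z}$ together with a trivialization over $\Spec R\rpot{z}$, where the structure map $O\pot{t} \to R\pot{z}$ is determined by $t \mapsto z + \pi$. On the generic fibre ($R$ a $K$-algebra), $\pi \in R^{\times}$, so we are parametrising modifications of $\underline{\calG}$-torsors along the closed subscheme $\{z = 0\} = \{t = \pi\}$, where $\underline{\calG}$ specialises to $G$ over $K$ by the defining property of the lift in \Cref{prop_lifts_bk}; this gives $\Gr_{\underline{\calG}, K} \cong \Gr_G$ equivariantly. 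On the special fibre ($R$ a $k$-algebra) we have $\pi = 0$, so $z = t$ and $\underline{\calG} \otimes_{O\pot{t}} k\pot{t} = \calG'$ is the parahoric group scheme produced by \Cref{prop_lifts_bk}, so we recover the equicharacteristic affine flag variety $\Fl_{\calG'}$ as recalled in \Cref{equichar_LM_sec}.

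The main obstacle is the construction of the faithful representation $\rho$, since $\underline{\calG}$ is built via a birational gluing procedure rather than through explicit linear data. I would produce $\rho$ along the lines of \cite[Proposition 6.4]{PZ13}: first choose suitable faithful finite-dimensional representations over the two reductive loci of $\underline{\calG}$, namely over $K\pot{t}$ and over $k\pot{t}$, and then glue them to a finite rank representation over $O\pot{t}$. The gluing works because $O\pot{t}$ is a two-dimensional regular local ring with trivial Picard group, so finitely generated reflexive modules on the punctured spectrum extend uniquely across the closed point. In the wild setting this procedure goes through thanks to the structure of $\underline{\calG}$ provided by \Cref{prop_lifts_bk}, in particular the fact that its reductions are parahoric and hence admit the requisite faithful representations from Bruhat--Tits theory.
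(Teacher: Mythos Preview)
Your proposal has two genuine gaps.

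For the generic fiber, knowing that $\underline{\calG}$ specialises to $G$ at the closed point $z=0$ (equivalently $t=\pi$) is not sufficient to identify $\Gr_{\underline{\calG},K}$ with $\Gr_G$. What is needed is an isomorphism $\underline{\calG}\otimes_{O\pot{t}} K\pot{z}\cong G\otimes_K K\pot{z}$ of group schemes over the entire disk $\Spec K\pot{z}$, not merely at the origin. \Cref{prop_lifts_bk} does not give this directly: it only describes the reductions of $\underline{\calG}$ to $O$, $k\pot{t}$ and $K\pot{t}$. The paper fills this gap by observing that $\underline{\calG}\otimes K\pot{z}$ is reductive and then invoking \cite[Lemma 0.2]{Ric19b}, which asserts that every reductive group scheme over $K\pot{z}$ is constant; only then can one choose the required identification.

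For ind-projectivity, a closed immersion $\underline{\calG}\hookrightarrow \GL(\calV)$ does not in general induce a \emph{closed} immersion $\Gr_{\underline{\calG}}\hookrightarrow \Gr_{\GL(\calV)}$; one obtains only a locally closed immersion unless the fppf quotient $\GL(\calV)/\underline{\calG}$ is affine, which typically fails for parahoric group schemes. Thus your argument yields at best ind-quasi-projectivity, which is exactly what \cite[Proposition 11.7]{PZ13} provides and what the paper cites. The paper then establishes properness separately: it checks the valuative criterion, reduces by the argument of \cite[Proposition 6.5]{PZ13} to the torus $\Gr_{\underline{\calT}}$, and uses that $\underline{\calT}$ is a product of Weil restrictions of $\bbG_m$ along the maps of smooth $O$-curves in \eqref{equation.mixed.lifts}, so that \cite[Corollary 3.6, Lemma 3.8]{HR20b} applies. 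Your sketch of how to construct $\rho$ is along the right lines for the quasi-projectivity step, but it does not by itself give properness.
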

	
	\begin{proof}
		Representability by an ind-quasi-projective ind-scheme follows from \cite[Proposition 11.7]{PZ13}, thanks to \Cref{prop_lifts_witt}. 
		Its special fiber is the affine flag variety associated to the $k\pot{t}$-group scheme $\calG'$, that is, $\Gr_{\underline{\calG},k}=\Fl_{\calG'}$.
		As for the generic fiber, we have to find and choose an identification between $\underline{\calG}\otimes K\pot{z}$ and $G\otimes K\pot{z} $.
		But the former group scheme is reductive, so such an isomorphism exists by \cite[Lemma 0.2]{Ric19b}, which says that every reductive group scheme over $K\pot{z}$ is constant.
		
		Finally, we show projectivity by the same argument of
                \cite[Proposition 6.5]{PZ13}: it is enough to verify
                the valuative criterion for
                $\Gr_{\underline{\calT}}$. Since $\underline{\calT}$ is a product of
                restrictions of scalars of the multiplicative group along maps of the smooth $O$-curves in \eqref{equation.mixed.lifts}, this is a consequence of \cite[Corollary 3.6, Lemma 3.8]{HR20b}.
	\end{proof}
	
	Just as in \cite[Section 7]{PZ13}, we introduce local models in mixed characteristic. 
	\begin{definition}
		Let $M_{\underline{\calG}, \mu}$ denote the flat closure of $S_{G,\mu}$ inside $\Gr_{\underline{\calG}, O_E}$. 
		We denote by $\widetilde{M}_{\underline{\calG}, \mu}$ its seminormalization.
	\end{definition}
	
	The reader is referred to \Cref{adjoint.groups.general.definition} for the extension to not necessarily adjoint groups and to \Cref{z.extensions.LM} for the relation to the (modified) local models from \cite[Section 2.6]{HPR20}.
	In the following paragraphs, we single out some important properties of the local models. 
	
            \begin{lemma} \label{l.inclusion}%
              The reduced special fiber of
              $M_{\underline{\calG}, \mu}$ contains the
              $\mu'$-admissible locus $A_{\calG', \mu'}$ in
              equicharacteristic, where $\mu'$ is the corresponding
              dominant absolute coweight of $G'$.
            \end{lemma}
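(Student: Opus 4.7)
The plan is to exhibit, for each cocharacter $\lambda \in X_*(T)$ representing $\mu$, an $O_{\widetilde E}$-valued section $s_\lambda$ of $\Gr_{\underline\calG, O_{\widetilde E}}$ (for a sufficiently large finite extension $\widetilde E / E$) whose generic fibre lies in $S_{G, \mu}$ and whose special fibre is the point $\bar\lambda(t) \in \Fl_{\calG'}$; a positive-loop-group orbit closure argument, unions over $\lambda$, and descent to $k_E$ will then give the claimed inclusion. The construction of $s_\lambda$ uses the N\'eron lift $\underline{\calT}$ provided by \Cref{prop_lifts_bk}: by the Kottwitz-type identifications between the cocharacter lattices of $T$, $\underline{T}_K$, and $\underline{T} \otimes k\pot{t}$ built into \Cref{prop_lifts_bk} and \Cref{lem_equiv_id_apart_bk}, $\lambda$ extends to an $O\pot{t}$-cocharacter of $\underline{\calT}$. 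Viewing $O_{\widetilde E}\rpot{z}$ as an $O\pot{t}$-algebra via $t \mapsto z + \pi$ with $z = t - \pi$, and evaluating $\lambda$ at the unit $z$, yields $\lambda(z) \in \underline{\calG}(O_{\widetilde E}\rpot{z})$, which defines the required section $s_\lambda$.

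On the generic fibre, where $\pi \in K_{\widetilde E}^\times$, the variable $z$ plays the role of the usual formal parameter, and under the identification $\Gr_{\underline\calG, K} \cong \Gr_G$ of \Cref{prop_ident_BD_Grass} the point $s_\lambda|_{K_{\widetilde E}}$ becomes the translation point $\lambda(z) \in \Gr_G$; since $\lambda$ represents $\mu$, this lies in $S_{G, \mu}$. On the special fibre $\pi = 0$, so $z$ reduces to $t$, and $s_\lambda|_{k_{\widetilde E}}$ is exactly the class of $\lambda(t)$ in $\Fl_{\calG'}$, which is $\bar\lambda(t)$ by the Kottwitz identification used in \Cref{def_admissible}. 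Because the generic fibre of the $O_{\widetilde E}$-point $s_\lambda$ lies in the base change of $S_{G, \mu}$, and $M_{\underline\calG, \mu}$ is closed in $\Gr_{\underline\calG, O_E}$, the specialisation $\bar\lambda(t)$ necessarily belongs to the special fibre of $M_{\underline\calG, \mu} \otimes_{O_E} O_{\widetilde E}$.

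To extend the inclusion from the single point $\bar\lambda(t)$ to the full Schubert variety $S_{\bar\lambda(t)}$, observe that $M_{\underline\calG, \mu}$ is stable under the positive loop group scheme associated with $\underline{\calG}$ over $O_E$: the generic fibre $S_{G, \mu}$ is $L^+G$-stable, and for any positive loop element $g$ the translate $g \cdot M_{\underline\calG, \mu}$ is again flat with the same generic fibre, hence coincides with $M_{\underline\calG, \mu}$ by uniqueness of flat closure. On the special fibre this action restricts to the standard $L^+\calG'$-action, so the $L^+\calG'$-orbit closure $S_{\bar\lambda(t)}$ of $\bar\lambda(t)$ lies in the reduced special fibre of $M_{\underline\calG, \mu}$. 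Taking the union over all $\breve K$-rational representatives $\lambda$ of $\mu$ and descending from $k_{\widetilde E}$ to $k_E$ recovers $A_{\calG', \mu'}$, proving the lemma. The main technical subtlety I anticipate is precisely in the construction of $s_\lambda$ and the verification that its special fibre is exactly $\bar\lambda(t)$; this reduces to careful bookkeeping of the three cocharacter-lattice identifications across the rings $K\rpot{z}$, $O\pot{t}$, and $k\rpot{t}$ that are built into the BK lift $\underline{\calG}$ and into the definition of the admissible locus.
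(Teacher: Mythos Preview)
Your overall strategy --- produce $O_{\widetilde E}$-sections $s_\lambda$ landing in $\Gr_{\underline{\calT}}$ with the prescribed generic and special fibres, then use $L^+\underline{\calG}$-stability of $M_{\underline{\calG},\mu}$ to obtain the full Schubert varieties, and finally take the union --- is exactly the argument of \cite[Lemma 3.12]{Ric16} to which the paper refers. The paper's only additional remark is that the torus input, namely \cite[Lemma 2.21]{Ric16}, transports to mixed characteristic because $\underline{\calT}$ is \emph{induced}.

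There is, however, a genuine gap in your construction of $s_\lambda$. You assert that $\lambda$ ``extends to an $O\pot{t}$-cocharacter of $\underline{\calT}$'', but this is false in general. The connected N\'eron model $\underline{\calT}$ is not a torus at $t=0$ (its closed fibre has unipotent part whenever $T$ is ramified), so a geometric cocharacter of $\underline{T}$ cannot extend across that locus. Concretely, for a single induced factor $\underline{\calT}=\Res_{O_{L^{\rm nr}}\pot{u}/O\pot{t}}\bbG_m$ one has $\Hom_{O\pot{t}}(\bbG_m,\underline{\calT})=\bbZ$ by adjunction, whereas $X_*(T_{\bar K})=\bbZ^{[L:K]}$; so most $\lambda$ do not arise this way and ``$\lambda(z)$'' is undefined. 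Nor does base change to $O_{\widetilde E}\rpot{z}$ help: the special fibre of $\underline{T}$ along $O_{\widetilde E}\rpot{z}\to k_{\widetilde E}\rpot{t}$ is the torus $T'$ of $G'$, which is typically non-split over $k_{\widetilde E}\rpot{t}$ since splitting $T'$ requires a ramified extension in the $t$-direction, not merely a residue field extension.

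The replacement is precisely the structural fact the paper highlights. Because $\underline{\calT}$ is a finite product of $\Res_{O_{L_a^{\rm nr}}\pot{t_a}/O\pot{t}}\bbG_m$, a point of $\Gr_{\underline{\calT}}(O_{\widetilde E})$ is a line bundle on $\Spec\big(O_{\widetilde E}\pot{z}\otimes_{O\pot{t}}O_{L_a^{\rm nr}}\pot{t_a}\big)$ trivialized after inverting $z$, and one writes down $s_\lambda$ factor by factor using the roots of the Eisenstein polynomial of $t_a$ over $O_{\widetilde E}\pot{z}$ (indexed by the embeddings of $L_a$ into $\widetilde E$). This is the content of \cite[Lemma 2.21]{Ric16} and its mixed-characteristic analogues (compare \cite[Corollary 3.6, Lemma 3.8]{HR20b}, already invoked in \Cref{prop_ident_BD_Grass}). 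Once you replace your cocharacter-evaluation step by this explicit construction, the rest of your argument goes through and coincides with the paper's.
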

            \begin{proof}
              The proof is the same as the proof of \cite[Lemma
              3.12]{Ric16}. This depends on \cite[Lemma 2.21]{Ric16}
              which is formulated in an equicharacteristic setting,
              but the proof extends to the mixed characteristic
              setting using that $\underline \calT$ is induced.
            \end{proof}

	\begin{remark}
		Since our group lifts seldom coincide with the corresponding
		constructions of \cite{Lev16},
		we do not know how to compare our $M_{\underline{\calG},\mu}$ and $\widetilde{M}_{\underline{\calG},\mu}$ with the local models from \cite{Lev16}, when $\mu$ is non-minuscule. 
		However, our arguments and results below still hold for both objects.
		For minuscule $\mu$, both constructions do coincide by \cite[Section 7]{AGLR22}.
	\end{remark}
	
	Now, we may state our main result on the singularities of local models.
	
	\begin{theorem}\label{theorem_coherence_mixed_char}
		Under \Cref{hyp_odd_unitary}, the local model $\widetilde{M}_{\underline{\calG}, \mu}$ is Cohen--Macaulay, and has a
		reduced special fiber equal to $\widetilde{A}_{\calG',\mu'}$. 
		If the admissible locus is irreducible, then $\widetilde{M}_{\underline{\calG}, \mu}$ has pseudo-rational singularities.
	\end{theorem}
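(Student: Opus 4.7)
The plan is to mirror the proof of \Cref{theorem_coherence_allp}, using the $O\pot{t}$-group lift $\underline{\calG}$ from \Cref{prop_lifts_bk} as a bridge between the mixed characteristic local model and the equicharacteristic local model associated with $\calG' = \underline{\calG}\otimes k\pot{t}$, to which \Cref{theorem_coherence_allp} and \Cref{corollary.coherence.conjecture} already apply.

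First, I would extend the adjoint line bundle $\calL_{\ad}$ from $\Gr_{\underline{\calG}}$ (constructed as in the paragraph preceding \Cref{line.bundle.extension}) to a relatively ample line bundle on $M_{\underline{\calG},\mu}\to\Spec(O_E)$. Flatness over $O_E$ then yields equality of Hilbert polynomials on the two fibres, so for all $n\geq 1$,
\begin{equation}
\dim_E H^0(S_{G,\mu},\calL_{\ad}^{\otimes n}) = \dim_{k_E} H^0\bigl((M_{\underline{\calG},\mu})_{k_E,\red},\calL_{\ad}^{\otimes n}\bigr).
\end{equation}
The identification of Iwahori--Weyl groups in \Cref{lem_equiv_id_apart_bk} matches $\mu$ with $\mu'$ and transports the underlying Schubert combinatorics, while \Cref{line.bundle.extension} guarantees that $\calL_{\ad}$ specialises correctly on both fibres. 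Hence the equicharacteristic coherence statement \Cref{corollary.coherence.conjecture}, applied to $(\calG',\mu')$, equates the left-hand side with $\dim_{k_E} H^0(\widetilde{A}_{\calG',\mu'},\calL_{\ad}^{\otimes n})$. Combined with the inclusion $A_{\calG',\mu'}\subset (M_{\underline{\calG},\mu})_{k_E,\red}$ from \Cref{l.inclusion}, this forces the reduced special fibre to equal $A_{\calG',\mu'}$. An application of Serre's criterion as in \cite[Proposition 9.2]{PZ13}, together with Cohen--Macaulayness of $A_{\calG',\mu'}$ from \Cref{theorem_frobenius_split_allp}, then shows that the special fibre is already reduced and that $M_{\underline{\calG},\mu}$ is normal.

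Next, passing to seminormalizations gives a finite universal homeomorphism $\widetilde{M}_{\underline{\calG},\mu}\to M_{\underline{\calG},\mu}$ whose special fibre identifies with $\widetilde{A}_{\calG',\mu'}$ by \Cref{intersections.seminormalized.Schubert.varieties}. Cohen--Macaulayness of $\widetilde{M}_{\underline{\calG},\mu}$ then follows from flatness over $O_E$ combined with Cohen--Macaulayness of the special fibre $\widetilde{A}_{\calG',\mu'}$, itself an output of the equicharacteristic \Cref{theorem_coherence_allp} applied to $(\calG',\mu')$.

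Finally, assume $\widetilde{A}_{\calG',\mu'}$ is irreducible; then it coincides with a single seminormalized Schubert variety, hence is normal by \Cref{seminormal.Schubert.varieties.lemma} and $F$-rational by \Cref{theorem_frobenius_split_allp}. Since the generic fibre $\widetilde{S}_{G,\mu}$ is also a normal Schubert variety, Serre's criterion again yields normality of $\widetilde{M}_{\underline{\calG},\mu}$. I then invoke \Cref{lemma.pseudorational} at each local ring, with $\pi\in O_E$ as the non-zero divisor whose quotient is $F$-rational, to obtain pseudo-rationality; by \Cref{Kovacs_equivalences} this is equivalent to rational singularities. I expect the main obstacle to be the coherence match of the first paragraph: one must verify that the equicharacteristic coherence computation actually governs the mixed-characteristic Hilbert polynomial, which requires careful bookkeeping when transporting $\calL_{\ad}$ and the combinatorics of $\mu$ across the group lift $\underline{\calG}$.
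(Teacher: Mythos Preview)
Your overall strategy is right, but there is a genuine gap in the Hilbert-polynomial step. Flatness of $M_{\underline{\calG},\mu}$ over $O_E$ equates $\dim_E H^0(S_{G,\mu},\calL_{\ad}^{\otimes n})$ with $\dim_{k_E} H^0\bigl((M_{\underline{\calG},\mu})_{k_E},\calL_{\ad}^{\otimes n}\bigr)$ for $n\gg 0$, i.e., with the \emph{full} special fiber, not its reduction as in your displayed equation. On the other hand, the coherence statement you invoke gives $\dim H^0(\widetilde{A}_{\calG',\mu'},\calL_{\ad}^{\otimes n})$, while \Cref{l.inclusion} only supplies the closed immersion $A_{\calG',\mu'}\hookrightarrow (M_{\underline{\calG},\mu})_{k_E}$. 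To force this immersion to be an equality you would need $\dim H^0(A_{\calG',\mu'},\calL_{\ad}^{\otimes n})=\dim H^0(\widetilde{A}_{\calG',\mu'},\calL_{\ad}^{\otimes n})$, i.e., $A_{\calG',\mu'}=\widetilde{A}_{\calG',\mu'}$. But $G$ is adjoint here, so by \Cref{thm_normality_classical_sch_vars} this fails whenever $p\mid |\pi_1(G)|$ (for example $G'=\mathrm{PGL}_n$ with $p\mid n$). Consequently your argument does not establish reducedness of the special fiber, and the subsequent claims (Cohen--Macaulayness of $A_{\calG',\mu'}$ from \Cref{theorem_frobenius_split_allp}, which only treats the seminormalized $\widetilde{S}_w$, and the identification of the special fiber of $\widetilde{M}$ with $\widetilde{A}$) are not justified.

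The paper repairs exactly this point by first passing to the simply connected cover: one forms the finite birational universal homeomorphism $M_{\underline{\calG_{\on{sc}}},\mu}\to M_{\underline{\calG},\mu}$ and runs the Hilbert-polynomial argument upstairs, where \Cref{thm_normality_classical_sch_vars} guarantees $A_{\calG'_{\on{sc}},\mu'}=\widetilde{A}_{\calG'_{\on{sc}},\mu'}$. Then \eqref{equation.determinant.line} forces the special fiber of $M_{\underline{\calG_{\on{sc}}},\mu}$ to be reduced and equal to $\widetilde{A}_{\calG'_{\on{sc}},\mu'}$, hence $M_{\underline{\calG_{\on{sc}}},\mu}$ is normal and identifies with $\widetilde{M}_{\underline{\calG},\mu}$. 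Once you insert this simply connected reduction, the rest of your outline (Cohen--Macaulayness from \cite[Lemma 5.7]{HR22}, and in the irreducible case $F$-rationality of the special fiber via \Cref{theorem_frobenius_split_allp} feeding into \Cref{lemma.pseudorational} and \Cref{Kovacs_equivalences}) matches the paper's proof.
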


	\begin{proof}
          As in the proof of \Cref{theorem_coherence_allp}, we reduce
          to the case $O=\breve O$, $G$ simple and note that
          $M_{\underline{\calG}, \mu}$ has a finite, birational,
          universally homeomorphic cover
          $M_{\underline{\calG_{\on{sc}}}, \mu}$ isomorphic to a
          subscheme of the Grassmannian
          $\Gr_{\underline{\calG_{\on{sc}}}}$ associated to the simply
          connected cover $G_{\on{sc}}\to G$.  In particular, by
          \Cref{thm_normality_classical_sch_vars}, its generic fiber
          is isomorphic to $S_{G,\mu}\cong\widetilde S_{G,\mu}$
          (Schubert varieties in characteristic $0$ are normal) and by
          Lemma \ref{l.inclusion} the special fiber contains
          $\widetilde{A}_{\calG',\mu'}$.
		
	Let $\calL_{\on{ad}}$ be the line bundle on $\Gr_{\underline{\calG_{\on{sc}}}}$ given by pullback of the determinant line bundle under the adjoint representation. 
			Its restriction to $M_{\underline{\calG_{\on{sc}}}, \mu}$ is ample, by \Cref{prop_ident_BD_Grass}. 
			By \eqref{equation.determinant.line}, we get an equality
			\begin{equation}
			\dim_k
                        H^0(\widetilde{A}_{\calG_{\on{sc}}',\mu'},
                        \calL_{\on{ad}}^{\otimes n})=\dim_{E}
                        H^0(S_{G_{\on{sc}}, \mu}, \calL_{\on{ad}}^{\otimes n}).
                      \end{equation}
                      This implies that
                    $\widetilde{M}_{\underline{\calG}, \mu}$ is normal
                    and its special fiber is reduced and equal to
                    $\widetilde{A}_{\calG',\mu'}$, compare with the
                    proof of \Cref{theorem_coherence_allp}.  The
                    Cohen--Macaulayness follows from flatness and that
                    of $\widetilde{A}_{\calG',\mu'}$ proven in
                    \Cref{theorem_coherence_allp}, see \cite[Lemma
                    5.7]{HR22}.  Moreover, if
                    $\widetilde{A}_{\calG',\mu'}=\widetilde{S}_{\calG',\mu'}$
                    is irreducible, then it has $F$-rational
                    singularities by
                    \Cref{theorem_frobenius_split_allp}, so
                    pseudo-rationality follows by
                    \Cref{lemma.pseudorational}.	\end{proof}
	
	\begin{remark}\label{rem: good primes equality mixed}
		Again, there is an equality $\widetilde{M}_{\underline{\calG}, \mu}= M_{\underline{\calG}, \mu}$ if and only if $\widetilde{A}_{\calG', \mu'}= A_{\calG', \mu'}$. 
		(Note that $\widetilde S_{G,\mu}=S_{G,\mu}$ because Schubert varieties in characteristic $0$ are normal.)
		This is ensured, for instance, when $p \nmid \lvert \pi_1(G)\rvert$, and may otherwise very well fail, see \cite[Corollary 9.2]{HLR24}.
	\end{remark}
 \begin{remark}
        	We note that, for $\mu$ minuscule, the
                $\calG_k$-scheme $\widetilde{A}_{\calG', \mu'}$ is
                related to the Witt vector affine Grassmannian of
                $\calG$, see \cite[Section 3]{AGLR22}.
              \end{remark}
              
	\begin{remark}
		If $\calG$ is special parahoric, then the admissible locus is irreducible, so $\widetilde{M}_{\underline{\calG}, \mu}$ has (pseudo-)rational singularities.
		For a complete list of triples $(G,\mu,\calG)$ with $G$ absolutely simple and $\mu$ minuscule such that the associated admissible locus is irreducible, the reader is referred to \cite[Theorem 7.1 (1)]{HPR20}.
	\end{remark}

		\begin{remark} \label{adjoint.groups.general.definition}
			The local models constructed in \cite{AGLR22} are invariant under passing to the adjoint group.
			So, if $G$ is not necessarily adjoint, we may define following \cite[Section 7.1]{HR20b} the local model as $\widetilde M_{\underline{\calG_{\on{ad}}},\mu_\ad}\otimes_{O_{E_\ad}}O_E$ where $\mu_\ad$ is induced by $\mu$ under $G\to G_\ad$ and $E_\ad\subset E$ denotes its reflex field. 
			Then \Cref{theorem_coherence_mixed_char} holds for this more general definition: this is clear if $E/E_\ad$ is unramified, and else follows from the method of proof.
		\end{remark}
We also get a complete description of the Picard group of the local model in mixed characteristic.

\begin{corollary}\label{cor_pic_local_model_mixed}
	Under \Cref{hyp_odd_unitary}, the following properties hold:
	\begin{enumerate}
		\item 
		\label{cor_pic_local_model_mixed.1}
		The restriction map
		$\mathrm{Pic}(\widetilde{M}_{\underline{\calG},\mu}) \to
		\mathrm{Pic}(\widetilde{A}_{\calG',\mu'})$ is an isomorphism.
		\item 
		\label{cor_pic_local_model_mixed.2}
		Let $G_i$ for $i=1,\dots, m$ be an enumeration of the simple factors of $G$ such that the image $\bar\mu_i$ of $\mu$ in the group $X_*(T_i)_I$ attached to $G_i$ is non-zero. 
		Then the restriction map 
		\begin{equation} \prod_{i=1}^{m}\mathrm{Pic}(\widetilde{\Fl}^{\tau_{i}}_{\calG'_{i}}) \to
		\mathrm{Pic}(\widetilde{A}_{\calG',\mu'}) \end{equation} is an isomorphism,
		where $\calG'_i$ is the associated parahoric $k\pot{t}$-model of $G'_i$ and the superscript $\tau_i$ indicates the connected component attached to $\mu'_i$.
		\item 
		\label{cor_pic_local_model_mixed.3}
		There is a commutative diagram:
		\begin{equation}
		\begin{tikzcd}[column sep=4.5cm, row sep=1cm,ampersand replacement=\&]
		\mathrm{Pic}(\widetilde{M}_{\underline{\calG},\mu}) \ar[r, ""] \ar[d, "\sim"]
		\& \mathrm{Pic}(\widetilde{S}_{G,\mu})
		\ar[d, "\sim"] \\
		\mathrm{Pic}(\widetilde{A}_{\calG',\mu'})  \& \prod_{i=1}^{m}\mathrm{Pic}(\widetilde{S}_{G_i,\mu_i}) \ar[d, "\prod_{i=1}^m \mathrm{deg}_i"]
		\\
		\prod_{i=1}^{m}\mathrm{Pic}(\widetilde{\Fl}^{\tau_{i}}_{\calG'_{i}}) \ar[u, "\sim"] \ar[r, "\prod_{i=1}^m c'_i"] \& \bbZ^m,
		\end{tikzcd}
		\end{equation}
		where the maps of Picard groups are induced by functoriality, $\mathrm{deg}_i$ denotes the degree homomorphism, and the $c'_i$ are the central charge homomorphisms for $\Fl_{\calG'_{i,\on{sc}}}$ translated to the other components.	
	\end{enumerate} 
\end{corollary}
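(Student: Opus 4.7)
The strategy is to follow the argument of \Cref{cor_pic_local_model_equal} verbatim, replacing $(\calG,\mu)$ with $(\underline{\calG},\mu)$ and using that, by \Cref{theorem_coherence_mixed_char}, the special fiber of $\widetilde{M}_{\underline{\calG},\mu}$ is $\widetilde{A}_{\calG',\mu'}$. The three steps parallel the three parts of the corollary.

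For \eqref{cor_pic_local_model_mixed.1}, the plan is to invoke Grothendieck's formal GAGA, which reduces the statement to the assertion that every line bundle on $\widetilde{A}_{\calG',\mu'}$ lifts uniquely to the formal completion $\widetilde{M}_{\underline{\calG},\mu} \times_{\Spec(O_E)} \Spf(O_E)$. The obstruction (respectively, uniqueness) of such a lift along each infinitesimal thickening lies in $\on{H}^2$ (respectively, $\on{H}^1$) of the structure sheaf of $\widetilde{A}_{\calG',\mu'}$ twisted by a nilpotent ideal sheaf, and these groups all vanish by \Cref{cohomology.vanishing.lemma} applied to the union of seminormalized Schubert varieties making up $\widetilde{A}_{\calG',\mu'}$. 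Thus \eqref{cor_pic_local_model_mixed.1} follows from standard obstruction theory (e.g., \cite[Proposition 5.19]{Kle05}).

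For \eqref{cor_pic_local_model_mixed.2}, assume $k$ is algebraically closed (by \'etale descent) and reduce to the case where $G$ is simple with $\mu$ non-zero. By \Cref{picard.group.schubert.lemma.normal.union}, $\on{Pic}(\widetilde{A}_{\calG',\mu'})$ is the free abelian group on simple reflections $s$ with $s \not\in W_{\calG'}$ and $s \leq \bar{\lambda}(t)$ for some $\bbbreve K$-rational representative $\lambda$ of $\mu'$. It suffices to check that every such simple reflection occurs in $\tau_\mu^{-1}\mathrm{Adm}(\mu)$, where $\tau_\mu$ is the unique length-zero element of $\mathrm{Adm}(\mu)$. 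The finite-Coxeter-group argument from the proof of \Cref{cor_pic_local_model_equal} applies: if some simple reflection were missing, the others would generate a finite subgroup containing $\tau_\mu^{-1}\mathrm{Adm}(\mu)$, contradicting the presence of at least two distinct translations $t_{\bar\mu}$ and $t_{w_0(\bar\mu)}$ in $\mathrm{Adm}(\mu)$ when $\bar\mu \neq 0$. Comparing with \Cref{picard.group.flag.variety.lemma.normal} identifies this group with $\prod_i \mathrm{Pic}(\widetilde{\Fl}^{\tau_i}_{\calG'_i})$.

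For \eqref{cor_pic_local_model_mixed.3}, all Picard groups in sight are torsion-free by parts \eqref{cor_pic_local_model_mixed.1}, \eqref{cor_pic_local_model_mixed.2} and \Cref{thm_normality_classical_sch_vars}, so commutativity of the diagram can be checked after tensoring with $\bbQ$. Applied to each simple factor separately, \Cref{line.bundle.extension} produces, for each rational line bundle $\calL$ on $\widetilde{M}_{\underline{\calG},\mu}$, a geometric generic fiber of the form $\calO(c_{\calL_k})$, where $c_{\calL_k}$ is computed by the central charges on the special fiber. This exactly matches the two paths around the diagram. The main (mild) obstacle is to check that the analogue of \Cref{line.bundle.extension} applies in the present mixed-characteristic Beilinson--Drinfeld setting; this reduces to observing that $\calL_\ad$ extends naturally to $\Gr_{\underline{\calG}}$ (same construction as in \Cref{lem.central.charge.adjoint}) and using \Cref{prop_ident_BD_Grass}, so the identical argument identifying the image in $\on{Pic}(\Gr_{\underline\calG})_{\bbQ}$ carries over.
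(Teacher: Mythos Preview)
Your proposal is correct and follows essentially the same approach as the paper: part \eqref{cor_pic_local_model_mixed.1} via formal GAGA and the cohomology vanishing of \Cref{cohomology.vanishing.lemma}, part \eqref{cor_pic_local_model_mixed.2} by direct reference to \Cref{cor_pic_local_model_equal}\eqref{cor_pic_local_model_equal.2}, and part \eqref{cor_pic_local_model_mixed.3} by producing enough rationalized line bundles on $\Gr_{\underline{\calG}}$ as in \Cref{line.bundle.extension}. The only place the paper is slightly more explicit is in part \eqref{cor_pic_local_model_mixed.3}, where it spells out the mixed-characteristic analogue of the classifying-stack map, namely $\Gr_{\underline{\calG}} \to [\Spec\, O/\underline{\calG}_{t=0}]$ (restriction of the torsor to the locus $t=0$), in order to lift $\ker(c)$ with trivial generic fiber; your phrase ``the identical argument carries over'' is correct but glosses over this construction.
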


\begin{proof}
  The proof is the same as in \Cref{cor_pic_local_model_equal}, and we briefly explain the necessary changes. 
  For \eqref{cor_pic_local_model_mixed.1}, we use \Cref{theorem_coherence_mixed_char} to know that $\widetilde{A}_{\calG',\mu'}$ equals the special fiber of $\widetilde{M}_{\underline{\calG},\mu}$. 
  The structure sheaf has vanishing higher cohomology by \Cref{cohomology.vanishing.lemma}, so line bundles lift uniquely. 
  
  Part \eqref{cor_pic_local_model_mixed.2} follows directly from  \Cref{cor_pic_local_model_equal} \eqref{cor_pic_local_model_equal.2}. 
  
 For \eqref{cor_pic_local_model_mixed.3}, we need to produce enough line bundles on the mixed characteristic local model $\widetilde{M}_{\underline{\calG},\mu}$, compare the proof of \Cref{line.bundle.extension}. 
 We have already seen how to construct the adjoint line bundle during \Cref{theorem_coherence_mixed_char}. 
 As for the kernel of the central charge, we define a map $\Gr_{\underline{\calG}} \to [\Spec \, O/\underline{\calG}_{t=0}]$ by reducing torsors to the subscheme defined by the principal ideal $t$, where $\underline{\calG}_{t=0}$ denotes the reduction of the $O\pot{t}$-group scheme $\underline{\calG}$ to $O$ via $t\mapsto 0$. 
 Pulling back line bundles of $[\Spec \, O/\underline{\calG}_{t=0}]$ to $\widetilde{M}_{\underline{\calG},\mu}$ yields the desired lifts of $\mathrm{ker}\,c$ with trivial generic fiber.
\end{proof}

	In the equicharacteristic case, we have seen in
	\Cref{theorem_coherence_allp} that local models have
	rational singularities. 
	Together with \Cref{theorem_coherence_mixed_char} at special level, this provides some motivation
	for the following:
	
	\begin{conjecture} \label{conjecture_mixed}
		The local model $\widetilde{M}_{\underline{\calG}, \mu}$ has pseudo-rational
		singularities.
	\end{conjecture}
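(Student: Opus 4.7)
The plan is to reduce \Cref{conjecture_mixed} to \Cref{conjecture_pseudo_rational}, by verifying that each local ring of $\widetilde{M}_{\underline{\calG}, \mu}$ satisfies the hypotheses of the latter. First I would pick a closed point $x$ of $\widetilde{M}_{\underline{\calG}, \mu}$ (necessarily in the special fiber) and set $R=\calO_{\widetilde{M}_{\underline{\calG}, \mu}, x}$ with $\pi$ a uniformizer of $O_E$. The ring $R$ is excellent (as it is essentially of finite type over $O_E$), of mixed characteristic $(0,p)$, admits a dualizing complex, and is normal by the proof of \Cref{theorem_coherence_mixed_char}; so the setup of \Cref{lemma.pseudorational} and of \Cref{conjecture_pseudo_rational} applies.

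Next, I would check that $R/(\pi)$ is $F$-finite and $F$-injective. Since the special fiber of $\widetilde{M}_{\underline{\calG}, \mu}$ is $\widetilde{A}_{\calG',\mu'}$ by \Cref{theorem_coherence_mixed_char}, $R/(\pi)$ is a local ring of $\widetilde{A}_{\calG',\mu'}=\bigcup_\la \widetilde{S}_{\bar\la(t)}$ (\Cref{intersections.seminormalized.Schubert.varieties}), which is $F$-split by \Cref{theorem_frobenius_split_allp} combined with descent from $\bar k$, hence in particular $F$-injective; $F$-finiteness is automatic since $k$ is perfect and $\widetilde{A}_{\calG',\mu'}$ is of finite type over $k_E$. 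For the hypothesis on $R[\pi^{-1}]$, note that the generic fiber of $\widetilde{M}_{\underline{\calG}, \mu}$ is $\widetilde{S}_{G,\mu}=S_{G,\mu}$ (Schubert varieties in characteristic zero are normal, cf.\ \Cref{rem: good primes equality mixed}). Classical Kac--Moody theory shows that such Schubert varieties have rational singularities in characteristic zero, so are pseudo-rational by \Cref{Kovacs_equivalences}, and pseudo-rationality localizes along prime ideals, giving that $R[\pi^{-1}]$ is pseudo-rational.

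Granting \Cref{conjecture_pseudo_rational}, I can then conclude that $R$ is pseudo-rational. Since $x$ was arbitrary, $\widetilde{M}_{\underline{\calG}, \mu}$ has pseudo-rational singularities, and \Cref{Kovacs_equivalences} converts this to rational singularities as desired.

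The main obstacle is of course \Cref{conjecture_pseudo_rational} itself, which is the mixed characteristic analogue of the Schwede--Singh result (\Cref{lemma.Schwede.Singh}) and appears to be open in general. A secondary, more technical point is to ensure the hypothesis on $R[\pi^{-1}]$ in the conjecture really is stated pointwise in a way compatible with what we verify: the local rings of the generic fiber of our local model are local rings of a Schubert variety in char zero, and one must check that ``pseudo-rationality of $R[\pi^{-1}]$'' in \Cref{conjecture_pseudo_rational} is pseudo-rationality of each of its localizations at a prime (this is the way it would be formulated so that a Zariski sheaf-theoretic argument goes through); modulo this bookkeeping, the only serious ingredient missing is the conjectural commutative-algebra statement.
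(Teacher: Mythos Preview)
Your proposal is correct and matches the paper's own treatment: immediately after stating \Cref{conjecture_mixed}, the paper simply asserts that it would follow from \Cref{conjecture_pseudo_rational}, and your argument supplies exactly the verification of hypotheses needed for that reduction. The paper adds one refinement you omit (for minuscule $\mu$ one may weaken the hypotheses of \Cref{conjecture_pseudo_rational} to $R$ Cohen--Macaulay, $R[\pi^{-1}]$ regular, and $F$-split in place of $F$-injective), but this does not affect the validity of your reduction.
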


	This would follow from \Cref{conjecture_pseudo_rational}.  For
        the purpose of proving \Cref{conjecture_mixed} for minuscule
        $\mu$, that is, the case relevant to Shimura varieties, it
        would suffice (by \Cref{theorem_coherence_mixed_char}) to also
        assume in \Cref{conjecture_pseudo_rational} that $R$ is
        Cohen--Macaulay and $R[\pi^{-1}]$ is regular (as in
        \cite[Proposition 2.13]{FW89}), and $F$-injective can be
        replaced by $F$-split.
	
	\subsection{Functoriality of local models}
	
	In this subsection, we discuss the behavior of our local models under certain maps of parahoric group schemes. 
	This is not used elsewhere in the paper, but plays an important role in \cite[Section 7]{AGLR22} for proving a comparison theorem between the power series approach of the present paper and the perfectoid approach in \cite{AGLR22}.
	
	In both equal and mixed characteristic, a morphism of pairs $(\calG,\mu)\to (\widetilde \calG,\widetilde \mu)$ is a map of $O$-group schemes $\calG\to \widetilde \calG$ which maps $\mu$ into $\widetilde \mu$ under the induced map of reductive $K$-groups $G\to \widetilde G$ in the generic fiber, compare \Cref{functoriality_remark}.
	In order to study functoriality properties, it is useful to
        base change the local model to the absolute integral closure
        $\bar O$ of $O$ with fraction field denoted $\bar K$.

	In equicharacteristic the formation of local models is functorial in the following sense:
	
	\begin{lemma}\label{lemma.functorliaty.equi}
	In equicharacteristic \textup{(}\Cref{equichar_LM_sec}\textup{)}, the association $(\calG,\mu)\mapsto \widetilde M_{\calG,\mu}\otimes_{O_E}{\bar O}$ from the category of pairs as above to the category of $\bar O$-schemes is functorial. 
	Under \Cref{hyp_odd_unitary}, it commutes with finite products, and the map $\calG\to \calG_\ad$ induces an isomorphism of $O_E$-schemes
	\begin{equation}\label{equation.adjoint.iso.equi}
	\widetilde M_{\calG,\mu}\cong \widetilde M_{\calG_\ad,\mu_\ad, O_E}, 
	\end{equation}
	where $\calG\to \calG_\ad$ is the map of parahoric $O$-models extending $G\to G_\ad$ and $\mu_\ad$ is the composite of $\mu$ with $G_{\bar K}\to G_{\ad, \bar K}$.
	\end{lemma}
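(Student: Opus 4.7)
My plan is to tackle the three assertions separately, drawing on facts already established in the paper. For functoriality, I would start from the observation that a morphism of pairs $(\calG,\mu)\to (\widetilde\calG,\widetilde\mu)$ induces a morphism of Beilinson--Drinfeld Grassmannians $\Gr_\calG\to\Gr_{\widetilde\calG}$ over $\Spec\,O$, whose generic fibre maps $S_{G,\mu}$ into $S_{\widetilde G,\widetilde\mu}$. Since these are closed subschemes of ind-proper flat $O_E$-schemes, taking flat closures (scheme-theoretic image of the generic fibre) gives a map $M_{\calG,\mu}\to M_{\widetilde\calG,\widetilde\mu}\otimes_{O_{\widetilde E}}O_E$. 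Base-changing to $\bar O$ and applying functoriality of seminormalization \cite[Tag 0EUS]{StaProj} produces the required map of $\widetilde M$'s.

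For commutation with finite products, assume $(\calG,\mu)=(\calG_1\times\calG_2,(\mu_1,\mu_2))$ with respective reflex fields $E_1,E_2$ and compositum $E$. Then the Grassmannian factors as $\Gr_{\calG,O_E}\cong \Gr_{\calG_1,O_E}\times_{O_E}\Gr_{\calG_2,O_E}$ and $S_{G,\mu}\cong S_{G_1,\mu_1,E}\times_E S_{G_2,\mu_2,E}$. Flat closures over a Dedekind base commute with such fibre products of flat subschemes (using that over a DVR the tensor product of torsion-free modules is torsion-free), so $M_{\calG,\mu}\cong M_{\calG_1,\mu_1,O_E}\times_{O_E} M_{\calG_2,\mu_2,O_E}$. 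After base change to $\bar O$, the resulting scheme has generic fibre a product of normal Schubert varieties by \Cref{thm_normality_classical_sch_vars}, and special fibre $\widetilde A_{\calG_1,\mu_1,\bar k}\times\widetilde A_{\calG_2,\mu_2,\bar k}$, which is $F$-split by \Cref{theorem_frobenius_split_allp} (and hence seminormal). Thus the product itself is seminormal over $\bar O$, and the universal property identifies it with $\widetilde M_{\calG,\mu}\otimes\bar O$.

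For the adjoint isomorphism, I would essentially reuse the argument already embedded in the proof of \Cref{theorem_coherence_allp}: the natural map $\widetilde M_{\calG,\mu}\to \widetilde M_{\calG_\ad,\mu_\ad}\otimes_{O_{E_\ad}}O_E$ is a finite universal homeomorphism inducing isomorphisms on all residue fields, as in \cite[Corollary 2.3 and its proof]{HR22}, since the corresponding property holds on generic fibres (for the map of Schubert varieties $S_{G,\mu}\to S_{G_\ad,\mu_\ad}$) and on special fibres (for the map of admissible loci $\widetilde A_{\calG,\mu}\to \widetilde A_{\calG_\ad,\mu_\ad}$, which reduce to the same statement about affine flag varieties used in the loc.~cit.). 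Since \Cref{theorem_coherence_allp} guarantees that the target is normal, in particular seminormal, the universal property of seminormalization forces the map to be an isomorphism.

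The main technical subtlety, rather than a conceptual obstacle, is the interplay of seminormalization with fibre products and with base change to $\bar O$. I would address this by exploiting the $F$-splitness of the special fibres (\Cref{theorem_frobenius_split_allp}) -- which is preserved under products of schemes over the perfect field $\bar k$ -- together with \Cref{intersections.seminormalized.Schubert.varieties}, which ensures that the relevant unions appearing in the special fibre remain seminormal. Once seminormality of the candidate scheme is established, the isomorphism in each case follows formally from the universal property.
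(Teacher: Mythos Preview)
Your overall strategy matches the paper's: functoriality via \Cref{functoriality_remark} and seminormalization, and the adjoint isomorphism via \cite[Corollary 2.3]{HR22} together with normality of the target from \Cref{theorem_coherence_allp}. Those parts are fine.

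The product argument, however, has a genuine slip. You establish $M_{\calG,\mu}\cong M_{\calG_1,\mu_1,O_E}\times_{O_E} M_{\calG_2,\mu_2,O_E}$ and then try to show this product is already seminormal over $\bar O$. But you claim its generic fibre is a product of \emph{normal} Schubert varieties by \Cref{thm_normality_classical_sch_vars}; that theorem says $S_{G_i,\mu_i}$ is normal only when $p\nmid |\pi_1(G_{i,\der})|$, so the claim fails in general. Likewise, the special fibre of $M_{\calG_i,\mu_i}$ is not $\widetilde A_{\calG_i,\mu_i}$ but $A_{\calG_i,\mu_i}$ (or something non-reduced), so your identification of the special fibre is also off. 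In effect, you are proving $M_{\calG,\mu}$ is seminormal, which is false whenever $M\neq\widetilde M$.

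What the paper does instead (this is the ``key fact'' singled out in its proof) is to work with the product $\widetilde M_{\calG_1,\mu_1,O_E}\times_{O_E}\widetilde M_{\calG_2,\mu_2,O_E}$ and use that each factor $\widetilde M_{\calG_i,\mu_i}\otimes_{O_{E_i}}O_{\widetilde E}$ remains \emph{normal} after any finite base change $\widetilde E/E_i$, as shown in the proof of \Cref{theorem_coherence_allp} via Serre's criterion (geometrically normal generic fibre plus reduced special fibre). Normality of the base-changed factors makes the product normal, and then the finite universal homeomorphism $\widetilde M_{\calG,\mu}\to \widetilde M_{\calG_1,\mu_1,O_E}\times_{O_E}\widetilde M_{\calG_2,\mu_2,O_E}$ inducing isomorphisms on residue fields must be an isomorphism. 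Your $F$-splitness route can be salvaged, but only after switching to the seminormalized factors $\widetilde M_{\calG_i,\mu_i}$ and invoking \Cref{theorem_coherence_allp} to identify their special fibres with $\widetilde A_{\calG_i,\mu_i}$.
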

	\begin{proof}
	This was proven in the course of \Cref{theorem_coherence_allp}, see especially the reduction in the beginning of its proof.
	Recall that for the isomorphism \eqref{equation.adjoint.iso.equi} and the commutation with finite products, the key fact is that $\widetilde M_{\calG,\mu}\otimes_{O_E}O_{\widetilde E}$ is normal for every finite field extension $\widetilde E\supset E$.
	\end{proof}

	\begin{remark}
	Using \Cref{remark.GL}, the special fiber of $\widetilde M_{\calG,\mu}\otimes_{O_E}{\bar O}$ is always reduced, so the base changed local model is normal and \Cref{lemma.functorliaty.equi} holds without assuming \Cref{hyp_odd_unitary}.	
	\end{remark}

	In mixed characteristic (\Cref{sec:mixed characteristic local models}), functoriality of $(\calG,\mu)\mapsto M_{\underline \calG, \mu}$ (or, its base change to $\bar O$) is subtle due to the auxiliary choices involved in the construction of the $O\pot{t}$-group lift $\underline \calG$.
	Here we point out two particularly interesting cases of functoriality: canonical $z$-extensions, making the connection to \cite[Section 2.6]{HPR20}, and embeddings into the Weil restriction of the split form, used in \cite[Section 7]{AGLR22}.

	\subsubsection{Canonical $z$-extensions following \cite[Section 2.4]{Lou23}}\label{z.extension.sec}

	Assume $K/\mathbb Q_p$ is of characteristic $0$ and use the notation introduced in \Cref{sec:mixed characteristic local models}. 
	In particular, $G$ satisfies \Cref{hyp_odd_unitary}, is adjoint, quasi-split and equipped with a quasi-pinning. 
	We lift the quasi-pinning along the simply connected cover $G_{\on{sc}}\to G$.
	This induces a map $\underline{\calG_{\on{sc}}}\to \underline{\calG}$ on the $O\pot{t}$-lifts by functoriality of extending birational group laws, compare \Cref{prop_lifts_bk}.	
			The maximal torus $T$ acts by inner automorphisms on $G_{\on{sc}}$, so we may form $\widetilde G:=G_{\on{sc}}\rtimes T$.
			By \cite[Lemme 2.4.2]{Lou23}, there is the $z$-extension 
			\begin{equation}\label{equation.canonical.extension}
			1\to T_{\on{sc}}\xrightarrow{t\mapsto (t,t^{-1})} \widetilde G\xrightarrow{(g,t)\mapsto gt} G\to 1
			\end{equation}
			with $\widetilde G_{\on{der}}=G_{\on{sc}}$ and $T\hookto \widetilde G, t\mapsto (1,t)$ being a maximal torus.
			By functoriality of extensions of birational group laws, the connected N\'eron model $\underline \calT$ acts on $\underline{\calG_{\on{sc}}}$ by inner automorphisms. 
			This allows us to define the $O\pot{t}$-group scheme $\underline{\widetilde\calG}:=\underline{\calG_{\on{sc}}}\rtimes \underline{\calT}$, which equals the model birationally glued from $(\underline{\calT_{\on{sc}}}\times \underline{\calT}, ({\underline{\calU_{a}}})_{a\in \Phi_G^{\on{nd}}})$ as in \Cref{prop_lifts_bk}.
			Moreover, it fits in a short exact sequence of $O\pot{t}$-group schemes
			\begin{equation}\label{equation.canonical.extension.lift}
			1\to \underline{\calT_{\on{sc}}}\to\underline{\widetilde\calG} \to \underline{\calG}\to 1,
			\end{equation}
			as can be seen by showing that $\underline{\calG}$ and the fppf quotient $\underline{\widetilde\calG}/\underline{\calT_{\on{sc}}}$ are solutions to the same birational group law, hence are isomorphic.
			The extension \eqref{equation.canonical.extension.lift} is called the \textit{canonical $z$-extension of $\underline{\calG}$}.

 The following lemma relates $\widetilde M_{\underline\calG,\mu}$ to the construction of local models via $z$-extensions as in \cite[Section 2.6]{HPR20}.
Here we view $\mu$ as a geometric cocharacter of $T$.

		\begin{lemma}\label{z.extensions.LM}
		Under \Cref{hyp_odd_unitary}, the map $\widetilde {\underline \calG}\to \underline\calG$ from \eqref{equation.canonical.extension} induces an isomorphism of $O_E$-schemes
		\begin{equation}
		M_{\widetilde {\underline \calG}, \widetilde\mu}\overset{\cong}{\longto} \widetilde M_{\underline\calG,\mu},
		\end{equation}
		where $\tilde \mu=(1,\mu)$ is viewed as a geometric cocharacter of $\widetilde G=G_{\on{sc}}\rtimes T$.
		\end{lemma}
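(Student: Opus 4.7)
My plan is to construct a morphism $\varphi\colon M_{\underline{\widetilde\calG},\widetilde\mu} \to M_{\underline\calG,\mu}$ by functoriality, show that the source is already normal, and then apply the universal property of seminormalization. The morphism $\underline{\widetilde\calG}\to\underline\calG$ from \eqref{equation.canonical.extension.lift} sends $\widetilde\mu=(1,\mu)$ to $\mu$ under the induced map $\widetilde G\to G$, so by functoriality of flat closures of Schubert varieties inside Beilinson--Drinfeld Grassmannians (compare \Cref{functoriality_remark}) we obtain a canonical morphism of $O_E$-schemes
\[
\varphi\colon M_{\underline{\widetilde\calG},\widetilde\mu} \longto M_{\underline\calG,\mu}.
\]

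The crucial point is that $\widetilde G_{\der}=G_{\on{sc}}$ is simply connected. Consequently, \Cref{thm_normality_classical_sch_vars} ensures all Schubert varieties for $\widetilde\calG':=\underline{\widetilde\calG}\otimes k\pot{t}$ are normal, so $A_{\widetilde\calG',\widetilde\mu'}=\widetilde A_{\widetilde\calG',\widetilde\mu'}$. Adapting the proof of \Cref{theorem_coherence_mixed_char} to the non-adjoint group $\widetilde G$ (via \Cref{adjoint.groups.general.definition}), with the key simplification that the translated simply-connected cover step is now tautologically an isomorphism since $\widetilde G_{\der}$ is already simply connected, yields that $M_{\underline{\widetilde\calG},\widetilde\mu}$ is itself normal with reduced special fibre equal to $A_{\widetilde\calG',\widetilde\mu'}$ and generic fibre equal to $S_{\widetilde G,\widetilde\mu}$. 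Next, I will verify that $\varphi$ is a universal homeomorphism inducing isomorphisms on all residue fields. On the generic fibre, the kernel of $\widetilde G\to G$ is the central torus $T_{\on{sc}}$, so the map $S_{\widetilde G,\widetilde\mu}\to S_{G,\mu}$ is a universal homeomorphism bijective on residue fields; as both Schubert varieties are normal in characteristic zero, it is in fact an isomorphism. On the special fibre, the analogous statement for $A_{\widetilde\calG',\widetilde\mu'}\to A_{\calG',\mu'}$ is obtained by exactly the same argument as in \cite[Corollary 2.3]{HR22}, invoked in the proof of \Cref{theorem_coherence_mixed_char}.

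Finally, since $M_{\underline{\widetilde\calG},\widetilde\mu}$ is normal (hence seminormal), and $\widetilde M_{\underline\calG,\mu}$ is by definition the seminormalization of $M_{\underline\calG,\mu}$, the universal property of seminormalization factors $\varphi$ uniquely through $\widetilde M_{\underline\calG,\mu}$; the resulting morphism is a universal homeomorphism inducing isomorphisms on residue fields into a seminormal target, and is therefore an isomorphism, proving the lemma. The main obstacle is the special fibre analysis of the universal homeomorphism assertion: concretely checking that the central toral kernel $\underline{\calT_{\on{sc}}}$ leaves both the underlying reduced admissible locus and all residue fields unchanged, which is precisely the technical input handled in \cite[Corollary 2.3]{HR22} for the simply connected cover in the proof of \Cref{theorem_coherence_mixed_char}.
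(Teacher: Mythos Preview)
Your argument is essentially the same as the paper's: construct the map on orbit closures, invoke \cite[Corollary 2.3]{HR22} to see it is a finite birational universal homeomorphism inducing isomorphisms on residue fields, use $\widetilde G_{\der}=G_{\on{sc}}$ together with the proof of \Cref{theorem_coherence_mixed_char} to get normality of the source, and conclude. The one point you skip that the paper makes explicit is why $\widetilde\mu$ has the same reflex field $E$ as $\mu$ (so that $\varphi$ is genuinely a morphism of $O_E$-schemes rather than only after a base change): this holds because $T$ sits as a maximal torus in both $G$ and $\widetilde G$, so the Galois orbits of $\mu$ and $\widetilde\mu=(1,\mu)$ coincide.
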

		\begin{proof}
			Firstly, as $T$ is a maximal torus in both $G$ and $\widetilde G$, the cocharacters $\mu, \widetilde \mu$ have the same reflex field $E$.
			Thus, $\widetilde {\underline \calG}\to \underline\calG$ induces a finite birational universal homeomorphism on orbit closures 
			\begin{equation}\label{equation.map.orbit.closures}
			M_{\widetilde {\underline \calG}, \widetilde\mu}\to M_{\underline \calG,\mu},
			\end{equation} 
			which is an isomorphism on residue fields, see \cite[Corollary 2.3 and its proof]{HR22}.
			As $\widetilde{G}_{\on{der}}=G_{\on{sc}}$, the orbit closure $M_{\widetilde {\underline \calG}, \widetilde\mu}$ is normal by the proof of \Cref{theorem_coherence_mixed_char}. 
			So the map \eqref{equation.map.orbit.closures} induces $M_{\widetilde {\underline \calG}, \widetilde\mu}\cong \widetilde M_{\underline\calG,\mu}$ because the latter is normal by \Cref{theorem_coherence_mixed_char}.
		\end{proof}

	\subsubsection{Embedding into the Weil restriction of the split form}
	We record the following result concerning the functoriality of the construction $\calG\mapsto \underline{\calG}$, used in \cite{AGLR22}. 
	Recall the notation from \Cref{section_lifts} and consider the adjunction morphism
	\begin{equation}\label{eq_adjunction}
	G=\Res_{L/K}(G_0) \to \Res_{L/K}\Res_{\widetilde K/L}(H_0\otimes_\bbZ K)=\Res_{\widetilde K/K}(H_0\otimes_{\bbZ} K)=:\widetilde G,
	\end{equation}
	where $\widetilde{K}$ contains the Galois hull of $M/L$ and $H_0/\bbZ$ is the split form of $G_0$ induced by \eqref{equation.splitting.map}. 
	We assume the following:
	
	\begin{hypothesis}\label{not.the.triality}
		If $p=3$, then $G_0\otimes_K\breve K$ is not a triality form
                of type $D_4$.
              \end{hypothesis}

	Recall the $O\rpot{t}$-group lifts $\underline G$ from \Cref{lem_equiv_id_apart_bk}. 
	We equip $\widetilde G$ with the quasi-pinning induced from the pinning of $H_0\otimes_{\bbZ} K$, leading to the $O\rpot{t}$-group lift $\underline{\widetilde G}$.

	\begin{lemma}
		Under \Cref{not.the.triality}, the map \eqref{eq_adjunction} lifts to a locally closed immersion of $O\rpot{t}$-group schemes 
		\begin{equation}\label{equation.functoriality.map}
		\underline{G}\to \underline{\widetilde{G}},
		\end{equation}
		compatibly with reduction to $\kappa\rpot{t}$ for $\kappa=k,K$.
	\end{lemma}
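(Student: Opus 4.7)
The plan is to construct \eqref{equation.functoriality.map} by producing compatible morphisms on the constituent data $(\underline T, \underline{U_a})$ of the birational group laws used in \Cref{lem_equiv_id_apart_bk}, and then to invoke the uniqueness statement in \cite[Théorème 3.2.5]{Lou19} to glue these to a map of $O\rpot{t}$-group schemes. Concretely, I would first reduce to the case $G=G_0$ absolutely almost simple, since both sides of \eqref{equation.functoriality.map} are manifestly compatible with the decomposition of $G$ into Weil restrictions of absolutely almost simple groups and with the induced splitting of the quasi-pinning. Second, over the open locus $U\subset\Spec\,O\rpot{t}$ of \Cref{lemma.quasi.pinning.generic.extension}, both $\underline{G}|_U$ and $\underline{\widetilde G}|_U$ arise as $\Gal(\widetilde V/U)$-invariants of base changes of the pinned split form $H_0$, and the adjunction \eqref{eq_adjunction} manifestly lifts to a closed immersion over $U$.

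The extension to all of $\Spec\,O\rpot{t}$ is carried out root subgroup by root subgroup, and on the maximal torus. On the torus, the morphism $\underline{T}\to\underline{\widetilde T}$ defined over $U$ extends uniquely to the connected Néron models thanks to the Néron mapping property (and our running assumption that these tori are induced). On the root subgroups, I would use the explicit descriptions \eqref{models_laurent_series_bk}: for each non-divisible $a\in\Phi_G^{\on{nd}}$ one has a canonical partition of the roots of $\widetilde G$ lying above $a$, and the natural diagonal map
\begin{equation}
\underline{U_a}\longto \prod_{\tilde a\mapsto a}\underline{\widetilde{U_{\tilde a}}},
\end{equation}
adjoint to the factorization of $\Res_{O_{L_a^{\on{nr}}}\rpot{t_a}/O\rpot{t}}\mathbb G_a$ through $\Res_{O_{\widetilde K^{\on{nr}}}\rpot{\tilde t}/O\rpot{t}}\mathbb G_a$, defines the required extension. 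The multipliable-root case (when $\Phi_G$ is non-reduced) is handled by \Cref{hyp_odd_unitary} together with the analysis of $\mathbb H$-groups already used in \Cref{prop_lifts_bk}. Compatibility with the birational group laws on either side then follows formally from the explicit commutator formulas of \cite[Section A.6]{BT84} invoked in the proof of \Cref{lem_laurent_lift_witt}, which are themselves functorial with respect to \eqref{eq_adjunction}.

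To conclude that the resulting morphism $\underline G\to\underline{\widetilde G}$ is a locally closed immersion, I would argue fiberwise: over the generic fiber $K\rpot{t}$ and the special fiber $k\rpot{t}$, the morphism recovers the standard adjunction for parahoric group schemes attached to quasi-split reductive groups, which is a locally closed immersion by classical Bruhat--Tits theory. Combined with flatness of both sides, this suffices to conclude. Compatibility with reduction to $\kappa\rpot{t}$ for $\kappa=k,K$ is built into the construction, since each of the ingredients $\underline T$ and $\underline{U_a}$ reduces correctly by the discussion preceding \Cref{lem_equiv_id_apart_bk}.

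The main obstacle, and the sole source of \Cref{not.the.triality}, is the triality case $G_0\otimes\breve K$ of type $D_4$ with $p=3$: in that situation the Galois hull of $M/L$ is a cubic wildly ramified extension, and the $S_3$-twisted Chevalley--Steinberg structure on $H_0$ is not compatible with the naïve Weil-restriction description of $\underline{\widetilde G}$. In all other cases Hypothesis \ref{not.the.triality} is automatic: either $M/L$ is of degree $\leq 2$ and either tame or covered by \Cref{hyp_odd_unitary}, or $p\neq 3$ so the cubic extension involved in triality is tame, and the constructions of \Cref{subsection_appendix_bk_lift} go through unchanged.
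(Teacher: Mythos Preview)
Your approach is essentially the same as the paper's: reduce to $G=G_0$, construct the map over the \'etale locus $U$, extend via the explicit models of tori and root subgroups, and glue by functoriality of birational group laws. Two points deserve sharpening.

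First, the role of \Cref{not.the.triality} is not quite what you describe. The paper uses it at the very outset, not as an afterthought: it ensures that the Galois hull of the fraction field of $O_{M^{\rm nr}}\pot{v}$ over that of $O\pot{t}$ is exactly the fraction field of $O_{\widetilde K^{\rm nr}}\pot{v}$, i.e.\ that the lifted extension used to build $\underline G$ has the same Galois hull as the one used to build $\underline{\widetilde G}$. Without this identification you cannot even define the map over $U$ as a comparison of Galois-fixed points inside the same ambient restriction of scalars. Your formulation (``the $S_3$-twisted structure is not compatible with the na\"ive Weil-restriction description'') gestures at the issue but does not pin it down.

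Second, your argument for the locally closed immersion has a gap. You check the map on the fibers over $K\rpot{t}$ and $k\rpot{t}$ and appeal to flatness, but $\Spec\,O\rpot{t}$ is a Dedekind scheme with many closed points besides $(\pi)$, so you are not checking all fibers; and even granting that, ``fiberwise immersion plus flatness of source and target'' does not formally yield an immersion. The paper instead invokes \cite[Proposition 2.2.10]{BT84}, which deduces the immersion property directly from the fact that the constituent maps on the big-cell data (torus and root subgroups) are closed immersions. Relatedly, the reference you want for gluing the morphism is the functoriality statement \cite[Proposition 3.3.9]{Lou19} rather than the existence theorem \cite[Th\'eor\`eme 3.2.5]{Lou19}.
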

	\begin{proof} 
		As the formation of $\underline G$ is compatible with restriction of scalars, we assume without loss of generality that $G=G_0$, so $L=K$.  
		\Cref{not.the.triality} ensures that the Galois hull of the fraction fields of the ring extension $O\pot{t} \to O_{M^{\rm{nr}}}\pot{v}$ is given by the fraction field of $O_{\widetilde{K}^{\rm{nr}}}\pot{v}$. 
		The map \eqref{equation.functoriality.map} exists by definition of $\underline{G}$ over the \'etale locus $U$ of $O\rpot{t} \to O_{M^{\rm{nr}}}\rpot{v}$, compare with \Cref{lemma.quasi.pinning.generic.extension}. 
		It can be further extended to $\Spec\, O\rpot{t}$ by taking the obvious inclusions for the models of the root groups \eqref{equation.root.models}, respectively the connected N\'eron models of tori, and by applying functoriality of solutions to birational group laws, compare with \cite[Proposition 3.3.9]{Lou23}. 
		This constructs \eqref{equation.functoriality.map}, which is a locally closed immersion by \cite[Proposition 2.2.10]{BT84}.
	\end{proof}
	
	Let $\widetilde S\subset \widetilde G$ be the maximal split subtorus contained in $\Res_{\widetilde K/K}(S)$.
	The inclusion of apartments
	\begin{equation}
	\scrA(G,S,K) \subset \scrA(\widetilde{G},\widetilde{S},K)
	\end{equation}
	is also compatible with the isomorphism \eqref{equation.identification.apartments.bk}.
	For a point $x\in \scrA(G,S,K)$, we denote its image by $\widetilde x \in \scrA(\widetilde{G},\widetilde{S},K)$.
	
	\begin{corollary}
		For $x\in \scrA(G,S,K)$, the map \eqref{equation.functoriality.map} extends to a locally closed immersion of the $O\pot{t}$-group schemes
		\begin{equation}\label{equation.parahoric.functoriality}
		\underline{\calG_{x}} \to \underline{\widetilde{\calG}_{\widetilde{x}}},
		\end{equation}
		constructed in \Cref{prop_lifts_bk}. 
		The map \eqref{equation.parahoric.functoriality} reduces to the canonical map of parahoric group schemes over $O$ and $\kappa\pot{t}$ with $\kappa=k,K$.
	\end{corollary}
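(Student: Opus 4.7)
The plan is to extend the morphism \eqref{equation.functoriality.map} from the generic fiber $\Spec\, O\rpot{t}$ to the full base $\Spec\, O\pot{t}$ by first constructing compatible maps on the integral building blocks of the two parahoric group schemes, and then invoking functoriality of birational group laws as in \cite[Proposition 3.3.9]{Lou19}. This is exactly parallel to the pattern used throughout \Cref{section_lifts}: the pairs $(\underline{\calT},(\underline{\calU_{a,x}}))$ and $(\underline{\widetilde{\calT}},(\underline{\widetilde{\calU}_{\widetilde a,\widetilde x}}))$ determine $\underline{\calG_x}$ and $\underline{\widetilde{\calG}_{\widetilde x}}$ via \Cref{prop_lifts_bk}, so it suffices to give functorial maps on these.

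First I would check that the connected N\'eron $O\pot{t}$-model $\underline{\calT}$ of $\underline T$ maps to the corresponding connected N\'eron model $\underline{\widetilde{\calT}}$ of the maximal torus of $\underline{\widetilde G}$ containing $\widetilde S$. This follows from the universal property of connected N\'eron models applied to the map of tori induced by \eqref{equation.functoriality.map}. Next, for each non-divisible root $a \in \Phi_G^{\on{nd}}$, I would verify that $\underline{\calU_{a,x}}$ as defined in \eqref{equation.root.models} maps into the corresponding product of root group models $\underline{\widetilde{\calU}_{\widetilde a,\widetilde x}}$ indexed by the roots $\widetilde a \in \Phi_{\widetilde G}^{\on{nd}}$ restricting to $a$. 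This boils down to the numerical compatibility $e_{\widetilde a} f_{\widetilde x}(\widetilde a) = e_a f_x(a)$ of the optimal quasi-concave functions at corresponding points, which is built into the equivariant embedding of apartments compatible with the chosen Chevalley--Steinberg valuations.

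Granted these compatible maps on the building blocks, \cite[Proposition 3.3.9]{Lou19} produces the extension \eqref{equation.parahoric.functoriality}. To see that this is a locally closed immersion, I would apply \cite[Proposition 2.2.10]{BT84}, reducing to a fiberwise check where the statement follows from the corresponding embedding of parahoric group schemes over the residue fields. Finally, compatibility with the canonical maps of parahoric group schemes over $O$ and over $\kappa\pot{t}$ ($\kappa=k,K$) is automatic: both sides reduce, by \Cref{prop_lifts_bk}, to parahoric group schemes attached to the facets $\bbf_x$ and $\bbf_{\widetilde x}$ in the respective buildings, and any morphism of parahoric group schemes is uniquely determined by its generic fiber, so the reductions must agree with the canonical maps. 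The main technical point I expect is the bookkeeping around the quasi-concave functions and ramification indices $e_a$, since roots of $G$ correspond to certain Galois orbits of roots in $\widetilde G$ whose associated field extensions $L_a$, $L_{\widetilde a}$ and their ramification over $K$ must be tracked carefully; \Cref{not.the.triality} is precisely what is needed to make the Galois hull of $M/K$ computable as $\widetilde K/K$, keeping this bookkeeping manageable.
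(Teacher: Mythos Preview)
Your proposal is correct and follows essentially the same route as the paper: construct the maps on the torus and root group models, invoke functoriality of solutions to birational group laws, and then apply \cite[Proposition 2.2.10]{BT84} for the locally closed immersion. The only minor difference is that the paper cites \cite[Proposition 3.4.8]{Lou19} (rather than 3.3.9) for the construction of the maps between the integral models of root groups and tori in the $O\pot{t}$-setting, and deduces the compatibility with the canonical parahoric maps directly ``from the construction'' rather than via your uniqueness argument.
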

	
	\begin{proof}
		Applying functoriality of solutions to birational group laws, it suffices to construct the maps between the models of roots groups and of tori, following \cite[Proposition 3.4.8]{Lou23}. 
		The resulting map is again a locally closed immersion by \cite[Proposition 2.2.10]{BT84}.
		That its reduction over $O$, respectively $\kappa\pot{t}$, is the expected map on parahoric group schemes is clear from the construction, compare \Cref{prop_lifts_bk}.
	\end{proof}
	
	Let us briefly return to the situation illustrated in \eqref{eq_adjunction} of the closed embedding of $G$ into the associated Weil-restricted split form $\widetilde{G}$. We denote by $\widetilde{\mu}$ the geometric conjugacy class of cocharacters of $\widetilde{G}$ obtained as the image of $\mu$. 
	The following compatibility at the level of local models plays a role in the proof of \cite[Theorem 7.23]{AGLR22}.

\begin{lemma}
	Under \Cref{hyp_odd_unitary} and \Cref{not.the.triality}, the map $\underline{\calG}:=\underline{\calG_{x}} \to \underline{\widetilde{\calG}_{\widetilde{x}}}:=\underline{\widetilde{\calG}}$ from \eqref{equation.parahoric.functoriality} induces a finite morphism
	\begin{equation}\label{equation_LM_functoriality}
	\widetilde{M}_{\underline{\calG},\mu} \to \widetilde{M}_{\underline{\widetilde{\calG}},\widetilde{\mu}}
	\end{equation}
	factoring uniquely through its scheme-theoretic image via a universal homeomorphism.
\end{lemma}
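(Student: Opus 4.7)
The plan is to construct the morphism via functoriality of seminormalized orbit closures and then verify finiteness together with the universal homeomorphism property by a fiber-by-fiber analysis. Applying \Cref{functoriality_remark} to the morphism of pairs $(\underline{\calG},\mu) \to (\underline{\widetilde{\calG}},\widetilde{\mu})$ determined by the locally closed immersion \eqref{equation.parahoric.functoriality} together with the relation that $\widetilde{\mu}$ is the image of $\mu$ under \eqref{eq_adjunction}, one obtains the map \eqref{equation_LM_functoriality} at the level of seminormalizations. Since source and target are both projective over $O_E$, the map is automatically proper and its scheme-theoretic image $Z \subseteq \widetilde{M}_{\underline{\widetilde{\calG}},\widetilde{\mu}}$ is closed, with the factorization $\widetilde{M}_{\underline{\calG},\mu} \to Z$ unique by the universal property of scheme-theoretic image. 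Hence it suffices to prove that the map is universally injective and induces trivial residue field extensions on all geometric points: in that case the factorization through $Z$ is proper, surjective, and universally injective with trivial residue field extensions, hence a finite universal homeomorphism.

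On the geometric generic fiber, the chosen splittings \eqref{equation.splitting.map} identify $G_{\bar K} \cong H_0^{[L:K]}$ and $\widetilde{G}_{\bar K} \cong H_0^{[\widetilde{K}:K]}$, and \eqref{eq_adjunction} becomes the diagonal embedding indexed by the natural surjection $\Hom_K(\widetilde{K},\bar K) \twoheadrightarrow \Hom_K(L,\bar K)$. A diagonal closed embedding of a product of reductive groups induces a closed immersion of affine Grassmannians, so $\Gr_{G,\bar K} \hookrightarrow \Gr_{\widetilde{G},\bar K}$ is a closed immersion of ind-schemes, and consequently $S_{G,\mu,\bar K} \hookrightarrow S_{\widetilde{G},\widetilde{\mu},\bar K}$ is a closed immersion of projective varieties. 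This gives universal injectivity with trivial residue field extensions in the generic fiber.

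The main obstacle is the analogous analysis on the geometric special fiber. By \Cref{theorem_coherence_mixed_char}, this fiber of \eqref{equation_LM_functoriality} identifies with the map $\widetilde{A}_{\calG',\mu',\bar k} \to \widetilde{A}_{\widetilde{\calG}',\widetilde{\mu}',\bar k}$ induced by the reduction of \eqref{equation.parahoric.functoriality} over $\bar k\rpot{t}$. The same diagonal description persists in equal characteristic, owing to the compatibility of \eqref{equation.identification.apartments.bk} with Iwahori--Weyl groups, so $\Fl_{\calG',\bar k} \to \Fl_{\widetilde{\calG}',\bar k}$ is a monomorphism of ind-schemes that restricts to a closed immersion on every individual Schubert variety. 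Applying \Cref{intersections.seminormalized.Schubert.varieties} to the finite unions of Schubert varieties defining the two seminormalized admissible loci, the induced map is universally injective with trivial residue field extensions in the special fiber as well. Combining the two fiber analyses, \eqref{equation_LM_functoriality} is proper and universally injective everywhere with trivial residue field extensions, hence has finite fibers, is finite, and its corestriction to $Z$ is a universal homeomorphism as required.
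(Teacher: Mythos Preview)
Your overall strategy---reduce to injectivity on geometric points and analyze the two fibers separately---matches the paper's approach, and the generic fiber argument is essentially correct. However, there is a genuine gap in the special fiber analysis.

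You assert that $\Fl_{\calG',\bar k} \to \Fl_{\widetilde{\calG}',\bar k}$ is a monomorphism of ind-schemes, justifying this by saying that ``the same diagonal description persists in equal characteristic.'' But this is not correct: the diagonal splitting of $G_{\bar K}$ and $\widetilde G_{\bar K}$ is a phenomenon over an algebraic closure of the \emph{function field}, whereas the affine flag varieties live over $\bar k$ and depend on the full parahoric $\bar k\pot{t}$-group schemes $\calG'$ and $\widetilde\calG'$. The map $\calG' \to \widetilde\calG'$ obtained from reducing \eqref{equation.parahoric.functoriality} is only a \emph{locally} closed immersion, not a closed one. Consequently, the intersection $LG' \cap L^+\widetilde\calG'$ can strictly contain $L^+\calG'$, and the induced map $\Fl_{\calG'} \to \Fl_{\widetilde\calG'}$ may genuinely fail to be a monomorphism. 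The compatibility of apartments and Iwahori--Weyl groups in \eqref{equation.identification.apartments.bk} controls only the combinatorics of Schubert cells, not this scheme-theoretic issue.

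The paper's fix is as follows: the discrepancy between $\calG'$ and the flat closure of its image in $\widetilde\calG'$ produces a finite \'etale quotient of $\Fl_{\calG'}$ whose connected components are each isomorphic to a connected component of $\Fl_{\calG'}$, and it is this quotient which embeds into $\Fl_{\widetilde\calG'}$. Since the admissible locus $A_{\calG',\mu'}$ is connected, the restriction of the quotient map to it is an isomorphism onto its image, and injectivity on geometric points follows. You need to insert this argument (or an equivalent one) to close the gap.
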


\begin{proof}
	By naturality of the Beilinson--Drinfeld Grassmannian, we obtain a map between the orbit closures, and hence the map \eqref{equation_LM_functoriality} by functoriality of seminormalizations \cite[Tag 0EUS]{StaProj}. 
	By projectivity of local models, it is enough to show that \eqref{equation_LM_functoriality} is injective on geometric points, which in turn can be tested on orbit closures.

In the generic fiber, the map $S_{G,\mu}\to S_{\widetilde G,\widetilde \mu}$ of Schubert varieties is a closed immersion because \eqref{eq_adjunction} is so.   
	In the reduced special fibers, the map is given by $\calA_{\calG',\mu'}\to \calA_{\widetilde\calG',\widetilde\mu'}$ on the respective admissible loci and is induced from $\Fl_{\calG'} \to \Fl_{\widetilde{\calG}'}$.

	It may happen that $\Fl_{\calG'} \to \Fl_{\widetilde{\calG}'}$ is not a monomorphism, because $\calG' \to \widetilde{\calG}'$ is a \textit{locally} closed immersion. 
	But this difference amounts to passing to a finite \'etale quotient of $\Fl_{\calG'}$ with isomorphic connected components (given by the affine flag variety of the flat closure of the immersion), which embeds into $\Fl_{\widetilde{\calG}'}$. 
	Since $\calA_{\calG',\mu'}$ is connected, this is enough to deduce injectivity of $\calA_{\calG',\mu'}\to \calA_{\widetilde\calG',\widetilde\mu'}$ on geometric points.
\end{proof}

	\bibliography{biblio.bib}
	\bibliographystyle{alpha}
	
\end{document}